\documentclass[10pt]{amsart}
\usepackage[font=small,labelfont=bf]{caption}
\usepackage[margin=1.25in]{geometry}
\newtheorem{theorem}{Theorem}[section]
\newtheorem{lemma}[theorem]{Lemma}
\newtheorem{proposition}{Proposition}[section]

\theoremstyle{definition}
\newtheorem{definition}[theorem]{Definition}

\usepackage{color}
\usepackage{verbatim}
\usepackage[colorinlistoftodos]{todonotes}

\theoremstyle{remark}
\newtheorem{remark}[theorem]{Remark}

\numberwithin{equation}{section}

\usepackage{graphicx, subcaption}
\usepackage{graphicx}
\usepackage{amsmath,amsfonts,amssymb,amsthm}
\usepackage{mathtools}
\usepackage{commath}
\usepackage{colortbl}
\usepackage{comment}

\begin{document}

\title{Convergence of an Eulerian scheme for the Vlasov-Poisson-BGK model}

\author[S. Y. Cho]{Seung Yeon Cho}
\address{Seung Yeon Cho\\
	Department of Mathematics\\ 
	Gyeongsang National University\\
	Jinju 52828, Republic of Korea}
\email{chosy89@gnu.ac.kr}

\author[S. Park]{Sungsu Park}
\address{Sung su Park\\
Department of Mathematics\\ 
Sungkyunkwan University\\
Suwon 440-746, Republic of Korea}
\email{ppssu@skku.edu}

\author[S.-B. Yun]{Seok-Bae Yun}
\address{Seok-Bae Yun\\
Department of Mathematics\\ 
Sungkyunkwan University\\
 Suwon 440-746, Republic of Korea}
\email{sbyun01@skku.edu}

\begin{abstract}   
    The Vlasov-Poisson-BGK (VPBGK) model is a kinetic model for describing the dynamics of collisional plasmas. Although various numerical schemes have been developed for it, a corresponding convergence theory has been absent. This paper fills this gap by presenting the first convergence analysis for a non-splitting, finite-difference Eulerian scheme discretized on the full phase-space grid. A major theoretical obstacle is the mixing of velocity indices induced by the electric field, which hinders the derivation of a uniform lower bound for the discrete solution. To overcome this stability challenge, we propose a modified lower bound estimate suitable for ionized systems that incorporates the step-wise degradation. Under a truncated velocity domain with a Neumann boundary condition, we establish error estimates for the distribution function in a weighted $L^{\infty}$ norm and for the electric field in a $L^{\infty}$ norm, respectively.
\end{abstract}
\maketitle
\noindent\textbf{Keywords} Vlasov-Poisson-BGK model, Implicit-explicit method, Eulerian method, Finite difference method, Convergence analysis.
\section{Introduction}\label{sec1}
\subsection{The Vlasov-Poisson-BGK model}
The dynamics of a colliding ionized rarefied gas system is described by the Vlasov-Poisson-Boltzmann system in kinetic theory. This system couples the Vlasov-Poisson system, which governs the evolution of charged particle distribution under an electric field, with the Boltzmann collision operator, which accounts for particle interactions through elastic collisions:
\begin{align*}
    \begin{split}
        &\partial_t f+ v\cdot \nabla_x{f} +\mathbb{E}\cdot \nabla_vf = \frac{1}{\varepsilon}Q(f,f),\cr
        &-\triangle_x \phi = \int_{\mathbb{R}}fdv - 1,\quad\mathbb{E}=-\nabla_x \phi.
    \end{split}
\end{align*}
However, despite its physical accuracy, the complexity of the Boltzmann collision operator $Q(f,f)$, which comes from involving multi-dimensional integrals, causes challenges in numerical computation. To address this issue, Bhatnagar, Gross, and Krook \cite{PhysRev.94.511} proposed a simplified model that replaces the Boltzmann collision operator with a relaxation operator driving the distribution function toward a local Maxwellian equilibrium. This BGK model has been widely used as an approximation of the Boltzmann system, since the model preserves fundamental physical properties of the Boltzmann system while reducing computational cost dramatically.\\
\indent In this paper, we consider the Cauchy problem for the one-dimensional Vlasov-Poisson-BGK (VPBGK) model with a uniform background ion density:
\begin{align}\label{VPBGK model}
\begin{split}
&\partial_t f+ v \partial_x{f} +\mathbb{E} \partial_vf = \frac{1}{\varepsilon}\left(\mathcal{M}(f)-f\right),\cr
&-\partial^2_x \phi = \int_{\mathbb{R}}fdv - 1, \quad\mathbb{E}=-\partial_x \phi,\cr
&f(0) = f_0,\quad \int_{\mathbb{T}}\int_{\mathbb{R}} f_0dxdv = 1.
\end{split}
\end{align}
The velocity distribution function $f(x,v,t)$ is the number density of electrons at a phase point $(x,v)\in \mathbb{T}\times\mathbb{R}$ and time $t\geq 0$, where $\mathbb{T}=[0,1]$ with a periodic boundary condition and $\mathbb{R}$ is the whole real line. The functions $\phi(x,t)$ and $\mathbb{E}(x,t)$ are the electric potential and field, respectively. The multiscale parameter $\varepsilon>0$ denotes a Knudsen number defined as the ratio between the mean free path and the typical macroscopic length. The local Maxwellian $\mathcal{M}(f)$ reads
\begin{align}\label{local Maxwellian}
    \begin{split}
        \mathcal{M}(f)(x,v,t)=\frac{\rho(x,t)}{\sqrt{2 \pi T(x,t) }}\exp\left(-\frac{|v-U(x,t)|^2}{2T(x,t)}\right),
    \end{split}
\end{align}
where the macroscopic local density $\rho$, bulk velocity $U$, energy $E$ are defined as follows:
\begin{align*}
\begin{split}
\rho(x,t)& = \int_{\mathbb{R}}f(x,v,t)dv,\cr
\rho(x,t) U(x,t) &= \int_{\mathbb{R}}vf(x,v,t)dv,\cr
E(x,t) &= \int_{\mathbb{R}}\frac{|v|^2}{2} f(x,v,t)dv.
\end{split}
\end{align*}
The temperature $T$ is given by
\begin{align*}
    \begin{split}
        \rho(x,t) T(x,t) &= \int_{\mathbb{R}}|v-U(x,t)|^2 f(x,v,t)dv.
    \end{split}
\end{align*}
Moreover, the relaxation operator satisfies the following cancellation property since $\mathcal{M}(f)$ shares the first three moments with $f$:
\begin{align*}
    \begin{split}
        \int_{\mathbb{R}} (\mathcal{M}(f) -f ) (1, v, |v|^2) dv = 0.
    \end{split}
\end{align*}
This further leads to the conservation of mass, momentum, and total energy as follows:
\begin{align*}
    \begin{split}
        \frac{d}{dt}\int_{\mathbb{T}\times\mathbb{R}}fdxdv=0,\quad\frac{d}{dt}\int_{\mathbb{T}\times\mathbb{R}}vfdxdv=0,\quad \frac{d}{dt}\left(\int_{\mathbb{T}\times\mathbb{R}}|v|^2fdxdv + \int_{\mathbb{T}}|\mathbb{E}|^2dx \right)=0.
    \end{split}
\end{align*}
In addition, this model satisfies an entropy dissipation, which is called the $H$-theorem:
\begin{align*}
    \begin{split}
        \frac{d}{dt}\int_{\mathbb{R}}f\log f dv=\int_{\mathbb{R}} (\mathcal{M}(f) -f )\ln fdv \leq 0.
    \end{split}
\end{align*}
\subsection{Main goal of this paper}
Various numerical methods for the VPBGK model have already been studied. A micro-macro approaches based on a decomposition of the solution into an equilibrium and a perturbation were applied to the VPBGK model in \cite{crestetto2012kinetic,crouseilles2011asymptotic,laidin2022hybrid}. In \cite{crouseilles2016multiscale}, the authors constructed an asymptotic preserving (AP) scheme with the rescaled Poisson equation using two multiscale parameters and provided a stability estimate for the first-order scheme. On the other hand, the automatic domain decomposition scheme was proposed in \cite{dimarco2014asymptotic} to treat multiscale issues in the computational domain. Although the aforementioned methods show good numerical performance in multiscale problems, to the authors' knowledge, no rigorous convergence analysis has ever been established for any numerical scheme applied to the VPBGK model. This leaves a gap between numerical practice and theoretical understanding.\\
\indent The goal of this paper is to fill this gap by developing a first convergence theory for a standard scheme of the VPBGK model. More precisely, we analyze a first-order finite-difference Eulerian scheme:
\begin{align*}
    \begin{split}       &f^{n+1}_{i,j}=\frac{\varepsilon\tilde{f}^n_{i,j}+\Delta t\mathcal{M}(\tilde{f}^n_{i,j})}{\varepsilon+\Delta t},\quad\mathbb{E}^n_i = \sum_{k}K(x_i,y_k)(\rho^n_k-1)\Delta y.
    \end{split}
\end{align*}
The rigorous detail of each term for the scheme is described in Section \ref{sec2}. The main idea in the construction of the scheme lies in the IMEX(implicit-explicit) method. In our scheme, we treat the convection and force terms explicitly while handling the relaxation term implicitly. This enables the scheme to overcome the stiffness issue of the small Knudsen number, which is the core of the construction of an AP scheme \cite{Cho2024,hu2018asymptotic,hu2017class,pieraccini2007implicit}. Indeed, despite the implicit treatment of the relaxation term, the cancellation property of the Maxwellian allows for an explicit computation. Since existing AP schemes share a similar implicit structure, our method can be viewed as a standard formulation among various existing AP schemes. In this work, we derive error estimates for the distribution function in a weighted $L^{\infty}$ norm and for the electric field in a $L^{\infty}$ norm, respectively:
\begin{align*}
        \begin{split}
            \|f(T^{f}) - f^{N_t}\|_{L^{\infty}_{q}} + \|\mathbb{E}(T^{f}) - \mathbb{E}^{N_t}\|_{L^{\infty}_{x}} \leq C\left\{\Delta x + \Delta v +\Delta t +\frac{1}{V^{q-3}_M} \right\}.
        \end{split}
    \end{align*}
\subsection{New idea and key difficulties}
The VPBGK model simultaneously considers the relaxation operator and the electric field. The interaction between them leads to new difficulties in our analysis. One major challenge is estimating a uniform lower bound for the discrete solution, which is complicated by the mixing of velocity indices. Another problem arises from the restrictions imposed by the CFL condition. In this subsection, we introduce these specific issues and how we treat them.\\\\  
\noindent $\bullet$ \textbf{Uniform stability estimate for the discrete solution $\boldsymbol{f^n_{i,j}}$}: 
The most important step in our analysis is establishing stability estimates for the discrete solution $f^n_{i,j}$. Assuming the initial lower bound condition $f_0\geq C_0e^{-|v|^\alpha}$ $(\alpha\in[1,2])$, we obtain the following uniform-in-$n$ stability estimates (see Section \ref{sec4} for detailed notation):
\begin{align*}
    \begin{split}
        &C(q,T^f,\alpha,\varepsilon)e^{-|v_j|^\alpha}e^{-2\alpha C_{\mathbb{E},1}T^f|v_j|}\leq f^n_{i,j}\leq e^{\left(C_{\mathbb{E},2}+\frac{C_{\mathcal{M}}-1}{\varepsilon} \right)T^f}\|f_0\|_{L^{\infty}_q}(1+|v_j|)^{-q}.
    \end{split}
\end{align*}
\indent Unlike the Vlasov-Poisson system, a convergence analysis for the BGK-type models necessitates a uniform positive lower bound on the discrete solution. This requirement guarantees a positive lower bound for the temperature $T^n_i$, ensuring that the local Maxwellian in the relaxation term remains well-defined (see Lemma \ref{Mac_bound_lemma}):
\begin{align*}
    \begin{split}
        T^n_i \geq \left( \frac{\sum_j f^n_{i,j}\Delta v}{C_M\|f^n\|_{L^{\infty}_q}}\right)^{2}.
    \end{split}
\end{align*}
For non-ionized systems \cite{boscarino2022convergence,russo2012convergence,russo2018convergence}, this bound was established via a straightforward recurrence relation:
\begin{align}\label{previous lb}
    \begin{split}
        f^n_{i,j}\geq \left(\frac{\varepsilon}{\varepsilon+\Delta t} \right)^n\sum_m w^n_{s+m,j}f^0_{s+m,j}\geq  C_0\left(\frac{\varepsilon}{\varepsilon+\Delta t} \right)^n e^{-|v_j|^\alpha},
    \end{split}
\end{align} 
where $\{w^n_{s+m,j} \}$ are binomial weights summing to one. This leads to a uniform-in-$n$ bound using $(1+x)^{-n}\geq e^{-nx}$ and $n\Delta t\leq T^f$:
\begin{align*}
    \begin{split}
        f^n_{i,j}\geq C_0e^{-\frac{T^f}{\varepsilon}}e^{-|v_j|^\alpha}.
    \end{split}
\end{align*}
The analysis in \eqref{previous lb} relies on a key simplification: every initial datum $f^0_{s+m,j}$ contributing to the lower bound of $\tilde{f}^n_{i,j}$ shares the same common factor $e^{-|v_j|^\alpha}$, which is independent of the summation index $m$.\\
\indent However, for the VPBGK model, the presence of the electric field $\mathbb{E}^n_i$ accelerates or decelerates the velocity of particle $v_j$, causing a mixing of velocity indices during the update process. This necessitates a more delicate estimate for the lower bound of the discrete solution. More precisely, if we apply the methodology of previous works \cite{boscarino2022convergence,russo2012convergence,russo2018convergence} to our scheme, the following estimate is derived involving multinomial weights $\{w^n_{i+m,j+l}\}_{m,l}$ that depend on the entire sequence of the preceding electric fields $\{\mathbb{E}^k_{i+m}\}$ for $m\in[-n,n]$ and $k\in[0,n]$:
\begin{align}\label{bad iteration}
    \begin{split}
        f^{n}_{i,j}\geq \left(\frac{\varepsilon}{\varepsilon+\Delta t} \right)^n\sum_{m,l}w^{n}_{i+m,j+l}f^0_{i+ m,j+ l}.
    \end{split}
\end{align}
Crucially, the lower bound from each term $f^0_{i+m,j+l}$ becomes $C_0e^{-|v_{j+l}|^\alpha}$, which depends on the summation index $l$ (see figure \ref{update figure}):
\begin{align*}
    \begin{split}
         f^{n}_{i,j}&\geq C_0\left(\frac{\varepsilon}{\varepsilon+\Delta t} \right)^ne^{-|v_{j}|^\alpha},\quad \hspace{2.2cm}\text{(non-ionized case)},\cr f^{n}_{i,j}&\geq C_0\left(\frac{\varepsilon}{\varepsilon+\Delta t} \right)^n\sum_{m,l}w^{n}_{i+m,j+l}e^{-|v_{j+ l}|^\alpha},\quad \text{(ionized case)}.
    \end{split}
\end{align*}
\begin{figure}
    \centering
    \includegraphics[width=0.75\linewidth]{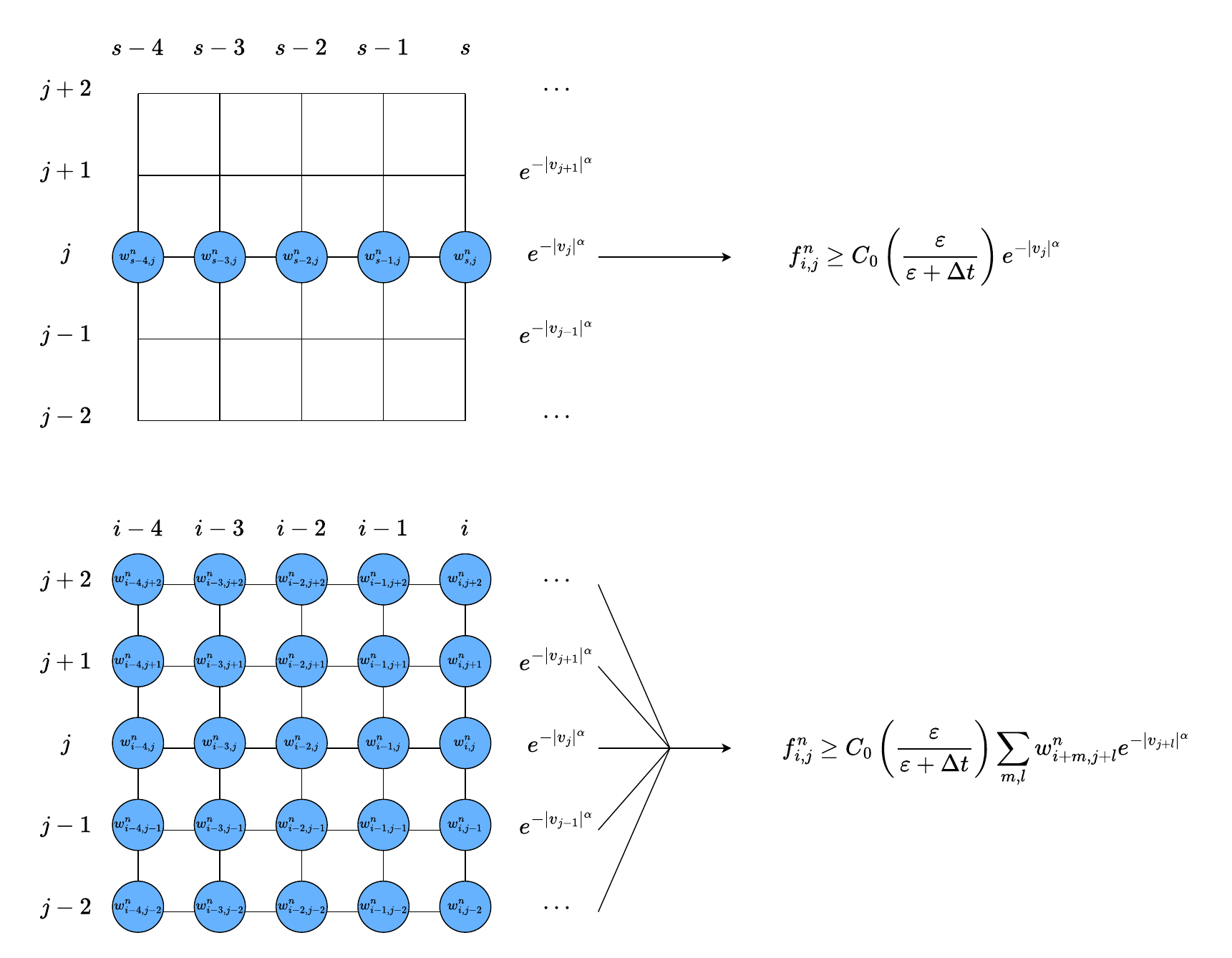}
    \caption{The blue circles on the grid represent the phase points at time $t=0$, which contribute to the lower bound estimate of $f^n_{i,j}$ for $v_j>0$. 
    The top figure is the case of the non-ionized \cite{boscarino2022convergence,russo2012convergence,russo2018convergence}, where $s$ is the spatial nodes such that $x_i-v_j\Delta t$ lies in $[x_s,x_{s+1})$. The bottom figure is the case of the VPBGK model.}
    \label{update figure}
\end{figure}
This mixing of velocity indices prevents factorization used in the non-ionized case, thereby obstructing the establishment of a uniform lower bound.\\
\indent To overcome this difficulty, we adopt a strategy that focuses on controlling the rate at which the lower bound decays at each time step due to the electric field. We split our analysis into two cases: whether the particle velocity $v_j$ is accelerated or decelerated by $\mathbb{E}^n_i$.\\
\textbf{(1) Deceleration Case:} We consider the case where the particle is decelerated by the electric field (e.g., $v_j>0$ and $\mathbb{E}^n_i<0$). In this scenario, the mixing of velocity indices degrades the lower bound. Applying the inductive argument \eqref{previous lb}, we deduce
\begin{align*}
    \begin{split}        
    {f}^n_{i,j}&\geq C_0\left(\frac{\varepsilon}{\varepsilon+\Delta t} \right)^n\left(e^{-|v_j|^\alpha}-\frac{\Delta t}{\Delta v}|\mathbb{E}^{n-1}_i|\left(e^{-|v_j|^\alpha}-e^{-|v_{j+1}|^\alpha} \right) \right).
    \end{split}
\end{align*}
Since $e^{-|v_j|^\alpha}-e^{-|v_{j+1}|^\alpha}>0$, the term involving the electric field is negative, making the lower bound strictly less than $e^{-|v_j|^\alpha}$ and breaking the simple recurrence. This failure suggests a decay in the lower bound over time. We therefore introduce a new inductive hypothesis specifically designed for the ionized case to capture this step-wise degradation:
\begin{align}\label{new lb}
    \begin{split}
        f^{n}_{i,j}\geq C_0\left\{\prod^{n}_{k=1}(1-\alpha C_{\mathbb{E},1}\Delta t(|v_j|+k\Delta v + 1))\right\}\left(\frac{\varepsilon}{\varepsilon+\Delta t}\right)^ne^{-|v_j|^\alpha}.
    \end{split}
\end{align}
Under this new inductive hypothesis, we can close the induction using the mean value theorem to bound the difference $e^{-|v_j|^\alpha}-e^{-|v_{j+1}|^\alpha}$ as follows:
\begin{align*}
    \begin{split}
        e^{-|v_j|^\alpha}-e^{-|v_{j+1}|^\alpha}\leq \alpha \Delta v(|v_j|+\Delta v +1)e^{-|v_j|^\alpha}.
    \end{split}
\end{align*}
Moreover, with appropriate smallness condition on the mesh sizes $\Delta x$, $\Delta v$, and $\Delta t$, we yield the uniform-in-$n$ estimate for $f^n_{i,j}$ (see Lemma \ref{A3}):
\begin{align*}
    \begin{split}
        f^n_{i,j}\geq C_0e^{- \left(2\alpha C_{\mathbb{E},1}+\frac{\alpha C^2_{\mathbb{E},1}(T^f+1)}{\eta_1}+\frac{1}{\varepsilon }\right)T^f}e^{-|v_j|^\alpha}e^{-2\alpha C_{\mathbb{E},1}T^f|v_j|}.
    \end{split}
\end{align*}
\textbf{(2) Acceleration Case:} When the electric field $\mathbb{E}^n_i$ accelerates particle velocity $v_j$ (e.g., $v_j >0$ and $\mathbb{E}^n_i>0$), although the electric field induces a mixing of particle velocity, the lower bound can still be expressed as a function of $v_j$. This allows us to apply the argument used for the non-ionized case \cite{boscarino2022convergence,russo2012convergence,russo2018convergence}. Utilizing the standard inductive hypothesis \eqref{previous lb}, we can close the induction and obtain an analogous result.\\\\
\noindent $\bullet$ \textbf{Constraints imposed by the CFL condition in the presence of the electric field:} Another challenge arises from the CFL condition, which must be satisfied at each time step:
\begin{align*}
    \begin{split}
        \sup_{i,j}\left(\frac{\Delta t}{\Delta x}|v_j|+\frac{\Delta t}{\Delta v}|\mathbb{E}^n_i| \right)<1.
    \end{split}
\end{align*}
Ensuring that this condition holds uniformly for all $n$ introduces two difficulties that must be addressed.\\
\indent First, since the CFL condition involves $\sup|v_j|$, we must truncate the velocity domain at a maximum velocity $V_M$, defined as
\begin{align}\label{truncation VM}
    \begin{split}
        V_M=\frac{2C_{\mathbb{E},1}}{{\Delta v}^{\gamma}},\quad \frac{1}{2}\leq \gamma<1.
    \end{split}
\end{align}
This truncation introduces an additional error that must be accounted for in the overall analysis. In particular, when estimating the difference between the continuous and discrete solutions, we have to consider the estimate for the continuous solution outside the truncated domain. We mention that such a choice of $V_M$, which is inversely proportional to $\Delta v$, is a common approach for initial data that is defined on the entire velocity space with a decay condition \cite{besse2004convergence,filbet2001convergence,schaeffer1998convergence}.\\
\indent Second, the CFL condition depends on the time-varying field $\mathbb{E}^n_i$. To construct $\Delta t$ which satisfies the CFL condition globally for all $n$, we establish a uniform-in-$n$ upper bound for $\mathbb{E}^n_i$. We achieve this by estimating the uniform $L^1$ norm of the discrete solution $f^n_{i,j}$ via the conservative form of the scheme (see Lemma \ref{A1}):
\begin{align*}
    \begin{split}
        \sum_{i,j}f^n_{i,j}\Delta x\Delta v\leq (2+T^f)e^{\frac{T^f}{\varepsilon}}. 
    \end{split}
\end{align*}
This bound, combined with the properties of the Green kernel, allows us to derive  the uniform upper bound for $\mathbb{E}^n_i$ (see Lemma \ref{A2}):
\begin{align*}
    \begin{split}    
        \|\mathbb{E}^n\|_{L^{\infty}_x}\leq (2+T^f)e^{\frac{T^f}{\varepsilon}}+1.
    \end{split}
\end{align*}

\subsection{Literature review}
Although this paper is the first to study the convergence analysis for the VPBGK model, numerous convergence theories have been constructed for the Vlasov-Poisson system and the BGK-type models. We close this introduction with a brief review of these related works.\\\\ 
\noindent$\bullet$ \textbf{Vlasov-Poisson system:}
The convergence analysis of the Vlasov-Poisson system was initiated for the particle method \cite{cottet1984particle,ganguly1989convergence,victory1991convergence1,victory1991convergence2}. In \cite{besse2004convergence}, an error estimate of the semi-Lagrangian scheme was established. The extension of the high-order scheme was considered in \cite{besse2008convergence1,besse2008convergence2}. In \cite{besse2008convergence1}, the author provided a convergence for a specific scheme based on gradient propagation and Hermite interpolation. This result was generalized in \cite{besse2008convergence2}. The authors showed that the scheme with appropriate interpolations, such as symmetric Lagrangian and B-spline, that satisfy certain stability and accuracy conditions, will converge. We also refer to convergence studies for the finite-volume method \cite{filbet2001convergence} and for the discontinuous Galerkin approximation \cite{einkemmer2014convergence}. While the challenge of deriving a positive lower bound is important to our analysis, it is not a necessary consideration in the study of the Vlasov-Poisson system, as the model itself does not impose such a requirement.\\\\
\noindent$\bullet$ \textbf{BGK type model:}
In the context of the BGK type model, the error estimate of a particle method for the BGK model was provided in \cite{issautier1996convergence} with regular initial data. The convergence theory of the semi-Lagrangian scheme for the monatomic BGK model in an $L^1$ space was studied in \cite{russo2012convergence}. After that, in \cite{russo2018convergence}, the result was extended to the ES-BGK model in a weighted $L^{\infty}$ space, and generalized to the polyatomic ES-BGK model \cite{boscarino2022convergence}, which are the main motivations of the present work. Although the models in these studies also necessitate a positive lower bound, the challenge of deriving this bound in the presence of an electric field is a feature unique to our work. Furthermore, in \cite{boscarino2022convergence,russo2018convergence}, the authors analyze the convergence of a semi-Lagrangian scheme, which overcomes the CFL restriction. As a result, in contrast to this paper, the truncation of the velocity domain need not be considered in their analysis.  

\subsection{Notations}
At the end of this section, we introduce the notation that will be frequently used in this paper:
\begin{itemize}
    \item $C_{a,b,\cdots,}$ represents generic constants which depends on $a$, $b$, $\cdots$. It can have different values for each line. 
    \item We use lower and upper indices $n$, $i$, $j$, respectively, for time, space, and velocity. For example, $t^n$, $x_{i}$, $v_{j}$.
    \item In this paper, we truncate the velocity grid to $[-V_M,V_M]$ where $V_M$ is the maximum velocity in the grid.
    \item Unless otherwise specified, the interval of the summation $\underset{i}{\sum}$ and $\underset{j}{\sum}$ be
    \begin{align*}
        \begin{split}
            0\leq i\leq N_x-1,\quad -N_v\leq j\le N_v.
        \end{split}
    \end{align*}
    \item We use the following notation for weighted $L^{\infty}$ norm for continuous solution:
        \begin{align*}
            \begin{split}
                \|f(t)\|_{L^{\infty}_{q}} = \underset{x,v}{\sup}|f(x,v,t)(1+|v|)^q|,\quad \|f(t)\|_{W^{N,\infty}_{q}} = \sum_{|\alpha|+|\beta|\leq N}\|\partial^{\alpha}_{\beta} f(t)\|_{L^{\infty}_{q}},
            \end{split}
        \end{align*}
        where $\alpha$, $\beta$ belong to $\mathbb{Z}_{+}$ and the differential operator $\partial^{\alpha}_{\beta}$ stands for $\partial^{\alpha}_{\beta} = \partial^{\alpha}_x \partial^{\beta}_v$.
    \item For any sequence $a^{n}_{i,j}$, we use the following notation for the weighted $L^{\infty}$ norm for of the sequence:
        \begin{align*}
            \begin{split}
                \|a^n\|_{L^{\infty}_{q}} = \underset{i,j}{\sup}|a^{n}_{i,j}(1+|v_j|)^q|.
            \end{split}
        \end{align*}
    \item In view of the above norms, we use the following notations:
        \begin{align*}
            \begin{split}
                \|f(t^n) - f^n\|_{L^{\infty}_{q}} = \underset{i,j}{\sup}|\{f(x_i,v_j,t^n) - f^n_{i,j}\}(1+|v_j|)^q|.
            \end{split}
        \end{align*}
\end{itemize}

\indent This paper is organized as follows. In Section \ref{sec2}, we provide the finite-difference Eulerian scheme for the VPBGK model. In Section \ref{sec3}, we state the main convergence result of the proposed scheme. In Section \ref{sec4}, we derive the stability of the discrete solution. In Section \ref{sec5}, we construct the consistent form of the VPBGK model to estimate the error between the continuous and discrete solutions. Finally, we prove the main theorem in Section \ref{sec6}.


\section{Description of the numerical scheme}\label{sec2}
In this section, we introduce an implicit-explicit (IMEX) scheme for the VPBGK model in a finite difference framework. In the method, we consider a fixed time step $\Delta t$ and uniform grids in space and velocity with mesh sizes $\Delta x$ and $\Delta v$, respectively. Throughout this paper, the following notations will be used to denote grid points: 
\begin{align*}
	t^n&=n\Delta t, \quad n=0,1,\dots,N_t,\cr
	x_i&= i \Delta x, \quad i=0,\pm1,\dots,\pm N_x, \pm(N_x+1),\dots,\cr
	v_j&=j\Delta v, \quad j=0,\pm1,\dots,\pm N_v,
\end{align*}
where $N_t \Delta t= T^f$, $N_x \Delta x= 1$, and $N_v\Delta v=V_M$. We divide the description of numerical methods into two parts: one is for the first equation in \eqref{VPBGK model} and the other is for the Poisson equation in \eqref{VPBGK model}.

\subsection{Description of the numerical scheme for the VPBGK model}
\,\\
\noindent$\bullet$ \textbf{Time discretization}\\
In the VPBGK model, we have the main stability issue, which is the stiffness arising in the relaxation term. To address this issue, we treat the convection and force terms explicitly and the relaxation term implicitly in the first equation of \eqref{VPBGK model}.
Letting $f^n=f(x,v,t^n)$ and $\mathbb{E}^n=\mathbb{E}(x,t^n)$, our scheme reads
\begin{align}\label{discretization 1}
    \begin{split}
        \frac{f^{n+1} -f^{n}}{\Delta t} + v\partial_{x}f^{n} + \mathbb{E}^n\partial_{v}f^n = \frac{1}{\varepsilon}(\mathcal{M}(f^{n+1}) - f^{n+1}).
    \end{split}
\end{align}
Although an implicit term $\mathcal{M}(f^{n+1})$ appears, we are able to compute this explicitly. Multiplying \eqref{discretization 1} by $\varphi(v)$ and integrating over $v$, the cancellation property of the BGK operator gives 
\begin{align}\label{conserv law}
    \begin{split}
        \int_{\mathbb{R}}f^{n+1}\left(1,v,\frac{|v|^2}{2}\right)dv = \int_{\mathbb{R}}\varphi(v)(f^{n}-\Delta t v\partial_{x}f^{n}- \Delta t \mathbb{E}^n\partial_{v}f^n)dv,\quad \varphi(v)=\left(1,v,\frac{|v|^{2}}{2}\right).
    \end{split}
\end{align}
This means that the macroscopic variables at time $t=t^{n+1}$ could be obtained by computing the right side of \eqref{conserv law}, and hence
$\mathcal{M}(f^{n+1})$ is computable explicitly. Finally, this time discretization allows us to compute $f^{n+1}$ explicitly.\\\\
$\bullet$ \textbf{Space and velocity discretization}\\
First, we explain how to approximate the convection term $v\partial_x f$. Define a flux function $F(f) = vf(x,v,t)$ and consider a function $\hat{F}$ which interpolates $vf$ in the sense of cell average:
\begin{align}\label{FVM formula x}
    \begin{split}
        F(f(x,\cdot,\cdot))=vf(x,\cdot,\cdot) = \frac{1}{\Delta x}\int ^{x+\frac{\Delta x}{2}} _ {x-\frac{\Delta x}{2}} \hat{F}(x)dx. 
    \end{split}
\end{align}
We will look for a function $\hat{F}$ which approximate $F_i=F(f(x_i,\cdot,\cdot))$:
\begin{align*}
    \begin{split}
        F_i = \frac{1}{\Delta x}\int ^{x_{i+\frac{1}{2}}} _ {x_{i-\frac{1}{2}}} \hat{F}(x)dx,\quad\text{ for all }i,
    \end{split}
\end{align*}
where $x_{i\pm\frac{1}{2}}:=x_i\pm\frac{\Delta x}{2}$. Differentiating (\ref{FVM formula x}) in $x$, the derivative of $F$ can be written as
\begin{align*}
    \begin{split}
        \partial_x F\big|_{x=x_i}=v\partial_x f(x_i,\cdot,\cdot) = \frac{1}{\Delta x}\left(\hat{F}\left(x_{i+\frac{1}{2}}\right)- \hat{F}\left(x_{i-\frac{1}{2}}\right)\right). 
    \end{split}
\end{align*}
Typically, $\hat{F}$ is a piecewise polynomial that has discontinuities at the cell boundaries $x_{i}\pm\frac{\Delta x}{2}$.
To ensure stability, we use upwinding to choose the correct direction from which information comes. For the construction of the first-order scheme, it is sufficient to consider a piecewise constant function for $\hat{F}$, i.e., we compute the numerical flux $\hat{F}_{i+\frac{1}{2}}$ as follows:
\begin{align}\label{Spatialflux}
    \begin{split}
        \hat{F}_{i+{\frac{1}{2}}} =
        v^{+}f(x_i)+
        v^{-}f(x_{i+1}),  
    \end{split}
\end{align}
where $v^{+} = \max(v,0)$ and $v^{-}=-\min(v,0)$. This means that we choose the value from the left for the positive flux: $\hat{F}_{i+{\frac{1}{2}}} =v^{+}f(x_i)$, and we choose the value from the right for the negative flux: $\hat{F}_{i+{\frac{1}{2}}} =v^{-}f(x_{i+1})$. Thus, the approximation to the space derivative is written as
\begin{align*}
    \begin{split}
        \partial_x (vf)\big|_{x=x_i}\approx\frac{1}{\Delta x}\left(\hat{F}_{i+\frac{1}{2}} - \hat{F}_{i-\frac{1}{2}} \right).
    \end{split}
\end{align*}
Similar to the space discretization, to approximate the force term $\mathbb{E}\partial_v f$, we define $G(f)=\mathbb{E}f(x,v,t)$ and consider $\hat{G}$ interpolating $G$ in the sense of cell average, such that
\begin{align}\label{FVM formula v}
    \begin{split}
        G(f(\cdot,v,\cdot)) = \mathbb{E}f(\cdot,v,\cdot)=\frac{1}{\Delta v}\int^{v+\frac{\Delta v}{2}}_{v-{\frac{\Delta v}{2}}} \hat{G}(v)dv.
    \end{split}
\end{align}
We look for a function $\hat{G}$ which satisfies 
\begin{align*}
    \begin{split}
        G_j=G(f(\cdot,v_j,\cdot))=\frac{1}{\Delta v}\int^{v_{{j}+{\frac{1}{2}}}}_{v_{j-\frac{1}{2}}}\hat{G}(v)dv,\quad\text{ for all }j.
    \end{split}
\end{align*}
where $v_{j\pm\frac{1}{2}}:=v_j\pm\frac{\Delta v}{2}$. The derivative of $G$ is represented as follows by differentiating \eqref{FVM formula v}:
\begin{align*}
    \begin{split}
        \partial_v G\big|_{v=v_j}=\mathbb{E} \partial_v f(\cdot,v_j,\cdot)=\frac{1}{\Delta v}\left(\hat{G}\left(v_{j+\frac{1}{2}}\right) -  \hat{G}\left(v_{j-\frac{1}{2}}\right)\right).
    \end{split}
\end{align*}
By upwinding, we compute the numerical flux $\hat{G}_{j+\frac{1}{2}}$ such that
\begin{align}\label{Veloflux}
    \begin{split}
        \hat{G}_{j+\frac{1}{2}} = \mathbb{E}^{+}f(v_j) + \mathbb{E}^{-}f(v_{j+1}),
    \end{split}
\end{align}
where $\mathbb{E}^{+}=\max(\mathbb{E},0)$ and $\mathbb{E}^{-}=-\min(\mathbb{E},0)$. Therefore, this leads to the approximation of the velocity derivative:
\begin{align*}
    \begin{split}
        \partial_v (\mathbb{E}f)\big|_{v=v_j}\approx\frac{1}{\Delta v}\left(\hat{G}_{j+\frac{1}{2}}-\hat{G}_{j-\frac{1}{2}} \right).
    \end{split}
\end{align*}\\
$\bullet$ \textbf{Numerical method on the full grid}\\
Before presenting our numerical method, we introduce the discrete macroscopic fields and local Maxwellian as follows.
\begin{definition}\label{discrete Mx and MF}
 We define the discrete macroscopic fields and the discrete local Maxwellian associated with $\{f^n_{i,j}\}$ by
		\begin{align*}
		    \begin{split}		        
			(\rho_i^n, \rho_i^n U_i^n , \rho^n_iT_i^n) &= \sum_{j}f_{i,j}^n \left(1,v_j,|v_j - U^n_i|^2\right)\Delta v,\quad\mathcal{M}(f_{i,j}^n)=\frac{\rho_i^n}{\sqrt{2 \pi T_i^n }}e^{-\frac{|v_j-U_i^n|^2}{2T_i^n}}.
		    \end{split}
		\end{align*}
\end{definition}
Finally, on the full grid, our scheme is given by
\begin{align}\label{implicit scheme}
    \begin{split}
        \frac{f_{i,j}^{n+1} -f_{i,j}^{n}}{\Delta t} + \frac{\hat{F}_{i+\frac{1}{2},j}^n - \hat{F}_{i-\frac{1}{2},j}^n}{\Delta x} + \frac{\hat{G}_{i,j+\frac{1}{2}}^n - \hat{G}_{i,j-\frac{1}{2}}^n}{\Delta v} = \frac{1}{\varepsilon}(\mathcal{M}(f^{n+1}_{i,j}) - f^{n+1}_{i,j}),
    \end{split}
\end{align}
where 
\begin{align*}
    \begin{split}
        \hat{F}^n_{i+\frac{1}{2},j}=v^{+}_jf^n_{i,j}+v^{-}_{j}f^n_{i+1,j},\quad \hat{G}^n_{i,j+\frac{1}{2}}={\mathbb{E}^n_i}^{+}f^n_{i,j}+{\mathbb{E}^n_i}^{-} f^n_{i,j+1}.
    \end{split}
\end{align*}
We note that the discrete electric field $\mathbb{E}^n_i$ will be defined in the next subsection. Now we let
\begin{align}\label{tilde f}
    \begin{split}
        \tilde{f}^n_{i,j}=  f_{i,j}^n -\Delta t \left(\frac{\hat{F}_{i+\frac{1}{2},j}^n - \hat{F}_{i-\frac{1}{2},j}^n}{\Delta x} + \frac{\hat{G}_{i,j+\frac{1}{2}}^n - \hat{G}_{i,j-\frac{1}{2}}^n}{\Delta v} \right).
    \end{split}
\end{align}
In view of \eqref{conserv law}, the macroscopic variables at time $t^{n+1}$ can be computed by 
\begin{align}\label{Scheme_mac}
    \begin{split}
        &(\rho^{n+1}_{i},\rho^{n+1}_{i}U^{n+1}_{i},E^{n+1}_{i}) 
        =\sum_j \varphi(v_j)f^{n+1}_{i,j}\Delta v\approx \sum_{j}\varphi(v_{j})\tilde{f}^{n}_{i,j}\Delta v = (\tilde{\rho}^{n}_{i},\tilde{\rho}^{n}_{i}\tilde{U}^{n}_{i},\tilde{E}^{n}_{i}).
    \end{split}
\end{align}
Then, the discrete Maxwellian which defined implicitly in \eqref{implicit scheme} can be computed explicitly as  $\mathcal{M}(f^{n+1}_{i,j}) \approx\mathcal{M}(\tilde{f}^{n}_{i,j})$. In conclusion, our scheme for the VPBGK model reads
\begin{align}\label{VPBGK_Scheme}
    \begin{split}
        f^{n+1}_{i,j} = \frac{\varepsilon \tilde{f}^{n}_{i,j} + \Delta t \mathcal{M}(\tilde{f}_{i,j}^{n})}{\varepsilon + \Delta t},\quad f^{n+1}_{i,-N_v-1}=f^{n+1}_{i,-N_v},\quad f^{n+1}_{i,N_v+1}=f^{n+1}_{i,N_v},\quad\text{for }n\geq 0.
    \end{split}
\end{align}
The initial iteration is defined as follows, with a homogeneous Neumann boundary condition:
\begin{align*}
    \begin{split}
        f^0_{i,j}=
        \begin{cases}
        f_0(x_i,v_j),&\text{ for }j\in[-N_v,N_v],\cr
        f_0(x_i,V_M),&\text{ for }j=N_{v}+1,\cr
        f_0(x_i,-V_M),&\text{ for }j=-N_{v}-1,\cr
        0,&\text{ otherwise.}
        \end{cases}
    \end{split}
\end{align*}
\subsection{Computation of the electric field}
The approximation of the electric field is carried out using the explicit form of the Green kernel defined on the one-dimensional torus \cite{besse2004convergence,besse2008convergence1,besse2008convergence2,cottet1984particle,filbet2001convergence}:
\begin{align}\label{Green kernel}
    K(x,y) = 
\begin{cases}
    y,  & \mbox{for } 0 \leq y \leq x, \\
    y-1, & \mbox{for } x< y\leq 1.
\end{cases}
\end{align}
The continuous electric field is given by
\begin{align}\label{continuous electric field}
    \begin{split}
        \mathbb{E}(x_i,t^n) = \int^{1}_{0} K(x_i,y)(\rho(y,t^n)-1)dy.
    \end{split}
\end{align}
Similarly, we calculate the discrete electric field as follows:
\begin{align}\label{Discrete field 2}
    \begin{split}
        \mathbb{E}^n_i = \sum_k K(x_i,y_k)(\rho^n_k -1)\Delta y,
    \end{split}
\end{align}
where $k$ shares the same mesh size as $i$.\\
\section{Main results}\label{sec3}
In this section, we present our main results. To do this, we record the well-posedness for the VPBGK model, which will be used in the error estimate.
\begin{theorem}\label{analysis}\textnormal{(Existence theorem)}\cite{YunPark2026}
    Let $q>3$ and $\alpha\in[1,2]$. Assume that $f_0\in C^2(\mathbb{T}\times\mathbb{R})$ and $\|f_0\|_{W^{2,\infty}_q}<\infty$. Assume that there is a positive constant $C_{0}$ which satisfies
    \begin{align*}
        \begin{split}
            f_0(x,v)\geq C_{0}e^{-|v|^{\alpha}}.
        \end{split}
    \end{align*}
    Then, there exists $T^f>0$ such that the system \eqref{VPBGK model} has a unique classical solution $(f,\mathbb{E})$ that satisfies 
    \begin{enumerate}
        \item The velocity distribution function $f$ is uniformly bounded in $[0,T^f]:$
        \begin{align*}
            \begin{split}
                \|f(t)\|_{W^{2,\infty}_q}\leq Ce^{Ct}(\|f_0\|_{W^{2,\infty}_q}+1),\text{ for }t\in[0,T^f],
            \end{split}
        \end{align*}
        where $C$ is some constant that depends on $q$, $f_0$, and $T^f$.
        \item The macroscopic fields satisfy that
        \begin{align*}
            \begin{split}
                &\rho(t,x)\geq C_{q,T^f},\quad T(t,x)\geq C_{q,f_0,T^f},\quad T(t,x) + |U(t,x)|^2\leq C_{q,f_0,T^f}.
            \end{split}
        \end{align*}
        \item The partial derivatives of $f$ are continuous in $[0,T^f]$:
        \begin{equation*}
            \begin{split}
                \partial^{\alpha}_{x}\partial^{\beta}_vf\in C([0,T^f]\times \mathbb{T}\times\mathbb{R}),\quad \text{for }|\alpha|+|\beta|\leq 2.
            \end{split}
        \end{equation*}
        \item The electric field $\mathbb{E}$ is uniformly bounded in $[0,T^f]:$
        \begin{align*}
            \begin{split}
                \|\mathbb{E}(t)\|_{L^{\infty}_x} \leq C_{q,f_0,T^f}.
            \end{split}
        \end{align*}
    \end{enumerate}
\end{theorem}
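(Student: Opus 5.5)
The statement is a local-in-time well-posedness result, imported from \cite{YunPark2026}. I would prove it by a standard iteration along characteristics, with the BGK relaxation written in mild form.

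\textbf{Iteration scheme.} Set $f^0=f_0$. Given $f^k$, define $\rho^k,U^k,T^k$ and the Maxwellian $\mathcal{M}(f^k)$. Compute $\mathbb{E}^k(x,t)=\int_0^1K(x,y)(\rho^k(y,t)-1)\,dy$ as in \eqref{continuous electric field}. Then let $f^{k+1}$ solve the linear problem
\begin{align*}
\partial_t f^{k+1}+v\partial_x f^{k+1}+\mathbb{E}^k\partial_v f^{k+1}=\tfrac{1}{\varepsilon}\left(\mathcal{M}(f^k)-f^{k+1}\right).
\end{align*}
Along the characteristics $\dot X=V$, $\dot V=\mathbb{E}^k(X,s)$ this has the mild form
\begin{align*}
f^{k+1}(t)=e^{-t/\varepsilon}f_0(X(0),V(0))+\tfrac{1}{\varepsilon}\int_0^te^{-(t-s)/\varepsilon}\mathcal{M}(f^k)(X(s),V(s),s)\,ds.
\end{align*}
The characteristics are globally defined as long as $\mathbb{E}^k$ is bounded and Lipschitz in $x$. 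Both hold here: $\|\mathbb{E}^k\|_{L^\infty}\le\|\rho^k\|_{L^1}+1$, and $\partial_x\mathbb{E}^k=\rho^k-1$, so it suffices to have $\rho^k\in L^\infty$. That in turn follows from $q>1$ and the weighted bound on $f^k$.

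\textbf{Uniform bounds on a short interval.} I would prove by induction on $k$ that, on some $[0,T^f]$ with $T^f$ independent of $k$, the following hold.
\begin{itemize}
\item Lower bound. The mild form gives $f^{k+1}\ge e^{-t/\varepsilon}C_0e^{-|V(0)|^\alpha}$. Since $|V(0)-v|\le t\|\mathbb{E}^k\|_\infty$, this yields $f^{k+1}\ge C\,e^{-|v|^\alpha}e^{-C t|v|}$ and hence $\rho^{k+1}\ge C_{q,T^f}$.
\item Weighted upper bound. $\|f^{k+1}\|_{L^\infty_q}$ is controlled because the weight $(1+|v|)^q$ changes by at most a bounded factor along characteristics.
\item Moment bounds. $T^k+|U^k|^2\le C$ comes from $q>3$, which bounds the second moment. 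A lower bound $T^k\ge c$ comes from the upper and lower bounds together, via the continuous analogue of Lemma~\ref{Mac_bound_lemma}: mass $\rho$ cannot concentrate when $f\le C(1+|v|)^{-q}$.
\item Maxwellian bounds. With these macroscopic bounds, $\|\mathcal{M}(f^k)\|_{W^{2,\infty}_q}\le C_{q}(1+\|f^k\|_{W^{2,\infty}_q})$.
\end{itemize}

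\textbf{Derivative estimates (the main obstacle).} I would differentiate the equation up to second order. Here $\partial_x\mathbb{E}^k=\rho^k-1$ and $\partial_x^2\mathbb{E}^k=\partial_x\rho^k$ are controlled by $\|f^k\|_{W^{1,\infty}_q}$, which is what lets the iteration close. A Gronwall argument along characteristics then gives $\|f^{k+1}(t)\|_{W^{2,\infty}_q}\le Ce^{Ct}(\|f_0\|_{W^{2,\infty}_q}+1)$ for $t\le T^f$. The difficulty is the loss of derivatives in the commutator $\partial_x\mathbb{E}^k\,\partial_vf$: the second derivative of $f^{k+1}$ involves $\partial_x^2\mathbb{E}^k$, which is one derivative of $\rho^k$. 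This is harmless because $\rho^k$ is a velocity integral of $f^k$, and $q>3$ makes $\int(1+|v|)^{-q}(1+|v|)\,dv$ finite, so the estimate closes.

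\textbf{Convergence and conclusion.} I would show that $f^k$ is Cauchy in $L^\infty_q$, possibly after shrinking $T^f$. This uses the Lipschitz estimate $\|\mathcal{M}(f)-\mathcal{M}(g)\|_{L^\infty_q}\le C\|f-g\|_{L^\infty_q}$, valid on the set where the macroscopic bounds hold, together with the Lipschitz dependence of the characteristics on $\mathbb{E}^k$. The uniform $W^{2,\infty}_q$ bounds plus interpolation then upgrade the limit to a classical $C^2$ solution, which gives item (3). Items (1), (2) and (4) pass to the limit from the uniform bounds. Uniqueness follows from the same contraction estimate applied to two solutions.
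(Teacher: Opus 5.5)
The paper does not prove this theorem; it is imported from \cite{YunPark2026}, so your outline can only be judged on its own merits. The scheme is the standard one and is sound up to the final step. Its ingredients are Picard iteration in mild form along the characteristics of $\mathbb{E}^k$, a lower bound obtained by following characteristics back to $f_0$, weighted and moment bounds, Gronwall for first and second derivatives, and contraction in $L^\infty_q$.

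\textbf{The gap.} You claim that the uniform $W^{2,\infty}_q$ bounds plus interpolation make the limit $C^2$ and hence give item (3). This step fails. Convergence in $L^\infty_q$ together with a uniform $W^{2,\infty}_q$ bound only yields convergence in $C^{1,\theta}$ for $\theta<1$. The limit is $C^{1,1}$, with bounded but possibly discontinuous second derivatives. For example, cut-off mollifications of $x|x|$ are bounded in $W^{2,\infty}$ and converge uniformly to a function whose second derivative jumps. So your argument gives a classical ($C^1$) solution with items (1), (2), (4), but not continuity of $\partial_x^\alpha\partial_v^\beta f$ for $|\alpha|+|\beta|=2$.

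You also cannot fix this by making the iteration contract in $W^{2,\infty}_q$. The equation for $\partial^2(f^{k+1}-f^k)$ contains $(\mathbb{E}^k-\mathbb{E}^{k-1})\,\partial_v\partial^2 f^k$, which is a third derivative. That is where the real loss of derivatives lies. The term $\partial_x^2\mathbb{E}^k=\partial_x\rho^k$ that you flagged costs nothing, and the integral $\int(1+|v|)^{1-q}dv$ you invoke is finite already for $q>2$.

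\textbf{How to close item (3).} You need an extra step. One option is to propagate, uniformly in $k$, a modulus of continuity of $\partial^2 f^k$ on compact sets through the twice-differentiated mild formulation, and then apply Arzel\`a--Ascoli to get local uniform convergence of $\partial^2 f^k$. This uses three facts:
\begin{itemize}
\item $\partial^2 f_0$ is continuous.
\item $\partial_x^2\mathbb{E}^k=\partial_x\rho^k$ is uniformly Lipschitz.
\item Since $q>3$, the contribution of $|v|>R$ to $\int\partial_x^2 f^k\,(1,v,|v|^2)\,dv$ is uniformly $O(R^{3-q})$.
\end{itemize}
Alternatively, solve the linear system satisfied by the second derivatives of the limit in a space of continuous weighted functions, and identify its solution with the weak derivatives by uniqueness.

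\textbf{A smaller point.} $\partial^2\mathcal{M}(f^k)$ contains products of first derivatives of $(\rho^k,U^k,T^k)$. The correct bound is therefore $C(1+\|f^k\|_{W^{2,\infty}_q}+\|f^k\|^2_{W^{1,\infty}_q})$, with $C$ depending on the macroscopic bounds, not $C_q(1+\|f^k\|_{W^{2,\infty}_q})$. Close the first-order estimates, which are linear, before the second-order ones.
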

We now state our main theorem.
\begin{theorem}\label{Main Thm}\textnormal{(Main convergence theorem)}
    Let $f$ be the unique smooth solution of (\ref{VPBGK model}) corresponding to a non-negative initial datum $f_0$ satisfying the hypotheses of Theorem \ref{analysis} and $\|f_0\|_{W^{2,\infty}_{q+2}}<\infty$. Let $f^{n}$ and $\mathbb{E}^{n}$ be the discrete solution constructed by (\ref{VPBGK_Scheme}) and (\ref{Discrete field 2}) given in Section \ref{sec2}. Then there exists positive numbers $r_{\Delta x}$, $r_{\Delta v}$, $C_{\mathbb{E},1}$, $C_{\mathbb{E},2}$, and $C_{\mathcal{M}}$ which are determined in Section \ref{sec4}, such that if $\Delta x<r_{\Delta x}$, $\Delta v<r_{\Delta v}$, and $\Delta t$ satisfy
    \begin{align}\label{CFL condition}
        \begin{split}
            \eta_1\frac{\Delta v}{C_{\mathbb{E},1}}<\Delta t<\eta_2\frac{\Delta v}{C_{\mathbb{E},1}},\quad\text{for }\;0<\eta_1<\eta_2<1,
        \end{split}
    \end{align}
    and $V_M$ is determined by
    \begin{align}\label{V_M large}
        \begin{split}
            V_M=\frac{C_{\mathbb{E},1}}{(\Delta v)^{\gamma}},\quad \frac{1}{2}\leq \gamma<1,
        \end{split}
    \end{align}
    then, we have
    \begin{align*}
        \begin{split}
            \|f(T^{f}) - f^{N_t}\|_{L^{\infty}_{q}} + \|\mathbb{E}(T^{f}) - \mathbb{E}^{N_t}\|_{L^{\infty}_{x}} \leq C\left\{\Delta x + \Delta v + \Delta t + \frac{1}{V^{q-3}_M} \right\},
        \end{split}
    \end{align*}
    where $N_t$ is defined by $T^f = N_t \Delta t$ and $C$ depends on $q$, $T^f$, $f_0$, $\alpha$, $\eta_1$, $\eta_2$, and $\varepsilon$.
\end{theorem}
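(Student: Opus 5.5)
The proof will follow the stability-plus-consistency strategy of \cite{russo2018convergence,boscarino2022convergence}, adapted to the Eulerian IMEX scheme with the electric field. The first step is to establish uniform-in-$n$ stability of the discrete solution by induction on $n$, with $C_{\mathbb{E},1}$ fixed a priori.

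\textbf{Induction hypotheses.} Assume $\|\mathbb{E}^k\|_{L^\infty_x}\le C_{\mathbb{E},1}$ for $k\le n$. Then the CFL condition \eqref{CFL condition}, combined with $V_M=C_{\mathbb{E},1}(\Delta v)^{-\gamma}$ and $\Delta x$ small relative to $\Delta v$, makes the operator $f^n\mapsto\tilde f^n$ a convex combination of neighbouring grid values. From this we derive, in order:
\begin{enumerate}
\item an $L^1$ bound from the conservative form, $\sum f^n\Delta x\Delta v\le (2+T^f)e^{T^f/\varepsilon}$, which accounts for the boundary flux produced by the Neumann closure;
\item the bound $\|\mathbb{E}^{n+1}\|_{L^\infty_x}\le C_{\mathbb{E},1}$, using $|K|\le1$ in \eqref{Discrete field 2}; this closes the field part of the induction;
\item the upper bound $f^n_{i,j}\le e^{(C_{\mathbb{E},2}+(C_{\mathcal M}-1)/\varepsilon)T^f}\|f_0\|_{L^\infty_q}(1+|v_j|)^{-q}$. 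The weight $(1+|v_j|)^{-q}$ changes by a factor $1+C\Delta t$ under a $v$-shift weighted by $\Delta t|\mathbb{E}|/\Delta v$, and this produces $C_{\mathbb{E},2}$. In addition, $\|\mathcal M(\tilde f)\|_{L^\infty_q}\le C_{\mathcal M}\|\tilde f\|_{L^\infty_q}$, which requires upper and lower bounds on $\tilde\rho,\tilde T$;
\item the lower bound, which is the delicate part.
\end{enumerate}

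\textbf{Lower bound.} We carry the product inductive hypothesis \eqref{new lb}. In the decelerating case the $v$-upwind stencil draws on $f_{i,j+1}$, which has a smaller exponential factor. The mean value bound $e^{-|v_j|^\alpha}-e^{-|v_{j+1}|^\alpha}\le\alpha\Delta v(|v_j|+\Delta v+1)e^{-|v_j|^\alpha}$ shows the loss is exactly one factor of the product. The lower bound $\Delta t>\eta_1\Delta v/C_{\mathbb{E},1}$ is used here to trade $\Delta v$ for $\Delta t$. In the accelerating case, monotonicity of $e^{-|v|^\alpha}$ gives the old recurrence. We then bound the product using $1-x\ge e^{-2x}$ for small $x$ (where $|v_j|\Delta t\le V_M\Delta t\lesssim\Delta v^{1-\gamma}$ is small) and $\sum_k k\Delta v\Delta t\lesssim (T^f)^2C_{\mathbb{E},1}/\eta_1$. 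This yields the stated bound $f^n_{i,j}\ge C e^{-|v_j|^\alpha}e^{-2\alpha C_{\mathbb{E},1}T^f|v_j|}$. Combined with Lemma \ref{Mac_bound_lemma}, it gives $\rho^n_i,T^n_i$ bounded below and $T^n_i+|U^n_i|^2$ bounded above uniformly. This closes the induction and makes the discrete Maxwellian Lipschitz: $\|\mathcal M(f)-\mathcal M(g)\|_{L^\infty_q}\le C_{\mathcal{L}}\|f-g\|_{L^\infty_q}$ on the class of sequences satisfying these bounds.

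\textbf{Consistency.} Plugging the exact solution (Theorem \ref{analysis}) into \eqref{VPBGK_Scheme} gives $f(t^{n+1})=\frac{\varepsilon\tilde f(t^n)+\Delta t\mathcal M(\tilde f(t^n))}{\varepsilon+\Delta t}+\Delta t R^n$. The residual satisfies $\|R^n\|_{L^\infty_q}\le C(\Delta x+\Delta v+\Delta t)$ on the truncated grid. Taylor expansion of the upwind fluxes requires the $W^{2,\infty}_{q+2}$ bound, since the $v$-weights cost two powers. Three further contributions enter:
\begin{enumerate}
\item the quadrature error in the discrete moments, $\sum_j\to\int$;
\item the truncation tail $\int_{|v|>V_M}(1+|v|^2)f\,dv\lesssim V_M^{-(q-3)}$;
\item the field error $\|\mathbb{E}(t^n)-\mathbb{E}^n\|_{L^\infty}\le\|\rho(t^n)-\rho^n\|_{L^1}+C\Delta x\le C\|f(t^n)-f^n\|_{L^\infty_q}+C(\Delta x+V_M^{-(q-2)})$.
\end{enumerate}

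\textbf{Error recursion.} Subtract the scheme from the consistent form and set $e^n=\|f(t^n)-f^n\|_{L^\infty_q}$. The transport part is a convex combination in weighted norm up to a factor $1+C\Delta t$. The term $(\mathbb{E}(t^n)-\mathbb{E}^n)\partial_vf$ is controlled by the bound above and $\|\partial_v f\|_{L^\infty_q}$. The Maxwellian is Lipschitz, with both arguments in the admissible class by stability and Theorem \ref{analysis}. Together these give $e^{n+1}\le(1+C\Delta t)e^n+C\Delta t(\Delta x+\Delta v+\Delta t+V_M^{-(q-3)})$. Since $e^0$ is zero on the grid (up to the Neumann ghost cells), discrete Gronwall gives the bound for $f$, and the field estimate follows from item 3 of the consistency step.

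The main obstacle is closing the lower-bound induction simultaneously with the field bound and CFL condition. The constants $C_{\mathbb{E},1}$, $r_{\Delta x}$, $r_{\Delta v}$ must be chosen so that no circularity occurs. My plan is to first fix $C_{\mathbb{E},1}=(2+T^f)e^{T^f/\varepsilon}+1$ from the $L^1$ bound, which only uses positivity and therefore the CFL condition. All other constants are then chosen afterwards.
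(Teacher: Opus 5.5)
Your architecture matches the paper's: convexity of the upwind update under CFL, the product lower bound with the mean-value loss in the decelerating case, the $1+C\Delta t$ weight factor giving $C_{\mathbb{E},2}$, then consistency, a Lipschitz Maxwellian and discrete Gronwall. The gap is in how you close the stability induction. You take only $\|\mathbb{E}^k\|_{L^\infty_x}\le C_{\mathbb{E},1}$ as hypothesis and claim the $L^1$ bound ``only uses positivity''. That claim is false, for two reasons.

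First, the relaxation step does not conserve discrete mass. One only has $\sum_j\mathcal{M}(\tilde f^{n}_{i,j})\Delta v\le\tilde\rho^n_i\bigl(1+\Delta v/\sqrt{2\pi\tilde T^n_i}\bigr)$. The estimate $\sum_j\mathcal{M}(\tilde f^{n}_{i,j})\Delta v\le 2\tilde\rho^n_i$, which produces the factor $e^{T^f/\varepsilon}$, therefore needs a lower bound on $\tilde T^n_i$. In the paper this is $A^n_5$ together with $\Delta v<l\sqrt{2a_4}$.

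Second, with the Neumann closure the transport step changes total mass by a boundary flux of size at most $\Delta t\sum_i|\mathbb{E}^n_i|(f^n_{i,N_v}+f^n_{i,-N_v})\Delta x$. Positivity alone only gives $f^n_{i,\pm N_v}\Delta v\le\rho^n_i$. That yields a per-step amplification $1+2\eta_2$ independent of $\Delta t$, which is useless over $N_t\sim T^f/\Delta t$ steps. The $+T^f$ in $(2+T^f)$ instead comes from bounding this flux by $\Delta t$ per step. That bound uses $f^n_{i,\pm N_v}\le\|f^n\|_{L^\infty_q}(1+V_M)^{-q}$ and $\Delta v<a_5$, i.e.\ the weighted upper bound.

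So the $L^1$ bound depends on the lower and upper bounds. Those depend on the field bound, which depends on the $L^1$ bound. This circularity has to be broken by propagating all six estimates jointly, as in the paper's $\mathcal{A}^n$ (Definition~\ref{stability condition}), with step $n$ using steps $n-1$ and $n-2$. Your order of choosing constants survives the repair: $C_{\mathbb{E},1}$ is an explicit number fixed first, and the thresholds $a_1,\dots,a_5$ on $\Delta v$ depend on it. What fails is the justification you give for that order.

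Two smaller points. The mesh relation is reversed. Since $V_M\Delta t/\Delta x<\eta_2\Delta v^{1-\gamma}/\Delta x$, convexity requires $\Delta v\le\bigl((1-\eta_2)\Delta x/\eta_2\bigr)^{1/(1-\gamma)}$, i.e.\ $\Delta v$ small relative to $\Delta x$, not the other way round.

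Also, your residual bound is asserted on the truncated grid without treating $j=\pm N_v$. There the ghost value $f^n_{i,N_v+1}=f^n_{i,N_v}$ effectively replaces $f(x_i,V_M+\Delta v,t^n)$ by $f(x_i,V_M,t^n)$. This leaves a defect $\frac{\Delta t}{\Delta v}|\mathbb{E}^n_i|\,|f(x_i,V_M+\Delta v,t^n)-f(x_i,V_M,t^n)|$. Its weighted size is $\lesssim\Delta t\,(1+V_M)^{-2}\|f\|_{W^{1,\infty}_{q+2}}\sim\Delta t\,\Delta v^{2\gamma}$. This is $O(\Delta t\,\Delta v)$ precisely because $\gamma\ge\frac12$, which is the one place the paper uses that hypothesis (the $R_4$ term in Lemma~\ref{f estimate}). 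Your proposal never invokes $\gamma\ge\frac12$, so this boundary step is missing.
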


\begin{remark}
    In Theorem \ref{Main Thm}, under \eqref{CFL condition}, \eqref{V_M large}, and $\Delta v<r_{\Delta v}$, we have
    \begin{align*}
        \begin{split}
            \Delta t\approx \Delta v, \quad V_M\approx (\Delta v)^{-\gamma}, \quad \Delta v\approx (\Delta x)^{\frac{1}{1-\gamma}}.
        \end{split}
    \end{align*}    
    Then, the error estimate is presented by
    \begin{align*}
        \begin{split}
            \|f(T^{f}) - f^{N_t}\|_{L^{\infty}_{q}} + \|\mathbb{E}(T^{f}) - \mathbb{E}^{N_t}\|_{L^{\infty}_{x}}\leq \Delta x^{\min\left(1,\frac{(q-3)\gamma}{1-\gamma}\right)}.
        \end{split}
    \end{align*}
    Therefore, if $q\geq 4$, a first-order convergence rate is achieved automatically. For the case $3<q<4$, we can derive a first-order rate by selecting $\gamma\in\Big[\frac{1}{q-2},1\Big)$.
\end{remark}

\begin{remark}
While Theorem \ref{analysis} establishes a local-in-time solution for \eqref{VPBGK model}, the convergence analysis imposes no restriction on the final time $T^f$.
\end{remark}
\section{Stability estimates}\label{sec4}
The purpose of this section is to show that the discrete solution $\tilde{f}^{n}_{i,j}$, and the corresponding electric field and macroscopic fields have uniform-in-$n$ bounds. In the following stability estimates, we use the relation \eqref{tilde f}, which implies that $\tilde{f}^n_{i,j}$ can be represented by the convex combination of $f^{n}_{i,j}$, $f^{n}_{i-1,j}$, $f^{n}_{i+1,j}$, $f^{n}_{i,j-1}$, $f^{n}_{i,j+1}$:
\begin{align}\label{convex_combi}
        \begin{split}
            \tilde{f}^{n}_{i,j}&= \left(1-\frac{\Delta t}{\Delta x}v_j-\frac{\Delta t}{\Delta v}\mathbb{E}^{n}_{i}\right)f^{n}_{i,j}+ \frac{\Delta t}{\Delta x}v_j^{+} f^{n}_{i-1,j} +\frac{\Delta t}{\Delta x}v_j^{-} f^{n}_{i+1,j}\cr  
            &+\frac{\Delta t}{\Delta v}{\mathbb{E}^{n}_{i}}^{+} f^{n}_{i,j-1} + \frac{\Delta t}{\Delta v}{\mathbb{E}^{n}_{i}}^{-} f^{n}_{i,j+1},
        \end{split}
    \end{align}
under the CFL condition:
\begin{align}\label{CFL condition 0}
    \begin{split}
        \sup_{i,j}\left(\frac{\Delta t}{\Delta x}|v_j| + \frac{\Delta t}{\Delta v}|\mathbb{E}^n_i|\right)< 1.
    \end{split}
\end{align}
Recalling \eqref{VPBGK_Scheme}, we assume the Neumann boundary condition in velocity:
\begin{align}\label{Neumann boundary condition}
    \begin{split}
        f^n_{i,-N_v-1}=f^n_{i,-N_v},\quad f^n_{i,N_v+1}=f^n_{i,N_v}.
    \end{split}
\end{align}

\indent We begin by defining several specific constants that arise for technical reasons. 
\begin{definition}\label{Constant series}
    We define $D_{\alpha,\beta}$, $\tilde{D}_{\alpha,\beta}$, and $\bar{D}_{q-m}$ by
    \begin{align*}
        \begin{split}
            D_{\alpha,\beta} = \int_{\mathbb{R}} e^{-|v|^{\alpha}-\beta|v|}dv,\quad \tilde{D}_{\alpha,\beta}=\sup_v\left\{(1+|v|)^qe^{-|v|^{\alpha}-\beta|v|}\right\},\quad \bar{D}_{q-m}=\int_{\mathbb{R}}\frac{1}{(1+|v|)^{q-m}}dv,
        \end{split}
    \end{align*}
    and we define $C_{\mathbb{E},1}$ and $C_{\mathbb{E},2}$ by
    \begin{align*}
        \begin{split}
            C_{\mathbb{E},1} =(2+T^f)e^{\frac{T^f}{\varepsilon}}+1 ,\quad C_{\mathbb{E},2}=(2^q-1)C_{\mathbb{E},1}.
        \end{split}
    \end{align*}
\end{definition}

\begin{definition}
    We define the constants $a_1$, $a_2$, $a_3$, $a_4$, and $a_5$ by
    \begin{align*}
        \begin{split}
            &a_1 = \frac{(C_0D_{\alpha,2\alpha C_{\mathbb{E},1}T^f})^{\frac{5}{9}}}{2^{\frac{16}{9}}(1+C_{\mathbb{E},2})^{\frac{5}{9}}C_M^{\frac{2}{9}}\|f_0\|^{\frac{5}{9}}_{L^{\infty}_q}}\exp\left({-\frac{5}{9}\left(2\alpha C_{\mathbb{E},1}+\frac{\alpha C^2_{\mathbb{E},1}(T^f+1)}{\eta_1}+C_{\mathbb{E},2}+\frac{C_{\mathcal{M}}}{\varepsilon} \right)T^f}\right),\cr
            &a_2 = \left\{\frac{(C_0)^2D_{\alpha,2\alpha C_{\mathbb{E},1}T^f} \tilde{D}_{\alpha,2\alpha C_{\mathbb{E},1}T^f}}{4(q-3)\bar{D}_{q-2}(C_0D_{\alpha,2\alpha C_{\mathbb{E},1}T^f} +8)(1+C_{\mathbb{E},2})\|f_0\|_{L^{\infty}_q}}\right\}^{\frac{1}{q-1}}\cr
            &\quad \times \exp\left({-\frac{1}{q-1}\left(4\alpha C_{\mathbb{E},1}+\frac{2\alpha C^2_{\mathbb{E},1}(T^f+1)}{\eta_1}+C_{\mathbb{E},2}+\frac{C_{\mathcal{M}}+1}{\varepsilon} \right)T^f}\right),\cr
            &a_3 = \frac{(C_0D_{\alpha, 2\alpha C_{\mathbb{E},1}T^f})^{\frac{2q+3}{3q+3}}}{2^{\frac{4q+12}{3q+3}}(1+C_{\mathbb{E},2})^{\frac{2q+3}{3q+3}}C_M^{\frac{2q}{3q+3}}\|f_0\|^{\frac{2q+3}{3q+3}}_{L^{\infty}_q}}\cr
            &\quad \times \exp\left({-\frac{2q+3}{3q+3}\left(2\alpha C_{\mathbb{E},1}+\frac{\alpha C^2_{\mathbb{E},1}(T^f+1)}{\eta_1}+C_{\mathbb{E},2} + \frac{C_{\mathcal{M}}}{\varepsilon}\right)T^f}\right),\cr
            &a_4=\left(\frac{C_0 D_{\alpha,2\alpha C_{\mathbb{E},1}T^f}}{4(1+C_{\mathbb{E},2})C_M\|f_0\|_{L^{\infty}_q} }\exp\left({-\left(2\alpha C_{\mathbb{E},1}+\frac{\alpha C^2_{\mathbb{E},1}(T^f+1)}{\eta_1}+C_{\mathbb{E},2}+\frac{C_{\mathcal{M}}}{\varepsilon } \right)T^f} \right)^{\frac{2}{3}}\right),\cr
            &a_5=\left(2 C^{1-q}_{\mathbb{E},1}(1+C_{\mathbb{E},2})e^{\left(C_{\mathbb{E},2}+\frac{2(C_{\mathcal{M}}-1)}{\varepsilon} \right)T^f}\|f_0\|_{L^{\infty}_q} \right)^{-\frac{1}{q\gamma}}
        \end{split}
    \end{align*}
\end{definition}

Next, we introduce the stability estimate $\mathcal{A}^n$.
\begin{definition}\label{stability condition}\textnormal{(Key stability estimates)}
     We say that $f^n_{i,j}$ satisfies the stability estimate $\mathcal{A}^n$ if the following six statements hold ($C_M$ and $C_{\mathcal{M}}$ will be determined in Lemma \ref{Mac_bound_lemma}, Lemma \ref{Mx to f}): 
     \begin{align*}
        \begin{split}
            (A_1^{n})\;&\text{Uniform $L^1$ bound for the discrete solution:}\cr 
            &\sum_{i,j}f^n_{i,j}\Delta x\Delta v\leq (2+T^f)e^{\frac{T^f}{\varepsilon}}.\cr
            (A_2^n)\,&\text{Uniform upper bound for the discrete electric field:}\cr
            &\|\mathbb{E}^n\|_{L^{\infty}_x}\leq C_{\mathbb{E},1}.\cr
            (A_3^{n})\,&\text{Uniform Maxwellian lower bound for the discrete solution:}\cr
            &\tilde{f}^n_{i,j} \geq \frac{C_0}{2}e^{-\left(2\alpha C_{\mathbb{E},1}+\frac{\alpha C^2_{\mathbb{E},1}(T^f+1)}{\eta_1}+\frac{1}{\varepsilon }\right)T^f}e^{-|v_j|^\alpha}e^{-2\alpha C_{\mathbb{E},1}T^f|v_j|}.\cr
            (A_4^{n})\,&\text{Uniform upper bound for the discrete solution:}\cr
            &\|\tilde{f}^{n}\|_{L^{\infty}_{q}} \leq (1+C_{\mathbb{E},2})e^{\left(C_{\mathbb{E},2} + \frac{C_{\mathcal{M}}-1}{\varepsilon}\right)T^{f}}\|f_{0}\|_{L^{\infty}_{q}}.\cr
            (A_5^{n})\,&\text{Uniform lower bound for the discrete macroscopic fields:}\cr
            &\tilde{\rho}^n_i \geq \frac{C_0 D_{\alpha,2\alpha C_{\mathbb{E},1}T^f}}{4}e^{-\left(2\alpha C_{\mathbb{E},1}+\frac{\alpha C^2_{\mathbb{E},1}(T^f+1)}{\eta_1}+\frac{1}{\varepsilon }\right)T^f},\cr
            &\tilde{T}^n_i\geq \left(\frac{C_0 D_{\alpha,2\alpha C_{\mathbb{E},1}T^f}}{4(1+C_{\mathbb{E},2})C_M\|f_0\|_{L^{\infty}_q} }e^{-\left(2\alpha C_{\mathbb{E},1}+\frac{\alpha C^2_{\mathbb{E},1}(T^f+1)}{\eta_1}+C_{\mathbb{E},2}+\frac{C_{\mathcal{M}}}{\varepsilon } \right)T^f} \right)^{\frac{2}{3}}.\cr   
            (A_6^{n})\,&\text{Uniform upper bound for the discrete macroscopic fields:}\cr
            &\|\tilde{\rho}^n\|_{L^{\infty}_{x}},\,\|\tilde{U}^n\|_{L^{\infty}_{x}},\,\|\tilde{T}^n\|_{L^{\infty}_{x}}\cr 
            &\hspace{1.3cm}\leq  \bar{D}_{q-2}\left(1+\frac{8}{C_0  D_{\alpha,2\alpha C_{\mathbb{E},1}T^f}} \right)(1+C_{\mathbb{E},2})e^{\left(2\alpha C_{\mathbb{E},1}+\frac{\alpha C^2_{\mathbb{E},1}(T^f+1)}{\eta_1}+C_{\mathbb{E},2} + \frac{C_{\mathcal{M}}}{\varepsilon}\right)T^{f}}\|f_{0}\|_{L^{\infty}_{q}}. 
        \end{split}
    \end{align*}
\end{definition}

We are now ready to state the main result of this section.

\begin{theorem}\label{stability theorem}\textnormal{(Stability theorem)}
    Let $\alpha\in[1,2]$, $\beta\geq 0$, $\gamma\in(0,1)$, $0<\eta_1<\eta_2<1$, and $q>3$. Choose $l>0$ sufficiently small such that $\Delta x,\Delta v<l$ implies
    \begin{align*}
        \begin{split}
            &\frac{1}{2}D_{\alpha,\beta}\leq \sum_{j}e^{-|v_j|^{\alpha}-\beta|v_j|}\Delta v\leq 2D_{\alpha,\beta},\cr
            &\frac{1}{2}\tilde{D}_{\alpha,\beta}\leq \sup_j \left\{(1+|v_j|)^q e^{-|v_j|^{\alpha}-\beta|v_j|} \right\}\leq 2\tilde{D}_{\alpha,\beta},\cr
            &\frac{1}{2}\bar{D}_{q-m} \leq \sum_{j}\frac{\Delta v}{(1+|v_{j}|)^{q-m}}\leq 2\bar{D}_{q-m},\cr
            &\sum_{i,j}f_0(x_i,v_j)\Delta x\Delta v\leq 2\int_\mathbb{T}\int_{\mathbb{R}}f_0(x,v)dxdv\leq 2.
        \end{split}
    \end{align*}
    Assume that $V_M$ is chosen to satisfies \eqref{V_M large}, and $\Delta x$, $\Delta v$, $\Delta t$ satisfy
    \begin{align*}
        \begin{split}
            \Delta x<r_{\Delta x},\;\;\Delta v<r_{\Delta v},\;\;\eta_1\frac{\Delta v}{C_{\mathbb{E},1}}<\Delta t<\eta_2\frac{\Delta v}{C_{\mathbb{E},1}},
        \end{split}
    \end{align*}
    where
    \begin{align*}
        \begin{split}
            &r_{\Delta x}=\min\left\{l,\frac{1}{4\alpha C_{\mathbb{E},1}},\frac{1}{2} \right\},\cr &r_{\Delta v} =\min\left\{a_{1},a_{2},a_{3},l,l \sqrt{2a_4},a_5,\left(\frac{1-\eta_2}{\eta_2}\Delta x\right)^{\frac{1}{1-\gamma}},\frac{1}{4\alpha\{ C_{\mathbb{E},1}(T^f+1)+1\}},\frac{1}{2} \right\}.
        \end{split}
    \end{align*}
    Then, $f^{n}_{i,j}$ satisfies $\mathcal{A}^{n}$ for all $n\geq 0$.
\end{theorem}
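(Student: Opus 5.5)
We argue by induction on $n$, proving that $\mathcal{A}^{n}$ holds for every $n$ with $n\Delta t\le T^f$. The statements are interdependent, so at each level we verify them in the order $A_1^n\Rightarrow A_2^n\Rightarrow$ (CFL at step $n$) $\Rightarrow A_3^n, A_4^n\Rightarrow A_5^n, A_6^n$.

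\textbf{Step 1: $A_1^n$ and $A_2^n$.} Assume $\mathcal{A}^k$ for $k\le n-1$; the base case is the same computation with $f^0=f_0$ on the grid and the Riemann-sum hypotheses of the theorem.

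For $A_1^n$, sum \eqref{tilde f} over $i,j$. The spatial fluxes telescope by periodicity. The velocity fluxes telescope except at the boundary indices $\pm N_v$. With the Neumann condition \eqref{Neumann boundary condition}, the boundary terms are bounded by $\Delta t\,C_{\mathbb{E},1}\sum_i (f^{k}_{i,N_v}+f^k_{i,-N_v})\Delta x$. Using $A_4^{k}$, this is at most $C\Delta t\,V_M^{-q}$. Since $V_M^{-q}\sim \Delta v^{q\gamma}$, the condition $\Delta v<a_5$ makes this at most $\Delta t$ per step.

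The relaxation step \eqref{VPBGK_Scheme} preserves mass, up to the factor $\varepsilon/(\varepsilon+\Delta t)$ bookkeeping. In the worst case it only multiplies the sum by $1+\Delta t/\varepsilon\le e^{\Delta t/\varepsilon}$. Iterating from the initial mass $\le 2$ gives $\sum f^n\Delta x\Delta v\le (2+n\Delta t)e^{n\Delta t/\varepsilon}\le (2+T^f)e^{T^f/\varepsilon}$.

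$A_2^n$ then follows from \eqref{Discrete field 2}, because $|K|\le 1$ gives $|\mathbb{E}^n_i|\le \sum_k\rho^n_k\Delta y+1$.

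With $A_2^n$ in hand, the CFL condition \eqref{CFL condition 0} holds. Indeed, $\frac{\Delta t}{\Delta v}|\mathbb{E}^n_i|<\eta_2$, and $\frac{\Delta t}{\Delta x}V_M<\frac{\eta_2\Delta v^{1-\gamma}}{\Delta x}\le 1-\eta_2$ by the last restriction in $r_{\Delta v}$. So \eqref{convex_combi} is a genuine convex combination.

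\textbf{Step 2: $A_4^n$, the upper bound.} Multiply \eqref{convex_combi} by $(1+|v_j|)^q$. The neighbours $f^n_{i,j\pm1}$ carry weight at most $\frac{\Delta t}{\Delta v}C_{\mathbb{E},1}$. Since $(1+|v_j|)^q\le 2^q(1+|v_{j\pm1}|)^q$ for $\Delta v\le 1/2$, the loss per step is a factor $1+(2^q-1)C_{\mathbb{E},1}\Delta t=1+C_{\mathbb{E},2}\Delta t$.

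For the relaxation step, use the bound $\|\mathcal{M}(\tilde f)\|_{L^\infty_q}\le C_{\mathcal{M}}\|\tilde f\|_{L^\infty_q}$ from Lemma \ref{Mx to f}. This is applicable because $A_5^{n-1}$ and $A_6^{n-1}$ control $\tilde\rho,\tilde U,\tilde T$. It gives the factor $\frac{\varepsilon+C_{\mathcal{M}}\Delta t}{\varepsilon+\Delta t}\le e^{(C_{\mathcal{M}}-1)\Delta t/\varepsilon}$. Iterating yields $A_4^n$; the prefactor $1+C_{\mathbb{E},2}$ absorbs the one extra $\tilde{}$-step.

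\textbf{Step 3: $A_3^n$, the lower bound. This is the main obstacle.} We prove the stronger inductive hypothesis \eqref{new lb}, splitting into cases by sign. Take $v_j\ge0$; the case $v_j<0$ is symmetric.

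\emph{Acceleration case} ($\mathbb{E}^n_i\ge 0$). The neighbour $f_{i,j-1}$ has smaller $|v|$, so its lower bound dominates $e^{-|v_j|^\alpha}$ and the non-ionized argument \eqref{previous lb} applies directly. Near $j=0$ we have $|v_{j-1}|\le |v_j|+\Delta v$, which is covered by the $k\Delta v$ shift in the product.

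\emph{Deceleration case.} The convex combination loses
\[
\frac{\Delta t}{\Delta v}|\mathbb{E}^n_i|\bigl(e^{-|v_j|^\alpha}-e^{-|v_{j+1}|^\alpha}\bigr)\le \alpha C_{\mathbb{E},1}\Delta t(|v_j|+\Delta v+1)e^{-|v_j|^\alpha},
\]
by the mean value theorem with $\alpha\le2$. The dependence of the product weights on $|v_{j+1}|$ versus $|v_j|$ is handled by the index shift $k\Delta v$. This closes \eqref{new lb}.

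At the Neumann boundary $j=N_v$ the neighbour equals $f_{i,N_v}$ itself, so nothing is lost there. Relaxation only helps, since $\mathcal{M}\ge0$, contributing the factor $\varepsilon/(\varepsilon+\Delta t)$.

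Finally we convert the product into an exponential. Using $\log(1-x)\ge -2x$ for small $x$ (guaranteed by $r_{\Delta v}\le\frac{1}{4\alpha\{C_{\mathbb{E},1}(T^f+1)+1\}}$), we get
\[
\sum_{k\le n}\alpha C_{\mathbb{E},1}\Delta t(|v_j|+k\Delta v+1)\le \alpha C_{\mathbb{E},1}T^f|v_j|+\alpha C_{\mathbb{E},1}T^f+\alpha C_{\mathbb{E},1}\frac{n^2\Delta t\Delta v}{2}.
\]
In the last term, $n\Delta t\le T^f$ and $\Delta v<C_{\mathbb{E},1}\Delta t/\eta_1$. This produces the $\frac{\alpha C_{\mathbb{E},1}^2(T^f+1)}{\eta_1}$ term, and the factor $2$ from the logarithm gives the exponents in $A_3^n$. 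Passing from $f^n$ to $\tilde f^n$ costs at most the factor $1/2$. The delicate point is checking the $j=0$ crossing and the boundary rows, which is where we expect the most care to be needed.

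\textbf{Step 4: $A_5^n$ and $A_6^n$.} Sum $A_3^n$ against $\Delta v$ and use the Riemann-sum hypothesis with $\beta=2\alpha C_{\mathbb{E},1}T^f$; this gives the lower bound on $\tilde\rho$ with constant $\tfrac14 C_0D$. The lower bound on $\tilde T$ follows from Lemma \ref{Mac_bound_lemma}, $\tilde T\ge(\tilde\rho/(C_M\|\tilde f\|_{L^\infty_q}))^2$, combined with $A_4^n$.

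The truncation errors from the finite velocity grid must be absorbed. For example, $\tilde\rho$ should not be compared to the full-line sum. This is where the thresholds $a_1,\dots,a_4$ on $\Delta v$ enter: they guarantee that tail and discretization contributions are at most half the main terms.

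For $A_6^n$, bound the moments by $\sum_j(1+|v_j|)^{2-q}\Delta v\,\|\tilde f\|_{L^\infty_q}\le 2\bar D_{q-2}\|\tilde f\|_{L^\infty_q}$. Then divide by the lower bound on $\tilde\rho$, which produces the factor $1+8/(C_0D)$. This completes the induction.
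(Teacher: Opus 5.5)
Your plan follows the paper's route (Lemmas \ref{A1}--\ref{A6}): telescoping with the Neumann boundary flux controlled via $a_5$, $|K|\le 1$, the CFL check, the product-type inductive lower bound with the acceleration/deceleration split and $\Delta v<C_{\mathbb{E},1}\Delta t/\eta_1$, and then moments. However, two steps do not work as written.

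\textbf{Step 2.} The comparison $(1+|v_j|)^q\le 2^q(1+|v_{j\pm1}|)^q$ is too crude. The velocity neighbour carries weight $\frac{\Delta t}{\Delta v}|\mathbb{E}^n_i|$, so this comparison only yields a per-step factor $1+(2^q-1)\frac{\Delta t}{\Delta v}|\mathbb{E}^n_i|$. That factor can be as large as $1+(2^q-1)\eta_2$, and iterated over $N_t\sim 1/\Delta v$ steps it blows up as $\Delta v\to 0$. To get $1+C_{\mathbb{E},2}\Delta t$ you need the weight ratio to be $1+O(\Delta v)$, namely $\bigl(\frac{1+|v_j|}{1+|v_{j\pm1}|}\bigr)^q\le(1+\Delta v)^q\le 1+(2^q-1)\Delta v$ for $\Delta v\le 1$. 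The extra $\Delta v$ then cancels the $1/\Delta v$ in the weight; this is exactly Lemma \ref{tilde-original}.

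\textbf{Step 1.} The relaxation step does not preserve discrete mass. The quantity $\sum_j\mathcal{M}(\tilde f^{n-1}_{i,j})\Delta v$ is a Riemann sum of a Gaussian sampled on the truncated grid, not $\tilde\rho^{n-1}_i$. When $\tilde T^{n-1}_i\ll(\Delta v)^2$ it can be of order $\tilde\rho^{n-1}_i\Delta v/\sqrt{\tilde T^{n-1}_i}$. Your factor $1+\Delta t/\varepsilon$ is justified only once you have $\sum_j\mathcal{M}(\tilde f^{n-1}_{i,j})\Delta v\le 2\tilde\rho^{n-1}_i$. The paper obtains this by rescaling $u=(v-\tilde U^{n-1}_i)/\sqrt{2\tilde T^{n-1}_i}$ and using the temperature lower bound $A^{n-1}_5$ together with $\Delta v<l\sqrt{2a_4}$, so that the rescaled mesh is below $l$ and the Riemann-sum hypothesis applies.

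So $A^n_1$ genuinely depends on $A^{n-1}_5$, and this is the actual role of $a_4$. Likewise, $a_1,a_2,a_3$ do not absorb tails, as you suggest in Step 4. They are the conditions in Lemma \ref{Mac_bound_lemma} guaranteeing that the optimizing radii exceed $\Delta v$, which is what makes $C_M$ and $C_{\mathcal{M}}$ available.

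Two smaller points:
\begin{itemize}
\item The bound $1-\alpha C_{\mathbb{E},1}\Delta t(|v_j|+k\Delta v+1)\ge\frac12$ also needs $|v_j|\Delta t<\Delta x<\frac{1}{4\alpha C_{\mathbb{E},1}}$, i.e.\ the $r_{\Delta x}$ restriction, since $|v_j|$ reaches $V_M$. The $r_{\Delta v}$ condition alone is not enough.
\item Your $\tilde T\ge(\tilde\rho/(C_M\|\tilde f\|_{L^\infty_q}))^2$ is what Lemma \ref{Mac_bound_lemma}(1) actually gives. The exponent $\frac23$ in $A_5$ and in $a_4$ appears to be a misprint for $2$.
\end{itemize}
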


\begin{remark}\label{Delta x Delta v restriction}  
    The time step $\Delta t$ introduced in Theorem \ref{stability theorem} always satisfies the CFL condition \eqref{CFL condition 0}. Since the discrete velocity $v_j$ and the discrete electric field $\mathbb{E}^n_i$ are bounded by the maximum velocity $V_M$ from the truncated domain and by the constant $C_{\mathbb{E},1}$ from the stability estimate $A^n_2$, respectively, we have
    \begin{align*}
        \begin{split}
            \frac{\Delta t}{\Delta x}|v_j| + \frac{\Delta t}{\Delta v}|\mathbb{E}^n_i|\leq \frac{V_M\Delta t}{\Delta x}+\frac{C_{\mathbb{E},1}\Delta t}{\Delta v}<\frac{\eta_2 V_M\Delta v}{C_{\mathbb{E},1}\Delta x}+\eta_2.
        \end{split}
    \end{align*}
    Using $\Delta v<\left(\frac{1-\eta_2}{\eta_2}\Delta x\right)^{\frac{1}{1-\gamma}}$ and the choice of $V_M$ in \eqref{V_M large}, we obtain
    \begin{align*}
        \begin{split}
            \frac{\eta_2 V_M\Delta v}{C_{\mathbb{E},1}\Delta x}+\eta_2=(1-\eta_2)+\eta_2<1.
        \end{split}
    \end{align*}
    Moreover, since $\eta_2<1$ and $C_{\mathbb{E},1}>1$, $\Delta t$ is automatically less than 1.
\end{remark}

\begin{remark}
    Under the hypothesis for Theorem \ref{stability theorem}, the positivity of the discrete solution $\tilde{f}^n_{i,j}$ is obtained by induction. Since \eqref{CFL condition 0} always holds, $\tilde{f}^n_{i,j}$ is positive by \eqref{convex_combi}. Then, we can directly show that $f^n_{i,j}>0$ from our scheme \eqref{VPBGK_Scheme}.
    \begin{align*}
        \begin{split}
            f^{n}_{i,j}>\frac{\varepsilon}{\varepsilon+\Delta t}\tilde{f}^n_{i,j}>0.
        \end{split}
    \end{align*}
\end{remark}

To prove Theorem \ref{stability theorem}, we divide this section into two subsections. In the first subsection, we establish the technical lemmas for the proof. Using these lemmas, we show each stability estimate $A^n_k$ $(k=1,\cdots,6)$ in the second subsection. 

\subsection{Technical lemmas}
The following lemma gives estimates for the discrete macroscopic fields.
\begin{lemma}\label{Mac_bound_lemma}
    Let $q>3$. Assume that $f^{n}_{i,j}$ satisfies $\mathcal{A}^{n}$ and $\Delta v<r_{\Delta v}$ where $r_{\Delta v}$ is stated in the hypothesis of Theorem \ref{stability theorem}. Then, for all $n$, there exists a positive constant $C_{M}$ which depends on $q$, such that
\begin{align*}
    \begin{split}
        &\text{(1) } \tilde{\rho}^{n}_{i} ({\tilde{T}^{n}_{i}})^{-\frac{1}{2}} \leq C_{M}\|\tilde{f}^{n}\|_{L^{\infty}_{q}}, \cr
        &\text{(2) } \tilde{\rho}^{n}_{i} ( {\tilde{T}^{n}_{i}} + |\tilde{U}^{n}_{i}|^{2})^{\frac{q-1}{2}}\leq C_{M}\|\tilde{f}^{n}\|_{L^{\infty}_{q}},\cr
        &\text{(3) } \tilde{\rho}^{n}_{i}|\tilde{U}^{n}_{i}|^{q+1} [(\tilde{T}^{n}_{i} + |\tilde{U}^{n}_{i}|^{2})\tilde{T}^{n}_{i}]^{-\frac{1}{2}} \leq C_{M}\|\tilde{f}^{n}\|_{L^{\infty}_{q}}.
    \end{split}
\end{align*}
\end{lemma}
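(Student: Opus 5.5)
The plan is to follow the continuous argument of Perthame/Pulvirenti (as in \cite{russo2018convergence,boscarino2022convergence}), transferred to the discrete velocity sum. Fix $n,i$ and write $\rho=\tilde\rho^n_i$, $U=\tilde U^n_i$, $T=\tilde T^n_i$, $\tilde f_j=\tilde f^n_{i,j}$, $N=\|\tilde f^n\|_{L^\infty_q}$. The only input is the pointwise bound $\tilde f_j\le N(1+|v_j|)^{-q}$ together with the definitions $\rho=\sum_j\tilde f_j\Delta v$ and $\rho T=\sum_j|v_j-U|^2\tilde f_j\Delta v$. Positivity of $\tilde f_j$ comes from the convex combination \eqref{convex_combi} under the CFL condition. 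We also need $T>0$ and $\rho>0$, which is where the hypothesis $\mathcal{A}^n$ and the smallness $\Delta v<r_{\Delta v}$ enter.

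For (1), split the sum for $\rho$ at radius $R$ around $U$:
\begin{align*}
\rho=\sum_{|v_j-U|\le R}\tilde f_j\Delta v+\sum_{|v_j-U|>R}\tilde f_j\Delta v\le N(2R+\Delta v)+\frac{\rho T}{R^2}.
\end{align*}
Choosing $R=\sqrt{2T}$ gives $\rho/2\le N(2\sqrt{2T}+\Delta v)$. The extra $\Delta v$ from counting lattice points is the discrete nuisance. It is absorbed by $N\Delta v\le C\sqrt T$, or more simply $\Delta v\le\sqrt T$. This last inequality is what the restriction $\Delta v<l\sqrt{2a_4}$ in $r_{\Delta v}$ is designed for, combined with the lower bound $(A_5^n)$ on $\tilde T^n_i$, which is of order $a_4$.

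For (2), bound $\rho(T+|U|^2)\lesssim\sum_j|v_j|^2\tilde f_j\Delta v$, using $\rho T+\rho U^2=\sum v_j^2\tilde f_j\Delta v$ exactly. Then split at $|v_j|\le R$ and $|v_j|>R$:
\begin{align*}
\sum_j|v_j|^2\tilde f_j\Delta v\le R^2\rho+N\sum_{|v_j|>R}(1+|v_j|)^{2-q}\Delta v\le R^2\rho+CNR^{3-q}.
\end{align*}
Here $q>3$ is used, and the tail sum is compared with an integral via the Riemann-sum hypotheses $\bar D_{q-m}$. Optimizing in $R$ gives $(T+|U|^2)\rho\le C\rho^{(q-3)/(q-1)}N^{2/(q-1)}$. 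Rearranging, $\rho(T+|U|^2)^{(q-1)/2}\le CN$, as in the continuous case. If $\rho$ is small the optimal $R$ may fall below the grid scale, so I would check that $R\ge\Delta v$, which is again guaranteed by $\Delta v<a_2$ and the lower bounds in $(A_5^n)$.

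Item (3) follows by interpolating (1) and (2). The left side of (3) is dominated by
\begin{align*}
\rho\,(T+|U|^2)^{q/2}\,T^{-1/2}=\big(\rho T^{-1/2}\big)^{a}\big(\rho (T+|U|^2)^{(q-1)/2}\big)^{b}\cdot(\text{leftover powers}).
\end{align*}
Since $|U|^{q+1}(T+|U|^2)^{-1/2}\le(T+|U|^2)^{q/2}$, I would bound instead $\rho(T+|U|^2)^{q/2}T^{-1/2}$, but this is not a pure product of (1) and (2). I would therefore redo the splitting argument for (3) directly, using the moment $\sum|v_j|^{q+1}$-type weights, or use $|U|\le\sqrt{T+|U|^2}$ and handle the case $|U|^2\le T$ versus $|U|^2\ge T$ separately. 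In the first case, (3) reduces to (1) times a bounded power through (2). In the second case, $\rho|U|^{q}T^{-1/2}$ is controlled by combining (1) and (2), provided $|U|$ is bounded via $(A_6^n)$. This last step, making the exponents close without extra constants, is where I expect the main difficulty. I would use $(A_6^n)$ to bound $|U|$ and $T$ above, absorbing the leftover powers into $C_M$, even though that makes $C_M$ depend on more than $q$. The restriction $\Delta v<a_3$, with its exponent $(2q+3)/(3q+3)$, suggests the authors needed a grid-scale condition precisely here.
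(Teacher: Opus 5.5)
Your items (1) and (2) are sound. In (1) you fix $R=\sqrt{2T}$ and absorb the lattice-counting term $N\Delta v$ using $T\ge a_4$ from $(A_5^n)$ together with $\Delta v<l\sqrt{2a_4}$. The paper instead optimizes $R$ and uses $\Delta v<a_1$; either route works. Your (2) is the paper's argument. In fact, since $\Delta v<1$, the tail bound $\sum_{|v_j|>R}(1+|v_j|)^{2-q}\Delta v\le \frac{2}{q-3}R^{3-q}$ holds for every $R>0$, so no grid-scale check is needed there.

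The gap is in (3).

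\emph{What goes wrong.} The case $|U|^2\le T$ does follow from (2) alone. For $|U|^2\ge T$, however, the quantity you are left with, $\rho|U|^qT^{-1/2}$, is just (3) restated, and no interpolation of (1) and (2) reaches it. A product $(\rho T^{-1/2})^{\theta}\bigl(\rho(T+|U|^2)^{(q-1)/2}\bigr)^{1-\theta}$ carries $T^{-\theta/2}$, so matching $T^{-1/2}$ forces $\theta=1$ and removes every power of $|U|$. A concrete example: take a discrete bump of mass $\rho$ and width $\sqrt T$ centred at a large $U$, so that $N\approx\rho U^q/\sqrt T$. Then (3) is sharp, while (1) and (2) lose the unbounded factors $U^q$ and $U/\sqrt T$.

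\emph{Why the fallback fails.} Bounding $|U|$ by $(A_6^n)$ is not admissible, and not merely because it enlarges $C_M$. The statement insists that $C_M$ depend only on $q$ because $C_M$ produces $C_{\mathcal M}$ in Lemma \ref{Mx to f}. Both $C_M$ and $C_{\mathcal M}$ then enter $a_1,\dots,a_4$ and the right-hand sides of $(A_4^n)$--$(A_6^n)$. In particular, the $(A_6^n)$ bound contains $e^{C_{\mathcal M}T^f/\varepsilon}$, so letting $C_M$ depend on it is circular.

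\emph{The missing idea.} Split $\rho|U|\le\sum_j\tilde f^n_{i,j}|v_j|\Delta v$ directly around $U$ at radius $R$:
\begin{itemize}
\item On $\{|v_j-U|\le R\}$, use H\"older with the weight $|v_j|^q$ to get $\rho^{1-1/q}\bigl(N(2R+\Delta v)\bigr)^{1/q}$.
\item On $\{|v_j-U|>R\}$, insert $|v_j-U|/R\ge1$ and apply Cauchy--Schwarz to get $R^{-1}\rho\,(T+|U|^2)^{1/2}T^{1/2}$.
\end{itemize}
Balance the two terms. The balancing $R$ exceeds $\Delta v$ thanks to $\Delta v<a_3$, which is exactly where that restriction is used. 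This yields $(\rho|U|)^{q+1}\le C_q\,\rho^{q}N\,[(T+|U|^2)T]^{1/2}$, which is (3) with $C_M$ depending only on $q$.
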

\begin{proof}
        (1) We split $\tilde{\rho}^{n}_{i}$ into two parts:
        \begin{align*}
            \begin{split}
                \tilde{\rho}^{n}_{i} = \sum_{|v_{j}-\tilde{U}^{n}_{i}| \leq r+\Delta v}\tilde{f}^{n}_{i,j}\Delta v+ \sum_{|v_{j}-\tilde{U}^{n}_{i}| > r+\Delta v}\tilde{f}^{n}_{i,j}\Delta v\equiv I_{1} + I_{2}.
            \end{split}
        \end{align*}
        Then, we have
        \begin{align*}
            \begin{split}
                I_{1} \leq \|\tilde{f}^{n}\|_{L^{\infty}_{q}} \sum_{|v_{j}-\tilde{U}^{n}_{i}| \leq r+\Delta v} \Delta v\leq \|\tilde{f}^{n}\|_{L^{\infty}_{q}} \int_{|v-\tilde{U}^{n}_{i}| \leq 2r+\Delta v}dv  = 4\|\tilde{f}^{n}\|_{L^{\infty}_{q}}(r+\Delta v),
            \end{split}
        \end{align*}
        and 
        \begin{align*}
            \begin{split}
                I_{2}= \sum_{|v_{j}-\tilde{U}^{n}_{i}| > r+\Delta v} \tilde{f}^{n}_{i,j}\frac{|v_{j}-\tilde{U}^{n}_{i}|^{2}}{|v_{j}-\tilde{U}^{n}_{i}|^{2}}\Delta v\leq \frac{1}{(r+\Delta v)^{2}} \sum_{j} \tilde{f}^{n}_{i,j}|v_{j} - \tilde{U}^{n}_{i}|^{2}\Delta = \frac{\tilde{\rho}^{n}_{i} \tilde{T}^{n}_{i}}{(r+\Delta v)^{2}}.
            \end{split}
        \end{align*}
        These lead to
        \begin{align*}
            \begin{split}
                \tilde{\rho}^{n}_{i} \leq 4\|\tilde{f}^{n}\|_{L^{\infty}_{q}}(r+\Delta v)+ \frac{\tilde{\rho}^{n}_{i} \tilde{T}^{n}_{i}}{(r+\Delta v)^{2}}.
            \end{split}
        \end{align*}
       We optimize $r$ by equating the two terms on the right-hand sides, i.e., 
        \begin{align}\label{r setting 1}
            \begin{split}
                r+\Delta v = \left(\frac{\tilde{\rho}^{n}_{i} \tilde{T}^{n}_{i}}{4\|\tilde{f}^{n}\|_{L^{\infty}_{q}}}\right)^{\frac{1}{3}}.
            \end{split}
        \end{align}
       Since $f^{n}_{i,j}$ satisfies $\mathcal{A}^n$, we have
        \begin{align*}
            \begin{split}
               \left(\frac{\tilde{\rho}^{n}_{i} \tilde{T}^{n}_{i}}{4\|\tilde{f}^{n}\|_{L^{\infty}_{q}}}\right)^{\frac{1}{3}} \geq a_{1} > \Delta v.
            \end{split}
         \end{align*}
        Thus, we can always find a positive number $r$ satisfying \eqref{r setting 1}, and we have
        \begin{align*}
            \begin{split}
                \tilde{\rho}^{n}_{i} \leq C_{M}(\tilde{T}^{n}_{i})^{\frac{1}{2}}\|\tilde{f}^{n}\|_{L^{\infty}_{q}}.
            \end{split}
        \end{align*}
        By condition $A_5^n$, $\tilde{T}^{n}_{i}$ is always positive. This completes the proof.\\
        \indent (2) Similar to the proof of (1), we split $\tilde{\rho}^n_i(\tilde{T}^n_i + |\tilde{U}^n_i|^2)$ into two parts:
        \begin{align*}
            \begin{split}
                \tilde{\rho}^{n}_{i} (\tilde{T}^{n}_{i} + |\tilde{U}^{n}_{i}|^{2}) = \sum_{|v_{j}|
                > r+2\Delta v}\tilde{f}^{n}_{i,j}|v_{j}|^{2}\Delta v + \sum_{|v_{j}|
                \leq r+2\Delta v}\tilde{f}^{n}_{i,j}|v_{j}|^{2}\Delta v \equiv I_{1} + I_{2}.
            \end{split}
        \end{align*}
        For $I_1$, we compute
        \begin{align*}
            \begin{split}
                I_{1} &=  \sum_{|v_{j}|
                > r+2\Delta v}\tilde{f}^{n}_{i,j}\frac{|v_{j}|^{q}}{|v_{j}|^{q-2}}\Delta v\cr
                &\leq \| \tilde{f}^{n}\|_{L^{\infty}_{q}}\sum_{|v_{j}|
                > r+2\Delta v}\frac{1}{|v_{j}|^{q-2}}\Delta v\cr
                &\leq \|\tilde{f}^{n}\|_{L^{\infty}_{q}}\int_{|v|>r+\Delta v}\frac{dv}{|v|^{q-2}}\cr
                &= \frac{2}{(q-3){(r+\Delta v)}^{q-3}}\| \tilde{f}^{n}\|_{L^{\infty}_{q}}.
            \end{split}
        \end{align*}
        For $I_2$, we have
        \begin{align*}
            \begin{split}
                I_{2} &\leq (r+2\Delta v)^2\sum_{j} \tilde{f}^{n}_{i,j}\Delta v \leq 4\tilde{\rho}^{n}_{i}(r+\Delta v)^{2}.
            \end{split}
        \end{align*}
        If we set $(r+\Delta v)^{q-1} = \frac{\|\tilde{f}^{n}\|_{L^{\infty}_{q}}}{2(q-3)\tilde{\rho}^{n}_{i}}$, then we obtain
        \begin{align*}
            \begin{split}
                \tilde{\rho}^{n}_{i} (\tilde{T}^{n}_{i} + |\tilde{U}^{n}_{i}|^{2}) &\leq \frac{2}{(q-3){(r+\Delta v)}^{q-3}}\| \tilde{f}^{n}\|_{L^{\infty}_{q}}+4\tilde{\rho}^{n}_{i}(r+\Delta v)^{2}\leq C(\tilde{\rho}^{n}_{i})^{1-\frac{2}{q-1}}\|\tilde{f}^{n}\|^{\frac{2}{q-1}}_{L^{\infty}_{q}}.
            \end{split}
        \end{align*}
        Since $f^{n}_{i,j}$ satisfies $\mathcal{A}^{n}$, $\left(\frac{\|\tilde{f}^{n}\|_{L^{\infty}_{q}}}{2(q-3)\tilde{\rho}^{n}_{i}}\right)^{\frac{1}{q-1}} \geq a_{2} > \Delta v$, that is, there always exists positive $r$. This gives the desired result.\\
        \indent (3) We split $\tilde{\rho}^{n}_{i}|\tilde{U}^{n}_{i}|$ as follows:
        \begin{align*}
            \begin{split}
                \tilde{\rho}^{n}_{i} |\tilde{U}^{n}_{i}| \leq \sum_{|v_{j}-\tilde{U}^{n}_{i}| \leq r+\Delta v}\tilde{f}^{n}_{i,j}|v_{j}|\Delta v + \sum_{|v_{j}-\tilde{U}^{n}_{i}| > r+\Delta v}\tilde{f}^{n}_{i,j}|v_{j}|\Delta v\equiv I_1 + I_2.
            \end{split}
        \end{align*}
        Then, by H\"older inequality, we obtain
        \begin{align*}
            \begin{split}
                I_{1} &\leq \left( \sum_{|v_{j}-\tilde{U}^{n}_{i}|\leq r+\Delta v}\tilde{f}^{n}_{i,j}\Delta v \right)^{1-\frac{1}{q}} \left( \sum_{|v_{j}-\tilde{U}^{n}_{i}|\leq r+\Delta v}\tilde{f}^{n}_{i,j}|v_{j}|^{q}\Delta v\right)^{\frac{1}{q}}\cr
                &\leq (\tilde{\rho}^{n}_{i})^{1-\frac{1}{q}}(4\|\tilde{f}^{n}_{i}\|_{L^{\infty}_{q}}(r+\Delta v))^{\frac{1}{q}}.
            \end{split}
        \end{align*}
        The Cauchy-Schwarz inequality gives 
        \begin{align*}
            \begin{split}
                I_{2} &\leq \frac{1}{r+\Delta v}\sum_{|v_{j}-\tilde{U}^{n}_{i}| >r+\Delta v}\tilde{f}^{n}_{i,j}|v_{j}-\tilde{U}^{n}_{i}||v_{j}|\Delta v \cr
                &\leq \frac{1}{r+\Delta v} \left(\sum_{j} \tilde{f}^{n}_{i,j}|v_{j}|^{2}\Delta v \right)^{\frac{1}{2}} \left( \sum_{j}\tilde{f}^{n}_{i,j}|v_{j}-\tilde{U}^{n}_{i}|^{2}\Delta v\right)^{\frac{1}{2}}\cr
                &\leq \frac{1}{r+\Delta v} (\tilde{\rho}^{n}_{i}(\tilde{T}^{n}_{i} + |\tilde{U}^{n}_{i}|^{2}))^{\frac{1}{2}}(\tilde{\rho}^{n}_{i}\tilde{T}^{n}_{i})^{\frac{1}{2}}\cr
                &= \frac{\tilde{\rho}^{n}_{i}}{r+\Delta v}(\tilde{T}^{n}_{i} + |\tilde{U}^{n}_{i}|^{2})^{\frac{1}{2}}(\tilde{T}^{n}_{i})^{\frac{1}{2}}. 
            \end{split}
        \end{align*}
        Combining these, we get
        \begin{align*}
            \begin{split}
                \tilde{\rho}^n_i |\tilde{U}^n_i|\leq (\tilde{\rho}^{n}_{i})^{1-\frac{1}{q}}(4\|\tilde{f}^{n}_{i}\|_{L^{\infty}_{q}}(r+\Delta v))^{\frac{1}{q}}+\frac{\tilde{\rho}^{n}_{i}}{r+\Delta v}(\tilde{T}^{n}_{i} + |\tilde{U}^{n}_{i}|^{2})^{\frac{1}{2}}(\tilde{T}^{n}_{i})^{\frac{1}{2}}. 
            \end{split}
        \end{align*}
        To obtain the third result, We set $r$ as 
        \begin{align*}
            \begin{split}
                (r+\Delta v) =\left( \frac{(\tilde{\rho}^{n}_{i})^{\frac{1}{q}}(\tilde{T}^{n}_{i} + |\tilde{U}^{n}_{i}|^{2})^{\frac{1}{2}}(\tilde{T}^{n}_{i})^{\frac{1}{2}}}{4^{\frac{1}{q}}\|\tilde{f}^{n}\|^{\frac{1}{q}}_{L^{\infty}_{q}}}\right)^{\frac{q}{q+1}}.
            \end{split}
        \end{align*}
       Note that $r$ is always positive since the right-hand side of the above is greater than or equal to $a_3$ by condition $\mathcal{A}^n$.
\end{proof}

Applying the above lemma, we control the local Maxwellian by the discrete solution $\tilde{f}^n_{i,j}$ in $L^{\infty}_q$ norm.
\begin{remark}
    To avoid confusion, we state the definition of $\mathcal{M}(\tilde{f}^n_{i,j})$ as follows:
    \begin{align*}
        \begin{split}
            \mathcal{M}(\tilde{f}^n_{i,j})=\frac{\tilde{\rho}^n_i}{\sqrt{2\pi \tilde{T}^n_i}}\exp\left(-\frac{|v_j-\tilde{U}^n_i|^2}{2\tilde{T}^n_i}\right),
        \end{split}
    \end{align*}
    where
    \begin{align*}
        \begin{split}
            (\tilde{\rho}^n_i,\tilde{\rho}^n_i\tilde{U}^n_i,\tilde{\rho}^n_i\tilde{T}^n_i)=\sum_j \tilde{f}^n_{i,j}(1,v_j,|v_j-\tilde{U}^n_i|^2)\Delta v.
        \end{split}
    \end{align*}
\end{remark}

\begin{lemma}\label{Mx to f}
    Assume that $f^{n}_{i,j}$ satisfies $\mathcal{A}^{n}$, and $\Delta v<r_{\Delta v}$. Then, we have
    \begin{align*}
        \begin{split}
            \|\mathcal{M}(\tilde{f}^{n})\|_{L^{\infty}_{q}} \leq C_{\mathcal{M}}\|\tilde{f}^{n}\|_{L^{\infty}_{q}},
        \end{split}
    \end{align*}
    where $C_{\mathcal{M}}$ depends only on $q$.
\end{lemma}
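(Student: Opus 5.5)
We need $\sup_{i,j}(1+|v_j|)^q\mathcal{M}(\tilde f^n_{i,j})\le C_{\mathcal M}\|\tilde f^n\|_{L^\infty_q}$. This is the discrete analogue of the standard estimate for the ES-BGK and BGK models in \cite{russo2018convergence,boscarino2022convergence}. The plan is to reduce it entirely to the three inequalities of Lemma~\ref{Mac_bound_lemma}. Since $(1+|v|)^q\le 2^{q-1}(1+|v|^q)$, it suffices to bound two quantities by $C\|\tilde f^n\|_{L^\infty_q}$ uniformly in $v$:
\begin{align*}
\mathcal{M}_0:=\frac{\tilde\rho^n_i}{\sqrt{\tilde T^n_i}}\,e^{-\frac{|v-\tilde U^n_i|^2}{2\tilde T^n_i}},\qquad \mathcal{M}_q:=\frac{\tilde\rho^n_i}{\sqrt{\tilde T^n_i}}\,|v|^q e^{-\frac{|v-\tilde U^n_i|^2}{2\tilde T^n_i}}.
\end{align*}
Here $v$ is a free real variable, which then covers the grid points $v_j$. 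Positivity of $\tilde T^n_i$ is guaranteed by $A^n_5$, so the Maxwellian is well defined. The term $\mathcal M_0$ is immediate: drop the exponential and apply Lemma~\ref{Mac_bound_lemma}(1).

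For $\mathcal M_q$, split into two regions using a parameter $\theta\in(0,1)$, say $\theta=1/2$.

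\emph{Region $|v-\tilde U^n_i|\le \theta|v|$.} Then $|v|\le |\tilde U^n_i|/(1-\theta)$, so $\mathcal M_q\le C\tilde\rho^n_i|\tilde U^n_i|^q(\tilde T^n_i)^{-1/2}$. If $|\tilde U^n_i|^2\le \tilde T^n_i$, this is at most $C\tilde\rho^n_i(\tilde T^n_i+|\tilde U^n_i|^2)^{(q-1)/2}$, which is controlled by Lemma~\ref{Mac_bound_lemma}(2). If $|\tilde U^n_i|^2>\tilde T^n_i$, then $(\tilde T^n_i+|\tilde U^n_i|^2)^{1/2}\le \sqrt2|\tilde U^n_i|$, and therefore
$$\tilde\rho^n_i|\tilde U^n_i|^q(\tilde T^n_i)^{-1/2}\le \sqrt2\,\tilde\rho^n_i|\tilde U^n_i|^{q+1}\big[(\tilde T^n_i+|\tilde U^n_i|^2)\tilde T^n_i\big]^{-1/2},$$
which is exactly Lemma~\ref{Mac_bound_lemma}(3).

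\emph{Region $|v-\tilde U^n_i|>\theta|v|$.} The exponential satisfies $e^{-|v-\tilde U|^2/(2\tilde T)}\le e^{-\theta^2|v|^2/(2\tilde T)}$. Since $\sup_{s}s^{q}e^{-\theta^2s^2/(2\tilde T)}=C_{q,\theta}\tilde T^{q/2}$, we get $\mathcal M_q\le C\tilde\rho^n_i(\tilde T^n_i)^{(q-1)/2}\le C\tilde\rho^n_i(\tilde T^n_i+|\tilde U^n_i|^2)^{(q-1)/2}$. This is again controlled by Lemma~\ref{Mac_bound_lemma}(2).

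Collecting the cases gives the claim with $C_{\mathcal M}$ depending only on $q$, through $C_M$, $\theta$ and the Gaussian suprema; the factor $1/\sqrt{2\pi}$ is harmless.

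The main obstacle is the first region in the case of large bulk velocity relative to temperature. There the Gaussian gives no decay in $|v|$ and the weight $|v|^q$ must be paid by $|\tilde U|^q$; this is precisely why the unusual inequality (3) of Lemma~\ref{Mac_bound_lemma} was established. The care needed is to check that the case split $|\tilde U|^2\lessgtr \tilde T$ matches the exponents appearing in (2) and (3). Everything else consists of pointwise calculus bounds that do not involve the discretization, so no further conditions on $\Delta v$ beyond those already used in Lemma~\ref{Mac_bound_lemma} are needed.
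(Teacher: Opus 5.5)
Your proof is correct and is essentially the paper's argument. Both reduce everything to the three bounds of Lemma~\ref{Mac_bound_lemma}: the bulk-velocity contribution $\tilde\rho^n_i|\tilde U^n_i|^q(\tilde T^n_i)^{-1/2}$ is handled by the case split $|\tilde U^n_i|\lessgtr(\tilde T^n_i)^{1/2}$ with parts (2) and (3), the thermal contribution by a Gaussian moment bound giving $\tilde\rho^n_i(\tilde T^n_i)^{(q-1)/2}$ and then part (2), and the unweighted term by part (1). The only difference is cosmetic: the paper splits additively via $|v_j|^q\le C_q(|\tilde U^n_i|^q+|v_j-\tilde U^n_i|^q)$ rather than dividing the velocity line into regions. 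Your explicit factor $\sqrt2$ in the case $|\tilde U^n_i|^2>\tilde T^n_i$ also correctly justifies the step the paper writes as $|\tilde U^n_i|^{-1}\le(\tilde T^n_i+|\tilde U^n_i|^2)^{-1/2}$, which holds only up to that constant.
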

\begin{proof}
    We split $|v_{j}|^q\mathcal{M}(\tilde{f}^{n}_{i,j})$ into two parts as follows:
    \begin{align*}
        \begin{split}
            |v_{j}|^{q}\mathcal{M}(\tilde{f}^{n}_{i,j}) &\leq C_q(|\tilde{U}^{n}_{i}|^{q} + |v_{j}-\tilde{U}^{n}_{i}|^{q})\mathcal{M}(\tilde{f}^{n}_{i,j})\equiv I_1 + I_2.
        \end{split}
    \end{align*}
    For $I_1$, we have
    \begin{align*}
        \begin{split}
            I_1 &\leq C_q|\tilde{U}^{n}_{i}|^{q}\frac{\tilde{\rho}^{n}_{i}}{(2\pi \tilde{T}^{n}_{i})^{\frac{1}{2}}}.
        \end{split}
    \end{align*}
    If $|\tilde{U}^{n}_{i}|>(\tilde{T}^{n}_{i})^{\frac{1}{2}}$, Lemma \ref{Mac_bound_lemma}$_3$ gives
    \begin{align*}
        \begin{split}
            I_{1} \leq C_q\frac{|\tilde{U}^{n}_{i}|^{q}\tilde{\rho}^{n}_{i}}{(\tilde{T}^{n}_{i})^{\frac{1}{2}}} \leq C_q\frac{|\tilde{U}^{n}_{i}|^{q+1}\tilde{\rho}^{n}_{i}}{|\tilde{U}^{n}_{i}| (\tilde{T}^{n}_{i})^{\frac{1}{2}}} \leq C_q\frac{|\tilde{U}^{n}_{i}|^{q+1}\tilde{\rho}^{n}_{i}}{(\tilde{T}^{n}_{i} + |\tilde{U}^{n}_{i}|^{2})^{\frac{1}{2}} (\tilde{T}^{n}_{i})^{\frac{1}{2}}} \leq C_q\|\tilde{f}^{n}\|_{L^{\infty}_{q}}.
        \end{split}
    \end{align*}
    If $|\tilde{U}^{n}_{i}|\leq(\tilde{T}^{n}_{i})^{\frac{1}{2}}$, Lemma \ref{Mac_bound_lemma}$_2$ leads to
    \begin{align*}
        \begin{split}
            I_{1} \leq C_q\frac{(\tilde{T}^{n}_{i})^{\frac{q}{2}}\tilde{\rho}^{n}_{i}}{(\tilde{T}^{n}_{i})^{\frac{1}{2}}} \leq \tilde{\rho}^{n}_{i} (\tilde{T}^{n}_{i})^{\frac{q-1}{2}} \leq \tilde{\rho}^{n}_{i}(\tilde{T}^{n}_{i} + |\tilde{U}^{n}_{i}|^{2})^{\frac{q-1}{2}}\leq C_qC_M\|\tilde{f}^{n}\|_{L^{\infty}_{q}}.
        \end{split}
    \end{align*}
   On the other hand, estimate for $I_{2}$ is presented by
    \begin{align*}
        \begin{split}
            I_2 &\leq C_q|v_{j} - \tilde{U}^{n}_{i}|^{q}\frac{\tilde{\rho}^{n}_{i}}{(\tilde{T}^{n}_{i})^{\frac{1}{2}}}\exp\left( -\frac{|v_{j}-\tilde{U}^{n}_{i}|^{2}}{2\tilde{T}^{n}_{i}}\right)\cr
            &= C_q\frac{\tilde{\rho}^{n}_{i}}{(\tilde{T}^{n}_{i})^{\frac{1}{2}}}(\tilde{T}^{n}_{i})^{\frac{q}{2}} \left[\left( \frac{|v_{j}-\tilde{U}^{n}_{i}|^{2}}{2\tilde{T}^{n}_{i}}\right)^{\frac{q}{2}}\exp\left( -\frac{|v_{j}-\tilde{U}^{n}_{i}|^{2}}{2\tilde{T}^{n}_{i}}\right)\right]\cr
            &\leq C_q\tilde{\rho}^{n}_{i} (\tilde{T}^{n}_{i})^{\frac{q-1}{2}} \cr
            &\leq \tilde{\rho}^{n}_{i}(\tilde{T}^{n}_{i} + |\tilde{U}^{n}_{i}|^{2})^{\frac{q-1}{2}}\cr
            &\leq C_qC_M\|\tilde{f}^{n}\|_{L^{\infty}_{q}}.
        \end{split}
    \end{align*}
    In the last line, we used $x^{n}e^{-ax} \leq C$ for some $C>0$. Combining the estimates for $I_1$ and $I_2$, we have
    \begin{align*}
        \begin{split}
            |v_j|^q\mathcal{M}(\tilde{f}^n_{i,j})\leq C_qC_M\|\tilde{f}^{n}\|_{L^{\infty}_{q}}.
        \end{split}
    \end{align*}
    By Lemma \ref{Mac_bound_lemma}$_1$, we have
    \begin{align*}
        \begin{split}
            \mathcal{M}(\tilde{f}^{n}_{i,j}) \leq \frac{\tilde{\rho}^{n}_{i}}{(2\pi \tilde{T}^{n}_{i})^{\frac{1}{2}}}
            \leq C_M\|\tilde{f}^{n}\|_{L^{\infty}_{q}}.
        \end{split}
    \end{align*}
    Combining these, we obtain
    \begin{align*}
        \begin{split}
            \|\mathcal{M}(\tilde{f}^n)\|_{L^{\infty}_q}\leq C_{\mathcal{M}}\|\tilde{f}^n\|_{L^{\infty}_q},
        \end{split}
    \end{align*}
    for some $C_{\mathcal{M}}$ depending only on $q$.
\end{proof}

Next, we prove that the discrete solution $f^n_{i,j}$ is periodic in the spatial nodes.
\begin{lemma}\label{discrete periodic}
    Let $f^n_{i,j}$ satisfy \eqref{VPBGK_Scheme}. Then, we have
    \begin{align*}
        \begin{split}
            f^{n}_{i+N_{x},j} = f^{n}_{i,j} \quad \text{for all }n=0,1,\cdots,N_t.
        \end{split}
    \end{align*}
\end{lemma}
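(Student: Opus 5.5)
We argue by induction on $n$. For $n=0$ the claim is immediate from the definition of the initial iteration: $f^0_{i,j}=f_0(x_i,v_j)$ (or $f_0(x_i,\pm V_M)$ at the ghost velocity nodes). Since $x_{i+N_x}=x_i+N_x\Delta x=x_i+1$ and $f_0$ is $1$-periodic in $x$ on $\mathbb{T}$, we get $f^0_{i+N_x,j}=f^0_{i,j}$ for every $j$, including $j=\pm(N_v+1)$ and the indices where the datum is set to $0$.

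For the inductive step, assume $f^n_{i+N_x,j}=f^n_{i,j}$ for all $i,j$. We first show that the macroscopic quantities and the field are periodic.

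\textbf{Density and field.} Periodicity of $\rho^n_i=\sum_j f^n_{i,j}\Delta v$ in $i$ is immediate. For $\mathbb{E}^n_i$ given by \eqref{Discrete field 2}, the sum over $k$ runs over one period $0\le k\le N_x-1$. The issue is that $K(x_{i+N_x},y_k)$ is evaluated at $x_{i+N_x}=x_i+1$, which lies outside $[0,1]$. We interpret $K(\cdot,y)$ as the periodic extension of \eqref{Green kernel} in its first argument, i.e. $K(x,y)$ depends on $x$ only through $x \bmod 1$. This is the natural reading, since the field is defined on $\mathbb{T}$, and with it $\mathbb{E}^n_{i+N_x}=\mathbb{E}^n_i$ follows. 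Equivalently, we may note that the scheme only ever evaluates $\mathbb{E}^n_i$ for $0\le i\le N_x-1$ and define it periodically for other indices. This bookkeeping point is the only place where any care is needed.

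\textbf{Transport update.} The flux formulas $\hat F^n_{i\pm\frac12,j}$ and $\hat G^n_{i,j\pm\frac12}$ involve only $f^n_{i-1,j},f^n_{i,j},f^n_{i+1,j},f^n_{i,j\pm1}$, the index-independent quantities $v_j^\pm$, and $(\mathbb{E}^n_i)^\pm$. By the inductive hypothesis and the periodicity of $\mathbb{E}^n$ just established, the convex-combination representation \eqref{convex_combi} gives $\tilde f^n_{i+N_x,j}=\tilde f^n_{i,j}$ for all $j$ in $[-N_v,N_v]$. This holds at the boundary velocity indices too, because the ghost values $f^n_{i,\pm(N_v+1)}$ are themselves periodic in $i$ by the hypothesis.

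\textbf{Relaxation update.} The macroscopic moments $(\tilde\rho^n_i,\tilde U^n_i,\tilde T^n_i)$ are finite sums over $j$ of periodic quantities, so they are periodic in $i$, and hence so is $\mathcal{M}(\tilde f^n_{i,j})$. The update \eqref{VPBGK_Scheme} is a pointwise combination of $\tilde f^n_{i,j}$ and $\mathcal{M}(\tilde f^n_{i,j})$ with coefficients independent of $i$, which gives $f^{n+1}_{i+N_x,j}=f^{n+1}_{i,j}$ for $|j|\le N_v$. The Neumann ghost values copy $f^{n+1}_{i,\pm N_v}$, so they are periodic as well. This closes the induction for all $n\le N_t$.
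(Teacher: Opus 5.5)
Your proof is correct and follows the same induction as the paper: periodic fluxes and field give $\tilde f^n_{i+N_x,j}=\tilde f^n_{i,j}$, which in turn gives periodic moments, a periodic Maxwellian, and a periodic $f^{n+1}$. You also make explicit a step the paper uses without comment: $\mathbb{E}^n_{i+N_x}=\mathbb{E}^n_i$ needs $K$ read periodically in $x$ (or $\mathbb{E}^n$ extended periodically from $0\le i\le N_x-1$), and this is genuinely needed, since the literal formula \eqref{Green kernel} evaluated at $x_i+1$ would shift the field by $\sum_{k:\,y_k>x_i}(\rho^n_k-1)\Delta y$.
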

\begin{proof}
    By the definition of the $f^{0}_{i,j}$, for $i=0,1,\cdots,N_x-1$ and $j\in[-N_v,N_v]$, we have
    \begin{align*}
        \begin{split}
            f^{0}_{i+N_{x},j} = f_{0}(x_{i}+N_{x}\Delta x, v_{j}) = f_{0}(x_{i}+1,v_{j})=f_{0}(x_{i},v_{j}) = f^{0}_{i,j}.
        \end{split}
    \end{align*}
    Since the periodicity of the spatial domain $f^{n}_{i+N_{x},j} = f^{n}_{i,j}$. Then, we get
    \begin{align*}
        \begin{split}
            \hat{F}^{n}_{i+N_{x},j} = v_{j}^{+}f^{n}_{i+N_{x},j} + v_{j}^{-}f^{n}_{i+N_{x}+1,j} = v_{j}^{+}f^{n}_{i,j} + v_{j}^{-}f^{n}_{i+1,j} = \hat{F}^{n}_{i,j},
        \end{split}
    \end{align*}
    and
    \begin{align*}
        \begin{split}
            \hat{G}^{n}_{i+N_{x},j} = \mathbb{E}_{i+N_{x}}^{+}f^{n}_{i+N_{x},j} + \mathbb{E}_{i+N_{x}}^{-}f^{n}_{i+N_{x},j+1} = \mathbb{E}_{i}^{+}f^{n}_{i,j} + \mathbb{E}_{i}^{-}f^{n}_{i,j+1} = \hat{G}^{n}_{i,j}.
        \end{split}
    \end{align*}
    These lead to 
    \begin{align*}
        \begin{split}
            \tilde{f}^{n}_{i+N_{x},j} = \tilde{f}^{n}_{i,j}.
        \end{split}
    \end{align*}
   Using this, we have 
    \begin{align*}
        \begin{split}
            (\tilde{\rho}^{n}_{i+N_{x}},\tilde{\rho}^{n}_{i+N_{x}}\tilde{U}^{n}_{i+N_{x}}, \tilde{E}^{n+1}_{i+N_{x}}) &= \sum_{j}\left(1,v_j,\frac{|v_j|^2}{2} \right)\tilde{f}^{n}_{i+N_{x},j}\Delta v\cr
            &= \sum_{j}\left(1,v_j,\frac{|v_j|^2}{2} \right)\tilde{f}^{n}_{i,j}\Delta v\cr 
            &= (\tilde{\rho}^{n}_{i},\tilde{\rho}^{n}_{i}\tilde{U}^{n}_{i}, \tilde{E}^{n}_{i}).
        \end{split}
    \end{align*}
    The above periodicity of the macroscopic fields gives 
    \begin{align*}
        \begin{split}
            \mathcal{M}(\tilde{f}^{n}_{i+N_{x},j}) =  \mathcal{M}(\tilde{f}^{n}_{i,j}).
        \end{split}
    \end{align*}
    Therefore, from (\ref{VPBGK_Scheme}), we have
    \begin{align*}
        \begin{split}
            f^{n+1}_{i+N_{x},j} = \frac{\varepsilon \tilde{f}^{n}_{i+N_{x},j} + \Delta t\mathcal{M}(\tilde{f}^{n}_{i+N_{x},j})}{\varepsilon +\Delta t} = \frac{\varepsilon \tilde{f}^{n}_{i,j} + \Delta t\mathcal{M}(\tilde{f}^{n}_{i,j})}{\varepsilon +\Delta t} = f^{n+1}_{i,j}.
        \end{split}
    \end{align*}
    Then, induction gives the desired result.
\end{proof}

This lemma presents that $\tilde{f}^n_{i,j}$ can be controlled by $f^n_{i,j}$.
\begin{lemma}\label{tilde-original}
    Assume $\Delta x$, $\Delta v$, $\Delta t$, and $V_M$ satisfy the hypothesis of Theorem \ref{stability theorem} and $\mathbb{E}^n_i$ satisfies
    \begin{align*}
        \begin{split}
            \|\mathbb{E}^n\|_{L^{\infty}_x}\leq C_{\mathbb{E},1},\text{ for all }n.
        \end{split}
    \end{align*}
    Then, we have 
    \begin{align*}
        \begin{split}
            \|\tilde{f}^{n}\|_{L^{\infty}_{q}} \leq (1+C_{\mathbb{E},2}\Delta t)\|f^{n}\|_{L^{\infty}_{q}}.
        \end{split}
    \end{align*}
\end{lemma}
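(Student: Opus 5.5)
The plan is to start from the convex-combination representation \eqref{convex_combi}, which is valid because Remark \ref{Delta x Delta v restriction} gives the CFL condition \eqref{CFL condition 0} under the hypotheses of Theorem \ref{stability theorem} and the bound $\|\mathbb{E}^n\|_{L^\infty_x}\le C_{\mathbb{E},1}$. One remark on the coefficient of $f^n_{i,j}$: it should read $1-\frac{\Delta t}{\Delta x}|v_j|-\frac{\Delta t}{\Delta v}|\mathbb{E}^n_i|$, since the upwind fluxes use $v_j^\pm$ and ${\mathbb{E}^n_i}^\pm$. All five coefficients are therefore nonnegative and sum to one. Multiplying by the weight $(1+|v_j|)^q$ gives
\begin{align*}
(1+|v_j|)^q\tilde f^n_{i,j}\le \Big(1-\tfrac{\Delta t}{\Delta v}|\mathbb{E}^n_i|\Big)\|f^n\|_{L^\infty_q}+\tfrac{\Delta t}{\Delta v}{\mathbb{E}^n_i}^{+}(1+|v_j|)^q f^n_{i,j-1}+\tfrac{\Delta t}{\Delta v}{\mathbb{E}^n_i}^{-}(1+|v_j|)^q f^n_{i,j+1}.
\end{align*}
The three spatial terms $f^n_{i,j}, f^n_{i\pm1,j}$ share the same velocity $v_j$, so each is bounded directly by $\|f^n\|_{L^\infty_q}$ with weight sum $1-\frac{\Delta t}{\Delta v}|\mathbb{E}^n_i|$. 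Periodicity from Lemma \ref{discrete periodic} ensures that the indices $i\pm1$ cause no trouble.

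The only step that needs work is the velocity-shifted terms. For these I write $(1+|v_j|)^q f^n_{i,j\mp1}=\big(\frac{1+|v_j|}{1+|v_{j\mp1}|}\big)^q (1+|v_{j\mp1}|)^q f^n_{i,j\mp1}$. Since $|v_j|\le |v_{j\mp1}|+\Delta v$ and $\Delta v<r_{\Delta v}\le 1$, the ratio is at most $1+\Delta v/(1+|v_{j\mp1}|)\le 2$, so the factor is at most $2^q$. At the boundary indices $j=\pm N_v$ the shifted index is $\pm(N_v+1)$. There the Neumann condition \eqref{Neumann boundary condition} gives $f^n_{i,\pm(N_v+1)}=f^n_{i,\pm N_v}$, so the same estimate holds with ratio $1$. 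I also need to interpret $\|f^n\|_{L^\infty_q}$ as controlling these ghost values, which the Neumann condition guarantees.

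Collecting terms gives
\begin{align*}
(1+|v_j|)^q\tilde f^n_{i,j}\le \Big(1+(2^q-1)\tfrac{\Delta t}{\Delta v}|\mathbb{E}^n_i|\Big)\|f^n\|_{L^\infty_q}\le \Big(1+(2^q-1)\tfrac{C_{\mathbb{E},1}}{\Delta v}\Delta t\Big)\|f^n\|_{L^\infty_q}.
\end{align*}

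The expected obstacle is the factor $1/\Delta v$ here. The crude bound $2^q$ gives $\frac{\Delta t}{\Delta v}$ rather than $\Delta t$, so it does not directly produce $C_{\mathbb{E},2}\Delta t$. To obtain the claimed form I would use the sharper mean-value bound
\begin{align*}
\Big(1+\tfrac{\Delta v}{1+|v_{j\mp1}|}\Big)^q-1\le q\,2^{q-1}\Delta v\le (2^q-1)\Delta v\cdot C_q,
\end{align*}
valid for $\Delta v\le1$. This cancels the $1/\Delta v$ and yields $(1+C\,C_{\mathbb{E},1}\Delta t)\|f^n\|_{L^\infty_q}$. If the constant must be exactly $(2^q-1)C_{\mathbb{E},1}$, I would use convexity of $s\mapsto (1+s)^q$ on $[0,1]$, which gives $(1+s)^q-1\le (2^q-1)s$. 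With $s=\Delta v/(1+|v_{j\mp1}|)\le\Delta v$, the excess is bounded by $(2^q-1)\Delta v\,\frac{\Delta t}{\Delta v}|\mathbb{E}^n_i|\le C_{\mathbb{E},2}\Delta t$. Taking the supremum over $i,j$ then finishes the proof.
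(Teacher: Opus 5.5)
Your argument is correct and is essentially the paper's proof. You start from the convex-combination update, bound the three same-velocity terms directly by $\|f^n\|_{L^\infty_q}$, and control the velocity-shifted term via $\bigl(\frac{1+|v_j|}{1+|v_{j\mp1}|}\bigr)^q\le(1+\Delta v)^q\le 1+(2^q-1)\Delta v$, which turns the $\frac{\Delta t}{\Delta v}|\mathbb{E}^n_i|$ excess into $C_{\mathbb{E},2}\Delta t$. Your explicit handling of the signs (absolute values in the diagonal coefficient) and of the Neumann ghost values at $j=\pm N_v$ just spells out what the paper covers by reducing to $v_j\ge0$, $\mathbb{E}^n_i\ge0$, and the initial crude $2^q$ detour can be dropped.
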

\begin{proof}
    By the symmetry formula of the update rule \eqref{convex_combi}, we only consider the case of $v_j\geq 0$ and $\mathbb{E}^n_i\geq 0$. In this case, $\tilde{f}^{n}_{i,j}$ is computed by
    \begin{align*}
        \begin{split}
            \tilde{f}^{n}_{i,j} = \left(1-\frac{\Delta t}{\Delta x}v_j-\frac{\Delta t}{\Delta v}\mathbb{E}^{n}_{i}\right)f^{n}_{i,j} + \frac{\Delta t}{\Delta x}v_j f^{n}_{i-1,j} +\frac{\Delta t}{\Delta v}\mathbb{E}^{n}_{i} f^{n}_{i,j-1}.
        \end{split}
    \end{align*}
    Multiplying $(1+|v_j|)^q$ to both sides, we have
    \begin{align}\label{tilde to orig}
        \begin{split}
            &(1+|v_j|)^{q}\tilde{f}^{n}_{i,j}\cr
            &\quad= (1+|v_j|)^{q}\left\{ \left(1-\frac{\Delta t}{\Delta x}v_j-\frac{\Delta t}{\Delta v}\mathbb{E}^{n}_{i}\right)f^{n}_{i,j} + \frac{\Delta t}{\Delta x}v_jf^{n}_{i-1,j} + \frac{\Delta t}{\Delta v}\mathbb{E}^{n}_{i}f^{n}_{i,j-1}\right\}\cr
            &\quad\leq \left(1-\frac{\Delta t}{\Delta x}v_j-\frac{\Delta t}{\Delta v}\mathbb{E}^{n}_{i}\right)\|f^n\|_{L^{\infty}_q} + \frac{\Delta t}{\Delta x}v_j\|f^n\|_{L^{\infty}_q} + \frac{\Delta t}{\Delta v}\mathbb{E}^{n}_{i}\|f^n\|_{L^{\infty}_q}\left(\frac{1+|v_j|}{1+|v_{j-1}|} \right)^q.
        \end{split}
    \end{align}    
    Since $\Delta v<r_{\Delta v}<1$, we get
    \begin{align*}
        \begin{split}
            \left(\frac{1+|v_j|}{1+|v_{j-1}|}\right)^{q}\leq \left(1 + \frac{\Delta v}{1+|v_{j-1}|}\right)^q \leq (1+\Delta v)^q \leq 1+(2^q-1)\Delta v.
        \end{split}
    \end{align*}
    Then, Lemma \ref{A2} gives
    \begin{align*}
        \begin{split}
            \frac{\Delta t}{\Delta v}\mathbb{E}^{n}_{i}\|f^n\|_{L^{\infty}_q}\left(\frac{1+|v_j|}{1+|v_{j-1}|} \right)^q&\leq \frac{\Delta t}{\Delta v}\mathbb{E}^{n}_{i}\|f^n\|_{L^{\infty}_q} + (2^q-1)\|\mathbb{E}^n\|_{L^{\infty}_x}\Delta t \|f^n\|_{L^{\infty}_q}\cr
            &\leq \frac{\Delta t}{\Delta v}\mathbb{E}^{n}_{i}\|f^n\|_{L^{\infty}_q} +  (2^q-1)C_{\mathbb{E},1}\Delta t\|f^n\|_{L^{\infty}_q}\cr
            &= \frac{\Delta t}{\Delta v}\mathbb{E}^{n}_{i}\|f^n\|_{L^{\infty}_q} +  C_{\mathbb{E},2}\Delta t\|f^n\|_{L^{\infty}_q}.
        \end{split}
    \end{align*}
    Plugging the above estimate into \eqref{tilde to orig}, we have
    \begin{align*}
        \begin{split}
            \|\tilde{f}^{n}\|_{L^{\infty}_q}&\leq \left(1-\frac{\Delta t}{\Delta x}v_j-\frac{\Delta t}{\Delta v}\mathbb{E}^{n}_{i}\right)\|f^n\|_{L^{\infty}_q} + \frac{\Delta t}{\Delta x}v_j\|f^n\|_{L^{\infty}_q} + \frac{\Delta t}{\Delta v}\mathbb{E}^{n}_{i}\|f^n\|_{L^{\infty}_q}+C_{\mathbb{E},2}\Delta t\|f^n\|_{L^{\infty}_q} \cr
            &=(1+C_{\mathbb{E},2}\Delta t)\|f^n\|_{L^{\infty}_q}. 
        \end{split}
    \end{align*}
\end{proof}

\subsection{Proof for Theorem \ref{stability theorem}}
Now, we show that $f^n_{i,j}$ satisfies the stability estimate $\mathcal{A}^n$ for all $n$ under the assumptions of Theorem \ref{stability theorem} using an inductive argument. The proof of Theorem \ref{stability theorem} follows from Lemma \ref{A1} - \ref{A6} below. 

\begin{lemma}\label{A1}\textnormal{(Proof of the stability estimate $A^n_1$)}
    Assume $\Delta x$, $\Delta v$, $\Delta t$ and $V_M$ satisfy the hypothesis of Theorem \ref{stability theorem}. Then, $f^{n}_{i,j}$ satisfies $A^n_1$ for all $n\geq 0$, that is,
    \begin{align*}
        \begin{split}
            \sum_{i,j}f^n_{i,j}\Delta x\Delta v\leq (2+T^f)e^{\frac{T^f}{\varepsilon}}.
        \end{split}
    \end{align*}
\end{lemma}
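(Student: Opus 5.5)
The plan is to prove the bound by strong induction on $n$, together with the other statements of $\mathcal{A}^n$. When proving the step $n\to n+1$ we may therefore use $A^n_2$–$A^n_6$. Write $S^n=\sum_{i,j}f^n_{i,j}\Delta x\Delta v$ and $\tilde S^n=\sum_{i,j}\tilde f^n_{i,j}\Delta x\Delta v$. The target is the one-step recursion
\begin{align*}
S^{n+1}\leq \Big(1+\frac{\Delta t}{\varepsilon}\Big)\big(S^n+\Delta t\big).
\end{align*}
Iterating it and using $S^0\leq 2$ (the last hypothesis on $l$ in Theorem \ref{stability theorem}), $n\Delta t\leq T^f$ and $(1+\Delta t/\varepsilon)^n\leq e^{T^f/\varepsilon}$ gives
\begin{align*}
S^n\leq \Big(1+\frac{\Delta t}{\varepsilon}\Big)^n\big(S^0+n\Delta t\big)\leq (2+T^f)e^{\frac{T^f}{\varepsilon}}.
\end{align*}
Under the CFL condition all the $f^n_{i,j}$ are nonnegative, so these sums are genuine $L^1$ norms.

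\textbf{Transport step.} Sum \eqref{tilde f} over $i$ and $j$. The $x$-fluxes telescope to zero by the periodicity of Lemma \ref{discrete periodic}. The $v$-fluxes telescope to the boundary fluxes $\hat G^n_{i,N_v+\frac12}$ and $\hat G^n_{i,-N_v-\frac12}$, because the sum runs over $-N_v\leq j\leq N_v$. With the Neumann condition \eqref{Neumann boundary condition}, these boundary terms are bounded by $|\mathbb{E}^n_i|\big(f^n_{i,N_v}+f^n_{i,-N_v}\big)$. Hence
\begin{align*}
\tilde S^n\leq S^n+2\Delta t\, C_{\mathbb{E},1}\sup_{i,\pm}f^n_{i,\pm N_v}.
\end{align*}
Here $|\mathbb{E}^n_i|\leq C_{\mathbb{E},1}$ by $A^n_2$. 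For the boundary values we use the $L^\infty_q$ bound: $\|f^n\|_{L^\infty_q}\leq\|\tilde f^{n-1}\|_{L^\infty_q}$ up to the factor $(1+(C_{\mathcal{M}}-1)\Delta t/\varepsilon)$ coming from Lemma \ref{Mx to f}, and $\|\tilde f^{n-1}\|_{L^\infty_q}$ is controlled by $A^{n-1}_4$. This gives $f^n_{i,\pm N_v}\leq C\|f_0\|_{L^\infty_q}(1+V_M)^{-q}$. With $V_M=C_{\mathbb{E},1}\Delta v^{-\gamma}$ and $\Delta v<a_5$, the exponent $-1/(q\gamma)$ in $a_5$ is exactly what makes $2C_{\mathbb{E},1}\sup f^n_{i,\pm N_v}\leq 1$. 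We then conclude $\tilde S^n\leq S^n+\Delta t$. This step needs the most care, because the constants must be matched exactly against $a_5$.

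\textbf{Relaxation step.} From \eqref{VPBGK_Scheme},
\begin{align*}
S^{n+1}=\frac{\varepsilon\tilde S^n+\Delta t\sum_{i,j}\mathcal{M}(\tilde f^n_{i,j})\Delta x\Delta v}{\varepsilon+\Delta t}.
\end{align*}
On a discrete grid the Maxwellian does not reproduce $\tilde\rho^n_i$ exactly, so it suffices to show
\begin{align*}
\sum_j\mathcal{M}(\tilde f^n_{i,j})\Delta v\leq 2\tilde\rho^n_i.
\end{align*}
A Riemann-sum comparison for a monotone-on-each-side Gaussian gives
\begin{align*}
\sum_j \frac{1}{\sqrt{2\pi\tilde T^n_i}}e^{-\frac{|v_j-\tilde U^n_i|^2}{2\tilde T^n_i}}\Delta v\leq 1+\frac{\Delta v}{\sqrt{2\pi\tilde T^n_i}}.
\end{align*}
By the temperature lower bound in $A^n_5$, $\tilde T^n_i\geq a_4$, and by $\Delta v<l\sqrt{2a_4}$ with $l\leq\tfrac12$, the correction term is at most $1$. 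Therefore
\begin{align*}
S^{n+1}\leq\frac{\varepsilon+2\Delta t}{\varepsilon+\Delta t}\tilde S^n\leq\Big(1+\frac{\Delta t}{\varepsilon}\Big)\tilde S^n.
\end{align*}
Combining this with the transport step closes the recursion, and hence the induction.
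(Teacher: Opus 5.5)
Your proposal is correct and follows the paper's proof essentially step for step. It uses periodic telescoping in $x$, controls the Neumann boundary flux in $v$ by the $L^\infty_q$ decay at $V_M=C_{\mathbb{E},1}(\Delta v)^{-\gamma}$ and closes it exactly with $\Delta v<a_5$, bounds $\sum_j\mathcal{M}(\tilde f^n_{i,j})\Delta v\le 2\tilde\rho^n_i$ via $\tilde T^n_i\ge a_4$, and iterates the recursion $S^{n+1}\le r(S^n+\Delta t)$ with $r=\frac{\varepsilon+2\Delta t}{\varepsilon+\Delta t}$.

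Two of your steps are slightly cleaner than the paper's. Your unimodal Riemann-sum estimate avoids applying the grid hypothesis to the shifted grid $u^n_{i,j}$, which that hypothesis does not literally cover. Your closed form $(1+\Delta t/\varepsilon)^n(S^0+n\Delta t)$ is what the recursion actually gives, whereas the paper's intermediate claim $2r^n+rn\Delta t$ does not literally close under it. One small caveat: your Riemann-sum step needs $l\le\sqrt{\pi}$ (you wrote $l\le\tfrac12$), and this is not a stated hypothesis; it is a harmless extra shrinking of $l$.
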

\begin{proof}
    To prove this lemma, we claim that
    \begin{align}\label{lem A1 inductive}
        \begin{split}
            \sum_{i,j}f^n_{i,j}\Delta x\Delta v\leq 2\left(\frac{\varepsilon + 2\Delta t}{\varepsilon+\Delta t} \right)^n + \left(\frac{\varepsilon + 2\Delta t}{\varepsilon+\Delta t} \right)n\Delta t,\quad\text{for all }n.
        \end{split}
    \end{align}
    Since $f^{0}_{i,j}=f_0(x_i,v_j)$ for $j\in[-N_v,N_v]$, we get
    \begin{align*}
        \begin{split}
            \sum_{i,j}f^0_{i,j}\Delta x\Delta v= \sum_{i,j}f_0(x_i,v_j)\Delta x\Delta v\leq 2.
        \end{split}
    \end{align*}
    We assume that $f^{m}_{i,j}$ satisfies the stability estimate $\mathcal{A}^m$ (see Definition \ref{stability condition}) for all $m=1,\cdots,n-1$, and \eqref{lem A1 inductive}. Multiplying $\Delta x\Delta v$ to our scheme \eqref{VPBGK_Scheme} and summing over $i,j\in[0,N_x-1]\times [-N_v,N_v]$, we have
    \begin{align}\label{lem A1 1}
        \begin{split}
            \sum_{i,j}f^{n}_{i,j}\Delta x\Delta v = \frac{\varepsilon}{\varepsilon+\Delta t}\sum_{i,j}\tilde{f}^{n-1}_{i,j}\Delta x\Delta v + \frac{\Delta t}{\varepsilon+\Delta t}\sum_{i,j}\mathcal{M}(\tilde{f}^{n-1}_{i,j})\Delta x\Delta v.
        \end{split}
    \end{align}
    Let $u^{n}_{i,j} = \frac{v_j  - \tilde{U}^{n}_i}{\sqrt{2\tilde{T}^{n}_{i}}}$ and $\Delta u^n_i$ is denoted by $\Delta u^n_i=u^n_{i,j}-u^n_{i,j-1}$. Then. we have
    \begin{align}\label{lem A1 delta w}
        \begin{split}
            \Delta u^{n-1}_{i}=\frac{v_j  - \tilde{U}^{n-1}_i}{\sqrt{2\tilde{T}^{n-1}_{i}}}- \frac{v_{j-1}  - \tilde{U}^{n-1}_i}{\sqrt{2\tilde{T}^{n-1}_{i}}}  =\frac{\Delta v}{\sqrt{2\tilde{T}^{n-1}_{i}}}.
        \end{split}
    \end{align}
    Since $A^{n-1}_5$ is valid by the inductive assumption, we have
    \begin{align*}
        \begin{split}
            \tilde{T}^{n-1}_i \geq \left(\frac{C_0 D_{\alpha,2\alpha C_{\mathbb{E},1}T^f}}{4(1+C_{\mathbb{E},2})C_M\|f_0\|_{L^{\infty}_q} }e^{-\left(2\alpha C_{\mathbb{E},1}+\frac{\alpha C^2_{\mathbb{E},1}(T^f+1)}{\eta_1}+C_{\mathbb{E},2}+\frac{C_{\mathcal{M}}}{\varepsilon } \right)T^f} \right)^{\frac{2}{3}}  =a_4.
        \end{split}
    \end{align*}
    Inserting this into \eqref{lem A1 delta w}, the smallness condition $\Delta v<r_{\Delta v}\leq l\sqrt{2a_4}$ gives 
    \begin{align*}
        \begin{split}
            \Delta u^{n-1}_{i}\leq \frac{r_{\Delta v}}{\sqrt{2a_4}}<l.
        \end{split}
    \end{align*}
    Thus, we obtain
    \begin{align*}
        \begin{split}
            \sum_j \mathcal{M}(\tilde{f}^{n-1}_{i,j})\Delta v=\frac{\tilde{\rho}^{n-1}_i}{\sqrt{2\pi \tilde{T}^{n-1}_i}}\sum_je^{-\frac{|v_j -\tilde{U}^{n-1}_i|^2}{2\tilde{T}^{n-1}_{i}}}\Delta v=\frac{\tilde{\rho}^{n-1}_i}{\sqrt{\pi}} \sum_j e^{-|u^{n-1}_{i,j}|^2}\Delta u^{n-1}_{i}\leq 2\tilde{\rho}^{n-1}_i,
        \end{split}
    \end{align*}
    where we used
    \begin{align*}
        \begin{split}
            \sum_j e^{-|u^{n-1}_{i,j}|^2}\Delta u^{n-1}_{i}\leq 2D_{2,0}=2\int_{\mathbb{R}}e^{-|v|^2}dv=2\sqrt{\pi}.
        \end{split}
    \end{align*}    
    This implies 
    \begin{align}\label{lem A1 2}
        \begin{split}
            \sum_{i,j} \mathcal{M}(\tilde{f}^{n-1}_{i,j})\Delta x\Delta v\leq 2\sum_i \tilde{\rho}^{n-1}_i\Delta x= 2\sum_{i,j}\tilde{f}^{n-1}_{i,j}\Delta x\Delta v.
        \end{split}
    \end{align}
    Inserting \eqref{lem A1 2} into \eqref{lem A1 1}, we get
    \begin{align}\label{lem A1 3}
        \begin{split}
            \sum_{i,j}f^n_{i,j}\Delta x\Delta v&\leq \left(\frac{\varepsilon + 2\Delta t}{\varepsilon+\Delta t}\right)\sum_{i,j}\tilde{f}^{n-1}_{i,j}\Delta x\Delta v.
        \end{split}
    \end{align}
    Recall the definition of $\tilde{f}^{n-1}_{i,j}$ from \eqref{tilde f}:
    \begin{align}\label{lem A1 tilde f}
        \begin{split}
            \tilde{f}^{n-1}_{i,j}&= {f}^{n-1}_{i,j}-\frac{\Delta t}{\Delta x}\left(\hat{F}^{n-1}_{i+\frac{1}{2},j}-\hat{F}^{n-1}_{i-\frac{1}{2},j} \right)-\frac{\Delta t}{\Delta v}\left(\hat{G}^{n-1}_{i,j+\frac{1}{2}}-\hat{G}^{n-1}_{i,j-\frac{1}{2}} \right).
        \end{split}
    \end{align}
    Multiplying $\Delta x\Delta v$ to \eqref{lem A1 tilde f}, and summing over $[0,N_x-1]\times [-N_v,N_v]$, we yield
    \begin{align}\label{lem A1 4}
        \begin{split}
            &\sum_{i,j}\tilde{f}^{n-1}_{i,j}\Delta x\Delta v\cr
            &\quad=\sum_{i,j}{f}^{n-1}_{i,j}\Delta x\Delta v\cr
            &\quad-\frac{\Delta t}{\Delta x}\sum_{i,j}\left(\hat{F}^{n-1}_{i+\frac{1}{2},j}-\hat{F}^{n-1}_{i-\frac{1}{2},j} \right)\Delta x\Delta v-\frac{\Delta t}{\Delta v}\sum_{i,j}\left(\hat{G}^{n-1}_{i,j+\frac{1}{2}}-\hat{G}^{n-1}_{i,j-\frac{1}{2}} \right)\Delta x\Delta v.
        \end{split}
    \end{align}
    By Lemma \ref{discrete periodic}, the second term vanishes by the telescoping summation:
    \begin{align}\label{lem A1 5}
        \begin{split}
            \sum_i \left(\hat{F}^{n-1}_{i+\frac{1}{2},j}-\hat{F}^{n-1}_{i-\frac{1}{2},j} \right)&=v^{+}_j\sum_i (f^{n-1}_{i,j}-f^{n-1}_{i-1,j})+v^{-}_j\sum_i (f^{n-1}_{i+1,j}-f^{n-1}_{i,j})\cr
            &= v^+_j(f^{n-1}_{N_x-1}-f^{n-1}_{-1,j})+v^-_j(f^{n-1}_{N_x,j}-f^{n-1}_{0,j})\cr
            &=0.
        \end{split}
    \end{align}
    To derive the third term, we apply the boundary condition \eqref{Neumann boundary condition} to get
    \begin{align*}
        \begin{split}
            \sum_{j}\left(\hat{G}^{n-1}_{i,j+\frac{1}{2}}-\hat{G}^{n-1}_{i,j-\frac{1}{2}} \right)\Delta v&={\mathbb{E}^{n-1}_i}^{+}\sum_j(f^{n-1}_{i,j}-f^{n-1}_{i,j-1})\Delta v+{\mathbb{E}^{n-1}_i}^{-}\sum_j(f^{n-1}_{i,j+1}-f^{n-1}_{i,j})\Delta v\cr
            &={\mathbb{E}^{n-1}_i}^{+}(f^{n-1}_{i,N_v}-f^{n-1}_{i,-N_v-1})\Delta v+{\mathbb{E}^{n-1}_i}^{-}(f^{n-1}_{i,N_v+1}-f^{n-1}_{i,-N_v})\Delta v\cr
            &={\mathbb{E}^{n-1}_i}^{+}(f^{n-1}_{i,N_v}-f^{n-1}_{i,-N_v})\Delta v+{\mathbb{E}^{n-1}_i}^{-}(f^{n-1}_{i,N_v}-f^{n-1}_{i,-N_v})\Delta v\cr
            &=|\mathbb{E}^{n-1}_i|(f^{n-1}_{i,N_v}-f^{n-1}_{i,-N_v})\Delta v.
        \end{split}
    \end{align*}
    Then, the truncation assumption \eqref{V_M large} for $V_M$ leads to
    \begin{align}\label{lem A1 6}
        \begin{split}
            -\sum_{j}\left(\hat{G}^{n-1}_{i,j+\frac{1}{2}}-\hat{G}^{n-1}_{i,j-\frac{1}{2}} \right)\Delta v&= |\mathbb{E}^{n-1}_i|(f^{n-1}_{i,-N_v}-f^{n-1}_{i,N_v})\Delta v\cr
            &\leq |\mathbb{E}^{n-1}_i|\frac{(1+V_M)^q(|f^{n-1}_{i,-N_v}|+|f^{n-1}_{i,N_v}|)\Delta v}{V_M^q}\cr
            &\leq \frac{2|\mathbb{E}^{n-1}_i|\|f^{n-1}\|_{L^{\infty}_q}\Delta v}{V_M^q}\cr
            &\leq 2C^{-q}_{\mathbb{E},1}|\mathbb{E}^{n-1}_i|\|f^{n-1}\|_{L^{\infty}_q}(\Delta v)^{q\gamma +1}.
        \end{split}
    \end{align} 
    From inductive assumptions $A^{n-1}_2$, we have
    \begin{align}\label{lem A1 inductiv field}
        \begin{split}
            |\mathbb{E}^{n-1}_i|\leq C_{\mathbb{E},1}.
        \end{split}
    \end{align}
    By inductive assumption $A^{n-2}_4$ and Lemma \ref{Mx to f}, we get
    \begin{align}\label{lem A1 inductiv f}
        \begin{split}
           \|f^{n-1}\|_{L^{\infty}_q}&\leq \frac{\varepsilon}{\varepsilon+\Delta t}\|\tilde{f}^{n-2}\|_{L^{\infty}_q}+\frac{\Delta t}{\varepsilon+\Delta t}\|\mathcal{M}(\tilde{f}^{n-2})\|_{L^{\infty}_q}\cr
            &\leq \frac{\varepsilon+C_{\mathcal{M}}\Delta t}{\varepsilon+\Delta t}\|\tilde{f}^{n-2}\|_{L^{\infty}_q}\cr
            &\leq (1+C_{\mathbb{E},2})\left(\frac{\varepsilon+C_{\mathcal{M}}\Delta t}{\varepsilon+\Delta t}\right)e^{\left(C_{\mathbb{E},2}+\frac{C_{\mathcal{M}}-1}{\varepsilon} \right)T^f}\|f_0\|_{L^{\infty}_q}\cr
            &\leq (1+C_{\mathbb{E},2})e^{\left(C_{\mathbb{E},2}+\frac{2(C_{\mathcal{M}}-1)}{\varepsilon} \right)T^f}\|f_0\|_{L^{\infty}_q},
        \end{split}
    \end{align}
    where we used
    \begin{align*}
        \begin{split}
            \frac{\varepsilon+C_{\mathcal{M}}\Delta t}{\varepsilon+\Delta t}\leq 1+ \frac{(C_{\mathcal{M}}-1)\Delta t}{\varepsilon+\Delta t}\leq e^{\frac{(C_{\mathcal{M}}-1)\Delta t}{\varepsilon+\Delta t}}\leq e^{\frac{(C_{\mathcal{M}}-1)T^f}{\varepsilon}}.
        \end{split}
    \end{align*}
    Plugging \eqref{lem A1 inductiv field} and \eqref{lem A1 inductiv f} into \eqref{lem A1 6}, we obtain
    \begin{align*}
        \begin{split}
            -\sum_{j}\left(\hat{G}^{n-1}_{i,j+\frac{1}{2}}-\hat{G}^{n-1}_{i,j-\frac{1}{2}} \right)\Delta v\leq 2 C^{1-q}_{\mathbb{E},1}(1+C_{\mathbb{E},2})e^{\left(C_{\mathbb{E},2}+\frac{2(C_{\mathcal{M}}-1)}{\varepsilon} \right)T^f}\|f_0\|_{L^{\infty}_q}(\Delta v)^{q\gamma+1}.
        \end{split}
    \end{align*}
    Then, the smallness condition $\Delta v<r_{\Delta v}\leq a_5$ gives 
    \begin{align}\label{lem A1 7}
        \begin{split}
            &-\frac{\Delta t}{\Delta v}\sum_{i,j}\left( \hat{G}^{n-1}_{i,j+\frac{1}{2}} - \hat{G}^{n-1}_{i,j-\frac{1}{2}}\right)\Delta x\Delta v\cr
            &\qquad\leq 2 C^{1-q}_{\mathbb{E},1}(1+C_{\mathbb{E},2})e^{\left(C_{\mathbb{E},2}+\frac{2(C_{\mathcal{M}}-1)}{\varepsilon} \right)T^f}\|f_0\|_{L^{\infty}_q}(\Delta v)^{q\gamma}\Delta t\sum_i\Delta x\cr
            &\qquad\leq \Delta t\sum_i\Delta x\cr
            &\qquad=\Delta t.
        \end{split}
    \end{align}
    Inserting \eqref{lem A1 5} and \eqref{lem A1 7} into \eqref{lem A1 4}, we get
    \begin{align}\label{lem A1 8}
        \begin{split}
            \sum_{i,j}\tilde{f}^{n-1}_{i,j}\Delta x\Delta v\leq \sum_{i,j}f^{n-1}_{i,j}\Delta x\Delta v+\Delta t.
        \end{split}
    \end{align}
    This implies that \eqref{lem A1 3} gives
    \begin{align*}
        \begin{split}
            \sum_{i,j}f^n_{i,j}\Delta x\Delta v\leq \left(\frac{\varepsilon+2\Delta t}{\varepsilon+\Delta t} \right)\sum_{i,j}f^{n-1}_{i,j}\Delta x\Delta v+ \left(\frac{\varepsilon+2\Delta t}{\varepsilon+\Delta t} \right)\Delta t.
        \end{split}
    \end{align*}
    Finally, the inductive assumption on $f^{n-1}_{i,j}$ leads to
    \begin{align*}
        \begin{split}
            \sum_{i,j}f^{n}_{i,j}\Delta x\Delta v\leq 2\left(\frac{\varepsilon + 2\Delta t}{\varepsilon+\Delta t} \right)^n + \left(\frac{\varepsilon + 2\Delta t}{\varepsilon+\Delta t} \right)n\Delta t.
        \end{split}
    \end{align*}
    Therefore, the claim \eqref{lem A1 inductive} is proved by induction. Moreover, using $(1+x)^n\leq e^{nx}$ and $n\Delta t\leq N_t \Delta t\leq T^f$, we have 
    \begin{align*}
        \begin{split}
            2\left(\frac{\varepsilon + 2\Delta t}{\varepsilon+\Delta t} \right)^n+\left(\frac{\varepsilon + 2\Delta t}{\varepsilon+\Delta t} \right)n\Delta t
            &\leq (2+n\Delta t)\left(\frac{\varepsilon + 2\Delta t}{\varepsilon+\Delta t} \right)^n\cr
            &=(2+n\Delta t)\left(1+\frac{\Delta t}{\varepsilon+\Delta t} \right)^n\cr
            &\leq (2+n\Delta t)e^{\frac{n\Delta t}{\varepsilon+\Delta t}}\cr
            &\leq (2+T^f)e^{\frac{T^f}{\varepsilon}}.
        \end{split}
    \end{align*}
    This gives the desired result.
\end{proof}

\begin{lemma}\label{A2}\textnormal{(Proof of the stability estimate $A^n_2$)}
    Assume $\Delta x$, $\Delta v$, $\Delta t$, and $V_M$ satisfy the hypothesis of Theorem \ref{stability theorem}. Then, $f^{n}_{i,j}$ satisfies $A^n_2$ for all $n\geq 0$, that is,
    \begin{align*}
        \begin{split}
            \|\mathbb{E}^n\|_{L^{\infty}_x}\leq C_{\mathbb{E},1}.
        \end{split}
    \end{align*}
\end{lemma}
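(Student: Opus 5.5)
The estimate follows directly from the discrete Green-kernel formula \eqref{Discrete field 2} together with the $L^1$ bound $A^n_1$ already established in Lemma \ref{A1}. Since $K$ is defined in \eqref{Green kernel} with $0\le x,y\le 1$, we have $|K(x_i,y_k)|\le 1$ for all $i,k$: on $0\le y\le x$ it equals $y\in[0,1]$, and on $x<y\le 1$ it equals $y-1\in(-1,0]$. This uniform kernel bound is the only property of $K$ we need.

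For fixed $n$ and $i$, apply the triangle inequality to \eqref{Discrete field 2}:
\begin{align*}
    \begin{split}
        |\mathbb{E}^n_i|\leq \sum_k |K(x_i,y_k)|\,|\rho^n_k-1|\Delta y\leq \sum_k \rho^n_k\Delta y+\sum_k\Delta y .
    \end{split}
\end{align*}
Here we use $\rho^n_k\ge 0$, which follows from the positivity of $f^n_{k,j}$ (see the remark after Theorem \ref{stability theorem}; for $n=0$ it follows from $f_0\ge0$). Because $k$ runs over the spatial mesh $0\le k\le N_x-1$ with $\Delta y=\Delta x$ and $N_x\Delta x=1$, the second sum equals $1$. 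For the first sum, Definition \ref{discrete Mx and MF} gives
\[
\sum_k\rho^n_k\Delta x=\sum_{k,j}f^n_{k,j}\Delta x\Delta v .
\]
By Lemma \ref{A1}, this is at most $(2+T^f)e^{T^f/\varepsilon}$. Hence $|\mathbb{E}^n_i|\le (2+T^f)e^{T^f/\varepsilon}+1=C_{\mathbb{E},1}$ for every $i$, which is exactly $A^n_2$.

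The one point needing care is the order of the induction. Lemma \ref{A1} is proved inductively, and at step $n$ it uses $A^{n-1}_2$ in \eqref{lem A1 inductiv field}. So we should present Lemma \ref{A2} as part of the same joint induction: assume $\mathcal{A}^{m}$ for $m\le n-1$, obtain $A^n_1$ from Lemma \ref{A1}, then deduce $A^n_2$ as above. The base case $n=0$ is direct, since $\sum f^0\Delta x\Delta v\le 2\le(2+T^f)e^{T^f/\varepsilon}$ by the hypothesis on $l$. We also need positivity of $f^n$, which requires the CFL condition \eqref{CFL condition 0} at step $n-1$; that condition is guaranteed by $A^{n-1}_2$ and Remark \ref{Delta x Delta v restriction}. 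So no circularity arises, and this induction bookkeeping is the only genuine obstacle.
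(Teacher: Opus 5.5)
Your proposal is correct and follows the paper's proof: the triangle inequality on the discrete Green-kernel sum \eqref{Discrete field 2}, the bound $|K(x_i,y_k)|\le 1$, $\sum_k\Delta y=1$, and the $L^1$ bound $A^n_1$ from Lemma \ref{A1}. The paper leaves implicit that positivity of $f^n_{k,j}$ is needed to identify $\sum_k|\rho^n_k|\Delta y$ with the quantity bounded in $A^n_1$, and that Lemmas \ref{A1} and \ref{A2} must be run as one joint induction; your explicit treatment of both points is accurate.
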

\begin{proof}
    According to the definition of the discrete electric field \eqref{Discrete field 2}, for $k\in[0,N_x-1]$ and $j\in[-N_v,N_v]$, we have
    \begin{align*}
        \begin{split}
            |\mathbb{E}^n_i|\leq \sum_{k,j} |K(x_i,y_k)|f^n_{k,j}\Delta y\Delta v + \sum_k |K(x_i,y_k)|\Delta y.
        \end{split}
    \end{align*}
    Since $\underset{i,k}{\sup}|K(x_i,y_k)|\leq 1$ by the definition of the Green kernel \eqref{Green kernel} and $y_k$ shares the same mesh size with $x_i$, Lemma \ref{A1} gives
    \begin{align*}
        \begin{split}
            \|\mathbb{E}^n\|_{L^{\infty}_x}\leq \sum_{k,j} f^n_{k,j}\Delta y\Delta v + 1\leq (2+T^f)e^{\frac{T^f}{\varepsilon}}+1 = C_{\mathbb{E},1}.
        \end{split}
    \end{align*}
    This completes the proof.
\end{proof}

\begin{lemma}\label{A3}\textnormal{(Proof of the stability estimate $A^n_3$)}
    Assume $\Delta x$, $\Delta v$, $\Delta t$, and $V_M$ satisfy the hypothesis of Theorem \ref{stability theorem}. Then, $f^{n}_{i,j}$ satisfies $A^n_3$ for all $n\geq 0$, that is,
    \begin{align*}
        \begin{split}
            \tilde{f}^{n}_{i,j}\geq \frac{C_0}{2}e^{-\left(2\alpha C_{\mathbb{E},1}+\frac{\alpha C^2_{\mathbb{E},1}(T^f+1)}{\eta_1}+\frac{1}{\varepsilon }\right)T^f}e^{-|v_j|^\alpha}e^{-2\alpha C_{\mathbb{E},1}T^f|v_j|}.
        \end{split}
    \end{align*}
\end{lemma}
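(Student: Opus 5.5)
We argue by induction on $n$, following the strategy from the introduction. Write $\theta=\varepsilon/(\varepsilon+\Delta t)$ and
\[
\Pi_n(v_j)=\prod_{k=1}^{n}\bigl(1-\alpha C_{\mathbb{E},1}\Delta t(|v_j|+k\Delta v+1)\bigr).
\]
The inductive hypothesis is
\[
\tilde f^{n}_{i,j}\ \ge\ C_0\,\Pi_{n+1}(v_j)\,\theta^{n}e^{-|v_j|^\alpha},
\]
and also, for $f^n$ itself, $f^n_{i,j}\ge C_0\Pi_n(v_j)\theta^n e^{-|v_j|^\alpha}$. Since $\mathcal M\ge 0$ and $\tilde f\ge 0$ under the CFL condition (Remark \ref{Delta x Delta v restriction}), the scheme \eqref{VPBGK_Scheme} gives $f^{n}_{i,j}\ge\theta\tilde f^{n-1}_{i,j}$. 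So it suffices to propagate a lower bound from $f^n$ to $\tilde f^n$ through the convex combination \eqref{convex_combi}, with the CFL condition guaranteed by Lemma \ref{A2} together with Remark \ref{Delta x Delta v restriction}. At $n=0$ the hypothesis $f_0\ge C_0e^{-|v|^\alpha}$ gives the base case. At the boundary indices $j=\pm(N_v+1)$ the Neumann condition copies the value at $\pm N_v$, and there $e^{-|v_{N_v}|^\alpha}\ge e^{-|v_{N_v+1}|^\alpha}$, so boundary neighbours only help.

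For the inductive step, fix $i,j$ and use the symmetry of \eqref{convex_combi} to reduce to $v_j\ge0$. The spatial neighbours $f^n_{i\pm1,j}$ carry the same velocity index, so their lower bound is the same function of $v_j$ and they contribute exactly as in the non-ionized case. For the velocity neighbours there are two cases.

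\textbf{Acceleration} ($\mathbb E^n_i\ge0$): the neighbour is $f^n_{i,j-1}$, and $|v_{j-1}|\le|v_j|$ gives both $e^{-|v_{j-1}|^\alpha}\ge e^{-|v_j|^\alpha}$ and $\Pi_n(v_{j-1})\ge\Pi_n(v_j)$. For $j=0$ the neighbour has $|v_{-1}|=\Delta v$, and this is absorbed like the deceleration case. The combination stays above $C_0\Pi_n(v_j)\theta^ne^{-|v_j|^\alpha}\ge C_0\Pi_{n+1}(v_j)\theta^n e^{-|v_j|^\alpha}$.

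\textbf{Deceleration} ($\mathbb E^n_i<0$): the neighbour is $f^n_{i,j+1}$, and we write
\[
\tilde f^n_{i,j}\ \ge\ C_0\theta^n\Pi_n(v_{j+1})\Bigl(e^{-|v_j|^\alpha}-\tfrac{\Delta t}{\Delta v}|\mathbb E^n_i|\bigl(e^{-|v_j|^\alpha}-e^{-|v_{j+1}|^\alpha}\bigr)\Bigr).
\]
By the mean value theorem, $e^{-|v_j|^\alpha}-e^{-|v_{j+1}|^\alpha}\le \alpha\Delta v(|v_j|+\Delta v+1)e^{-|v_j|^\alpha}$, using $\alpha|\xi|^{\alpha-1}\le\alpha(|\xi|+1)$ for $\alpha\in[1,2]$. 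Together with $|\mathbb E^n_i|\le C_{\mathbb E,1}$, the bracket is at least $\bigl(1-\alpha C_{\mathbb E,1}\Delta t(|v_j|+\Delta v+1)\bigr)e^{-|v_j|^\alpha}$. The remaining mismatch is that $\Pi_n$ is evaluated at $v_{j+1}$ rather than $v_j$. This is exactly why the factors in the product carry the shift $k\Delta v$: since $\Pi_n(v_{j+1})$ has factors $1-\alpha C_{\mathbb E,1}\Delta t(|v_j|+(k+1)\Delta v+1)$, we get $\Pi_n(v_{j+1})\cdot(\text{new factor})=\Pi_{n+1}(v_j)$ after reindexing, which closes the induction. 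I would define the product with this index bookkeeping and check it carefully; it is the main obstacle. Each factor must also be positive, which requires $\alpha C_{\mathbb E,1}\Delta t(V_M+T^f\!/\Delta t\cdot\Delta v+1)<1$. By $\Delta t<\eta_2\Delta v/C_{\mathbb E,1}$ and $V_M\Delta v\sim\Delta v^{1-\gamma}$, this follows from the smallness $\Delta v<r_{\Delta v}$ (the term $1/(4\alpha\{C_{\mathbb E,1}(T^f+1)+1\})$).

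To convert to the uniform bound, use $\log(1-x)\ge -x-x^2$ for $x\le1/2$ and $\theta^n\ge e^{-T^f/\varepsilon}$. Then $\sum_{k\le n+1}\alpha C_{\mathbb E,1}\Delta t(|v_j|+k\Delta v+1)$ is bounded by $\alpha C_{\mathbb E,1}(T^f+\Delta t)|v_j|+\alpha C_{\mathbb E,1}(T^f+1)+\alpha C_{\mathbb E,1}(n+1)^2\Delta t\Delta v$. Using $\Delta v<C_{\mathbb E,1}\Delta t/\eta_1$ gives $(n+1)^2\Delta t\Delta v\le C_{\mathbb E,1}(T^f+1)^2/\eta_1$, which is of the form in $A^n_3$ after absorbing constants. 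The quadratic remainder $\sum x_k^2$ is small by the mesh restrictions and produces the factor $\tfrac12$. This yields
\[
\tilde f^n_{i,j}\ge\frac{C_0}{2}e^{-(2\alpha C_{\mathbb E,1}+\alpha C_{\mathbb E,1}^2(T^f+1)/\eta_1+1/\varepsilon)T^f}e^{-|v_j|^\alpha}e^{-2\alpha C_{\mathbb E,1}T^f|v_j|}.
\]
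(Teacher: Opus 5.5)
Your inductive core matches the paper's. You use the same hypothesis $\tilde f^n_{i,j}\ge C_0\Pi_{n+1}(v_j)\theta^n e^{-|v_j|^\alpha}$, the bound $f^n_{i,j}\ge\theta\tilde f^{n-1}_{i,j}$, the acceleration/deceleration split, the mean value bound, the reindexing $|v_{j+1}|+k\Delta v=|v_j|+(k+1)\Delta v$, and the Neumann treatment at $j=N_v$.

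\textbf{The gap is in the final conversion to the explicit constants of $A^n_3$.} You claim the quadratic remainder $\sum_{k\le n+1}x_k^2$, with $x_k=\alpha C_{\mathbb{E},1}\Delta t(|v_j|+k\Delta v+1)$, is small and produces the factor $\frac12$. It is not small uniformly in $j$. Since $x_k\ge\alpha C_{\mathbb{E},1}\Delta t|v_j|$, taking $|v_j|\approx V_M=C_{\mathbb{E},1}\Delta v^{-\gamma}$ and $n\approx N_t$ gives $\sum_k x_k^2\gtrsim T^f\Delta v^{1-2\gamma}$ (using $\Delta t>\eta_1\Delta v/C_{\mathbb{E},1}$). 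This is of order one for $\gamma=\frac12$ and unbounded for $\gamma>\frac12$.

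So the remainder cannot go into a bounded prefactor. It must be absorbed into the $|v_j|$-exponential, which uses up the factor $2$ in $e^{-2\alpha C_{\mathbb{E},1}T^f|v_j|}$. Because you keep all $n+1$ factors, $(n+1)\Delta t$ can reach $T^f+\Delta t$. The resulting coefficient $(1+\max_k x_k)\alpha C_{\mathbb{E},1}(T^f+\Delta t)$ is not bounded by $2\alpha C_{\mathbb{E},1}T^f$ under the stated hypotheses (e.g.\ for small $N_t$).

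The $|v_j|$-independent part of your exponent also overshoots. Your $\alpha C_{\mathbb{E},1}(T^f+1)+\alpha C^2_{\mathbb{E},1}(T^f+1)^2/\eta_1$ always exceeds the admissible $2\alpha C_{\mathbb{E},1}T^f+\alpha C^2_{\mathbb{E},1}(T^f+1)T^f/\eta_1+\ln 2$. These constants are explicit and are used downstream (in $a_1,\dots,a_4$ and $r_{\Delta v}$), so ``absorbing constants'' is not an option.

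\textbf{The repair is the paper's bookkeeping.}
\begin{itemize}
\item Every factor satisfies $1-x_k\ge\frac12$, so drop the $k=n+1$ factor at the cost of $\frac12$. This, not the quadratic remainder, is where the $\frac12$ comes from.
\item For $k\le n$, use $1-x=(1+\frac{x}{1-x})^{-1}\ge(1+2x)^{-1}\ge e^{-2x}$ for $0\le x\le\frac12$. This gives $\prod_{k\le n}(1-x_k)\ge e^{-2\alpha C_{\mathbb{E},1}n\Delta t(|v_j|+1)}e^{-\alpha C_{\mathbb{E},1}n(n+1)\Delta t\Delta v}$.
\item Then $n\Delta t\le T^f$ and $n(n+1)\Delta t\Delta v<C_{\mathbb{E},1}n(n+1)(\Delta t)^2/\eta_1\le C_{\mathbb{E},1}T^f(T^f+1)/\eta_1$ yield exactly $A^n_3$.
\end{itemize}

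\textbf{Your justification of $x_k\le\frac12$ cites the wrong condition.} The bound $\alpha C_{\mathbb{E},1}\Delta t\, V_M<\alpha\eta_2C_{\mathbb{E},1}\Delta v^{1-\gamma}$ is not made small by $\Delta v<1/(4\alpha\{C_{\mathbb{E},1}(T^f+1)+1\})$ when $\gamma$ is close to $1$. The $|v_j|$ term is controlled by the CFL bound $|v_j|\Delta t<\Delta x$ together with $\Delta x<1/(4\alpha C_{\mathbb{E},1})$. The $\Delta v$ condition only handles $\alpha C_{\mathbb{E},1}k\Delta t\Delta v+\alpha C_{\mathbb{E},1}\Delta t$, via $k\Delta t\le T^f+1$ and $C_{\mathbb{E},1}\Delta t<\Delta v$.
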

\begin{proof}
    This proof only focuses on the case $v_j>0$. The analysis for $v_j\leq 0$ is omitted, as it proceeds analogously due to the symmetry of the update rule \eqref{convex_combi}. We divide our analysis into two cases, whether the velocity of the particle $v_j$ is accelerated or decelerated by the electric field $\mathbb{E}^n_i$. The first aim of this proof is to establish the following estimate: 
    \begin{align}\label{lem A3 inductive}
        \begin{split}
            \tilde{f}^n_{i,j} \geq C_{0}\left\{ \prod^{n+1}_{k=1}(1-\alpha C_{\mathbb{E},1}\Delta t(|v_j|+k\Delta v+1))\right\}\left(\frac{\varepsilon}{\varepsilon+\Delta t} \right)^{n}e^{-|v_j|^{\alpha}},\quad\text{for all }n.
        \end{split}
    \end{align}
\textbf{(1-1) $\boldsymbol{0<v_j< V_M}$ and $\boldsymbol{\mathbb{E}^n_i<0}$ for all $\boldsymbol{n}$.}\\
First, we consider the case in which the particle is decelerated by the electric field. This is the case where the mixing of velocity grid indices has a critical impact on the stability estimate. From the update rule \eqref{convex_combi}, we have 
\begin{align}\label{lem A3 bad update}
    \begin{split}
        \tilde{f}^n_{i,j}=\left(1-\frac{\Delta t}{\Delta x}|v_j|-\frac{\Delta t}{\Delta v}|\mathbb{E}^n_i| \right)f^n_{i,j}+\frac{\Delta t}{\Delta x}|v_j|f^n_{i-1,j}+\frac{\Delta t}{\Delta v}|\mathbb{E}^n_i|f^n_{i,j+1},\quad \text{for all }n.
    \end{split}
\end{align}
For the initial step, we compute $\tilde{f}^0_{i,j}$ using \eqref{lem A3 bad update}:
    \begin{align*}
        \begin{split}
            \tilde{f}^0_{i,j}&=\left(1-\frac{\Delta t}{\Delta x}|v_j|-\frac{\Delta t}{\Delta v}|\mathbb{E}^0_i| \right)f^{0}_{i,j} + \frac{\Delta t}{\Delta x}|v_j|f^0_{i-1,j} + \frac{\Delta t}{\Delta v}|\mathbb{E}^0_i|f^0_{i,j+1}\cr
            &\geq C_0\left\{\left(1-\frac{\Delta t}{\Delta x}|v_j|-\frac{\Delta t}{\Delta v}|\mathbb{E}^0_i| \right)e^{-|v_j|^{\alpha}}+ \frac{\Delta t}{\Delta x}|v_j|e^{-|v_j|^{\alpha}}+ \frac{\Delta t}{\Delta v}|\mathbb{E}^0_i|e^{-|v_{j+1}|^{\alpha}}\right\}\cr
            &\geq C_0\left\{e^{-|v_j|^{\alpha}} - \frac{\Delta t}{\Delta v}|\mathbb{E}^0_i|\left(e^{-|v_j|^{\alpha}}-e^{-|v_{j+1}|^{\alpha}}\right)\right\}.
        \end{split}
    \end{align*}
    Since $\alpha\in[1,2]$, for $v_j> 0$ and $\theta\in(0,1)$, we apply the mean value theorem to obtain
    \begin{align}\label{lem A3 2}
        \begin{split}
            e^{-|v_j|^{\alpha}}-e^{-|v_{j+1}|^{\alpha}}&\leq \alpha\Delta v(v_j+\theta\Delta v)^{\alpha-1} e^{-|v_j+\theta\Delta v|^{\alpha}}\cr
            &\leq \alpha\Delta v(|v_j|+\Delta v+1) e^{-|v_j|^{\alpha}}.
        \end{split}
    \end{align}
    Applying this, Lemma \ref{A2} leads to
    \begin{align*}
        \begin{split}
            \tilde{f}^0_{i,j} &\geq C_0\left\{e^{-|v_j|^{\alpha}} - \frac{\Delta t}{\Delta v}|\mathbb{E}^0_i|\left(e^{-|v_j|^{\alpha}}-e^{-|v_{j+1}|^{\alpha}}\right)\right\}\cr
            &\geq C_0(1-\alpha |\mathbb{E}^0_i|(|v_j|+\Delta v+1)\Delta t)e^{-|v_j|^{\alpha}}\cr
            &\geq C_0(1-\alpha C_{\mathbb{E},1}(|v_j|+\Delta v+1)\Delta t)e^{-|v_j|^{\alpha}}.
        \end{split}
    \end{align*}
    Now we assume that the inductive hypothesis \eqref{lem A3 inductive} holds for the $(n-1)$-th step. Then, from our scheme \eqref{VPBGK_Scheme}, we have
    \begin{align}\label{lem A3 1}
        \begin{split}
            f^n_{i,j}\geq \frac{\varepsilon}{\varepsilon+\Delta t} \tilde{f}^{n-1}_{i,j} \geq C_{0}\left\{ \prod^{n}_{k=1}(1-\alpha C_{\mathbb{E},1}\Delta t(|v_j|+k\Delta v+1))\right\}\left(\frac{\varepsilon}{\varepsilon+\Delta t} \right)^{n}e^{-|v_j|^{\alpha}}.
        \end{split}
    \end{align}
    Plugging this into the update rule \eqref{lem A3 bad update}, we obtain
    \begin{align*}
        \begin{split}
            \tilde{f}^{n}_{i,j}&= \left(1-\frac{\Delta t}{\Delta x}|v_j|-\frac{\Delta t}{\Delta v}|\mathbb{E}^n_i| \right)f^{n}_{i,j} + \frac{\Delta t}{\Delta x}|v_j|f^n_{i-1,j} + \frac{\Delta t}{\Delta v}|\mathbb{E}^n_i|f^n_{i,j+1}\cr
            &\geq C_{0}\left\{ \prod^{n}_{k=1}(1-\alpha C_{\mathbb{E},1}\Delta t(|v_j|+k\Delta v+1))\right\}\left(\frac{\varepsilon}{\varepsilon+\Delta t} \right)^n\left(1-\frac{\Delta t}{\Delta x}|v_j|-\frac{\Delta t}{\Delta v}|\mathbb{E}^n_i| \right)e^{-|v_j|^{\alpha}}\cr
            &+ C_{0}\left\{ \prod^{n}_{k=1}(1-\alpha C_{\mathbb{E},1}\Delta t(|v_j|+k\Delta v+1))\right\}\left(\frac{\varepsilon}{\varepsilon+\Delta t} \right)^n\frac{\Delta t}{\Delta x}|v_j|e^{-|v_j|^{\alpha}}\cr
            &+ C_{0}\left\{ \prod^{n}_{k=1}(1-\alpha C_{\mathbb{E},1}\Delta t(|v_{j+1}|+k\Delta v+1))\right\}\left(\frac{\varepsilon}{\varepsilon+\Delta t} \right)^n\frac{\Delta t}{\Delta v}|\mathbb{E}^n_i|e^{-|v_{j+1}|^{\alpha}}.
        \end{split}
    \end{align*}
    Since $|v_{j+1}|>|v_j|$ for positive $v_j$, the terms of the partial product on the right-hand sides are bounded below such that
    \begin{align*}
        \begin{split}
            \left\{ \prod^{n}_{k=1}(1-\alpha C_{\mathbb{E},1}\Delta t(|v_j|+k\Delta v+1))\right\}\geq \left\{ \prod^{n}_{k=1}(1-\alpha C_{\mathbb{E},1}\Delta t(|v_{j+1}|+k\Delta v+1))\right\}.
        \end{split}
    \end{align*}
    Using this, we have
    \begin{align*}
        \begin{split}
            \tilde{f}^n_{i,j}&\geq C_0\left\{ \prod^{n}_{k=1}(1-\alpha C_{\mathbb{E},1}\Delta t(|v_{j+1}|+k\Delta v+1))\right\}\left(\frac{\varepsilon}{\varepsilon+\Delta t} \right)^n\cr
            &\qquad\times\left(1-\frac{\Delta t}{\Delta x}|v_j|-\frac{\Delta t}{\Delta v}|\mathbb{E}^n_i| \right)e^{-|v_j|^\alpha}+\frac{\Delta t}{\Delta x}|v_j|e^{-|v_j|^\alpha} + \frac{\Delta t}{\Delta v}|\mathbb{E}^n_i|e^{-|v_{j+1}|^\alpha}\cr
            &=C_{0}\left\{ \prod^{n}_{k=1}(1-\alpha C_{\mathbb{E},1}\Delta t(|v_{j+1}|+k\Delta v+1))\right\}\left(\frac{\varepsilon}{\varepsilon+\Delta t} \right)^n\cr
            &\qquad\times\left\{e^{-|v_j|^{\alpha}} - \frac{\Delta t}{\Delta v}|\mathbb{E}^n_i|\left(e^{-|v_j|^{\alpha}}-e^{-|v_{j+1}|^{\alpha}} \right)\right\}\cr
            &=C_{0}\left\{ \prod^{n+1}_{k=2}(1-\alpha C_{\mathbb{E},1}\Delta t(|v_{j}|+k\Delta v+1))\right\}\left(\frac{\varepsilon}{\varepsilon+\Delta t} \right)^n\cr
            &\qquad\times\left\{e^{-|v_j|^{\alpha}} - \frac{\Delta t}{\Delta v}|\mathbb{E}^n_i|\left(e^{-|v_j|^{\alpha}}-e^{-|v_{j+1}|^{\alpha}} \right)\right\},
        \end{split}
    \end{align*}
     where we used $|v_{j+1}|+k\Delta v=|v_j|+(k+1)\Delta v$ in the last line. Using \eqref{lem A3 2} and Lemma \ref{A2}, we have
    \begin{align*}
        \begin{split}
            e^{-|v_{j}|^\alpha}-\frac{\Delta t}{\Delta v}|\mathbb{E}^{n}_i|(e^{-|v_j|^\alpha}-e^{-|v_{j+1}|^\alpha}) \geq (1-\alpha C_{\mathbb{E},1}\Delta t(|v_j|+\Delta v+1))e^{-|v_j|^2}.
        \end{split}
    \end{align*}
    This implies
    \begin{align*}
        \begin{split}
            \tilde{f}^{n}_{i,j}\geq C_{0}\left\{ \prod^{n+1}_{k=1}(1-\alpha C_{\mathbb{E},1}\Delta t(|v_{j}|+k\Delta v+1))\right\}\left(\frac{\varepsilon}{\varepsilon+\Delta t} \right)^n e^{-|v_j|^\alpha}.
        \end{split}
    \end{align*}
    Then, induction gives the desired result. We note that the positivity of $(1-\alpha C_{\mathbb{E},1}\Delta t(|v_{j}|+k\Delta v+1))$ follows from   
    \begin{align}\label{positivity of the rate}
        \begin{split}
            1-\alpha C_{\mathbb{E},1}|v_{j}|\Delta t-\alpha C_{\mathbb{E},1}k\Delta t\Delta v-\alpha C_{\mathbb{E},1}\Delta t&\geq 1-\alpha C_{\mathbb{E},1}\Delta x-\alpha C_{\mathbb{E},1}(T^f+1)\Delta v -\alpha \Delta v\cr 
            &\geq 1-\frac{1}{4}-\frac{1}{4}\cr
            &= \frac{1}{2}.
        \end{split}
    \end{align}
    In the first line, we applied the CFL condition and the definition of $T^f$: 
    \begin{align*}
        \begin{split}
            |v_j|\Delta t<\Delta x,\quad C_{\mathbb{E},1}\Delta t<\Delta v,\quad k\Delta t\leq (N_t+1)\Delta t\leq T^f+\Delta t\leq T^f+1.
        \end{split}
    \end{align*}
    In the second line, we used the smallness condition for $\Delta x$ and $\Delta v$ in the hypothesis of Theorem \ref{stability theorem} as follows:
    \begin{align*}
        \begin{split}
            \Delta x<\frac{1}{4\alpha C_{\mathbb{E},1}},\quad \Delta v<\frac{1}{4\alpha\{ C_{\mathbb{E},1}(T^f+1)+1\}}.
        \end{split}
    \end{align*}
   \noindent\textbf{(1-2) $\boldsymbol{v_j=V_M}$ and $\boldsymbol{\mathbb{E}^n_i<0}$ for all $\boldsymbol{n}$.}\\
   The update of $\tilde{f}^n_{i,j}$ at the boundary necessitates data from outside the truncated velocity domain. Thus, we additionally consider the estimate on the boundary. Assume that \eqref{lem A3 inductive} holds for the $(n-1)$-th step. Then, $f^n_{i,j}$ satisfies \eqref{lem A3 1}. The Neumann boundary condition \eqref{Neumann boundary condition} in $v$ direction implies
    \begin{align*}
        \begin{split}
            \tilde{f}^{n}_{i,N_v} &= \left(1-\frac{\Delta t}{\Delta x}V_M-\frac{\Delta t}{\Delta v}|\mathbb{E}^n_i| \right)f^{n}_{i,N_v} + \frac{\Delta t}{\Delta x}V_Mf^n_{i-1,N_v} + \frac{\Delta t}{\Delta v}|\mathbb{E}^n_i|f^n_{i,N_v+1}\cr
            &= \left(1-\frac{\Delta t}{\Delta x}V_M-\frac{\Delta t}{\Delta v}|\mathbb{E}^n_i| \right)f^{n}_{i,N_v} + \frac{\Delta t}{\Delta x}V_Mf^n_{i-1,N_v} + \frac{\Delta t}{\Delta v}|\mathbb{E}^n_i|f^n_{i,N_v}\cr
            &\geq C_0\left\{\prod^{n}_{k=1}(1-\alpha C_{\mathbb{E},1}\Delta t(V_M+\Delta v + 1))\right\}\left(\frac{\varepsilon}{\varepsilon+\Delta t} \right)^n e^{-V_M^{\alpha}}\cr
             &\geq  C_0\left\{\prod^{n+1}_{k=1}(1-\alpha C_{\mathbb{E},1}\Delta t(V_M+\Delta v + 1))\right\}\left(\frac{\varepsilon}{\varepsilon+\Delta t} \right)^n e^{-V_M^{\alpha}}.
        \end{split}
    \end{align*}
    Therefore, induction completes the proof on the boundary.\\\\
    \noindent\textbf{(2) $\boldsymbol{0<v_j\leq  V_M}$ and $\boldsymbol{\mathbb{E}^n_i\geq 0}$ for all $\boldsymbol{n}$.}\\
Second, we consider the case in which the particle is accelerated by the electric field. Although index mixing induced by the electric field is still present in this case, its impact on the analysis is not critical. Therefore, the argument from the non-ionized problem \cite{boscarino2022convergence,russo2012convergence,russo2018convergence} can be largely adapted. From the update rule \eqref{convex_combi}, we have
\begin{align}\label{lem A3 good update}
    \begin{split}
        \tilde{f}^n_{i,j}=\left(1-\frac{\Delta t}{\Delta x}|v_j|-\frac{\Delta t}{\Delta v}|\mathbb{E}^n_i| \right)f^n_{i,j}+\frac{\Delta t}{\Delta x}|v_j|f^n_{i-1,j}+\frac{\Delta t}{\Delta v}|\mathbb{E}^n_i|f^n_{i,j-1},\quad \text{for all }n.
    \end{split}
\end{align}
Since $v_j$ is positive, we have
    \begin{align*}
        \begin{split}
            e^{-|v_{j-1}|^{\alpha}}=e^{-|v_j-\Delta v|^{\alpha}}\geq e^{-|v_j|^{\alpha}}.
        \end{split}
    \end{align*}
    Using this with the update rule \eqref{lem A3 good update}, we present the estimate for the initial step:
    \begin{align*}
        \begin{split}
            \tilde{f}^0_{i,j}&=\left(1-\frac{\Delta t}{\Delta x}|v_j|-\frac{\Delta t}{\Delta v}|\mathbb{E}^0_i| \right)f^{0}_{i,j} + \frac{\Delta t}{\Delta x}|v_j|f^0_{i-1,j} + \frac{\Delta t}{\Delta v}|\mathbb{E}^0_i|f^0_{i,j-1}\cr
            &\geq C_{0}\left\{\left(1-\frac{\Delta t}{\Delta x}|v_j|-\frac{\Delta t}{\Delta v}|\mathbb{E}^0_i| \right)e^{-|v_j|^{\alpha}}+ \frac{\Delta t}{\Delta x}|v_j|e^{-|v_j|^{\alpha}}+ \frac{\Delta t}{\Delta v}|\mathbb{E}^0_i|e^{-|v_{j-1}|^{\alpha}}\right\}\cr
            &\geq C_{0}\left\{\left(1-\frac{\Delta t}{\Delta x}|v_j|-\frac{\Delta t}{\Delta v}|\mathbb{E}^0_i| \right)e^{-|v_j|^{\alpha}}+ \frac{\Delta t}{\Delta x}|v_j|e^{-|v_j|^{\alpha}}+ \frac{\Delta t}{\Delta v}|\mathbb{E}^0_i|e^{-|v_{j}|^{\alpha}}\right\}\cr
            &= C_0e^{-|v_j|^{\alpha}}\cr
            &\geq C_0(1-{\alpha}C_{\mathbb{E},1}(|v_j|+\Delta v+1)\Delta t)e^{-|v_j|^{\alpha}}.
        \end{split}
    \end{align*}
    Now we assume that the inductive hypothesis \eqref{lem A3 inductive} holds for the $(n-1)$-th step. Then, $f^n_{i,j}$ satisfies \eqref{lem A3 1} by our scheme \eqref{VPBGK_Scheme}. Plugging this into the update rule \eqref{lem A3 good update}, we have
    \begin{align*}
        \begin{split}
            \tilde{f}^{n}_{i,j} &= \left(1-\frac{\Delta t}{\Delta x}|v_j|-\frac{\Delta t}{\Delta v}|\mathbb{E}^n_i| \right)f^{n}_{i,j} + \frac{\Delta t}{\Delta x}|v_j|f^n_{i-1,j} + \frac{\Delta t}{\Delta v}|\mathbb{E}^n_i|f^n_{i,j-1}\cr
            &\geq C_{0}\left\{ \prod^{n}_{k=1}(1-\alpha C_{\mathbb{E},1}\Delta t(|v_j|+k\Delta v+1))\right\}\left(\frac{\varepsilon}{\varepsilon+\Delta t} \right)^{n}\left(1-\frac{\Delta t}{\Delta x}|v_j|-\frac{\Delta t}{\Delta v}|\mathbb{E}^n_i| \right)e^{-|v_j|^{\alpha}}\cr
            &+ C_{0}\left\{ \prod^{n}_{k=1}(1-\alpha C_{\mathbb{E},1}\Delta t(|v_j|+k\Delta v+1))\right\}\left(\frac{\varepsilon}{\varepsilon+\Delta t} \right)^n\frac{\Delta t}{\Delta x}|v_j|e^{-|v_j|^{\alpha}}\cr
            &+ C_0\left\{ \prod^{n}_{k=1}(1-\alpha C_{\mathbb{E},1}\Delta t(|v_{j-1}|+k\Delta v+1))\right\}\left(\frac{\varepsilon}{\varepsilon+\Delta t} \right)^{n}\frac{\Delta t}{\Delta v}|\mathbb{E}^n_i|e^{-|v_{j-1}|^{\alpha}}.
        \end{split}
    \end{align*}
    Since $e^{-|v_{j-1}|^\alpha}\geq e^{-|v_j|^\alpha}$ for positive $v_j$, the last term can be controlled by
    \begin{align*}
        \begin{split}
            &C_0\left\{ \prod^{n}_{k=1}(1-\alpha C_{\mathbb{E},1}\Delta t(|v_{j-1}|+k\Delta v+1))\right\}\left(\frac{\varepsilon}{\varepsilon+\Delta t} \right)^{n}\frac{\Delta t}{\Delta v}|\mathbb{E}^n_i|e^{-|v_{j-1}|^{\alpha}}\cr
            &\quad\geq C_0\left\{ \prod^{n}_{k=1}(1-\alpha C_{\mathbb{E},1}\Delta t(|v_{j}|+k\Delta v+1))\right\}\left(\frac{\varepsilon}{\varepsilon+\Delta t} \right)^{n}\frac{\Delta t}{\Delta v}|\mathbb{E}^n_i|e^{-|v_{j}|^{\alpha}}.
        \end{split}
    \end{align*}
    Using this, we obtain
    \begin{align*}
        \begin{split}
            \tilde{f}^n_{i,j}&\geq C_{0}\left\{ \prod^{n}_{k=1}(1-\alpha C_{\mathbb{E},1}\Delta t(|v_j|+k\Delta v+1))\right\}\left(\frac{\varepsilon}{\varepsilon+\Delta t} \right)^{n}\cr
            &\qquad\times \left(1-\frac{\Delta t}{\Delta x}|v_j|-\frac{\Delta t}{\Delta v}|\mathbb{E}^n_i| \right)e^{-|v_j|^\alpha}+\frac{\Delta t}{\Delta x}|v_j|e^{-|v_j|^\alpha} + \frac{\Delta t}{\Delta v}|\mathbb{E}^n_i|e^{-|v_{j}|^\alpha}\cr
            &= C_{0}\left\{ \prod^{n}_{k=1}(1-\alpha C_{\mathbb{E},1}\Delta t(|v_j|+k\Delta v+1))\right\}\left(\frac{\varepsilon}{\varepsilon+\Delta t} \right)^{n}e^{-|v_j|^{\alpha}}\cr
            &\geq C_{0}\left\{ \prod^{n+1}_{k=1}(1-\alpha C_{\mathbb{E},1}\Delta t(|v_j|+k\Delta v+1))\right\}\left(\frac{\varepsilon}{\varepsilon+\Delta t} \right)^{n}e^{-|v_j|^{\alpha}}.
        \end{split}
    \end{align*}
    This completes the proof of \eqref{lem A3 inductive}.\\\\
    \indent To show the uniform boundedness, we use $(1+x)^{-n}\geq e^{-nx}$ and $n\Delta t\leq T^f$ to get
    \begin{align*}
        \begin{split}
            \left( \frac{\varepsilon}{\varepsilon+\Delta t}\right)^n = \left(1+\frac{\Delta t}{\varepsilon} \right)^{-n}\geq e^{-\frac{n\Delta t}{\varepsilon}}\geq e^{-\frac{T^f}{\varepsilon}}.
        \end{split}
    \end{align*}
    Moreover, using $(1+x)^{-1}\geq e^{-x}$ and \eqref{positivity of the rate}, we have
    \begin{align*}
        \begin{split}
            (1-\alpha C_{\mathbb{E},1}\Delta t(|v_j|+k\Delta v))&=\left(1+\frac{\alpha C_{\mathbb{E},1}\Delta t(|v_j|+k\Delta v+1)}{1-\alpha C_{\mathbb{E},1}\Delta t(|v_j|+k\Delta v+1)} \right)^{-1}\cr
            &\geq \left(1+2\alpha C_{\mathbb{E},1}\Delta t(|v_j|+k\Delta v+1) \right)^{-1}\cr
            &\geq e^{-2\alpha C_{\mathbb{E},1}\Delta t(|v_j|+k\Delta v +1)}.
        \end{split}
    \end{align*}
    Thus, we yield
    \begin{align*}
        \begin{split}
            \prod^{n+1}_{k=1}(1-\alpha C_{\mathbb{E},1}\Delta t(|v_j|+k\Delta v+1))&\geq \frac{1}{2}\prod^{n}_{k=1}(1-\alpha C_{\mathbb{E},1}\Delta t(|v_j|+k\Delta v+1))\cr
            &= \frac{1}{2}e^{-2\alpha C_{\mathbb{E},1}n\Delta t(|v_j|+1)}e^{-2\alpha C_{\mathbb{E},1}\Delta t\sum^{n}_{k=1} k\Delta v}\cr
            &\geq \frac{1}{2}e^{-2\alpha C_{\mathbb{E},1}T^f}e^{-2\alpha C_{\mathbb{E},1}T^f|v_j|}e^{-\alpha C_{\mathbb{E},1}n(n+1)\Delta t\Delta v}.
        \end{split}
    \end{align*}
    From the hypothesis of this lemma, $\Delta t$ satisfies
    \begin{align*}
        \begin{split}
            \Delta v<\frac{C_{\mathbb{E},1}\Delta t}{\eta_1}.
        \end{split}
    \end{align*}
    This gives
    \begin{align*}
        \begin{split}
            e^{-\alpha C_{\mathbb{E},1}n(n+1)\Delta t\Delta v}=e^{-\frac{\alpha C^2_{\mathbb{E},1}n(n+1)(\Delta t)^2}{\eta_1}}> e^{-\frac{\alpha C^2_{\mathbb{E},1}({T^f}^2+T^f)}{\eta_1}}.
        \end{split}
    \end{align*}
    Consequently, we obtain
    \begin{align*}
        \begin{split}
            \tilde{f}^{n}_{i,j}&\geq C_0\prod^{n+1}_{k=1}(1-\alpha C_{\mathbb{E},1}\Delta t(|v_j|+k\Delta v+1))\left(\frac{\varepsilon}{\varepsilon+\Delta t} \right)^ne^{-|v_j|^\alpha}\cr
            &\geq \frac{C_0}{2}e^{-\left(2\alpha C_{\mathbb{E},1}+\frac{\alpha C^2_{\mathbb{E},1}(T^f+1)}{\eta_1}+\frac{1}{\varepsilon }\right)T^f}e^{-|v_j|^\alpha}e^{-2\alpha C_{\mathbb{E},1}T^f|v_j|}.
        \end{split}
    \end{align*}
\end{proof}

Now we apply Lemma \ref{tilde-original} to show that $f^n_{i,j}$ satisfies $A^n_4$.

\begin{lemma}\label{A4}\textnormal{(Proof of the stability estimate $A^n_4$)}
        Assume $\Delta x$, $\Delta v$, $\Delta t$, and $V_M$ satisfy the hypothesis of Theorem \ref{stability theorem}. Then, $f^{n}_{i,j}$ satisfies $A^n_4$ for all $n\geq 0$, that is,
    \begin{align*}
        \begin{split}
            \|\tilde{f}^{n}\|_{L^{\infty}_{q}} \leq (1+C_{\mathbb{E},2})e^{\left(C_{\mathbb{E},2} + \frac{C_{\mathcal{M}}-1}{\varepsilon}\right)T^{f}}\|f_ {0}\|_{L^{\infty}_{q}}.
        \end{split}
    \end{align*}
\end{lemma}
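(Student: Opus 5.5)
We prove by induction a recursive bound on $\|f^n\|_{L^\infty_q}$ and then pass to $\tilde f^n$ via Lemma \ref{tilde-original}. The two ingredients per time step are the transport step, which enlarges the weighted norm by the factor $(1+C_{\mathbb{E},2}\Delta t)$, and the relaxation step, which multiplies it by $\frac{\varepsilon+C_{\mathcal{M}}\Delta t}{\varepsilon+\Delta t}$. Specifically, we aim for
\begin{align*}
    \begin{split}
        \|f^{n}\|_{L^{\infty}_q}\leq \left\{(1+C_{\mathbb{E},2}\Delta t)\frac{\varepsilon+C_{\mathcal{M}}\Delta t}{\varepsilon+\Delta t}\right\}^{n}\|f_0\|_{L^{\infty}_q}.
    \end{split}
\end{align*}
For $n=0$ this is immediate from the definition of $f^0_{i,j}$ and the Neumann extension, since the ghost values $f_0(x_i,\pm V_M)$ carry the same weight as $v_{\pm N_v}$.

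For the inductive step, assume $\mathcal{A}^{m}$ holds for $m\le n-1$, together with the bound above at step $n-1$. By $A^{n-1}_2$ (Lemma \ref{A2}), the field satisfies $\|\mathbb{E}^{n-1}\|_{L^\infty_x}\le C_{\mathbb{E},1}$, so the CFL condition \eqref{CFL condition 0} holds by Remark \ref{Delta x Delta v restriction}. Lemma \ref{tilde-original} then gives $\|\tilde f^{n-1}\|_{L^\infty_q}\le (1+C_{\mathbb{E},2}\Delta t)\|f^{n-1}\|_{L^\infty_q}$. Next we apply the triangle inequality to the convex update \eqref{VPBGK_Scheme} and invoke Lemma \ref{Mx to f}; the latter is legitimate because $A^{n-1}_3$, $A^{n-1}_5$, $A^{n-1}_6$ are available, so the hypotheses of Lemma \ref{Mac_bound_lemma} hold. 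This yields
\[
\|f^n\|_{L^\infty_q}\le \frac{\varepsilon+C_{\mathcal{M}}\Delta t}{\varepsilon+\Delta t}\,\|\tilde f^{n-1}\|_{L^\infty_q}.
\]
Combining the two estimates closes the recursion. The Neumann ghost values $f^n_{i,\pm(N_v+1)}=f^n_{i,\pm N_v}$ need a separate check: the weight $(1+|v|)^q$ at $v_{\pm(N_v+1)}$ is larger, but Lemma \ref{tilde-original} only uses the ratio $\left(\frac{1+|v_j|}{1+|v_{j-1}|}\right)^q$ within the grid. So we should define the norm over $j\in[-N_v,N_v]$ and note that the boundary update at $j=N_v$ uses $f^n_{i,N_v}$ itself, which is harmless.

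Finally we turn the recursion into a uniform bound. Using $1+x\le e^x$ and $n\Delta t\le T^f$,
\[
(1+C_{\mathbb{E},2}\Delta t)^n\le e^{C_{\mathbb{E},2}T^f},\qquad \left(1+\frac{(C_{\mathcal{M}}-1)\Delta t}{\varepsilon+\Delta t}\right)^n\le e^{\frac{(C_{\mathcal{M}}-1)T^f}{\varepsilon}},
\]
where we may assume $C_{\mathcal{M}}\ge1$. Applying Lemma \ref{tilde-original} once more gives
\[
\|\tilde f^n\|_{L^\infty_q}\le(1+C_{\mathbb{E},2}\Delta t)\|f^n\|_{L^\infty_q},
\]
and since $\Delta t<1$ by Remark \ref{Delta x Delta v restriction}, the extra factor is at most $1+C_{\mathbb{E},2}$. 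This is exactly the claimed constant.

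The main obstacle is the circularity of the induction. Lemma \ref{Mx to f} at step $n-1$ requires the lower bounds $A^{n-1}_5$ on $\tilde T^{n-1}_i$, and those in turn depend on $A^{n-1}_4$. So the proof must be organized as a joint induction over all of $\mathcal{A}^n$, in the same way as in Lemma \ref{A1}, rather than as a standalone argument. I would state explicitly that $A^n_4$ is derived assuming $\mathcal{A}^{m}$ for $m<n$ together with $A^n_2$, which has already been proved for all $n$.
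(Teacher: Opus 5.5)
Your proof is correct and is essentially the paper's argument: the paper inducts directly on $\|\tilde f^n\|_{L^\infty_q}$ using Lemma \ref{tilde-original} and Lemma \ref{Mx to f}, whereas you induct on $\|f^n\|_{L^\infty_q}$ and apply Lemma \ref{tilde-original} once more at the end. Both routes give the same bound $(1+C_{\mathbb{E},2}\Delta t)^{n+1}\bigl(\frac{\varepsilon+C_{\mathcal{M}}\Delta t}{\varepsilon+\Delta t}\bigr)^n\|f_0\|_{L^\infty_q}$, and your remarks on the joint induction over $\mathcal{A}^n$, the Neumann ghost values, and the normalization $C_{\mathcal{M}}\ge 1$ only make explicit what the paper uses implicitly.
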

\begin{proof}
    We claim that $f^n_{i,j}$ satisfies
    \begin{align}\label{lem A4 inductive}
        \begin{split}
            \|\tilde{f}^{n}\|_{L^{\infty}_{q}} \leq (1+C_{\mathbb{E},2}\Delta t)^{n+1}\left(\frac{\varepsilon + C_{\mathcal{M}}\Delta t}{\varepsilon + \Delta t}\right)^{n}\|f^{0}\|_{L^{\infty}_{q}},\text{ for all }n.
        \end{split}
    \end{align}
    For $n=0$, since $\Delta t<1$ and $|\mathbb{E}^0_i|\leq C_{\mathbb{E},1}$ Lemma \ref{tilde-original} gives
    \begin{align*}
        \begin{split}
            \|\tilde{f}^0\|_{L^{\infty}_{q}} \leq (1+C_{\mathbb{E},2}\Delta t)\|f^0\|_{L^{\infty}_{q}}\leq (1+C_{\mathbb{E},2})\|f^0\|_{L^{\infty}_{q}}.
        \end{split}
    \end{align*}
    Assume that \eqref{lem A4 inductive} is valid for the $(n-1)$-th step. Our scheme \eqref{VPBGK_Scheme} gives
    \begin{align*}
        \begin{split}
            \|\tilde{f}^{n}\|_{L^{\infty}_{q}} &\leq (1+C_{\mathbb{E},2}\Delta t)\|f^n\|_{L^{\infty}_{q}} \cr
            &\leq (1+C_{\mathbb{E},2}\Delta t)\left(\frac{\varepsilon}{\varepsilon+\Delta t}\|\tilde{f}^{n-1}\|_{L^{\infty}_{q}} + \frac{\Delta t}{\varepsilon+\Delta t}\|\mathcal{M}(\tilde{f}^{n-1})\|_{L^{\infty}_{q}}\right).
        \end{split}
    \end{align*}
    By Lemma \ref{Mx to f} and Lemma \ref{tilde-original}, we obtain
    \begin{align*}
        \begin{split}
            \|\tilde{f}^{n}\|_{L^{\infty}_{q}} &\leq (1+C_{\mathbb{E},2}\Delta t)\left(\frac{\varepsilon}{\varepsilon+\Delta t}\|\tilde{f}^{n-1}\|_{L^{\infty}_{q}} + \frac{C_{\mathcal{M}}\Delta t}{\varepsilon+\Delta t}\|\tilde{f}^{n-1}\|_{L^{\infty}_{q}}\right)\cr
            &\leq (1+C_{\mathbb{E},2}\Delta t)\left(\frac{\varepsilon + C_{\mathcal{M}}\Delta t}{\varepsilon + \Delta t}\right)\|\tilde{f}^{n-1}\|_{L^{\infty}_{q}}.
        \end{split}
    \end{align*}
    Then, the inductive assumption for $\tilde{f}^{n-1}_{i,j}$ leads to
    \begin{align*}
        \begin{split}
            \|\tilde{f}^{n}\|_{L^{\infty}_{q}}\leq (1+C_{\mathbb{E},2}\Delta t)^{n+1}\left(\frac{\varepsilon + C_{\mathcal{M}}\Delta t}{\varepsilon + \Delta t}\right)^{n}\|f^{0}\|_{L^{\infty}_{q}}.
        \end{split}
    \end{align*}
    This gives that \eqref{lem A4 inductive} holds for all $n$ by induction. To complete the proof of this lemma, we used $(1+x)^n \leq e^{nx}$ and $n\Delta t \leq N_t \Delta t = T^f$ to get
    \begin{align*}
        \begin{split}
            \left(\frac{\varepsilon + C_{\mathcal{M}}\Delta t}{\varepsilon + \Delta t} \right)^{n} \leq \left(1+\frac{ (C_{\mathcal{M}}-1)\Delta t}{\varepsilon + \Delta t} \right)^{n} \leq e^{\frac{(C_{\mathcal{M}}-1)n\Delta t}{\varepsilon + \Delta t}} \leq e^{\frac{(C_{\mathcal{M}}-1) T^{f}}{\varepsilon }},
        \end{split}
    \end{align*}
    and
    \begin{align*}
        \begin{split}
            (1+C_{\mathbb{E},2}\Delta t)^{n+1} = (1+C_{\mathbb{E},2}\Delta t)(1+C_{\mathbb{E},2}\Delta t)^{n} \leq (1+C_{\mathbb{E},2})e^{C_{\mathbb{E},2}n\Delta t} \leq (1+C_{\mathbb{E},2})e^{C_{\mathbb{E},2}T^f}.
        \end{split}
    \end{align*}
\end{proof}

The uniform lower and upper bounds for $\tilde{\rho}^n_i$, $\tilde{U}^n_i$, and $\tilde{T}^n_i$ follow from Lemma \ref{A3} and Lemma \ref{A4}:
\begin{lemma}\label{A5}\textnormal{(Proof of the stability estimate $A^n_5$)}
        Assume $\Delta x$, $\Delta v$, $\Delta t$, and $V_M$ satisfy the hypothesis of Theorem \ref{stability theorem}. Then, $f^{n}_{i,j}$ satisfies $A^n_5$ for all $n\geq 0$, that is,
    \begin{align*}
        \begin{split}
            \tilde{\rho}^n_i &\geq \frac{C_0 D_{\alpha,2\alpha C_{\mathbb{E},1}T^f}}{4}e^{-\left(2\alpha C_{\mathbb{E},1}+\frac{\alpha C^2_{\mathbb{E},1}(T^f+1)}{\eta_1}+\frac{1}{\varepsilon }\right)T^f},\cr
            \tilde{T}^n_i&\geq  \left(\frac{C_0 D_{\alpha,2\alpha C_{\mathbb{E},1}T^f}}{4(1+C_{\mathbb{E},2})C_M\|f_0\|_{L^{\infty}_q} }e^{-\left(2\alpha C_{\mathbb{E},1}+\frac{\alpha C^2_{\mathbb{E},1}(T^f+1)}{\eta_1}+C_{\mathbb{E},2}+\frac{C_{\mathcal{M}}}{\varepsilon } \right)T^f} \right)^{\frac{2}{3}}.
        \end{split}
    \end{align*}
\end{lemma}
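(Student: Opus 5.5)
The density bound follows by summing the lower bound $A^n_3$ (Lemma \ref{A3}) in $j$; the temperature bound then follows from Lemma \ref{Mac_bound_lemma}(1) combined with $A^n_4$ (Lemma \ref{A4}).

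\textbf{Density.} Summing the bound of Lemma \ref{A3} over $j$ gives
\begin{align*}
\tilde{\rho}^n_i=\sum_j\tilde{f}^n_{i,j}\Delta v\geq \frac{C_0}{2}e^{-\left(2\alpha C_{\mathbb{E},1}+\frac{\alpha C^2_{\mathbb{E},1}(T^f+1)}{\eta_1}+\frac{1}{\varepsilon}\right)T^f}\sum_j e^{-|v_j|^\alpha-2\alpha C_{\mathbb{E},1}T^f|v_j|}\Delta v .
\end{align*}
The remaining sum is a Riemann sum for $D_{\alpha,\beta}$ with $\beta=2\alpha C_{\mathbb{E},1}T^f$. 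Since $\Delta v<r_{\Delta v}\leq l$, the hypothesis of Theorem \ref{stability theorem} bounds it below by $\frac12 D_{\alpha,2\alpha C_{\mathbb{E},1}T^f}$. This gives exactly the stated constant $\frac{C_0D}{4}$.

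The one subtlety is that $j$ runs only over $[-N_v,N_v]$, a truncated range. I would read the Riemann-sum condition of Theorem \ref{stability theorem} as referring to this same truncated grid sum, so that it applies directly. If instead it refers to the full sum, I would absorb the tail beyond $V_M$ by shrinking $l$, using that $V_M\to\infty$ as $\Delta v\to 0$.

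\textbf{Temperature.} By Lemma \ref{Mac_bound_lemma}(1), $(\tilde T^n_i)^{1/2}\ge \tilde\rho^n_i/(C_M\|\tilde f^n\|_{L^\infty_q})$. I would insert the density lower bound just obtained and the upper bound $\|\tilde f^n\|_{L^\infty_q}\le (1+C_{\mathbb{E},2})e^{(C_{\mathbb{E},2}+\frac{C_{\mathcal M}-1}{\varepsilon})T^f}\|f_0\|_{L^\infty_q}$ from Lemma \ref{A4}. This yields
\begin{align*}
\tilde T^n_i\ge \left(\frac{C_0D}{4(1+C_{\mathbb{E},2})C_M\|f_0\|_{L^\infty_q}}e^{-\left(2\alpha C_{\mathbb{E},1}+\frac{\alpha C^2_{\mathbb{E},1}(T^f+1)}{\eta_1}+C_{\mathbb{E},2}+\frac{C_{\mathcal M}}{\varepsilon}\right)T^f}\right)^{2}.
\end{align*}
The stated exponent is $2/3$ rather than $2$. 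Since the bracket is less than $1$ once the constants are large (and it can be arranged so), the power-$2$ bound implies the power-$2/3$ bound. Alternatively, the $2/3$ exponent may come from a cruder route such as $\tilde\rho^n_i\le 4\|\tilde f^n\|(r+\Delta v)+\tilde\rho^n_i\tilde T^n_i/(r+\Delta v)^2$. Either way, I would verify the bracket is at most $1$ and conclude.

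\textbf{Main obstacle: circularity.} Lemma \ref{Mac_bound_lemma} itself assumes $\mathcal A^n$, including $A^n_5$. So the argument must be arranged as an induction in which $A^n_1$--$A^n_4$ at step $n$ and $\mathcal A^{n-1}$ are already established. The estimates of Lemma \ref{Mac_bound_lemma} needed here only use the quantitative choices of $r$ (such as $a_1>\Delta v$), which I would verify directly from $A^n_3$ and $A^n_4$.

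Alternatively, I would use the elementary split from part (1) of Lemma \ref{Mac_bound_lemma} directly. Take $r=\tilde\rho^n_i/(8\|\tilde f^n\|_{L^\infty_q})$, and note $\Delta v<\tilde\rho^n_i/(8\|\tilde f^n\|_{L^\infty_q})$ by the smallness of $\Delta v$ relative to $a_4$. Then $I_1\le\tilde\rho^n_i/2$, so $\tilde\rho^n_i\tilde T^n_i\ge \frac12\tilde\rho^n_i (r+\Delta v)^2$, which gives a lower bound on $\tilde T^n_i$ with no circular dependence on $A^n_5$.
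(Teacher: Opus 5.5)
Your density step is exactly the paper's. Your temperature step uses the same two ingredients as the paper, Lemma \ref{Mac_bound_lemma}(1) and Lemma \ref{A4}, and the bound $\tilde T^n_i\ge X^2$ you obtain is correct, with $X$ the bracket in the statement.

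\textbf{The gap.} The passage from $X^2$ to $X^{2/3}$ runs the wrong way. For $0<X<1$ one has $X^2<X^{2/3}$, so $\tilde T^n_i\ge X^2$ is the \emph{weaker} statement; the implication would need $X\ge 1$. Here $X<1$ by a wide margin: $\|f_0\|_{L^\infty_q}\ge f_0(x,0)\ge C_0$, $D_{\alpha,\beta}\le 2$ for $\alpha\in[1,2]$, and $C_{\mathbb{E},2}>21$, so $X\le 1/(44C_M)<1$.

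Your ``cruder route'' does not help either. For any admissible $s=r+\Delta v$ it gives $\tilde T^n_i\ge s^2\bigl(1-4\|\tilde f^n\|_{L^\infty_q}s/\tilde\rho^n_i\bigr)$. Optimizing in $s$ yields at best $(\tilde\rho^n_i)^2/(108\|\tilde f^n\|^2_{L^\infty_q})$, again the power $2$.

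In fact no argument using only $\tilde\rho^n_i$ and $\|\tilde f^n\|_{L^\infty_q}$ can produce the power $2/3$. A profile of height $h$ on a velocity window of width $w$ near $v=0$ has $\rho\approx hw$, $\|f\|_{L^\infty_q}\approx h$ and $T\approx w^2/12$, so the power $2$ is sharp.

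The paper's own proof makes exactly this jump. It asserts $\tilde T^n_i\ge(\tilde\rho^n_i/(C_M\|\tilde f^n\|_{L^\infty_q}))^{2/3}$ from Lemma \ref{Mac_bound_lemma}, whose part (1) only gives the square; the introduction states the bound with exponent $2$. So what you proved is the correct form of $A^n_5$. The repair is to state the temperature bound as $X^2$ and recompute the constants built from the $2/3$ power ($a_1$, $a_3$, $a_4$, hence $r_{\Delta v}$), not to deduce $X^{2/3}$ from $X^2$.

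\textbf{Circularity.} You are right that the paper's argument is circular: Lemma \ref{Mac_bound_lemma} assumes $\mathcal A^n$, including the lower bound on $\tilde T^n_i$ it is used to prove. Your last paragraph is a better route and avoids this, since it uses only the density bound (from $A^n_3$) and $A^n_4$. Two corrections are needed there.
\begin{itemize}
\item With $I_1\le 4\|\tilde f^n\|_{L^\infty_q}(r+\Delta v)$, getting $I_1\le\tilde\rho^n_i/2$ requires $r+\Delta v\le\tilde\rho^n_i/(8\|\tilde f^n\|_{L^\infty_q})$. Your choice only gives $I_1\le\tilde\rho^n_i$. Take $r=\tilde\rho^n_i/(16\|\tilde f^n\|_{L^\infty_q})$ and $\Delta v$ below the same quantity; then $\tilde T^n_i\ge\frac12 r^2$, once more the power $2$.
\item The smallness $\Delta v\le\rho_{\min}/(16F)$ is not implied by $\Delta v<l\sqrt{2a_4}$, which only gives $\Delta v\lesssim X^{1/3}$. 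Here $\rho_{\min}$ is the density bound and $F$ the bound in $A^n_4$, and one checks $\rho_{\min}/F=C_MX$. This condition must be added explicitly to $r_{\Delta v}$, which is harmless.
\end{itemize}
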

\begin{proof}
    To derive the estimate for the lower bound of $\tilde{\rho}^n_i$, we multiply $\Delta v$ to $\tilde{f}^n_{i,j}$, and sum over $-N_v\leq j\leq N_v$. Then, Lemma \ref{A3} gives
    \begin{align*}
        \begin{split}
            \tilde{\rho}^n_i&=\sum_{j}\tilde{f}^n_{i,j}\Delta v\cr 
            &\geq C_0\prod^{n+1}_{k=1}(1-\alpha C_{\mathbb{E},1}\Delta t(|v_j|+k\Delta v+1))\left(\frac{\varepsilon}{\varepsilon+\Delta t} \right)^ne^{-|v_j|^\alpha}\cr
            &\geq \frac{C_0}{2}e^{-\left(2\alpha C_{\mathbb{E},1}+\frac{\alpha C^2_{\mathbb{E},1}(T^f+1)}{\eta_1}+\frac{1}{\varepsilon }\right)T^f}\sum_j e^{-|v_j|^{\alpha}-2\alpha C_{\mathbb{E},1}T^f|v_j|}\Delta v\cr
            &\geq\frac{C_0 D_{\alpha,2\alpha C_{\mathbb{E},1}T^f}}{4}e^{-\left(2\alpha C_{\mathbb{E},1}+\frac{\alpha C^2_{\mathbb{E},1}(T^f+1)}{\eta_1}+\frac{1}{\varepsilon }\right)T^f}.
        \end{split}
    \end{align*}
    On the other hand, Lemma \ref{Mac_bound_lemma} and Lemma \ref{A4} lead to
    \begin{align*}
        \begin{split}
            \tilde{T}^n_i \geq \left( \frac{\tilde{\rho}^n_i}{C_M \|\tilde{f}^n\|_{L^{\infty}_q}}\right)^{\frac{2}{3}}>\left(\frac{C_0 D_{\alpha,2\alpha C_{\mathbb{E},1}T^f}}{4(1+C_{\mathbb{E},2})C_M\|f_0\|_{L^{\infty}_q} }e^{-\left(2\alpha C_{\mathbb{E},1}+\frac{\alpha C^2_{\mathbb{E},1}(T^f+1)}{\eta_1}+C_{\mathbb{E},2}+\frac{C_{\mathcal{M}}}{\varepsilon } \right)T^f} \right)^{\frac{2}{3}}.
        \end{split}
    \end{align*}
\end{proof}

\begin{lemma}\label{A6}\textnormal{(Proof of the stability estimate $A^n_6$)}
        Assume $\Delta x$, $\Delta v$, $\Delta t$, and $V_M$ satisfy the hypothesis of Theorem \ref{stability theorem}. Then, $f^{n}_{i,j}$ satisfies $A^n_6$ for all $n\geq 0$, that is,
    \begin{align*}
        \begin{split}
            &\|\tilde{\rho}^n\|_{L^{\infty}_x},\,\|\tilde{U}^n\|_{L^{\infty}_x},\,\|\tilde{T}^n\|_{L^{\infty}_x}\cr
            &\qquad\leq \bar{D}_{q-2}\left(1+\frac{8}{C_0  D_{\alpha,2\alpha C_{\mathbb{E},1}T^f}} \right)(1+C_{\mathbb{E},2})e^{\left(2\alpha C_{\mathbb{E},1}+\frac{\alpha C^2_{\mathbb{E},1}(T^f+1)}{\eta_1}+C_{\mathbb{E},2} + \frac{C_{\mathcal{M}}}{\varepsilon}\right)T^{f}}\|f_{0}\|_{L^{\infty}_{q}}.
        \end{split}
    \end{align*}
\end{lemma}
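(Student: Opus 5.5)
We treat the three quantities separately, using the upper bound $A^n_4$ (Lemma \ref{A4}) for $\|\tilde f^n\|_{L^\infty_q}$ and the lower bound on $\tilde\rho^n_i$ from $A^n_5$ (Lemma \ref{A5}) to divide out by the density. Throughout, write
\begin{align*}
\begin{split}
 B:=(1+C_{\mathbb{E},2})e^{\left(C_{\mathbb{E},2}+\frac{C_{\mathcal{M}}-1}{\varepsilon}\right)T^f}\|f_0\|_{L^\infty_q},\quad
 \rho_{\min}:=\frac{C_0D_{\alpha,2\alpha C_{\mathbb{E},1}T^f}}{4}e^{-\left(2\alpha C_{\mathbb{E},1}+\frac{\alpha C^2_{\mathbb{E},1}(T^f+1)}{\eta_1}+\frac1\varepsilon\right)T^f}.
\end{split}
\end{align*}

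\emph{Density.} Since $\tilde f^n_{i,j}\le \|\tilde f^n\|_{L^\infty_q}(1+|v_j|)^{-q}\le \|\tilde f^n\|_{L^\infty_q}(1+|v_j|)^{-(q-2)}$, the Riemann-sum hypothesis of Theorem \ref{stability theorem} gives
\[
\tilde\rho^n_i\le \|\tilde f^n\|_{L^\infty_q}\sum_j\frac{\Delta v}{(1+|v_j|)^{q-2}}\le 2\bar D_{q-2}B .
\]
This is below the claimed bound: the factor $\left(1+\frac{8}{C_0D}\right)\ge 1$, the exponential in the claim dominates $e^{(C_{\mathbb{E},2}+(C_{\mathcal M}-1)/\varepsilon)T^f}$, and the extra factor $2$ is absorbed once we note $e^{2\alpha C_{\mathbb{E},1}T^f}\ge 1$ and $e^{T^f/\varepsilon}$ appear. (If the bookkeeping is tight, we instead absorb the $2$ into the $8/(C_0D)$ term.)

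\emph{Velocity and temperature.} We bound the second moment first:
\[
\tilde\rho^n_i(\tilde T^n_i+|\tilde U^n_i|^2)=\sum_j \tilde f^n_{i,j}|v_j|^2\Delta v\le \|\tilde f^n\|_{L^\infty_q}\sum_j\frac{\Delta v}{(1+|v_j|)^{q-2}}\le 2\bar D_{q-2}B .
\]
Here $q>3$ ensures $\bar D_{q-2}<\infty$. Dividing by $\tilde\rho^n_i\ge \rho_{\min}$ gives
\[
\tilde T^n_i+|\tilde U^n_i|^2\le \frac{2\bar D_{q-2}B}{\rho_{\min}}=\frac{8\bar D_{q-2}}{C_0D_{\alpha,2\alpha C_{\mathbb{E},1}T^f}}(1+C_{\mathbb{E},2})e^{(\cdots)T^f}\|f_0\|_{L^\infty_q},
\]
where the exponents now combine into $2\alpha C_{\mathbb{E},1}+\frac{\alpha C^2_{\mathbb{E},1}(T^f+1)}{\eta_1}+C_{\mathbb{E},2}+\frac{C_{\mathcal M}}{\varepsilon}$, exactly as in the claim. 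This immediately bounds $\tilde T^n_i$.

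For $|\tilde U^n_i|$, we use $|\tilde U^n_i|\le \sqrt{X}\le 1+X$ with $X=\tilde T^n_i+|\tilde U^n_i|^2$. Alternatively, we bound $\tilde\rho^n_i|\tilde U^n_i|\le\sum_j\tilde f^n_{i,j}|v_j|\Delta v$ directly by the same weighted sum and divide by $\rho_{\min}$. Summing the two contributions gives the factor $\bar D_{q-2}(1+\frac{8}{C_0D})$.

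The only delicate point is the constant bookkeeping: matching the factor $2$ from the Riemann-sum comparison and the ratio $B/\rho_{\min}$ to the precise stated constant. This requires checking that the exponent $\frac{C_{\mathcal M}-1}{\varepsilon}+\frac1\varepsilon$ equals $\frac{C_{\mathcal M}}{\varepsilon}$ (it does). No new analytical input beyond $A^n_4$, $A^n_5$ and the Riemann-sum hypotheses is needed.
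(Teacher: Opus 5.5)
Your proposal is correct and follows essentially the same route as the paper. Each moment is bounded by $\|\tilde f^n\|_{L^\infty_q}$ from $A^n_4$ times a weighted Riemann sum, and the first and second moments are then divided by the density lower bound from $A^n_5$, with the exponents combining exactly as you note. The only loose end is bookkeeping: the factor $2$ in your density bound is not absorbed by the exponentials in general, since they tend to $1$ as $T^f\to 0$, but your fallback works because $C_0D_{\alpha,2\alpha C_{\mathbb{E},1}T^f}\le C_0D_{\alpha,0}\le\int_{\mathbb{T}}\int_{\mathbb{R}}f_0\,dv\,dx=1$ gives $1+\frac{8}{C_0D_{\alpha,2\alpha C_{\mathbb{E},1}T^f}}\ge 9$. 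For $\tilde U^n_i$, keep the direct first-moment bound (the paper's route) rather than $\sqrt{X}\le 1+X$, whose extra ``$1$'' would need its own justification.
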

\begin{proof}
    By Lemma \ref{A4}, we have
    \begin{align*}
        \begin{split}
            \tilde{\rho}^n_i\leq \sum_j \tilde{f}^n_{i,j}\Delta v \leq \|\tilde{f}^n\|_{L^{\infty}_q}\sum_j\frac{\Delta v}{(1+|v_j|)^q}\leq 2\bar{D}_{q} (1+C_{\mathbb{E},2})e^{\left(C_{\mathbb{E},2} + \frac{C_{\mathcal{M}}-1}{\varepsilon}\right)T^{f}}\|f_{0}\|_{L^{\infty}_{q}}.       
        \end{split}
    \end{align*}
    To derive the estimate for $\tilde{U}^n_i$, we use Lemma \ref{A4} and Lemma \ref{A5}:
    \begin{align*}
        \begin{split}
            |\tilde{U}^n_i| &\leq \frac{1}{\tilde{\rho}^n_i}\sum_{j}\tilde{f}^n_{i,j}|v_j|\Delta v\cr
            &\leq \frac{1}{\tilde{\rho}^n_i}\|\tilde{f}^n\|_{L^{\infty}_q}\sum_j\frac{\Delta v}{(1+|v_j|)^{q-1}}\cr
            &\leq \frac{8\bar{D}_{q-1}}{C_0  D_{\alpha,2\alpha C_{\mathbb{E},1}T^f}}(1+C_{\mathbb{E},2})e^{\left(2\alpha C_{\mathbb{E},1}+\frac{\alpha C^2_{\mathbb{E},1}(T^f+1)}{\eta_1}+C_{\mathbb{E},2} + \frac{C_{\mathcal{M}}}{\varepsilon}\right)T^{f}}\|f_{0}\|_{L^{\infty}_{q}}.
        \end{split}
    \end{align*}
    Lastly, estimate for $\tilde{T}^n_i$ follows from
    \begin{align*}
        \begin{split}
            \tilde{T}^n_i&=  \frac{1}{\tilde{\rho}^n_i}\sum_{j}\tilde{f}^n_{i,j}|v_j|^2\Delta v-|\tilde{U}^n_i|\cr
            &\leq \frac{1}{\tilde{\rho}^n_i}\sum_{j}\tilde{f}^n_{i,j}|v_j|^2\Delta v\cr
            &\leq \frac{1}{\tilde{\rho}^n_i}\|\tilde{f}^n\|_{L^{\infty}_q}\sum_j\frac{\Delta v}{(1+|v_j|)^{q-2}}\cr
            &\leq \frac{8\bar{D}_{q-2}}{C_0  D_{\alpha,2\alpha C_{\mathbb{E},1}T^f}}(1+C_{\mathbb{E},2})e^{\left(2\alpha C_{\mathbb{E},1}+\frac{\alpha C^2_{\mathbb{E},1}(T^f+1)}{\eta_1}+C_{\mathbb{E},2} + \frac{C_{\mathcal{M}}}{\varepsilon}\right)T^{f}}\|f_{0}\|_{L^{\infty}_{q}}. 
        \end{split}
    \end{align*}
\end{proof}
   
\section{Convergence to the Vlasov-Poisson-BGK model}\label{sec5}
\subsection{Consistent form}
In this subsection, we transform the VPBGK model (\ref{VPBGK model}) into a form consistent with our proposed scheme (\ref{VPBGK_Scheme}).
\begin{proposition}\label{consistent theorem}\textnormal{(Consistent form)}
Under the assumption of Theorem \ref{analysis}, the VPBGK model (\ref{VPBGK model}) can be represented in the following form:
\begin{align*}
    \begin{split}
        f(x,v,t+\Delta t)&= \frac{\varepsilon}{\varepsilon +\Delta t}\tilde{f}(x,v,t) + \frac{\Delta t}{\varepsilon+\Delta t}\mathcal{M}(\tilde{f})(x,v,t)+ \frac{1}{\varepsilon+ \Delta t}(R_1 + R_2 + R_3 ),
    \end{split}
\end{align*}
where $\tilde{f}(x,v,t)$ is defined by
\begin{align*}
    \begin{split}
        \tilde{f}(x,v,t) = f(x-v\Delta t, v-\mathbb{E}(x,t)\Delta t,t).
    \end{split}
\end{align*}
The remainder terms $R_1$, $R_2$, and $R_3$ are defined as follows:
\begin{align}\label{consistent_remainder}
    \begin{split}
        &R_1 = \varepsilon \Delta tv(\partial_x f(\bar{x},v,t) - \partial_x f(x,v,t))+\varepsilon \Delta t\mathbb{E}(x,t)(\partial_v f(x-v\Delta t,\bar{v},t) - \partial_v f(x,v,t)), \cr
        &R_2 = \frac{\varepsilon (\Delta t)^2}{2}\partial^2_t f(x,v,\bar{t}),\cr
        &R_3 = \Delta t(f(x,v,t+\Delta t) - f(x,v,t))+\Delta t\left(\mathcal{M}(f)(x,v,t) - \mathcal{M}(\tilde{f})(x,v,t)\right),
    \end{split}
\end{align}
for some $(\bar{x},\bar{v},\bar{t})$ which satisfy
\begin{align*}
    \begin{split}
        |\bar{x} - x|<|v|\Delta t,\quad|\bar{v} - v|<|\mathbb{E}(x,t)|\Delta t,\quad|\bar{t} - t|<\Delta t.
    \end{split}
\end{align*}
\end{proposition}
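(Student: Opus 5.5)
The plan is a direct Taylor expansion in time combined with the equation itself. No stability input from Section \ref{sec4} is needed; only the regularity supplied by Theorem \ref{analysis}. Fix $(x,v,t)$ with $t,t+\Delta t\in[0,T^f]$.

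\textbf{Step 1: Taylor expansion in time.} Expanding $f$ in $t$ to second order with Lagrange remainder gives
\begin{align*}
f(x,v,t+\Delta t)=f(x,v,t)+\Delta t\,\partial_t f(x,v,t)+\frac{(\Delta t)^2}{2}\partial_t^2 f(x,v,\bar t),\qquad |\bar t-t|<\Delta t .
\end{align*}
Multiplying by $\varepsilon$ and substituting $\varepsilon\partial_t f=-\varepsilon v\partial_x f-\varepsilon\mathbb{E}\partial_v f+\mathcal{M}(f)-f$ from \eqref{VPBGK model}, I obtain
\begin{align*}
\varepsilon f(x,v,t+\Delta t)=\varepsilon f-\varepsilon\Delta t\bigl(v\partial_x f+\mathbb{E}\partial_v f\bigr)+\Delta t\bigl(\mathcal{M}(f)-f\bigr)+R_2 ,
\end{align*}
where everything without explicit arguments is evaluated at $(x,v,t)$.

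\textbf{Step 2: introducing $\tilde f$ through the characteristic shift.} I split the difference $\tilde f(x,v,t)-f(x,v,t)$ telescopically:
\begin{align*}
\bigl[f(x-v\Delta t,v-\mathbb{E}\Delta t,t)-f(x-v\Delta t,v,t)\bigr]+\bigl[f(x-v\Delta t,v,t)-f(x,v,t)\bigr].
\end{align*}
Applying the mean value theorem in $v$ to the first bracket and in $x$ to the second gives
\begin{align*}
\tilde f-f=-\mathbb{E}\Delta t\,\partial_v f(x-v\Delta t,\bar v,t)-v\Delta t\,\partial_x f(\bar x,v,t),
\end{align*}
with $|\bar v-v|<|\mathbb{E}(x,t)|\Delta t$ and $|\bar x-x|<|v|\Delta t$. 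Multiplying by $\varepsilon$ and rearranging yields exactly
\begin{align*}
\varepsilon f-\varepsilon\Delta t\bigl(v\partial_x f+\mathbb{E}\partial_v f\bigr)=\varepsilon\tilde f+R_1 .
\end{align*}

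\textbf{Step 3: algebra that makes the relaxation term implicit.} After Step 2 the identity reads
\begin{align*}
\varepsilon f(t+\Delta t)=\varepsilon\tilde f+\Delta t\,\mathcal{M}(f)(t)-\Delta t f(t)+R_1+R_2 .
\end{align*}
I add $\Delta t\, f(x,v,t+\Delta t)$ to both sides and add and subtract $\Delta t\,\mathcal{M}(\tilde f)$ on the right. This gives
\begin{align*}
(\varepsilon+\Delta t)f(t+\Delta t)=\varepsilon\tilde f+\Delta t\,\mathcal{M}(\tilde f)+R_1+R_2+\Delta t\bigl(f(t+\Delta t)-f(t)\bigr)+\Delta t\bigl(\mathcal{M}(f)-\mathcal{M}(\tilde f)\bigr).
\end{align*}
The last two terms are precisely $R_3$. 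Dividing by $\varepsilon+\Delta t$ yields the claimed representation.

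\textbf{Main obstacle: justifying the regularity and the Maxwellian.} The algebra is routine; the work is in checking that each ingredient is defined.
\begin{itemize}
\item The derivative $\partial_t^2 f$ must exist and be continuous. It is not listed in Theorem \ref{analysis}, so I would obtain it by differentiating the equation in $t$. This expresses $\partial_t^2 f$ through at most second $x,v$-derivatives of $f$ (item 3), the moments entering $\mathcal{M}(f)$, and $\partial_t\mathbb{E}=-j$, the current. These are finite by the $W^{2,\infty}_q$ bound with $q>3$, and $T$ is bounded below by item 2.
\item The Maxwellian $\mathcal{M}(\tilde f)$ must be well defined, i.e. the moments of $\tilde f$ must be finite and its temperature positive. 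Since $\tilde f$ is a bounded shift of $f$ in $(x,v)$, its moments are finite by the same weighted bound.
\item Positivity of the temperature of $\tilde f$ I would argue as follows. The density of $\tilde f$ equals the $\Delta t$-shifted density, which is positive. Then $\rho T=\int|v-U|^2\tilde f\,dv>0$ because $\tilde f>0$ by the positivity of $f$ propagated from $f_0\ge C_0e^{-|v|^\alpha}$.
\end{itemize}
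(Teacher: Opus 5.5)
Your argument is correct and is essentially the paper's proof: a second-order Taylor expansion in $t$, the mean value theorem in $x$ and then in $v$ to reach $\tilde f$, substitution of the equation for $\partial_t f$, and finally adding $\Delta t\, f(x,v,t+\Delta t)$ to both sides and adding and subtracting $\Delta t\,\mathcal{M}(\tilde f)$. Your version matches the statement more closely than the paper's display, which writes $\mathcal{M}(f)(x-v\Delta t,v-\mathbb{E}\Delta t,t)$ where the statement (and its later use in Lemma \ref{Maxewllian estimate}) means the Maxwellian built from the moments of $\tilde f$; the identity holds either way, since that step is a pure add-and-subtract.

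Your regularity discussion goes beyond the paper, which takes $\partial_t^2 f$ for granted, but two points in it need correcting. First, $\tilde\rho$ is not a shifted copy of $\rho$, because the $x$-shift $v\Delta t$ depends on the integration variable; $\tilde\rho>0$ nevertheless follows at once from $\tilde f>0$. Second, the existence of $\partial_t\mathcal{M}(f)$, and hence of $\partial_t^2 f$, requires $\partial_t T$. This involves $\int v|v|^2\partial_x f\,dv$, which the weight $q>3$ alone does not control (it needs $q>4$ at that weight). The extra weight from $\|f_0\|_{W^{2,\infty}_{q+2}}<\infty$ in Theorem \ref{Main Thm} does control it.
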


\begin{proof}
   By Taylor's theorem, we have
   \begin{align}\label{Cons 1}
       \begin{split}
           &f(x,v,t+\Delta t)\cr
           &\quad= f(x,v,t) +\Delta t\partial_t f(x,v,t) +\frac{(\Delta t)^2}{2}\partial^2_t f(x,v,\bar{t})\cr
           &\quad= f(x-v\Delta t,v,t) +v\Delta t\partial_x f(\bar{x},v,t)+\Delta t\partial_t f(x,v,t) +\frac{(\Delta t)^2}{2}\partial^2_t f(x,v,\bar{t})\cr
           &\quad= f(x-v\Delta t,v-\mathbb{E}(x,t)\Delta t,t) + v\Delta t\partial_xf(\bar{x},v,t) +\mathbb{E}(x,t)\Delta t\partial_v f(x-v\Delta t,\bar{v},t)\cr
           &\quad+\Delta t\partial_t f(x,v,t) +\frac{(\Delta t)^2}{2}\partial^2_t f(x,v,\bar{t}).
       \end{split}
   \end{align}
    The VPBGK model \eqref{VPBGK model} gives
    \begin{align}\label{Cons 2}
       \begin{split}
           \partial_t f(x,v,t) = -v\partial_x f(x,v,t) - \mathbb{E}(x,t)\partial_v f(x,v,t) + \frac{1}{\varepsilon}(\mathcal{M}(f)(x,v,t) - f(x,v,t)).
       \end{split}
   \end{align}
   Inserting \eqref{Cons 2} into \eqref{Cons 1}, we get
   \begin{align*}
       \begin{split}
           &f(x,v,t+\Delta t)\cr
           &\quad= f(x-v\Delta t,v-\mathbb{E}(x,t)\Delta t,t) + \frac{\Delta t}{\varepsilon}(\mathcal{M}(f)(x,v,t) - f(x,v,t))\cr
           &\quad+v\Delta t(\partial_x f(\bar{x},v,t) - \partial_x f(x,v,t))+ \mathbb{E}(x,t)\Delta t(\partial_v f(x-v\Delta t,\bar{v},t) - \partial_v f(x,v,t))\cr 
           &\quad+ \frac{(\Delta t)^2}{2}\partial^2_t f(x,v,\bar{t}).
       \end{split}
   \end{align*}
   Multiplying $\displaystyle \frac{\varepsilon}{\varepsilon+\Delta t}$ and adding $\displaystyle \frac{\Delta t}{\varepsilon+\Delta t}f(x,v,t+\Delta t)$ to both sides, we rearrange the terms to obtain
   \begin{align*}
       \begin{split}
           &f(x,v,t+\Delta t)\cr
           &\quad= \frac{\varepsilon}{\varepsilon +\Delta t}f(x-v\Delta t,v-\mathbb{E}(x,t)\Delta t,t) + \frac{\Delta t}{\varepsilon+\Delta t}\mathcal{M}(f)(x-v\Delta t,v-\mathbb{E}\Delta t,t)\cr
           &\quad+\frac{\varepsilon\Delta t}{\varepsilon+\Delta t}v(\partial_x f(\bar{x},v,t) - \partial_x f(x,v,t))+\frac{\varepsilon\Delta t}{\varepsilon+\Delta t}\mathbb{E}(x,t)(\partial_v f(x-v\Delta t,\bar{v},t) - \partial_v f(x,v,t))\cr
           &\quad+\frac{\varepsilon}{\varepsilon+\Delta t}\frac{(\Delta t)^2}{2}\partial^2_t f(x,v,\bar{t})\cr
           &\quad+\frac{\Delta t}{\varepsilon + \Delta t}\left\{(f(x,v,t+\Delta t) - f(x,v,t))+(\mathcal{M}(f)(x,v,t) - \mathcal{M}(f)(x-v\Delta t,v-\mathbb{E}(x,t)\Delta t,t))\right\}.
       \end{split}
   \end{align*}
   This completes the proof.
\end{proof}

To estimate the remainder terms, we present a lemma to control the continuous local Maxwellian.

\begin{lemma}\label{LipMaxw}\cite{YunPark2026}
    Let $f$ be a solution in Theorem \ref{analysis} corresponding to initial data $f_0$. Then, we have, for $q>3$,
    \begin{align*}
        \begin{split}
            \|\mathcal{M}(f)\|_{W^{1,\infty}_{q}} \leq C_{q,f_0,T^f}(\|f\|_{W^{1,\infty}_{q}} +1),\quad\|\mathcal{M}(f) - \mathcal{M}(g)\|_{L^{\infty}_{q}} \leq C_{q,f_0,T^f} \|f-g\|_{L^{\infty}_{q}}.
        \end{split}
    \end{align*}
\end{lemma}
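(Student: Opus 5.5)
The plan is to prove the two estimates of Lemma \ref{LipMaxw} separately, using only the bounds on the macroscopic fields from Theorem \ref{analysis}: $\rho\geq C_{q,T^f}$, $T\geq C_{q,f_0,T^f}$, and $T+|U|^2\leq C_{q,f_0,T^f}$. I would read $\mathcal{M}(g)$ as a smooth function of the moments $(\rho_g,U_g,T_g)$ and $v$, and reduce everything to estimates on these moments.

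\textbf{Moment bounds.} Since $q>3$, the weights $(1+|v|)^{-q}|v|^k$ are integrable for $k=0,1,2$. Hence
\begin{align*}
|\rho_f|+|\rho_fU_f|+|\rho_f(T_f+U_f^2)|\leq C_q\|f\|_{L^\infty_q}.
\end{align*}
For the same reason, $\partial_x$ of each moment is controlled by $\|\partial_xf\|_{L^\infty_q}$. The lower bounds on $\rho$ and $T$ make the maps $(\rho,\rho U,\rho(T+U^2))\mapsto(\rho,U,T)$ smooth with bounded derivatives on the relevant region. Therefore $\|\partial_x(\rho,U,T)\|_{L^\infty}\leq C\|f\|_{W^{1,\infty}_q}$.

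\textbf{First estimate.} I compute $\partial_x\mathcal{M}$ and $\partial_v\mathcal{M}$ directly. Each derivative is $\mathcal{M}$ times a polynomial in $(v-U)/\sqrt T$, with coefficients involving $\partial_x\rho/\rho$, $\partial_xU/\sqrt T$ and $\partial_xT/T$. Multiplying by $(1+|v|)^q\leq C(1+|U|)^q(1+|v-U|)^q$ and using $\sup_s s^m e^{-s^2/2T}\leq C_{T}$ bounds everything by $C(1+\|f\|_{W^{1,\infty}_q})$. Here $\rho\leq C$ follows from $\|f\|_{L^\infty_q}$ and the a priori estimate in Theorem \ref{analysis}.

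\textbf{Lipschitz estimate.} Interpolate along $g_\theta=\theta f+(1-\theta)g$ and write
\begin{align*}
\mathcal{M}(f)-\mathcal{M}(g)=\int_0^1\nabla_{(\rho,U,T)}\mathcal{M}(g_\theta)\cdot\big(\rho_f-\rho_g,\,U_f-U_g,\,T_f-T_g\big)\,d\theta.
\end{align*}
The moment differences are bounded by $C\|f-g\|_{L^\infty_q}$, using integrability of $(1+|v|)^{2-q}$ together with the lower bound on $\rho$. The weighted gradient of $\mathcal{M}$ is bounded as in the first estimate, provided $(\rho,U,T)$ along the path stay in a compact set with $\rho,T$ bounded below.

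\textbf{Main obstacle.} The difficulty is that a general $g$, or the intermediate $g_\theta$, need not inherit the lower bounds on $\rho$ and $T$. The estimate is only applied with $g=\tilde f$ or with discrete data close to $f$. I would therefore interpolate the moments linearly, $(\rho,\rho U,\rho(T+U^2))_\theta$, rather than the functions. I would also restrict to $g$ whose moments satisfy comparable lower and upper bounds, which hold for $\tilde f(x,v,t)=f(x-v\Delta t,v-\mathbb{E}\Delta t,t)$ by Theorem \ref{analysis}. If this restriction fails, I would split into two cases. When $\|f-g\|_{L^\infty_q}$ is large, both Maxwellians are bounded via Lemma \ref{Mac_bound_lemma}-type moment controls, and the difference is trivially bounded by a constant times $\|f-g\|_{L^\infty_q}$. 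When $\|f-g\|_{L^\infty_q}$ is small, $g$'s moments are close to $f$'s, so the lower bounds persist.
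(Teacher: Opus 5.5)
This lemma has no in-paper proof to compare against: the paper imports it from \cite{YunPark2026}. Your route is a sound self-contained proof. It uses the paper's own tools: weighted moment controls as in Lemmas \ref{Mac_bound_lemma} and \ref{Mx to f}, and a mean-value expansion of $\mathcal{M}$ in its parameters, kept in a compact set away from $\rho=0$, $T=0$, as in Lemma \ref{Maxewllian estimate}. The first estimate is fine as written. Two points in the Lipschitz part need tightening; neither is a real gap.

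\textbf{The interpolation identity.} Your displayed formula mixes two parametrizations. Along $g_\theta=\theta f+(1-\theta)g$, only the conserved moments $(\rho,\rho U,\rho(T+U^2))$ are affine in $\theta$; $U_\theta$ and $T_\theta$ are not. So $\frac{d}{d\theta}\mathcal{M}(g_\theta)$ is not $\nabla_{(\rho,U,T)}\mathcal{M}\cdot(\rho_f-\rho_g,U_f-U_g,T_f-T_g)$. There are two fixes:
\begin{itemize}
\item interpolate $(\rho,U,T)$ affinely, as the paper does with $\kappa$ in Lemma \ref{Maxewllian estimate}; or
\item keep $g_\theta$ and differentiate with respect to the conserved variables.
\end{itemize}
Interpolating the moments linearly is not a different choice from interpolating the functions, since moments are linear in $g$. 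Your worry that $g_\theta$ might lose the lower bounds is also unfounded when both endpoints have them. The quantity $\rho T=\int v^2 g\,dv-(\rho U)^2/\rho$ is concave in the conserved variables, because $m^2/\rho$ is jointly convex for $\rho>0$. Hence $\rho_\theta T_\theta\geq\min(\rho_fT_f,\rho_gT_g)$, while $\rho_\theta$ is affine. The two-case split is therefore needed only because $g$ is otherwise arbitrary.

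\textbf{The large-$\|f-g\|_{L^\infty_q}$ case.} Here you rely on $\|\mathcal{M}(g)\|_{L^\infty_q}\leq C_q\|g\|_{L^\infty_q}$ for an arbitrary $g$. State explicitly that this is the continuous (Perthame--Pulvirenti) analogue of Lemmas \ref{Mac_bound_lemma}--\ref{Mx to f}. Unlike the discrete versions, it needs no lower bounds, because the cut-off radius $r$ ranges over all of $(0,\infty)$ instead of having to exceed $\Delta v$. It also requires $g\geq 0$, which the statement leaves implicit and without which $\mathcal{M}(g)$ is undefined.

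With these two fixes, your split proves the Lipschitz bound for every nonnegative $g\in L^\infty_q$. Choose $\delta_0$ from the bounds in Theorem \ref{analysis}; in the large case the constant is $C_q(1+\|f\|_{L^\infty_q}/\delta_0)$. This is more than the paper needs: it applies the bound only with $g=\tilde f$ in Lemma \ref{R_3 estimate}. For small $\Delta t$ that case falls in your small-deviation regime, since $\|f-\tilde f\|_{L^\infty_q}\leq C\Delta t$ is shown in that proof.
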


We are now ready to derive the estimates for $R_1$, $R_2$, and $R_3$.
\begin{lemma}\label{R_1 estimate}
    Let $R_1$ is defined in \eqref{consistent_remainder}. Then, we have
    \begin{align*}
        \begin{split}
            \|R_1\|_{L^{\infty}_{q}} \leq \varepsilon C_{q,f_0,T^f}(\Delta t)^2.
        \end{split}
    \end{align*}
\end{lemma}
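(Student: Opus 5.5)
The plan is to estimate the two pieces of $R_1$ separately by the mean value theorem in the variable where the evaluation point has been displaced, and then absorb the weight $(1+|v|)^q$ using the $W^{2,\infty}_{q+1}$-type control of $f$ (which is why Theorem \ref{Main Thm} assumes $\|f_0\|_{W^{2,\infty}_{q+2}}<\infty$; Theorem \ref{analysis} with $q$ replaced by $q+1$ or $q+2$ gives uniform-in-time bounds on $\|f(t)\|_{W^{2,\infty}_{q+1}}$ on $[0,T^f]$).

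For the first piece, $\varepsilon\Delta t\, v(\partial_x f(\bar x,v,t)-\partial_x f(x,v,t))$, I apply the mean value theorem in $x$. This gives
\[
|\partial_x f(\bar x,v,t)-\partial_x f(x,v,t)|\le |\bar x-x|\,|\partial_x^2 f(\hat x,v,t)|\le |v|\Delta t\,|\partial_x^2 f(\hat x,v,t)|.
\]
Multiplying by $(1+|v|)^q$, the term is bounded by $\varepsilon(\Delta t)^2 |v|^2(1+|v|)^q|\partial^2_x f|\le \varepsilon(\Delta t)^2\|f(t)\|_{W^{2,\infty}_{q+2}}$. This is uniformly bounded on $[0,T^f]$ by Theorem \ref{analysis} applied with weight $q+2$.

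For the second piece, $\varepsilon\Delta t\,\mathbb{E}(x,t)(\partial_v f(x-v\Delta t,\bar v,t)-\partial_v f(x,v,t))$, I insert the intermediate point $\partial_v f(x-v\Delta t,v,t)$. The difference in $v$ is at most $|\bar v-v|\,|\partial_v^2 f|\le |\mathbb{E}|\Delta t\,|\partial_v^2 f(x-v\Delta t,\hat v,t)|$. The difference in $x$ is at most $|v|\Delta t\,|\partial_x\partial_v f|$. Both carry the prefactor $\varepsilon\Delta t|\mathbb{E}|$, and $\|\mathbb{E}(t)\|_{L^\infty_x}\le C_{q,f_0,T^f}$ by Theorem \ref{analysis}(4), so these contribute $\varepsilon C(\Delta t)^2$ times a weighted second-derivative norm.

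The main obstacle is the weight at the shifted velocity $\hat v$, which satisfies $|\hat v-v|\le C_{\mathbb E}\Delta t$. I need $(1+|v|)^q\le C(1+|\hat v|)^q$, which holds since $\Delta t<1$ and $\mathbb{E}$ is bounded: $(1+|v|)/(1+|\hat v|)\le 1+C_{\mathbb E}$. The $x$-shift does not affect the weight, and the extra factor $|v|$ is again absorbed by a higher weighted norm. Collecting all terms gives $\|R_1\|_{L^\infty_q}\le \varepsilon C_{q,f_0,T^f}(\Delta t)^2$, with $C$ depending on $\sup_{t\le T^f}\|f(t)\|_{W^{2,\infty}_{q+2}}$ and $\sup_t\|\mathbb{E}(t)\|_{L^\infty_x}$.
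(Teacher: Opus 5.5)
Your proposal is correct and follows essentially the same route as the paper: the mean value theorem in $x$ with the $W^{2,\infty}_{q+2}$ bound for the transport piece, and a two-step split of the field piece controlled by $\|\mathbb{E}(t)\|_{L^\infty_x}$, higher weighted second-derivative norms, and the bounded weight ratio $(1+|v|)/(1+|\hat v|)$. The only difference is that you use $\partial_v f(x-v\Delta t,v,t)$ as the intermediate point instead of the paper's $\partial_v f(x,\bar v,t)$, which spares you the weight comparison in the mixed-derivative term.
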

\begin{proof}
    By the mean value theorem, for some $x_1 \in [\bar{x},x]$, we have
    \begin{align*}
        \begin{split}
            |\varepsilon \Delta t v(\partial_x f(\bar{x},v,t)-\partial_x f(x,v,t))|\leq \varepsilon\Delta t|v||\bar{x}-x||\partial^2_x f(x_1,v,t)|.
        \end{split}
    \end{align*}            
Since $|\bar{x}-x|<|v|\Delta t$, the first term of $R_1$ is derived by         
         \begin{align}\label{R_1 1}
        \begin{split}
             |\varepsilon \Delta t v\{\partial_x f(\bar{x},v,t)-\partial_x f(x,v,t) \}|&\leq \varepsilon (\Delta t)^2|v|^2|\partial^2_x f(x_1,v,t)|\cr
            &\leq \varepsilon(\Delta t)^2 \|f(t)\|_{W^{2,\infty}_{q+2}}\frac{1}{(1+|v|)^q}\cr
            &\leq\varepsilon C_{q,f_0,T^f}(\Delta t)^2 \frac{1}{(1+|v|)^q}.
        \end{split}
    \end{align}
    On the other hand, we split the second term of $R_1$ into two parts such that
    \begin{align*}
        \begin{split}
            &|\varepsilon\Delta t\mathbb{E}(x,t)(\partial_v f(x-v\Delta t,\bar{v},t)-\partial_v f(x,v,t))|\cr
            &\quad \leq \varepsilon\Delta t\mathbb{E}(x,t)|\partial_v f(x-v\Delta t,\bar{v},t)-\partial_v f(x,\bar{v},t)|\cr
            &\quad+ \varepsilon\Delta t\mathbb{E}(x,t)|\partial_v f(x,\bar{v},t)-\partial_v f(x,v,t)|\cr
            &\quad \equiv I_1 + I_2.    
        \end{split}
    \end{align*}
    By the mean value theorem, for some $x_2\in [x-v\Delta t,x]$, $I_1$ is controlled by
        \begin{align*}
        \begin{split}
            I_1 &\leq \varepsilon(\Delta t)^2|\mathbb{E}(x,t)||v||\partial_x\partial_v f(x_2,\bar{v},t)|\cr
            &\leq \varepsilon(\Delta t)^2\|\mathbb{E}(t)\|_{L^{\infty}_x} \|f(t)\|_{W^{2,\infty}_{q+1}} \frac{|v|}{(1+|\bar{v}|)^{q+1}}\cr
            &\leq C_{q,f_0,T^f}\varepsilon(\Delta t)^2 \left(\frac{1+|v|}{1+|\bar{v}|}\right)^{q+1} \frac{1}{(1+|v|)^q}\cr
            &\leq C_{q,f_0,T^f}\varepsilon(\Delta t)^2 \frac{1}{(1+|v|)^q},
        \end{split}
    \end{align*}
    where we used
    \begin{align*}
        \begin{split}
            \left(\frac{1+|v|}{1+|\bar{v}|}\right)^{q+1}\leq \left(1+ \frac{|v|-|\bar{v}|}{1+|\bar{v}|} \right)^{q+1}\leq \left(1+|v-\bar{v}| \right)^{q+1}\leq C_{q}(1+|\mathbb{E}(x,t)|\Delta t)^{q+1}\leq C_{q,f_0,T^f}.
        \end{split}
    \end{align*}
    We now consider the estimate for $I_2$ as follows:
    \begin{align*}
        \begin{split}
            I_2 &\leq \varepsilon\Delta t |\mathbb{E}(x,t)||\partial_v f(x,\bar{v},t) -\partial_v f(x,v,t)|\cr
            &\leq \varepsilon\Delta t |\mathbb{E}(x,t)||\bar{v}-v||\partial^2_v f(x,v_{1},t)|\cr
            &\leq \varepsilon|\mathbb{E}(x,t)|^2\|f(t)\|_{W^{2,\infty}_{q}}(\Delta t)^2  \frac{1}{(1+|v_{1}|)^q}\cr
            &\leq C_{q,f_0,T^f}\varepsilon (\Delta t)^2 \left(\frac{1+|v|}{1+|v_{1}|}\right)^q \frac{1}{(1+|v|)^q}\cr
            &\leq \varepsilon C_{q,f_0,T^f}(\Delta t)^2\frac{1}{(1+|v|)^q},
        \end{split}
    \end{align*}
    for some $v_1\in[\bar{v},v]$. Combining the estimates for $I_1$ and $I_2$, we get
    \begin{align}\label{R_1 2}
        \begin{split}
            |\varepsilon\Delta t\mathbb{E}(x,t)(\partial_v f(x-v\Delta t,\bar{v},t)-\partial_v f(x,v,t) )|\leq \varepsilon C_{q,f_0,T^f} (\Delta t)^2\frac{1}{(1+|v|)^q}.
        \end{split}
    \end{align}
    By \eqref{R_1 1} and \eqref{R_1 2}, we obtain the desired result.
\end{proof}

\begin{lemma}\label{R_2 estimate}
     Let $R_{2}$ is defined in \eqref{consistent_remainder}. Then, we have
     \begin{align*}
         \begin{split}
             \|R_2\|_{L^{\infty}_{q}} \leq \varepsilon C_{q,f_0,T^f,\varepsilon}(\Delta t)^{2}.
         \end{split}
     \end{align*}
\end{lemma}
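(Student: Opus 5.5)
We need to bound $\|R_2\|_{L^\infty_q}$ with $R_2=\frac{\varepsilon(\Delta t)^2}{2}\partial_t^2 f(x,v,\bar t)$. It therefore suffices to show
\begin{align*}
\sup_{t\in[0,T^f]}\|\partial_t^2 f(t)\|_{L^\infty_q}\leq C_{q,f_0,T^f,\varepsilon}.
\end{align*}
The plan is to use the equation \eqref{VPBGK model} to trade time derivatives for phase-space derivatives. These are controlled by Theorem \ref{analysis} (in the $W^{2,\infty}_{q+2}$ version, using $\|f_0\|_{W^{2,\infty}_{q+2}}<\infty$ as in Lemma \ref{R_1 estimate}) together with Lemma \ref{LipMaxw}.

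\textbf{Step 1: the first time derivative.} From \eqref{VPBGK model},
\begin{align*}
\partial_t f=-v\partial_xf-\mathbb{E}\partial_vf+\tfrac{1}{\varepsilon}(\mathcal{M}(f)-f).
\end{align*}
The weight $|v|$ is absorbed by the $q+1$ weight, $\mathbb{E}$ is bounded by Theorem \ref{analysis}(4), and $\mathcal{M}(f)$ is bounded by Lemma \ref{LipMaxw}. Hence $\|\partial_tf\|_{L^\infty_{q+1}}\le C$, and similarly $\|\partial_tf\|_{W^{1,\infty}_{q}}\le C_\varepsilon$. For the latter, differentiate the equation in $x$ and $v$; this brings in second derivatives of $f$, bounded in $W^{2,\infty}_{q+1}$, and $\partial_x\mathbb{E}=\rho-1$, which is bounded.

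\textbf{Step 2: differentiate the equation in $t$.} This gives
\begin{align*}
\partial_t^2 f=-v\partial_x\partial_tf-\partial_t\mathbb{E}\,\partial_vf-\mathbb{E}\partial_v\partial_tf+\tfrac1\varepsilon(\partial_t\mathcal{M}(f)-\partial_tf).
\end{align*}
We bound the terms one at a time.
\begin{itemize}
\item $v\partial_x\partial_tf$: substitute $\partial_tf$ and take $\partial_x$. This produces $v^2\partial_x^2f$, $v\,\partial_x\mathbb{E}\,\partial_vf$, $v\,\mathbb{E}\,\partial_x\partial_vf$ and $\frac{v}{\varepsilon}\partial_x(\mathcal{M}(f)-f)$. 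Each is bounded in $L^\infty_q$ by $\|f\|_{W^{2,\infty}_{q+2}}$, by $\|\rho\|_\infty$, and by $\|\mathcal{M}(f)\|_{W^{1,\infty}_{q+1}}$ from Lemma \ref{LipMaxw} with $q+1$ in place of $q$.
\item $\mathbb{E}\partial_v\partial_tf$: this is handled in the same way.
\item $\partial_t\mathbb{E}$: we have $\partial_t\mathbb{E}(x)=\int K(x,y)\partial_t\rho\,dy$, and by the continuity equation $\partial_t\rho=-\partial_x(\rho U)$, which is bounded. Hence $|\partial_t\mathbb{E}|\le C$.
\end{itemize}

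\textbf{Main obstacle: $\partial_t\mathcal{M}(f)$.} Differentiating the Maxwellian produces $\partial_t\rho$, $\partial_tU$ and $\partial_tT$ multiplied by polynomial-in-$v$ Maxwellian factors. These time derivatives are controlled by the moment integrals of $\partial_tf$, using
\begin{align*}
\|\partial_tf\|_{L^\infty_{q+1}}\quad\text{with } q+1>3 \text{ so that } \int|v|^2|\partial_tf|\,dv<\infty,
\end{align*}
combined with the lower bounds $\rho\ge C$ and $T\ge C$ and the upper bounds in Theorem \ref{analysis}(2). The Maxwellian factors of the form
\begin{align*}
(1+|v|)^q\,|v-U|^k\,T^{-m}\,\mathcal{M}
\end{align*}
are bounded uniformly because $U$ and $T$ lie in compact sets.

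Collecting these bounds gives $\sup_t\|\partial_t^2f(t)\|_{L^\infty_q}\le C_{q,f_0,T^f,\varepsilon}$, and therefore $\|R_2\|_{L^\infty_q}\le \varepsilon C(\Delta t)^2$. The constant depends on $\varepsilon$ through the factor $1/\varepsilon$ in the collision term.
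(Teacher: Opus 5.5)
Your proof is correct and takes the same route as the paper: differentiate the equation in $t$, then bound $\partial_t\partial_x f$, $\partial_t\partial_v f$, $\partial_t\mathbb{E}$ (via the Green kernel and the continuity equation) and $\partial_t\mathcal{M}(f)$ (via the chain rule and moment bounds, using the lower bounds on $\rho$ and $T$); the paper does this through Lemmas \ref{TimeDeri_f}--\ref{TimeDeri_field}. Your handling of the term $v\,\partial_x\partial_t f$ is slightly more careful than the paper's: you expand it through the equation and use $\|f\|_{W^{2,\infty}_{q+2}}$ together with Lemma \ref{LipMaxw} at weight $q+1$, whereas Lemma \ref{TimeDeri_f2} only bounds $\partial_t\partial_x f$ in $L^{\infty}_{q}$, which does not by itself absorb the extra factor $|v|$.
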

\begin{proof}
    From the VPBGK model \eqref{VPBGK model}, we get
    \begin{align*}
        \begin{split}
            |\partial^2_t f(x,v,t)| &\leq |v||\partial_x\partial_t f(t,x,v)| + |\partial_t \mathbb{E}(x,t)||\partial_v f(t,x,v)| + |\mathbb{E}(x,t)||\partial_v\partial_t f(x,v,t)|\cr
            &+ \frac{1}{\varepsilon}(|\partial_t \mathcal{M}(f)(x,v,t)| + |\partial_t f(x,v,t)|).
        \end{split}
    \end{align*}
    By Lemma \ref{TimeDeri_f}-\ref{TimeDeri_field}, we have
    \begin{align*}
        \|\partial^2_t f(t)\|_{L^{\infty}_q} \leq C_{q,f_0,T^f,\varepsilon}. 
    \end{align*}
    Thus, we yield
    \begin{align*}
        \begin{split}
            \|R_2\|_{L^{\infty}_{q}}\leq \frac{\varepsilon}{2}(\Delta t)^2\|\partial^2_t f(t)\|_{L^{\infty}_q}\leq \varepsilon C_{q,f_0,T^f,\varepsilon}(\Delta t)^2.
        \end{split}
    \end{align*}
\end{proof}

\begin{lemma}\label{R_3 estimate}
    Let $R_{3}$ is defined in \eqref{consistent_remainder}. Then, we have
    \begin{align*}
        \begin{split}
           \|R_3\|_{L^{\infty}_{q}} \leq C_{q,f_0,T^f,\varepsilon}(\Delta t)^2 .
        \end{split}
    \end{align*}
\end{lemma}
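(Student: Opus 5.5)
We split $R_3$ into its two pieces,
\begin{align*}
R_3 = \Delta t\,\bigl(f(x,v,t+\Delta t)-f(x,v,t)\bigr) + \Delta t\,\bigl(\mathcal{M}(f)(x,v,t)-\mathcal{M}(\tilde f)(x,v,t)\bigr) \equiv J_1 + J_2 ,
\end{align*}
and show that each bracket is $O(\Delta t)$ in $L^\infty_q$. The extra factor $\Delta t$ in front then gives the claimed $(\Delta t)^2$.

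For $J_1$ we apply the mean value theorem in time, which gives $|f(x,v,t+\Delta t)-f(x,v,t)|\le \Delta t\,|\partial_t f(x,v,t_1)|$ for some $t_1\in(t,t+\Delta t)$. We bound $\|\partial_t f(t)\|_{L^\infty_q}$ uniformly on $[0,T^f]$ using the equation itself:
\begin{align*}
|\partial_t f|\le |v||\partial_x f| + |\mathbb{E}||\partial_v f| + \tfrac{1}{\varepsilon}\bigl(|\mathcal{M}(f)|+|f|\bigr).
\end{align*}
The term $|v||\partial_x f|(1+|v|)^q$ is controlled by $\|f\|_{W^{1,\infty}_{q+1}}$. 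This is finite by Theorem \ref{analysis} applied with weight $q+2$, which is available since $\|f_0\|_{W^{2,\infty}_{q+2}}<\infty$. The field term is handled by Theorem \ref{analysis}(4), and the Maxwellian term by Lemma \ref{LipMaxw}. The paper's Lemma \ref{TimeDeri_f}, already cited in Lemma \ref{R_2 estimate}, may package this bound directly. Either way, $\|J_1\|_{L^\infty_q}\le C_{q,f_0,T^f,\varepsilon}(\Delta t)^2$.

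For $J_2$ we use the Lipschitz estimate of Lemma \ref{LipMaxw}:
\begin{align*}
\|\mathcal{M}(f)(t)-\mathcal{M}(\tilde f)(t)\|_{L^\infty_q}\le C_{q,f_0,T^f}\|f(t)-\tilde f(t)\|_{L^\infty_q}.
\end{align*}
Here $\tilde f(x,v,t)=f(x-v\Delta t,v-\mathbb{E}(x,t)\Delta t,t)$, and we note that the proof of Proposition \ref{consistent theorem} effectively evaluates the Maxwellian along this shifted argument. We then split
\begin{align*}
f(x,v,t)-\tilde f(x,v,t) = \bigl[f(x,v,t)-f(x-v\Delta t,v,t)\bigr] + \bigl[f(x-v\Delta t,v,t)-f(x-v\Delta t,v-\mathbb{E}\Delta t,t)\bigr].
\end{align*}
\begin{itemize}
\item \emph{Spatial shift.} The first bracket is at most $\Delta t\,|v|\,|\partial_x f(\bar x,v,t)|$. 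This is bounded by $\Delta t\,\|f\|_{W^{1,\infty}_{q+1}}(1+|v|)^{-q}$.
\item \emph{Velocity shift.} The second bracket is at most $\Delta t\,|\mathbb{E}|\,|\partial_v f(\cdot,\bar v,t)|$ with $|\bar v-v|\le \|\mathbb{E}\|_{L^\infty}\Delta t$. The weight is transferred exactly as in Lemma \ref{R_1 estimate}: we use $\bigl(\tfrac{1+|v|}{1+|\bar v|}\bigr)^q\le (1+\|\mathbb{E}\|_{L^\infty}\Delta t)^q\le C$.
\end{itemize}
Together these give $\|f-\tilde f\|_{L^\infty_q}\le C\Delta t$, and hence $\|J_2\|_{L^\infty_q}\le C(\Delta t)^2$.

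The main obstacle is the weight bookkeeping. The factor $|v|$ produced by the spatial shift and the time derivative must be absorbed by the stronger weight $q+1$. We must also make sure that the Lipschitz constant in Lemma \ref{LipMaxw} applies to $\tilde f$. We would justify this by observing that $\tilde f$ inherits the macroscopic lower and upper bounds of $f$ up to $O(\Delta t)$ perturbations, which keeps it within the class covered by that lemma. If the lemma is not directly applicable, the fallback is to apply the mean value theorem to the Maxwellian itself in $(x,v)$. Using $\|\mathcal{M}(f)\|_{W^{1,\infty}_{q+1}}\le C(\|f\|_{W^{1,\infty}_{q+1}}+1)$ from Lemma \ref{LipMaxw} with weight $q+1$ then gives the same $O(\Delta t)$ bound.
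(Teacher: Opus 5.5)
Your proposal is correct and follows the paper's proof essentially step for step. It uses the same split of $R_3$, the mean value theorem in time with the equation-based bound on $\|\partial_t f\|_{L^\infty_q}$ (where $W^{1,\infty}_{q+1}$ absorbs the factor $|v|$), and the Lipschitz estimate of Lemma \ref{LipMaxw} with the spatial/velocity-shift splitting of $f-\tilde f$ and the weight transfer $(1+|v|)/(1+|\bar v|)\le 1+|\mathbb{E}|\Delta t$.

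One caveat concerns your fallback only. Differentiating $\mathcal{M}(f)$ in $(x,v)$ controls the shifted Maxwellian $\mathcal{M}(f)(x-v\Delta t,v-\mathbb{E}\Delta t,t)$ that appears at the end of the proof of Proposition \ref{consistent theorem}. It does not control $\mathcal{M}(\tilde f)$ built from the moments of $\tilde f$, which is the object compared with $\mathcal{M}(\tilde f^n)$ in Section \ref{sec6}. So the Lipschitz route you chose first is the one actually needed.
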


\begin{proof}
    We split the estimate for $R_3$ into the two parts:
    \begin{align*}
        \begin{split}
            (1+|v|^q)R_3 &\leq  (1+|v|^q)\Delta t|f(x,v,t+\Delta t)-f(x,v,t)|+(1+|v|^q)\Delta t|(\mathcal{M}(f)-\mathcal{M}(\tilde{f}))(x,v,t)|\cr
            &\equiv I_1+I_2.
        \end{split}
    \end{align*}
    \textbf{(1) Estimate for $I_1$}\\
    From the VPBGK model \eqref{VPBGK model}, Lemma \ref{LipMaxw} gives
    \begin{align*}
        \begin{split}
            |\partial_t f(x,v,t)|&\leq |v||\partial_x f(x,v,t)| + |\mathbb{E}(x,t)||\partial_v f(x,v,t)|+\frac{1}{\varepsilon}\left(|\mathcal{M}(f)(x,v,t)| + |f(x,v,t)| \right)\cr
            &\leq\left\{\|f(t)\|_{W^{1,\infty}_{q+1}}+C_{q,f_0,T^f}\|f(t)\|_{W^{1,\infty}_{q}}+\frac{1}{\varepsilon}(\|\mathcal{M}(f)(t)\|_{L^{\infty}_q}+\|f(t)\|_{L^{\infty}_q})\right\}\frac{1}{(1+|v|)^q}\cr
            &\leq \left\{\|f(t)\|_{W^{1,\infty}_{q+1}}+C_{q,f_0,T^f}\|f(t)\|_{W^{1,\infty}_{q}}+\frac{1}{\varepsilon}(C_{q,f_0,T^f}+1)\|f(t)\|_{L^{\infty}_q}\right\}\frac{1}{(1+|v|)^q}\cr
            &\leq C_{q,f_0,T^f,\varepsilon}\frac{1}{(1+|v|)^q}.            
        \end{split}
    \end{align*}
    Then, for some $t_1\in(t,t+\Delta t)$, Taylor's theorem leads to
    \begin{align*}
        \begin{split}
            I_1&\leq (\Delta t)^2(1+|v|^q)|\partial_tf(x,v,t_1)|\leq (\Delta t)^2\|\partial_t f(t_1)\|_{L^{\infty}_q}\leq C_{q,f_0,T^f,\varepsilon}(\Delta t)^2.
        \end{split}
    \end{align*}
    \textbf{(2) Estimate for $I_2$}\\
    From the definition of $\tilde{f}$, for some $\bar{x}$ and $\bar{v}$ which satisfy $|\bar{x}-x|<|v|\Delta t$ and $|\bar{v}-v|<|\mathbb{E}(x,t)|\Delta t$, respectively, we apply Taylor's theorem to obtain
        \begin{align*}
        \begin{split}
            &|f(x,v,t) - \tilde{f}(x,v,t)|\cr
            &\quad\leq |f(x,v,t) -f(x-v\Delta t,v,t)| + |f(x-v\Delta t,v,t) - f(x-v\Delta t,v-\mathbb{E}(x,t)\Delta t,t)|\cr
            &\quad\leq \Delta t |v||\partial_x f(\bar{x},v,t)| + \Delta t |\mathbb{E}(x,t)||\partial_v f(x-v\Delta t,\bar{v},t)|\cr
            &\quad\leq \Delta t \|f(t)\|_{W^{1,\infty}_{q+1}}\frac{1}{(1+|v|)^q} +\Delta t \|\mathbb{E}(t)\|_{L^{\infty}_x}\|f(t)\|_{W^{1,\infty}_q} \left(\frac{1+|v|}{1+|\bar{v}|} \right)^q\frac{1}{(1+|v|)^q}\cr
            &\quad \leq C_{q,f_0,T^f}\Delta t\frac{1}{(1+|v|)^q},
        \end{split}
    \end{align*}
    where we used 
    \begin{align*}
        \begin{split}
             \left(\frac{1+|v|}{1+|\bar{v}|} \right)^q=\left(1+\frac{|v|-|\bar{v}|}{1+|\bar{v}|}\right)^q\leq (1+|v-\bar{v}|)^q\leq (1+|\mathbb{E}(x,t)|\Delta t)^q\leq C_{q,f_0,T^f}.
        \end{split}
    \end{align*}
    Therefore, Lemma \ref{LipMaxw} gives
    \begin{align*}
        \begin{split}
            I_2&\leq \Delta t\|(\mathcal{M}(f) - \mathcal{M}(\tilde{f}))(t)\|_{L^{\infty}_q }\leq C_{q,f_0}\Delta t\|(f-\tilde{f})(t)\|_{L^{\infty}_q}\leq C_{q,f_0,T^f}(\Delta t)^2.
        \end{split}
    \end{align*}
    Combining the estimates for $I_1$ and $I_2$, this completes the proof.
\end{proof}


\subsection{Error estimates}
In this section, we derive error estimates for the difference between the continuous and the discrete electric field, solution, and local Maxwellian in a weighted $L^{\infty}$ norm.

\begin{lemma}\label{difference field}\textnormal{(Error estimate for the electric field)} 
    Let $\mathbb{E}(x,t)$ and $\mathbb{E}^n_i$ be defined in (\ref{continuous electric field}) and (\ref{Discrete field 2}). Then, under the assumption of Theorem \ref{Main Thm}, we have 
    \begin{align*}
        \begin{split}
            |\mathbb{E}(x_i,t^n) - \mathbb{E}^n_i| \leq C\|f(t^n)-f^n\|_{L^{\infty}_{q}} + C_{q,f_0,T^f}\left\{\Delta x + \Delta v+\frac{1}{V_M^{q-1}}\right\}. 
        \end{split}
    \end{align*}
\end{lemma}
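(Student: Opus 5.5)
We split the difference using the continuous representation \eqref{continuous electric field} and the discrete one \eqref{Discrete field 2}. Since both use the same kernel $K$ and the constant background $1$ enters both,
\begin{align*}
\mathbb{E}(x_i,t^n)-\mathbb{E}^n_i
&=\Big(\int_0^1 K(x_i,y)(\rho(y,t^n)-1)\,dy-\sum_k K(x_i,y_k)(\rho(y_k,t^n)-1)\Delta y\Big)\cr
&\quad+\sum_k K(x_i,y_k)\big(\rho(y_k,t^n)-\bar\rho_k\big)\Delta y+\sum_k K(x_i,y_k)\big(\bar\rho_k-\rho^n_k\big)\Delta y
\equiv J_1+J_2+J_3,
\end{align*}
where $\bar\rho_k:=\sum_{|j|\le N_v} f(y_k,v_j,t^n)\Delta v$ is the truncated Riemann sum of the exact density.

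\textbf{Step 1 ($J_1$, spatial quadrature).} On each cell $[y_k,y_{k+1}]$ the integrand $K(x_i,\cdot)(\rho-1)$ is Lipschitz, except on the single cell containing the jump of $K$ at $y=x_i$. The jump has size one, and since $x_i$ is itself a grid node it sits at a cell endpoint, so the discontinuity contributes at most $C\Delta x$. For the smooth part, $|\partial_y\rho|\le \int|\partial_x f|dv\le \bar D_q\|f\|_{W^{1,\infty}_q}$ by Theorem \ref{analysis}. Also $|\rho|\le C$ and $|K|\le1$. Hence $|J_1|\le C_{q,f_0,T^f}\Delta x$.

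\textbf{Step 2 ($J_2$, velocity quadrature and truncation).} Split $\rho(y_k,t^n)-\bar\rho_k$ into two pieces. The first is the Riemann-sum error on $[-V_M,V_M]$. Using $|\partial_v f|\le \|f\|_{W^{1,\infty}_q}(1+|v|)^{-q}$ and $q>1$, it is bounded by $C\Delta v$. The second is the tail $\int_{|v|>V_M}f\,dv\le \|f\|_{L^\infty_q}\int_{|v|>V_M}(1+|v|)^{-q}dv\le C V_M^{1-q}$. Since $\sum_k|K|\Delta y\le1$, this gives $|J_2|\le C_{q,f_0,T^f}(\Delta v+V_M^{1-q})$.

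\textbf{Step 3 ($J_3$, stability part).} Here
\[
|\bar\rho_k-\rho^n_k|\le \sum_j|f(y_k,v_j,t^n)-f^n_{k,j}|\Delta v\le \|f(t^n)-f^n\|_{L^\infty_q}\sum_j\frac{\Delta v}{(1+|v_j|)^q}\le 2\bar D_q\|f(t^n)-f^n\|_{L^\infty_q},
\]
using the Riemann-sum comparison assumed in Theorem \ref{stability theorem}. Summing with $|K|\le1$ yields $|J_3|\le C\|f(t^n)-f^n\|_{L^\infty_q}$.

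Adding the three bounds gives the claim. The only delicate point is the kernel discontinuity in Step 1. It is harmless because $x_i$ coincides with a node $y_k$ (the grids share one mesh), so the jump lies at a cell boundary and one $O(\Delta x)$ term absorbs it. Everything else is routine quadrature and tail bookkeeping, with constants drawn from Theorem \ref{analysis}.
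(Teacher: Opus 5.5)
Your proposal is correct and follows essentially the paper's argument. Your $J_1$ combines the paper's $I_2$ and $II$ (left-endpoint quadrature in $y$, with the single cell carrying the jump of $K$ contributing $O(\Delta x)$ as in $II_2$), and your $J_2+J_3$ reproduces the paper's $I_1$ estimate: the velocity Riemann sum, the boundary cell, the tail $\delta^n_k\lesssim V_M^{1-q}$, and the $\|f(t^n)-f^n\|_{L^{\infty}_{q}}$ term via $\sum_j\Delta v/(1+|v_j|)^q\le 2\bar{D}_q$. One small quibble: the coincidence $x_i=y_i$ is not what makes the jump harmless, since any single cell of length $\Delta y$ with bounded integrand contributes $O(\Delta y)$, which is exactly how the paper bounds $II_2$.
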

\begin{proof}
    Let $\Delta_k=[y_{k},y_{k+1})$ for $k=0,\cdots,N_x -1$. From the definition of the continuous and the discrete electric field in \eqref{continuous electric field} and \eqref{Discrete field 2}, we have 
    \begin{align}\label{E-E 1}
        \begin{split}
            &|\mathbb{E}(x_i,t^n) - \mathbb{E}^n_i|\cr 
            &\quad= \left|\int^1_0 K(x_i,y)(\rho(y,t^n)-1)dy - \sum_k K(x_i,y_k)(\rho^n_k -1)\Delta y\right|\cr
            &\quad\leq \left|\sum_k\int_{\Delta_k} K(x_i,y_k)(\rho(y,t^n)-1)dy - \sum_k K(x_i,y_k) (\rho^n_k -1)\Delta y\right|\cr 
            &\quad+ \sum_k\left|\int_{\Delta_k} (K(x_i,y) - K(x_i,y_k)) (\rho(y,t^n)-1)dy\right|\cr
            &\quad \equiv I + II.
        \end{split}
    \end{align}
    \textbf{(1) Estimate for $\boldsymbol{I}$}\\
    We expand $\rho(y,t^n)$ using the Taylor formula such that
    \begin{align}\label{rho taylor 6.9}
        \begin{split}
            \rho(y,t^n)=\rho(y_k,t^n) + (y-y_k)\partial_x \rho(y_{k,1},t^n),\quad y_{k,1}\in[y_k,y].
        \end{split}
    \end{align}
    Applying the above expansion, we split $I$ into two parts as follows:
    \begin{align}\label{field diff est I}
        \begin{split}
            I&=\left|\sum_k\int_{\Delta_k} K(x_i,y_k)(\rho(y,t^n)-1)dy - \sum_k K(x_i,y_k) (\rho^n_k -1)\Delta y\right|\cr
            &\leq \sum_k |K(x_i,y_k)||\rho(y_k,t^n)-\rho^n_k|\Delta y + \sum_k\int_{\Delta_k} |y-y_k||K(x_i,y_k)||\partial_x \rho(y_{k,1},t^n)|dy\cr
            &\equiv I_1 + I_2.
        \end{split}
    \end{align}
    To derive the estimate for $I_1$, we let $\Delta_l=[v_{l},v_{l+1})$ for $l=-N_v,\cdots,N_v-1$, and 
    \begin{align*}
        \begin{split}
            \delta^{n}_{k} =\int_{|v|\geq V_M} f(y_k,v,t^n)dv.
        \end{split}
    \end{align*}
    Using the above notation, $\rho(y_k,t^n)$ can be represented to
    \begin{align*}
        \begin{split}
            \rho(y_k,t^n) &= \int_{\mathbb{R}}f(y_k,v,t^n)dv\cr
            &=\sum_{-N_v\leq l\leq N_v-1} \int_{\Delta_l}f(y_k,v,t^n)dv+\delta^{n}_{k}\cr
            &=\sum_{-N_v\leq l\leq N_v} \int_{\Delta_l}f(y_k,v,t^n)dv-\int_{\Delta_{N_v}}f(y_k,v,t^n)dv+\delta^{n}_{k}.
        \end{split}
    \end{align*}
Then, we split $\rho(y_k,t^n) -\rho^n_k$ into three parts $J_1$, $J_2$, and $\delta^n_k$, which are estimates in the interior, at the boundary, and in the tail, respectively: 
\begin{align*}
    \begin{split}
        |\rho(y_k,t^n) -\rho^n_k|&\leq \left|\sum_{-N_v\leq l\leq N_v} \int_{\Delta_l}f(y_k,v,t^n)dv-\sum_{-N_v\leq l\leq N_v}f^n_{k,l}\Delta v \right|+\int_{\Delta_{N_v}}f(y_k,v,t^n)dv+\delta^{n}_{k}\cr
        &\equiv J_1 + J_2  +\delta^{n}_{k}.
    \end{split}
\end{align*}
For $J_1$, we use Taylor formula for $f(y_k,v,t^n)$ to get
 \begin{align}\label{E diff taylor}
    \begin{split}
        f(y_k,v,t^n) = f(y_k,v_l,t^n) + (v-v_l)\partial_v f(y_k,v_{l,1},t^n),\; \text{for some }v_{l,1}\in(v,v_l).
    \end{split}
\end{align}
Applying this, we have
\begin{align*}
    \begin{split}
        J_1 &\leq \left|\sum_{-N_v\leq l\leq N_v}\int_{\Delta_l}f(y_k,v_l,t^n)dv-\sum_{-N_v\leq l\leq N_v}f^n_{k,l}\Delta v \right|+\left|\sum_{-N_v\leq l\leq N_v}\int_{\Delta_l}(v-v_l)\partial_v f(y_k,v_{l,1},t^n)dv \right|\cr
        &\leq \left|\sum_{-N_v\leq l\leq N_v}(f(y_k,v_l,t^n)-f^n_{k,l})\Delta v \right|+\left|\Delta v\sum_{-N_v\leq l\leq N_v}\partial_vf(y_k,v_{l,1},t^n)\Delta v \right|,
    \end{split}
\end{align*}
where we used
\begin{align*}
    \begin{split}
        \int_{\Delta_l}dv=\Delta v,\quad{and}\quad|v-v_l|<|v_{l+1}-v_{l}|<\Delta v.
    \end{split}
\end{align*}
The first term on the right-hand side is bounded by
\begin{align*}
    \begin{split}
        \left|\sum_{-N_v\leq l\leq N_v}(f(y_k,v_l,t^n)-f^n_{k,l})\Delta v \right|\leq\|f(t^n)-f^n\|_{L^{\infty}_q}\sum_{-N_v\leq l\leq N_v}\frac{\Delta v}{(1+|v_l|)^q}\leq C\|f(t^n)-f^n\|_{L^{\infty}_q}.
    \end{split}
\end{align*}
The second term on the right-hand side is controlled by
\begin{align*}
    \begin{split}
        \left|\Delta v\sum_{-N_v\leq l\leq N_v}\partial_vf(y_k,v_{l,1},t^n)\Delta v \right|&\leq \Delta v\|f(t^n)\|_{W^{1,\infty}_q}\sum_{-N_v\leq l\leq N_v}\left(\frac{1+|v_l|}{1+|v_{l,1}|} \right)^q\frac{\Delta v}{(1+|v_l|)^q}\cr
        &\leq C_{q,f_0,T^f}\Delta v\sum_{-N_v\leq l\leq N_v}\frac{\Delta v}{(1+|v_l|)^q}\cr
        &\leq C_{q,f_0,T^f}\Delta v.
    \end{split}
\end{align*}
In these estimates, we used the smallness condition on $\Delta v$ such that
\begin{align*}
    \begin{split}
        \sum_{-N_v\leq l\leq N_v}\frac{\Delta v}{(1+|v_l|)^q}\leq 2\bar{D}_q,
    \end{split}
\end{align*}
and
\begin{align*}
    \begin{split}
        \left(\frac{1+|v_l|}{1+|v_{l,1}|} \right)^q\leq \left(1+\frac{|v_l|-|v_{l,1}|}{1+|v_{l,1}|} \right)^q\leq (1+|v_{l}-v_{l,1}|)^q\leq (1+\Delta v)^q<\left(1+\frac{1}{2}\right)^q.
    \end{split}
\end{align*}
Combining the estimates for the first and second terms, we yield
\begin{align}\label{field diff est J_1}
    \begin{split}
        J_1\leq C\|f(t^n)-f^n\|_{L^{\infty}_q}+C_{q,f_0,T^f}\Delta v.
    \end{split}
\end{align}
For $J_2$, we have
\begin{align}\label{field diff est J_2}
    \begin{split}
        J_2 \leq \|f(t^n)\|_{L^{\infty}_q}\int_{\Delta_{N_v}}dv\leq C_{q,f_0,T^f}\Delta v.
    \end{split}
\end{align}
On the other hand, $\delta^n_k$ is controlled by
\begin{align}\label{field diff est delta}
        \begin{split}
           \delta^{n}_{k} \leq \|f(t^n)\|_{L^{\infty}_{q}}\int_{|v|\geq V_M}\frac{1}{(1+|v|)^q}dv\leq 2C_{q,f_0,T^f}\int^{\infty}_{V_M}(1+|v|)^{-q}dv\leq \frac{C_{q,f_0,T^f}}{V_M^{q-1}}.
        \end{split}
    \end{align}
Combining \eqref{field diff est J_1}, \eqref{field diff est J_2}, and \eqref{field diff est delta}, we deduce
\begin{align*}
    \begin{split}
        |\rho(y_k,t^n)-\rho^n_k|\leq C\|f(t^n)- f^n\|_{L^{\infty}_q}+C_{q,f_0,T^f}\left\{ \Delta v + \frac{1}{V_M^{q-1}}\right\}.
    \end{split}
\end{align*}
From the definition of the Green kernel $K$, we have $|K(x_i,y_k)|\leq 1$. Therefore, $I_1$ is estimated by 
\begin{align}\label{field diff est I_1}
    \begin{split}
        I_1 &\leq \sum_{k}|\rho(y_k,t^n)-\rho^n_k|\Delta y\cr 
        &\leq \left[C\|f(t^n) -f^n\|_{L^{\infty}_q} + C_{q,f_0,T^f}\left\{ \Delta v + \frac{1}{V_M^{q-1}}\right\}\right]\sum_{k}\Delta y\cr
        &=C\|f(t^n) -f^n\|_{L^{\infty}_q} + C_{q,f_0,T^f}\left\{ \Delta v + \frac{1}{V_M^{q-1}}\right\}.
    \end{split}
\end{align}
The estimate for $I_2$ is derived as follows:
\begin{align}\label{field diff est I_2}
    \begin{split}
        I_2&\leq \Delta y\sum_k\int_{\Delta _k}|K(x_i,y_k)|\left(\int_{\mathbb{R}}|\partial_x f(y_{k,1},v,t^n)|dv\right)dy\cr
        &\leq \Delta y\left( \sum_k\int_{\Delta_k}dy\right)\left(\int_{\mathbb{R}}|\partial_x f(y_{k,1},v,t^n)|dv\right)\cr
        &\leq \|f(t^n)\|_{W^{1,\infty}_q}\Delta y\int_{\mathbb{R}}\frac{dv}{(1+|v|)^q}\cr
        &\leq C_{q,f_0,T^f}\Delta y, 
    \end{split}
\end{align}
where we used $|y-y_k|\leq \Delta y$ on the first line and $|K(x_i,y_k)|\leq 1$ on the second line. Since $k$ shares the same mesh size as $i$, we insert \eqref{field diff est I_1}, \eqref{field diff est I_2} into \eqref{field diff est I} to get
\begin{align}\label{E-E 2}
    \begin{split}
        I \leq  C \|f(t^n) -f^n\|_{L^{\infty}_q} + C_{q,f_0,T^f}\left\{\Delta x+ \Delta v + \frac{1}{V_M^{q-1}}\right\}.
    \end{split}
\end{align}
\textbf{(2) Estimate for $\boldsymbol{II}$}\\
We now consider the estimate for $II$. Let $x_i\in \Delta_{k_1}$ for some $k_1\in[0,N_x-1]$. Since $K(x,y)$ has a discontinuity at $x=y$, we split $II$ into two parts whether $K$ is differentiable or not:   
\begin{align*}
    \begin{split}
        II &= \sum_{x_i\notin \Delta_k}\int_{\Delta_k} (K(x_i,y)-K(x_i,y_k))(\rho(y,t^n)-1)dy\cr
        &+ \int_{\Delta_{k_1}}(K(x_i,y)-K(x_i,y_k))(\rho(y,t^n)-1)dy\cr
        &\equiv II_1 + II_2.
    \end{split}
\end{align*}
For some $y_{k_2}\in (y_{k},y)$, the mean value theorem gives the estimate for $II_1$ as follows:
\begin{align}\label{field diff est II_1}
    \begin{split}
       II_1&\leq \sum_k \int_{\Delta_k}|\partial_y K(x_i,y_{k,2})||y-y_k||\rho(y,t^n)-1|dy\cr
       &\leq \sum_{k}\int_{\Delta_k}|\partial_y K(x_i,y_{k,2})||y-y_k|\left\{\int_{\mathbb{R}}|f(y,v,t^n)|dv + 1\right\}dy\cr
       &\leq  C_{q}\{\|f(t^n)\|_{L^{\infty}_q} + 1\}\sum_{k}\int_{\Delta_k}|\partial_y K(x_i,y_{k,2})||y-y_k|dy\cr
        &\leq C_{q,f_0,T^f}\Delta y, 
    \end{split}
\end{align}
where we used $\sup_{x,y}|\partial_y K(x,y)|\leq 1$ and $|y-y_k|\leq \Delta y$. For $II_2$, $\sup_{x,y}|K(x,y)|\leq 1$ implies that
\begin{align}\label{field diff est II_2}
    \begin{split}
        II_2 &\leq  \int_{\Delta_{k_1}} (|K(x_i,y)| + |K(x_i,y_k)| )|\rho(y,t^n)-1|dy\cr
        &\leq 2 \int_{\Delta_{k_1}}\left\{\int_{\mathbb{R}}|f(y,v,t^n)|dv + 1\right\}dy\cr
        &\leq C_{q}\{\|f(t^n)\|_{L^{\infty}_q} + 1\}\int_{\Delta_{k_1}}dy\cr
        & \leq C_{q,f_0,T^f}\Delta y.
    \end{split}   
\end{align}
Since $k$ shares the same mesh size as $i$, \eqref{field diff est II_1} and \eqref{field diff est II_2} give the estimate for $II$:
\begin{align}\label{E-E 3}
    \begin{split}
        II \leq C_{q,f_0,T^f}\Delta x.
    \end{split}
\end{align}
Plugging \eqref{E-E 2} and \eqref{E-E 3} into \eqref{E-E 1}, we obtain the desired result.
\end{proof}  

We now introduce lemmas regarding the error estimate for $\tilde{f}$ and its moments.
\begin{lemma}\label{f estimate}\textnormal{(Error estimate for the distribution function)} 
    Let $f(x,v,t)$ and $f^{n}_{i,j}$ be the solution of (\ref{VPBGK model}) and (\ref{VPBGK_Scheme}). Then, under the assumption of Theorem \ref{Main Thm}, we have 
    \begin{align*}
        \begin{split}
            \|\tilde{f}(t^n) -\tilde{f}^n\|_{L^{\infty}_{q}} &\leq (1+C_{q,f_0,T^f,\varepsilon}\Delta t)\|f(t^n) - f^n\|_{L^{\infty}_{q}}\cr
            &+ C_{q,f_0,T^f}\left\{\Delta x\Delta t+ \Delta v\Delta t + (\Delta t)^2 + \frac{\Delta t}{V_M^{q-1}}\right\}.
        \end{split}
    \end{align*}
\end{lemma}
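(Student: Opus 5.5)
We compare $\tilde f(x_i,v_j,t^n)=f(x_i-v_j\Delta t,\,v_j-\mathbb{E}(x_i,t^n)\Delta t,\,t^n)$ with the discrete update \eqref{convex_combi}. By the symmetry of the update rule we treat only $v_j\ge0$, and we assume $\mathbb{E}^n_i\ge0$; the three other sign combinations are handled identically. The first step is to write the \emph{continuous} analogue of the convex combination. We define
\begin{align*}
\mathcal{L}^n_{i,j}[g]=\Big(1-\tfrac{\Delta t}{\Delta x}v_j-\tfrac{\Delta t}{\Delta v}\mathbb{E}^n_i\Big)g_{i,j}+\tfrac{\Delta t}{\Delta x}v_j\,g_{i-1,j}+\tfrac{\Delta t}{\Delta v}\mathbb{E}^n_i\,g_{i,j-1},
\end{align*}
so that $\tilde f^n_{i,j}=\mathcal{L}^n_{i,j}[f^n]$. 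We then decompose
\begin{align*}
\tilde f(x_i,v_j,t^n)-\tilde f^n_{i,j}
&=\mathcal{L}^n_{i,j}[f(t^n)-f^n]
+\Big(\tilde f(x_i,v_j,t^n)-\mathcal{L}^{n,*}_{i,j}[f(t^n)]\Big)
+\Big(\mathcal{L}^{n,*}_{i,j}-\mathcal{L}^n_{i,j}\Big)[f(t^n)],
\end{align*}
where $\mathcal{L}^{n,*}$ denotes the same stencil with $\mathbb{E}^n_i$ replaced by the exact field $\mathbb{E}(x_i,t^n)$. The three terms are, respectively, a stability term, a consistency term, and a field-error term.

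\textbf{Stability term.} Because of the CFL condition (Remark \ref{Delta x Delta v restriction}) all coefficients of $\mathcal{L}^n$ are nonnegative, so we can repeat the computation of Lemma \ref{tilde-original} with $f(t^n)-f^n$ in place of $f^n$. Multiplying by $(1+|v_j|)^q$ and using $\big(\tfrac{1+|v_j|}{1+|v_{j-1}|}\big)^q\le 1+(2^q-1)\Delta v$ together with $|\mathbb{E}^n_i|\le C_{\mathbb{E},1}$ (Lemma \ref{A2}), we obtain the bound $(1+C_{\mathbb{E},2}\Delta t)\|f(t^n)-f^n\|_{L^\infty_q}$. This supplies the leading factor in the statement. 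At the velocity boundary $j=\pm N_v$, the ghost values $f^n_{i,\pm(N_v+1)}$ obey the Neumann condition \eqref{Neumann boundary condition}. We therefore compare them with $f(x_i,\pm V_M,t^n)$, so that no additional error enters through the stencil.

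\textbf{Consistency term.} We Taylor expand $f(t^n)$ at $(x_i,v_j)$, both in $\tilde f$ and in each stencil point, up to second order. The zeroth-order terms cancel exactly. The first-order terms are $-v_j\Delta t\,\partial_xf-\mathbb{E}(x_i,t^n)\Delta t\,\partial_vf$ in both expressions, so they cancel as well. The remainders are of the form $(v_j\Delta t)^2\partial_x^2f$, $v_j\Delta t\Delta x\,\partial_x^2f$, $\mathbb{E}\Delta t\Delta v\,\partial_v^2f$ and $(\Delta t)^2|\mathbb{E}||v_j|\,\partial_x\partial_vf$. We weight each by $(1+|v_j|)^q$ and control it with $\|f(t)\|_{W^{2,\infty}_{q+2}}$, which is finite by the hypothesis $\|f_0\|_{W^{2,\infty}_{q+2}}<\infty$ in Theorem \ref{Main Thm}. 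Here we shift the weight from the intermediate velocity back to $v_j$ exactly as in Lemma \ref{R_1 estimate}. This yields $C(\Delta t\Delta x+\Delta t\Delta v+(\Delta t)^2)$.

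\textbf{Field-error term.} This term equals
\begin{align*}
\frac{\Delta t}{\Delta v}\big(\mathbb{E}(x_i,t^n)-\mathbb{E}^n_i\big)\big(f(x_i,v_{j-1},t^n)-f(x_i,v_j,t^n)\big),
\end{align*}
with the appropriate modification when the signs of $\mathbb{E}(x_i,t^n)$ and $\mathbb{E}^n_i$ differ, where the upwind direction switches. The factor $\tfrac{1}{\Delta v}$ cancels against the difference of $f$, since $|f(x_i,v_{j-1},t^n)-f(x_i,v_j,t^n)|\le \Delta v\,\|f\|_{W^{1,\infty}_q}(1+|v_j|)^{-q}$ up to constants. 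The term is therefore bounded by $C\Delta t\,|\mathbb{E}(x_i,t^n)-\mathbb{E}^n_i|$. Lemma \ref{difference field} then gives $C\Delta t\|f(t^n)-f^n\|_{L^\infty_q}+C\Delta t(\Delta x+\Delta v+V_M^{-(q-1)})$. Absorbing the first piece into the factor $(1+C\Delta t)$ produces the claimed estimate.

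\textbf{Main obstacle.} The delicate step is the sign-mismatch case, where $\mathbb{E}(x_i,t^n)$ and $\mathbb{E}^n_i$ have opposite signs and the stencils upwind in opposite directions. Here we rewrite both as central-difference plus diffusion parts. Each upwind stencil differs from the exact first-order transport by $O(\Delta t\Delta v)$ in $\partial_v^2 f$, so both are consistent to the same order, and their difference is still $\Delta t\,|\mathbb{E}-\mathbb{E}^n_i|\cdot O(\|\partial_vf\|)$ plus $O(\Delta t\Delta v)$. A second point needing care is the boundary $j=\pm N_v$. There the Neumann ghost value differs from $f(x_i,\pm V_M\pm\Delta v,t^n)$ by $O(\Delta v\,V_M^{-q})$, which is harmless after weighting.
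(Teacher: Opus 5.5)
Your interior argument is correct and is essentially the paper's. Your split into the stencil applied to $f(t^n)-f^n$, a Taylor consistency remainder, and the field-error term $\frac{\Delta t}{\Delta v}(\mathbb{E}(x_i,t^n)-\mathbb{E}^n_i)$ times a first difference of $f$ in $v$ is the paper's decomposition into $R_1,R_2,R_3$.

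The sign-mismatch case you call the main obstacle is not actually an issue. Take $\mathcal{L}^{n,*}$ literally as the same one-sided stencil with coefficient $\mathbb{E}(x_i,t^n)$. Its truncation error is then $O(\Delta t\Delta x+\Delta t\Delta v+(\Delta t)^2)$ whatever the sign of $\mathbb{E}(x_i,t^n)$, because consistency does not need nonnegative coefficients. The paper simply replaces $\partial_v f$ by the difference quotient in the direction dictated by $\mathbb{E}^n_i$.

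The real gap is at the velocity boundary, which you dismiss too quickly. In the decelerating case at $j=N_v$, and its mirror at $j=-N_v$, the Neumann ghost forces the comparison stencil to use $f(x_i,V_M,t^n)$ instead of $f(x_i,V_M+\Delta v,t^n)$. This mismatch enters multiplied by the stencil coefficient $\frac{\Delta t}{\Delta v}|\mathbb{E}^n_i|$, so the weighted contribution is
\[
(1+V_M)^q\,\frac{\Delta t}{\Delta v}|\mathbb{E}^n_i|\,\bigl|f(x_i,V_M+\Delta v,t^n)-f(x_i,V_M,t^n)\bigr|\le C_{\mathbb{E},1}\Delta t\,(1+V_M)^q\bigl|\partial_v f(x_i,V_M+\theta\Delta v,t^n)\bigr|.
\]
With only $q$-weighted control of $\partial_v f$ this is $O(\Delta t)$ per step, not small: the factor $\frac{1}{\Delta v}$ cancels the $\Delta v$ in your $O(\Delta v\,V_M^{-q})$. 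A local error of size $C\Delta t$ accumulates to $O(1)$ over $N_t$ steps and does not fit the right-hand side of the lemma.

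To close this step, use $\|f(t)\|_{W^{1,\infty}_{q+2}}<\infty$, which you already have from the $W^{2,\infty}_{q+2}$ hypothesis. It bounds the display by $C\Delta t\,V_M^{-2}$. Then the truncation rule $V_M=C_{\mathbb{E},1}(\Delta v)^{-\gamma}$ with $\gamma\geq\frac12$ gives $V_M^{-2}\le C(\Delta v)^{2\gamma}\le C\Delta v$. Since $q-1>2$, the term $\Delta t\,V_M^{-2}$ cannot be absorbed into $\Delta t\,V_M^{-(q-1)}$. This is exactly where the lower bound $\gamma\geq\frac12$ is needed, and your proposal never invokes it.
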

\begin{proof}
    We only consider $v_j\geq 0$ and $\mathbb{E}^{n}_{i} < 0$, which is the case where the particle velocity is decelerated by the electric field. The proof of an accelerated case is omitted, as the analysis proceeds in the same manner. Recall our update rule \eqref{convex_combi} for $\tilde{f}^n_{i,j}$ when $v_j\geq0$ and $\mathbb{E}^{n}_{i} < 0$:
    \begin{align}\label{lem 5.7 update}
        \begin{split}
            \tilde{f}^n_{i,j}=\left(1-\frac{\Delta t}{\Delta x}|v_j|-\frac{\Delta t}{\Delta v}|\mathbb{E}^n_i| \right)f^n_{i,j}+\frac{\Delta t}{\Delta x}|v_j|f^n_{i-1,j}+\frac{\Delta t}{\Delta v}|\mathbb{E}^n_i|f^n_{i,j+1}.
        \end{split}
    \end{align}
    Since the Neumann boundary condition \eqref{Neumann boundary condition} modifies the last term on the right-hand side at $j=N_v$, we treat the interior case $(0\leq j<N_v)$, and the boundary case $(j=N_v)$ separately.\\
    \textbf{(1) Interior case ($\boldsymbol{0\leq j<N_v}$):} We will reformulate $\tilde{f}(x_i,v_j,t^n)$ to be consistent with the structure of the above formula \eqref{lem 5.7 update}. To do this, we apply the Taylor formula, for some $x_{i,1}\in(x_i - v_j\Delta t,x_i)$ and $v_{j,1},\;v_{j,2} \in (v_j,v_j - \mathbb{E}(x_i,t^n)\Delta t)$, to get
    \begin{align}\label{lem 5.7 tilde taylor}
        \begin{split}
            \tilde{f}(x_i,v_j,t^n) &= f(x_i-v_j\Delta t,v_j-\mathbb{E}(x_i,t^n)\Delta t,t^n)\cr
            &= f(x_i,v_j,t^n) -v_j\Delta t\partial_x f(x_i,v_j,t^n) -\mathbb{E}(x_i,t^n)\Delta t\partial_v f(x_i,v_j,t^n)\cr
            &+\frac{(v_j \Delta t)^2}{2}\partial^2_x f(x_{i,1},v_j-\mathbb{E}(x_i,t^n)\Delta t,t^n)+ v_j\mathbb{E}(x_i,t^n)(\Delta t)^2\partial_v\partial_x f(x_i,v_{j,1},t^n)\cr
            &+\frac{(\mathbb{E}(x_i,t^n)\Delta t)^2}{2}\partial^2_v f(x_i,v_{j,2},t^n).
        \end{split}
    \end{align}
    Using the Taylor formula again, for some $x_{i,2} \in (x_{i-1},x_i)$ and $v_{j,3} \in (v_{j-1},v_j)$, we have 
    \begin{align}\label{lem 5.7 replace}
        \begin{split}
            \partial_x f(x_i,v_j,t^n) &= \frac{f(x_i,v_j,t^n)-f(x_{i-1},v_j,t^n)}{\Delta x}+\frac{\Delta x}{2}\partial^2_x f(x_{i,2},v_j,t^n),\cr
            \partial_v f(x_i,v_j,t^n) &= \frac{f(x_i,v_{j+1},t^n)-f(x_{i},v_{j},t^n)}{\Delta v} -\frac{\Delta v}{2}\partial^2_v f(x_i,v_{j,3},t^n).
        \end{split}
    \end{align}
    Inserting \eqref{lem 5.7 replace} into \eqref{lem 5.7 tilde taylor}, we rewrite $\tilde{f}(x_i,v_j,t^n)$ as
    \begin{align}\label{tilde conti convex}
        \begin{split}
            &\tilde{f}(x_i,v_j,t^n)\cr 
            &\quad= \left(1-\frac{\Delta t}{\Delta x}|v_j|-\frac{\Delta t}{\Delta v}|\mathbb{E}^{n}_{i}|\right)f(x_i,v_j,t^n) + \frac{\Delta t}{\Delta x}|v_j| f(x_{i-1},v_j,t^n) + \frac{\Delta t}{\Delta v}|\mathbb{E}^{n}_{i}|f(x_i,v_{j+1},t^n)\cr
            &\quad+ R_1 + R_2 + R_3,
        \end{split}
    \end{align}
    where $R_1$, $R_2$, and $R_3$ are given by
    \begin{align}\label{remainder f est}
        \begin{split}
            R_1 &= \frac{\Delta t}{\Delta v}(\mathbb{E}^{n}_{i} - \mathbb{E}(x_i,t^n))(f(x_i,v_{j+1},t^n) - f(x_i,v_{j},t^n)),\cr 
            R_2 &= -\frac{\Delta x\Delta t}{2}v_j\partial^2_x f(x_{i,2},v_j,t^n)+\frac{\Delta v\Delta t}{2}\mathbb{E}(x_i,t^n)\partial^2_v f(x_i,v_{j,3},t^n),\cr
            R_3 &= \frac{(v_j \Delta t)^2}{2}\partial^2_x f(x_{i,1},v_j-\mathbb{E}(x_i,t^n)\Delta t,t^n) + v_j\mathbb{E}(x_i,t^n)(\Delta t)^2\partial_v\partial_x f(x_i,v_{j,1},t^n)\cr 
            &+\frac{(\mathbb{E}(x_i,t^n)\Delta t)^2}{2}\partial^2_v f(x_i,v_{j,2},t^n).
        \end{split}
    \end{align}
    Subtracting \eqref{lem 5.7 update} from \eqref{tilde conti convex} and multiplying $(1+|v_j|)^q$, we obtain
    \begin{align}\label{lem 5.7 diff}
        \begin{split}
            (1+|v_j|)^q |\tilde{f}(x_i,v_j,t^n)-\tilde{f}^{n}_{i,j} | &\leq \left(1- \frac{\Delta t}{\Delta x}|v_j| - \frac{\Delta t}{\Delta v}|\mathbb{E}^{n}_{i}|\right)(1+|v_j|)^q|f(x_i,v_j,t^n)-f^{n}_{i,j}|\cr
            &+\frac{\Delta t}{\Delta x}|v_j| (1+|v_j|)^q|f(x_{i-1},v_j,t^n)-f^{n}_{i-1,j}|\cr
            &+\frac{\Delta t}{\Delta v}|\mathbb{E}^{n}_{i}|(1+|v_j|)^q|f(x_i,v_{j+1},t^n)- f^{n}_{i,j+1}|\cr
            &+(1+|v_j|)^q(R_1+R_2+R_3).
            \end{split}
    \end{align}
    Since $\Delta v$ is less than one, we have
    \begin{align*}
        \begin{split}
            \left(\frac{1+|v_j|}{1+|v_{j+1}|} \right)^{q}=\left(1+\frac{|v_j|-|v_{j+1}|}{1+|v_{j+1}|}\right)^q\leq (1+|v_{j}-v_{j+1}|)^q\leq (1+\Delta v)^q\leq 1+C_q\Delta v.
        \end{split}
    \end{align*}
    Using this, we control the third term on the right-hand side in \eqref{lem 5.7 diff} as follows:
    \begin{align*}
        \begin{split}
            &\frac{\Delta t}{\Delta v}|\mathbb{E}^{n}_{i}|(1+|v_j|)^{q}|f(x_i,v_{j+1},t^n) - f^{n}_{i,j+1} |\cr 
            &= \frac{\Delta t}{\Delta v}|\mathbb{E}^{n}_{i}|(1+|v_{j+1}|)^{q}| f(x_i,v_{j+1},t^n) - f^{n}_{i,j+1}|\left(\frac{1+|v_j|}{1+|v_{j+1}|} \right)^{q}\cr
            &\leq (1+C_q\Delta v)\frac{\Delta t}{\Delta v}|\mathbb{E}^{n}_{i}|(1+|v_{j+1}|)^{q}| f(x_i,v_{j+1},t^n)- f^{n}_{i,j+1}|\cr
            &\leq \left(\frac{\Delta t}{\Delta v}|\mathbb{E}^{n}_{i}|+C_{q}C_{\mathbb{E},1} \Delta t\right)\|f(t^n)-f^{n}\|_{L^{\infty}_{q}}.
        \end{split}
    \end{align*}
    This implies 
    \begin{align}\label{f diff 1}
        \begin{split}
            &(1+|v_j|)^{q}|\tilde{f}(x_i,v_j,t^n)-\tilde{f}^{n}_{i,j}|\cr
            &\qquad\leq (1+C_qC_{\mathbb{E},1}\Delta t)\|f(t^n)-f^{n}\|_{L^{\infty}_{q}}+(1+|v_j|)^q(R_1+R_2+R_3).
        \end{split}
    \end{align}
    We now consider the estimates for $R_1$, $R_2$, and $R_3$. Applying Lemma \ref{difference field}, for some $v_{j,4}\in(v_{j},v_{j+1})$, we estimate $R_1$ as follows:
    \begin{align}\label{f diff R_1}
        \begin{split}
            R_1 &\leq \frac{\Delta t}{\Delta v}|f(x_i,v_{j+1},t^n) - f(x_i,v_{j},t^n)||\mathbb{E}(x_i,t^n) - \mathbb{E}^n_i|\cr
            &\leq \Delta t|\partial_v f(x_i,v_{j,4},t^n)||\mathbb{E}(x_i,t^n) - \mathbb{E}^n_i|\cr
            &\leq C_{q,f_0,T^f}\|f(t^n)\|_{W^{1,\infty}_{q}}\Delta t\left\{\|f(t^n)-f^n\|_{L^{\infty}_{q}} + \Delta x + \Delta v + \frac{1}{V_M^{q-1}}\right\}\cr
            &\quad\times\left(\frac{1+|v_j|}{1+|v_{j,4}|}\right)^q\frac{1}{(1+|v_j|)^q}\cr
            &\leq C_{q,f_0,T^f}\left\{\Delta t\|f(t^n)-f^n\|_{L^{\infty}_{q}}+\Delta x\Delta t  + \Delta v\Delta t + \frac{\Delta t}{V_M^{q-1}}\right\}\frac{1}{(1+|v_j|)^q},
        \end{split}
    \end{align}
    where we used the smallness condition $\Delta v<r_{\Delta v}$ as follows:
    \begin{align}\label{v to v}
        \begin{split}
            \left(\frac{1+|v_j|}{1+|v_{j,4}|}\right)^q=\left(1+\frac{|v_j|-|v_{j,4}|}{1+|v_{j,4}|}\right)^q\leq (1+|v_{j}-v_{j,4}|)^q\leq (1+\Delta v)^q\leq \left(1+\frac{1}{2}\right)^q\leq C_q.
        \end{split}
    \end{align}
    For $R_2$, we have
    \begin{align}\label{f diff R_2}
        \begin{split}
            R_2&\leq \frac{\Delta x\Delta t}{2}|v_j||\partial^2_x f(x_{i,2},v_j,t^n)|+\frac{\Delta v\Delta t}{2}|\mathbb{E}(x_i,t^n)||\partial^2_v f(x_i,v_{j,3},t^n)|\cr           
            &\leq \|f(t^n)\|_{W^{2,\infty}_{q+1}}\frac{\Delta x \Delta t}{2(1+|v_j|)^{q}} +  \|f(t^n)\|_{W^{2,\infty}_{q}}\|\mathbb{E}(t^n)\|_{L^{\infty}_{x}}\left(\frac{1+|v_{j}|}{1+|v_{j,3}|}\right)^q\frac{\Delta v \Delta t}{2(1+|v_{j}|)^{q}}\cr
            &\leq C_{q,f_0,T^f}\left\{\Delta x\Delta t + \Delta v\Delta t \right\}\frac{1}{(1+|v_j|)^{q}}.
        \end{split}
    \end{align}
    In the third line, we used the following property similar to \eqref{v to v}:
    \begin{align*}
        \begin{split}
            \left(\frac{1+|v_{j}|}{1+|v_{j,3}|}\right)^q\leq C_q.
        \end{split}
    \end{align*}
    For $R_3$, we get
    \begin{align}\label{f diff R_3}
        \begin{split}
            R_3&\leq \frac{|v_j|^2 (\Delta t)^2}{2}|\partial^2_x f(x_{i,1},v_j-\mathbb{E}(x_i,t^n)\Delta t,t^n)| + |v_j||\mathbb{E}(x_i,t^n)|(\Delta t)^2|\partial_v\partial_x f(x_i,v_{j,1},t^n)|\cr 
            &+\frac{|\mathbb{E}(x_i,t^n)|^2(\Delta t)^2}{2}|\partial^2_v f(x_i,v_{j,2},t^n)|\cr            
            &\leq \frac{(\Delta t)^2\|f(t^n)\|_{W^{2,\infty}_{q+2}}}{2}\left(\frac{1+|v_j|}{1+|v_j -\mathbb{E}(x_i,t^n)\Delta t|} \right)^q\frac{1}{(1+|v_j|)^q}\cr
            &+\frac{(\Delta t)^2\|f(t^n)\|_{W^{2,\infty}_{q+2}}\|\mathbb{E}(t^n)\|_{L^{\infty}_x}}{2}\left(\frac{1+|v_j|}{1+|v_{j,1}|} \right)^q\frac{1}{(1+|v_j|)^q}\cr
            &+\frac{(\Delta t)^2\|\mathbb{E}(t^n)\|^2_{L^{\infty}_x}}{2}\left(\frac{1+|v_j|}{1+|v_{j,2}|} \right)^q\frac{1}{(1+|v_j|)^q}\cr
            &\leq C_{q,f_0,T^f}(\Delta t)^2 \frac{1}{(1+|v_j|)^q}.
        \end{split}
    \end{align}
    In the final step, we used the following properties:
    \begin{align*}
        \begin{split}
            \left(\frac{1+|v_j|}{1+|v_j -\mathbb{E}(x_i,t^n)\Delta t|} \right)^q&=\left(1+\frac{|v_j|-|v_j -\mathbb{E}(x_i,t^n)\Delta t|}{1+|v_j -\mathbb{E}(x_i,t^n)\Delta t|}\right)^q\cr
            &\leq (1+|\mathbb{E}(x_i,t^n)\Delta t|)^q\cr
            &\leq (1+C_{q,f_0,T^f}\Delta t)^q\cr
            &\leq C_{q,f_0,T^f},
        \end{split}
    \end{align*}
    and
    \begin{align*}
        \begin{split}
            \left(\frac{1+|v_{j}|}{1+|v_{j,1}|}\right)^q,\left(\frac{1+|v_{j}|}{1+|v_{j,2}|}\right)^q \leq C_q.
        \end{split}
    \end{align*}
    Plugging \eqref{f diff R_1}, \eqref{f diff R_2}, and \eqref{f diff R_3} into \eqref{f diff 1}, we obtain
    \begin{align}\label{error est f interior}
        \begin{split}
            (1+|v_j|)^q |\tilde{f}(x_i,v_j,t^n)-\tilde{f}^{n}_{i,j} | &\leq (1+C_{q,f_0,T^f,\varepsilon}\Delta t)\|f(t^n) - f^n\|_{L^{\infty}_{q}} \cr
            &+ C_{q,f_0,T^f}\left\{\Delta x\Delta t+ \Delta v\Delta t + (\Delta t)^2 + \frac{\Delta t}{V_M^{q-1}}\right\}.
        \end{split}
    \end{align}
     \textbf{(2) Boundary case ($\boldsymbol{j=N_v}$):} The update rule \eqref{convex_combi} at the boundary is modified due to the Neumann condition \eqref{Neumann boundary condition} as follows:
\begin{align}\label{App boundary}
    \begin{split}
        \tilde{f}^n_{i,N_v}=\left(1-\frac{\Delta t}{\Delta x}V_M-\frac{\Delta t}{\Delta v}|\mathbb{E}^n_i| \right)f^n_{i,N_v} + \frac{\Delta t}{\Delta x}V_Mf^n_{i-1,N_v}+\frac{\Delta t}{\Delta v}|\mathbb{E}^n_i|f^n_{i,N_v}.
    \end{split}
\end{align}
Similar to the proof of the interior case (1), we rewrite $f(x_i,V_M,t^n)$ in a consistent form with \eqref{App boundary} such that
\begin{align}\label{app consistent f}
    \begin{split}
        &\tilde{f}(x_i,V_M,t^n)\cr
        &=\left(1-\frac{\Delta t}{\Delta x}V_M-\frac{\Delta t}{\Delta v}|\mathbb{E}^n_i| \right)f(x_{i},V_M,t^n) + \frac{\Delta t}{\Delta x}V_Mf(x_{i-1},V_M,t^n)+\frac{\Delta t}{\Delta v}|\mathbb{E}^n_i|f(x_i,V_M,t^n)\cr
        &+R_1 + R_2 +R_3+R_4,
    \end{split}
\end{align}
where $R_1$, $R_2$, and $R_3$ are given by \eqref{remainder f est} with $v_{j,1},v_{j,2}\in (V_M,V_M-\mathbb{E}(x_i,t^n)\Delta t)$ and $v_{j,3}\in (V_M,V_M+\Delta v)$. The additional remainder term $R_4$ is defined by
\begin{align*}
    \begin{split}
        R_4 =-\frac{\Delta t}{\Delta v}\mathbb{E}^n_i(f(x_i,V_M+\Delta v,t^n)-f(x_i,V_M,t^n)).
    \end{split}
\end{align*}
Subtracting \eqref{app consistent f} from \eqref{App boundary} and multiplying by $(1+V_M)^q$, we have
\begin{align}\label{f diff 2}
    \begin{split}
        &(1+|V_M|)^q|\tilde{f}(x_i,V_M,t^n)-\tilde{f}^n_{i,N_v}|\cr
        &\quad\leq \left(1-\frac{\Delta t}{\Delta x}V_M-\frac{\Delta t}{\Delta v}|\mathbb{E}^n_i| \right)(1+V_M)^q|f(x_i,V_M,t^n)-f^n_{i,N_v}|\cr 
        &\quad+ \frac{\Delta t}{\Delta x}V_M(1+V_M)^q|f(x_{i-1},V_M,t^n)-f^n_{i-1,N_v}|\cr
        &\quad+\frac{\Delta t}{\Delta v}|\mathbb{E}^n_i|(1+V_M)^q|f(x_i,V_M,t^n)-f^n_{i,N_v}|\cr
        &\quad+(1+V_M)^q(R_1 + R_2 +R_3+R_4)\cr
        &\quad\leq \|f(t^n)-f^n\|_{L^{\infty}_q} + (1+V_M)^q(R_1 + R_2 +R_3+R_4).
    \end{split}
\end{align}
The estimates for $R_1$, $R_2$, and $R_3$ are already derived in the proof of interior case (1) as follows:
\begin{align}\label{f diff bdd R123}
    \begin{split}
        &(1+V_M)^q(R_1 + R_2 +R_3)\cr
        &\qquad\leq C_{q,f_0,T^f}\left\{\Delta t\|f(t^n)-f^n\|_{L^{\infty}_q}+\Delta x\Delta v+\Delta v\Delta t + (\Delta t)^2 +\frac{\Delta t}{V_M^{q-1}} \right\}.
    \end{split}
\end{align}
Therefore, it is sufficient to obtain an estimate for $R_4$ to prove this lemma. Using the mean value theorem, for $\theta\in(0,1)$, we have
\begin{align*}
    \begin{split}
        (1+V_M)^qR_4&\leq \frac{\Delta t}{\Delta v}\|\mathbb{E}^n\|_{L^{\infty}_x}\Delta v(1+V_M)^q|\partial_v f(x_i,V_M+\theta\Delta v,t^n)|\cr
        &\leq C_{\mathbb{E},1}\Delta t\frac{(1+V_M)^{q+2}}{V^2_M}|\partial_v f(x_i,V_M+\theta\Delta v,t^n)|\cr
        &\leq C_{\mathbb{E},1}\Delta t\frac{(1+(V_M+\theta\Delta v))^{q+2}}{V^2_M}|\partial_v f(x_i,V_M+\theta\Delta v,t^n)|\cr
        &\leq C_{\mathbb{E},1}\|f(t^n)\|_{W^{1,\infty}_{q+2}}\frac{\Delta t}{V^2_M}.
    \end{split}
\end{align*}
Recalling the truncation assumption \eqref{V_M large}, that is, $V_M=C_{\mathbb{E},1}(\Delta v)^{-\gamma}$, we get
\begin{align*}
    \begin{split}
        (1+V_M)^q R_4\leq C_{q,f_0,T^f,\varepsilon}\Delta t(\Delta v)^{2\gamma}.
    \end{split}
\end{align*}
Since $\frac{1}{2}\leq \gamma< 1$, this implies
\begin{align}\label{f diff bdd R4}
    \begin{split}
        (1+V_M)^q R_4\leq C_{q,f_0,T^f,\varepsilon}\Delta v\Delta t.
    \end{split}
\end{align}
Plugging \eqref{f diff bdd R123} and \eqref{f diff bdd R4} into \eqref{f diff 2}, we observe 
\begin{align}\label{error est f boundary}
    \begin{split}
         (1+|V_M|)^q|\tilde{f}(x_i,V_M,t^n)-\tilde{f}^n_{i,N_v}| &\leq (1+\Delta t)\|f(t^n) - f^n\|_{L^{\infty}_{q}} \cr
            &+ C_{q,f_0,T^f}\left\{\Delta x\Delta t+ \Delta v\Delta t + (\Delta t)^2 + \frac{\Delta t}{V_M^{q-1}}\right\}.
    \end{split}
\end{align}
Consequently, we yield the desired result by taking the $\sup_{0\leq j\leq N_v}$ to \eqref{error est f interior} and \eqref{error est f boundary}.
\end{proof}

\begin{lemma}\label{Moment diff}\textnormal{(Error estimate for the moments)}
    Let $f(x,v,t)$ and $f^{n}_{i,j}$ be the solution of (\ref{VPBGK model}) and (\ref{VPBGK_Scheme}). Then, under the assumption of Theorem \ref{Main Thm}, we have
    \begin{align*}
        \begin{split}
            \left| \int_{\mathbb{R}}\tilde{f}(x_{i},v,t^n)\varphi(v)dv-\sum_{j}\tilde{f}^{n}_{i,j}\varphi(v_{j})\Delta v \right|
            &\leq C_{q,f_0,T^f}\|f^n-f(t^n)\|_{L^{\infty}_{q}}\cr         
            &+C_{q,f_0,T^f}\left\{\Delta x\Delta t +\Delta v + \Delta t + \frac{1}{V_M^{q-3}} \right\},
        \end{split}
    \end{align*}
    where $\varphi(v) = (1,v,|v|^{2})$.
\end{lemma}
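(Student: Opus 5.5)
We will prove the estimate for each component of $\varphi$ separately, with the worst case being $|v|^2$. Fix $i$ and $n$ and split the continuous moment into the part on the truncated domain and the tail:
\begin{align*}
    \int_{\mathbb{R}}\tilde{f}(x_i,v,t^n)\varphi(v)dv=\sum_{-N_v\leq l\leq N_v}\int_{\Delta_l}\tilde{f}(x_i,v,t^n)\varphi(v)dv-\int_{\Delta_{N_v}}\tilde{f}\varphi\,dv+\int_{|v|\geq V_M}\tilde{f}(x_i,v,t^n)\varphi(v)dv .
\end{align*}
This is the same decomposition used for $\rho(y_k,t^n)-\rho^n_k$ in Lemma \ref{difference field}. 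We then bound an interior part $J_1$, a boundary cell $J_2$, and a tail $\delta$.

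\textbf{Weighted bounds on $\tilde{f}$.} We first record that $\tilde{f}(x,v,t)=f(x-v\Delta t,v-\mathbb{E}\Delta t,t)$ inherits weighted bounds from $f$:
\begin{align*}
    \|\tilde{f}(t)\|_{L^\infty_{q}}+\|\partial_v\tilde{f}(t)\|_{L^\infty_{q}}\leq C_{q,f_0,T^f}.
\end{align*}
This follows from Theorem \ref{analysis}, $\|\mathbb{E}\|_{L^\infty}\leq C$, the ratio bound $\big(\tfrac{1+|v|}{1+|v-\mathbb{E}\Delta t|}\big)^q\leq C$ already used in Lemma \ref{R_3 estimate}, and the chain rule $\partial_v\tilde f=-\Delta t\,\partial_xf+\partial_vf$ with $\Delta t<1$.

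\textbf{Tail and boundary cell.} For the tail,
\begin{align*}
    \int_{|v|\geq V_M}(1+|v|)^{2-q}dv\leq C V_M^{3-q},
\end{align*}
which accounts for the $V_M^{-(q-3)}$ term. For the boundary cell, $J_2\leq \|\tilde f\|_{L^\infty_q}(1+V_M)^{2-q}\Delta v\leq C\Delta v$, since $q>3$.

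\textbf{Interior part.} We compare $\int_{\Delta_l}\tilde{f}\varphi\,dv$ with $\tilde{f}(x_i,v_l,t^n)\varphi(v_l)\Delta v$ using the mean value theorem on $\tilde f\varphi$. The derivative $\partial_v(\tilde f\varphi)$ is bounded by $C(1+|v|)^{2-q}$, so summing over $l$ with $\sum_l \Delta v(1+|v_l|)^{2-q}\leq 2\bar D_{q-2}$ gives $C\Delta v$. The comparison of point values is then
\begin{align*}
    \Big|\sum_j(\tilde f(x_i,v_j,t^n)-\tilde f^n_{i,j})\varphi(v_j)\Delta v\Big|\leq \|\tilde f(t^n)-\tilde f^n\|_{L^\infty_q}\sum_j\frac{(1+|v_j|)^2\Delta v}{(1+|v_j|)^q}\leq 2\bar D_{q-2}\|\tilde f(t^n)-\tilde f^n\|_{L^\infty_q}.
\end{align*}
Applying Lemma \ref{f estimate} bounds this by
\begin{align*}
    C\|f(t^n)-f^n\|_{L^\infty_q}+C\big\{\Delta x\Delta t+\Delta v\Delta t+(\Delta t)^2+\Delta t V_M^{1-q}\big\}.
\end{align*}
We then absorb $(\Delta t)^2\leq\Delta t$ and $\Delta v\Delta t\leq \Delta v$.

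\textbf{Main obstacle.} The delicate step is the $|v|^2$ moment. The weight $q>3$ is exactly what makes $(1+|v|)^{2-q}$ summable and gives the $V_M^{3-q}$ tail, so every ratio argument must lose at most a constant. In particular, $\partial_v\tilde f$ involves $\partial_x f$ evaluated at a shifted velocity. We handle this by the ratio bound above together with $\|f\|_{W^{1,\infty}_q}$; if a stray factor of $|v|$ appears, we use $\|f_0\|_{W^{2,\infty}_{q+2}}$ from Theorem \ref{Main Thm} to cover it.
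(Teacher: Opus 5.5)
Your proof is correct, but it is organized differently from the paper's.

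\textbf{The paper's route.} The paper never calls on Lemma \ref{f estimate} here. For $v\in\Delta_j$ it Taylor-expands $\tilde f(x_i,v,t^n)$ and rewrites it in the upwind form \eqref{convex_combi}, with remainders $I_1,I_2,I_3$. The cell quadrature error $f(x_i,v,t^n)-f(x_i,v_j,t^n)$ and the field error $\mathbb{E}^n_i-\mathbb{E}(x_i,t^n)$ are folded into $I_1$. In effect, it re-runs the whole consistency analysis inside each velocity cell. The replacement of $\varphi(v)$ by $\varphi(v_j)$ is then a separate term $II$. The leftover grid value $\tilde f^n_{i,N_v}\varphi(V_M)\Delta v$ is bounded using the discrete stability estimate $(A^n_4)$. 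Only the case $v_j\ge 0$, $\mathbb{E}^n_i<0$ is written out.

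\textbf{Your route.} You factor the error as the pointwise grid error of $\tilde f$ (taken from Lemma \ref{f estimate}) plus a rectangle-rule error for the smooth function $\tilde f\varphi$. The latter needs only $\|\tilde f\|_{L^\infty_q}+\|\partial_v\tilde f\|_{L^\infty_q}\le C$ and the summability of $(1+|v|)^{2-q}$. You handle the boundary cell with the continuous $\tilde f$ rather than the discrete one.

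\textbf{What each buys.} Your version is shorter and more modular. All sign cases of $v_j$ and $\mathbb{E}^n_i$, and the Neumann row $j=N_v$, come for free from Lemma \ref{f estimate} instead of being redone. It also makes transparent that the $\Delta v$ and $V_M^{3-q}$ contributions come purely from quadrature and the velocity tail. The paper's version is self-contained but duplicates the Taylor computations of the preceding lemma.

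\textbf{Two small remarks.} First, no stray factor of $|v|$ appears in $\partial_v\tilde f=-\Delta t\,\partial_x f+\partial_v f$ (both evaluated at the shifted point), so the fallback to $W^{2,\infty}_{q+2}$ is unnecessary. Second, the coefficient of $\|f(t^n)-f^n\|_{L^\infty_q}$ you inherit is $2\bar D_{q-2}(1+C_{q,f_0,T^f,\varepsilon}\Delta t)$. This is bounded because $\Delta t<1$, and its $\varepsilon$-dependence is harmless for Theorem \ref{Main Thm}.
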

\begin{proof}
    We only consider the following deceleration case: $v_j\geq 0$ and $\mathbb{E}^n_i< 0$. The proof of the other case proceeds in the same argument. We first define $\Delta_j$ and $\delta(x,t)$ as follows:
    \begin{align*}
        \Delta_j=[v_{j},v_{j+1}),\quad\delta(x,t) = \int_{|v|\geq V_M} \tilde{f}(x,v,t)\varphi(v)dv.
    \end{align*}
    Then, we have 
    \begin{align*}
        \begin{split}
            \int^{\infty}_{0} \tilde{f}(x_i,v,t^n)\varphi(v)dv &= \sum_{0\leq j\leq N_v-1} \int_{\Delta_j}\tilde{f}(x_i,v,t^n)\varphi(v)dv + \delta(x,t).
        \end{split}
    \end{align*}
    Using this, we divide the difference between the continuous and the discrete moments for $\tilde{f}$ into four parts:
    \begin{align*}
        \begin{split}
            &\int^{\infty}_{0} \tilde{f}(x_i,v,t^n)\varphi(v)dv - \sum_{0\leq j\le N_v} \tilde{f}^n_{i,j}\varphi(v_j)\Delta v\cr
            &\quad= \sum_{0\leq j\leq N_v-1}\int_{\Delta_j}\tilde{f}(x_i,v,t^n)\varphi(v)dv - \sum_{0\leq j\leq N_v-1}\tilde{f}^{n}_{i,j}\varphi(v_j)\Delta v +\tilde{f}^n_{i,N_v}\varphi(V_M)\Delta v+ \delta(x,t)\cr
            &\quad= \left(\sum_{0\leq j\leq N_v-1}\int_{\Delta_j}\tilde{f}(x_i,v,t^n)\varphi(v_j)dv - \sum_{0\leq j\leq N_v-1} \tilde{f}^n_{i,j}\varphi(v_j)\Delta v \right)\cr 
            &\quad+ \sum_{0\leq j\leq N_v-1} \int_{\Delta_j}\tilde{f}(x_i,v,t^n)(\varphi(v)-\varphi(v_j))dv+\tilde{f}^n_{i,N_v}\varphi(V_M)\Delta v+ \delta(x,t)\cr
            &\quad\equiv I + II + III +\delta(x,t).
        \end{split}
    \end{align*}
    \textbf{(1) Estimate for $\boldsymbol{I}$}\\
    To derive the estimate for $I$, we expand $\tilde{f}(x_i,v,t^n)$ using Taylor's formula for some $x_{i,1}\in(x_i-v\Delta t,x_i)$ and $v_1,\,v_2\in (v,v-\mathbb{E}(x_i,t^n)\Delta t)$:
    \begin{align}\label{moment lemma tilde f expand 1}
        \begin{split}
            \tilde{f}(x_i,v,t^n) &=f(x_i-v\Delta t,v-\mathbb{E}(x_i,t^n)\Delta t,t^n)\cr
            &=f(x_i,v,t^n) - v\Delta t\partial_x f(x_i,v,t^n) -\mathbb{E}(x_i,t^n)\Delta t \partial_v f(x_i,v,t^n)\cr
            &+\frac{(v\Delta t)^2}{2}\partial^2_x f(x_{i,1},v-\mathbb{E}(x_i,t^n)\Delta t,t^n)+ \frac{v\mathbb{E}(x_i,t^n)(\Delta t)^2}{2}\partial_v\partial_x f(x_i,v_{1},t^n)\cr 
            &+\frac{(\mathbb{E}(x_i,t^n)\Delta t)^2}{2}\partial^2_v f(x_i,v_{2},t^n).
        \end{split}
    \end{align}
    Applying Taylor's formula again, for some $x_{i,2}\in(x_{i-1},x_i)$ and $v_{j,1}\in(v_j,v_{j+1})$, we get
    \begin{align*}
        \begin{split}
            \partial_x f(x_i,v_j,t^n)&=\frac{f(x_i,v_j,t^n)-f(x_{i-1},v_j,t^n)}{\Delta x}+\frac{\Delta x}{2}\partial^2_x f(x_{i,2},v_j,t^n),\cr
            \partial_v f(x_i,v_j,t^n)&=\frac{f(x_i,v_{j+1},t^n)-f(x_i,v_{j},t^n)}{\Delta x}-\frac{\Delta v}{2}\partial^2_v f(x_i,v_{j,1},t^n).
        \end{split}
    \end{align*}    
    By the above formula, we rewrite \eqref{moment lemma tilde f expand 1} as follows:
    \begin{align}\label{moment lemma tilde f expand 2}
        \begin{split}
            &\tilde{f}(x_i,v,t^n)\cr
            &\quad=\left(1-\frac{\Delta t}{\Delta x}|v_j|-\frac{\Delta t}{\Delta v}|\mathbb{E}^n_i| \right)f(x_i,v_j,t^n)+\frac{\Delta t}{\Delta x}|v_j|f(x_{i-1},v_j,t^n) + \frac{\Delta t}{\Delta v}|\mathbb{E}^n_i|f(x_i,v_{j+1},t^n)\cr
            &\quad+I_1+I_2+I_3,
        \end{split}
    \end{align}
    where $I_1$, $I_2$, and $I_3$ are given by
    \begin{align}\label{moment lemma remainder}
        \begin{split}
            I_1&=(f(x_i,v,t^n)-f(x_i,v_j,t^n))+v\Delta t(\partial_x f(x_i,v,t^n)-\partial_x f(x_i,v_j,t^n)))\cr
            &+\mathbb{E}(x_i,t^n)\Delta t(\partial_v f(x_i,v,t^n)-\partial_v f(x_i,v_j,t^n))\cr
            &+\partial_x f(x_i,v_j,t^n)\Delta t(v_j-v)+\partial_v f(x_i,v_j,t^n)\Delta t(\mathbb{E}^n_i-\mathbb{E}(x_i,t^n)),\cr
            I_2&=-\frac{\Delta x\Delta t}{2}v \partial^2_x f(x_{i,2},v_j,t^n)+\frac{\Delta v\Delta t}{2}\mathbb{E}^n_i\partial^2_v f(x_i,v_{j,1},t^n),\cr
            I_3&=\frac{(v\Delta t)^2}{2}\partial^2_x f(x_{i,1},v-\mathbb{E}(x_i,t^n)\Delta t,t^n)+ \frac{v\mathbb{E}(x_i,t^n)(\Delta t)^2}{2}\partial_v\partial_x f(x_i,v_{1},t^n)\cr 
            &+\frac{(\mathbb{E}(x_i,t^n)\Delta t)^2}{2}\partial^2_v f(x_i,v_{2},t^n).
        \end{split}
    \end{align}    
    Multiplying $\varphi(v_j)$ to \eqref{moment lemma tilde f expand 2} and integrating over $\Delta_j$ with respect to $v$, we obtain
    \begin{align*}
    \begin{split}
        &\int_{\Delta_j}\tilde{f}(x_i,v,t^n)\varphi(v_j)dv\cr
        &=\left\{\left(1-\frac{\Delta t}{\Delta x}|v_j|-\frac{\Delta t}{\Delta v}|\mathbb{E}^{n}_{i}|\right)f(x_i,v_j,t^n)+\frac{\Delta t}{\Delta x}|v_j| f(x_{i-1},v_j,t^n)\right\}\varphi(v_j)\Delta v\cr
        &+ \frac{\Delta t}{\Delta v}|\mathbb{E}^{n}_{i}|f(x_i,v_{j-1},t^n)\varphi(v_j)\Delta v\cr
        &+\int_{\Delta_j}(I_1 + I_2+I_3)\varphi(v_j)dv,
    \end{split}
    \end{align*}
   where we used $\int_{\Delta_j}dv=\Delta v$. Recalling the update rule for $\tilde{f}^n_{i,j}$ in \eqref{convex_combi}, we have
    \begin{align}\label{moment lemma sum}
        \begin{split}
            &\tilde{f}^n_{i,j}\varphi(v_j)\Delta v\cr
            &=\left(1-\frac{\Delta t}{\Delta x}|v_j|-\frac{\Delta t}{\Delta v}|\mathbb{E}^{n}_{i}|\right)f^n_{i,j}\varphi(v_j)\Delta v+\frac{\Delta t}{\Delta x}|v_j| f^n_{i-1,j}\varphi(v_j)\Delta v+ \frac{\Delta t}{\Delta v}|\mathbb{E}^{n}_{i}|f^n_{i,j-1}\varphi(v_j)\Delta v.
        \end{split}
    \end{align}
    Subtracting \eqref{moment lemma sum} from the moments of the continuous $\tilde{f}$ and summing over $j\in[0,N_v-1]$, we get 
    \begin{align}\label{moment lemma I build}
        \begin{split}
            &\left|\sum_{0\leq j\leq N_v-1}\int_{\Delta _j}\tilde{f}(x_i,v,t^n)\varphi(v_j)dv-\sum_{0\leq j\leq N_v-1} \tilde{f}^{n}_{i,j}\varphi(v_j)\Delta v\right|\cr 
            &\qquad\leq \sum_{0\leq j\leq N_v-1}\left(1-\frac{\Delta t}{\Delta x}|v_j| -\frac{\Delta t}{\Delta v}|\mathbb{E}^{n}_{i}|\right)|f(x_i,v_j,t^n)-f^n_{i,j}||\varphi(v_j)|\Delta v\cr 
            &\qquad+\sum_{0\leq j\leq N_v-1}\frac{\Delta t}{\Delta x}|v_j| |f(x_{i-1},v_j,t^n)-f^n_{i-1,j}||\varphi(v_j)|\Delta v\cr
            &\qquad+\sum_{0\leq j\leq N_v-1}\frac{\Delta t}{\Delta v}|\mathbb{E}^{n}_{i}| |f(x_i,v_{j+1},t^n)-f^n_{i,j+1}||\varphi(v_j)|\Delta v\cr
            &\qquad+\sum_{0\leq j\leq N_v-1}\int_{\Delta_j}(I_1 + I_2+I_3)\varphi(v_j)dv.
        \end{split}
    \end{align}
    Then, the first three terms on the right-hand side are controlled by
    \begin{align}\label{moment lemma first three term}
        \begin{split}
            &(\text{first three terms})\cr 
            &\quad\leq \|f(t^n)-f^n\|_{L^{\infty}_q}\sum_{0\leq j\leq N_v-1}\left\{\left(1-\frac{\Delta t}{\Delta x}|v_j| -\frac{\Delta t}{\Delta v}|\mathbb{E}^{n}_{i}|\right)+\frac{\Delta t}{\Delta x}|v_j|\right\}\frac{|\varphi(v_j)|\Delta v}{(1+|v_j|)^q}\cr
            &\quad +  \|f(t^n)-f^n\|_{L^{\infty}_q}\sum_{0\leq j\leq N_v}\frac{\Delta t}{\Delta v}|\mathbb{E}^{n}_{i}|\left(\frac{1+|v_j|}{1+|v_{j+1}|} \right)^q\frac{|\varphi(v_j)|\Delta v}{(1+|v_j|)^q}\cr
            &\quad\leq \|f(t^n)-f^n\|_{L^{\infty}_q}\sum_{0\leq j\leq N_v-1}\left(1+C_{\mathbb{E},2}\Delta t\right)\frac{|\varphi(v_j)|\Delta v}{(1+|v_j|)^q}\cr
            &\quad\leq 2\bar{D}_{q-2}(1+C_{\mathbb{E},2}\Delta t)\|f(t^n)-f^n\|_{L^{\infty}_q}.
        \end{split}
    \end{align}
    In the third line and the last line, we used
    \begin{align*}
        \begin{split}
            \left(\frac{1+|v_j|}{1+|v_{j-1}|} \right)^q\frac{\Delta t}{\Delta v}\mathbb{E}^{n}_{i}\leq (1+C_q\Delta v)\frac{\Delta t}{\Delta v}\mathbb{E}^{n}_{i}\leq \left(\frac{\Delta t}{\Delta v}\mathbb{E}^{n}_{i}+C_{\mathbb{E},2}\Delta t\right),
        \end{split}
    \end{align*}
    and
    \begin{align*}
        \begin{split}
            \sum_{0\leq j\leq N_v-1}\frac{|\varphi(v_j)|\Delta v}{(1+|v_j|)^q}\leq \sum_j\frac{|\varphi(v_j)|\Delta v}{(1+|v_j|)^q}\leq 2\bar{D}_{q-2},
        \end{split}
    \end{align*}
    where $\bar{D}_{q-2}$ is defined in Definition \ref{Constant series}. Now we consider the estimates for $I_1$, $I_2$, and $I_3$.\\
    \textbf{(1-1) Estimate for $I_1$}\\
    Applying the mean value theorem, for some $v_{j,2}$, $v_{j,3}$, and $v_{j,4}$ belong to $(v,v_j)$, we obtain 
    \begin{align*}
        \begin{split}
            I_1 &\leq \Delta v|\partial_v f(x_i,v_{j,2},t^n)|+\Delta tv_j|\partial_v\partial_x f(x_i,v_{j,3},t^n)|+\|\mathbb{E}(t^n)\|_{L^{\infty}_x}\Delta t|\partial^2_v f(x_i,v_{j,4},t^n)|\cr
            &+\Delta v\Delta t|\partial_x f(x_i,v_j,t^n)|+\|\mathbb{E}(t^n)-\mathbb{E}^n_i\|_{L^{\infty}_x}\Delta t |\partial_v f(x_i,v_j,t^n)|.
        \end{split}
    \end{align*}
   Then, the first four terms are bounded by
    \begin{align*}
        \begin{split}
           \left(\|f(t^n)\|_{W^{1,\infty}_q}\Delta v+\|f(t^n)\|_{W^{2,\infty}_{q+1}}\Delta t+\|\mathbb{E}(t^n)\|_{L^{\infty}_x}\|f(t^n)\|_{W^{2,\infty}_q}\Delta t+\|f(t^n)\|_{W^{1,\infty}_q}\Delta v\Delta t\right)\frac{1}{(1+|v_j|)^q}.
        \end{split}
    \end{align*}
    In this process, we used
    \begin{align*}
        \begin{split}
            \left( \frac{1+|v_j|}{1+|v_{j,m}|}\right)^q\leq \left(1+\frac{|v_j|-|v_{j,m}|}{1+|v_{j,m}|}\right)^q\leq (1+\Delta v)^q\leq C_q,\text{ for }m=2,3,4.
        \end{split}
    \end{align*}
    The estimate for the last term is derived from Lemma \ref{difference field}:
    \begin{align*}
        \begin{split}
            &\|\mathbb{E}(t^n)-\mathbb{E}^n_i\|_{L^{\infty}_x}\Delta t |\partial_v f(x_i,v_j,t^n)|\cr
            &\quad\leq\left\{ C_q\|f(t^n)\|_{W^{1,\infty}_q}\Delta t\|f(t^n)-f^n\|_{L^{\infty}_q}+C_{q,f_0,T^f}\|f(t^n)\|_{W^{1,\infty}_q}\left(\Delta x\Delta t+\Delta v\Delta t+\frac{\Delta t}{V^{q-1}_M} \right)\right\}\cr
            &\qquad\times\frac{1}{(1+|v_j|)^q}.
        \end{split}
    \end{align*}
    As a result, $I_1$ satisfies
    \begin{align}\label{moment lemma I_1 est}
        \begin{split}
            I_1&\leq C_{q,f_0,T^f}\left\{\Delta t\|f(t^n)-f^n\|_{L^{\infty}_q}+\Delta x\Delta t+\Delta v+\Delta t +\frac{\Delta t}{V^{q-1}_M} \right\}\frac{1}{(1+|v_j|)^q}.
        \end{split}
    \end{align}
    \textbf{(1-2) Estimate for $I_2$}\\
    From \eqref{moment lemma remainder}, we directly compute the estimate for $I_2$:
    \begin{align}\label{moment lemma I_2 est}
        \begin{split}
            I_2 &\leq \frac{\Delta x\Delta v}{2}v_j|\partial^2_xf(x_{i,2},v_j,t^n)|+\frac{\Delta v\Delta t}{2}\|\mathbb{E}^n_i\|_{L^{\infty}_x}|\partial^2_v f(x_i,v_{j,1},t^n)|\cr
            &\leq \left\{\frac{\Delta x\Delta v}{2}\|f(t^n)\|_{W^{2,\infty}_{q+1}}+\frac{\Delta v\Delta t}{2}C_{\mathbb{E},1}\|f(t^n)\|_{W^{2,\infty}_{q}}\left(\frac{1+|v_j|}{1+|v_{j,1}|} \right)^q\right\}\frac{1}{(1+|v_j|)^q}\cr
            &\leq C_{q,f_0,T^f}(\Delta x\Delta t+\Delta v\Delta t)\frac{1}{(1+|v_j|)^q}.
        \end{split}
    \end{align}
    \textbf{(1-3) Estimate for $I_3$}\\
    By \eqref{moment lemma remainder}, we present the estimate for $I_3$ as follows:
    \begin{align}\label{moment lemma I_3 est}
        \begin{split}
            I_3 &\leq \frac{(\Delta t)^2}{2}\|f(t^n)\|_{W^{2,\infty}_{q+2}}\left(\frac{1+|v_j|}{1+|v-\mathbb{E}(x_i,t^n)\Delta t|} \right)^q\frac{1}{(1+|v_j|)^q}\cr  
            &+ \frac{(\Delta t)^2}{2}\|\mathbb{E}(t^n)\|_{L^{\infty}_x}\|f(t^n)\|_{W^{2,\infty}_{q+1}}\left(\frac{1+|v_j|}{1+|v_1|} \right)^q \frac{1}{(1+|v_j|)^q}\cr
            &+\frac{(\Delta t)^2}{2}\|\mathbb{E}(t^n)\|^2_{L^{\infty}_x}\|f(t^n)\|_{W^{2,\infty}_{q}}\left( \frac{1+|v_j|}{1+|v_2|}\right)^q\frac{1}{(1+|v_j|)^q}\cr
            &\leq C_{q,f_0,T^f} (\Delta t)^2\frac{1}{(1+|v_j|)^q},
        \end{split}
    \end{align}
    where we used the smallness condition on $\Delta v$ and $\Delta t$, which is $\Delta v<1$ and $\Delta t<1$,
    \begin{align*}
        \begin{split}
            \left(\frac{1+|v_j|}{1+|v-\mathbb{E}(x_i,t^n)\Delta t|} \right)^q&\leq (1+|v_j-v-\mathbb{E}(x_i,t^n)\Delta t|)^q\cr
            &\leq (1+|\Delta v + C_{q,f_0,T^f}\Delta t|)^q\cr
            &\leq C_{q,f_0,T^f}.
        \end{split}
    \end{align*}
    Consequently, we combine \eqref{moment lemma I_1 est}, \eqref{moment lemma I_2 est}, and $\eqref{moment lemma I_3 est}$ to deduce
    \begin{align}\label{moment lemma I remainder}
        \begin{split}
            &\sum_{0\leq j\leq N_v-1}\int_{\Delta_j}(I_1 + I_2 +I_3)|\varphi(v_j)|dv\cr
            &\leq C_{q,f_0,T^f}\left\{\Delta t\|f(t^n)-f^n\|_{L^{\infty}_q}+  \Delta x\Delta t + \Delta v + \Delta t+\frac{\Delta t}{V^{q-1}_M} \right\}\sum_{0\leq j\leq N_v-1}\frac{|\varphi(v_j)|\Delta v}{(1+|v_j|)^q}\cr
            &\leq 2C_{q,f_0,T^f}\bar{D}_{q-2}\left\{\Delta t\|f(t^n)-f^n\|_{L^{\infty}_q}+  \Delta x\Delta t + \Delta v + \Delta t+\frac{\Delta t}{V^{q-1}_M} \right\}.
        \end{split}
    \end{align}
    Plugging \eqref{moment lemma first three term} and \eqref{moment lemma I remainder} into \eqref{moment lemma I build}, we observe the estimate for $I$ as follows:
    \begin{align}\label{moment lemma I est}
        \begin{split}
        I &\leq C_{q,f_0}\|f^{n}-f(t^n)\|_{L^{\infty}_{q}} + C_{q,f_0}\left\{\Delta x \Delta t + \Delta v + \Delta t+\frac{\Delta t}{V^{q-1}_M}  \right\}.
        \end{split}
    \end{align}
    \textbf{(2) Estimates for $\boldsymbol{II}$, $\boldsymbol{III}$, and $\boldsymbol{\delta(x,t)}$ }\\
    We now consider the estimate of $II$. Since, for $\beta\in(0,1)$, $|v_j|=|v+\beta\Delta v|\leq 1+|v|$ in $\Delta_j$, for $v\in\Delta_j$, we have
    \begin{align*}
        \begin{split}
            |\varphi(v) - \varphi(v_j)| \leq C|v-v_j|(|v|^{p}+|v_j|^{p}) \leq C\Delta v (1+|v|)^{p},\;\text{ for }p=0,1,2.
        \end{split}
    \end{align*}
    Then, we obtain
    \begin{align}\label{moment lemma II est}
    \begin{split}
        II &= \sum_{0\leq j\leq N_v-1} \int_{\Delta_j}\tilde{f}(x_i,v,t^n)(\varphi(v_j) - \varphi(v))dv \cr
        &\leq C\Delta v\sum_{0\leq j\leq N_v-1} \int_{\Delta_j}\tilde{f}(x_i,v,t^n)(1+|v|)^{p}dv \cr
        &= C\Delta v\sum_{0\leq j\leq N_v-1} \int_{\Delta_j}f(x_i-v\Delta t,v-\mathbb{E}(x_i,t^n)\Delta t,t^n)(1+|v|)^{p}dv\cr
        &\leq C\Delta v\|f(t^n)\|_{L^{\infty}_q}\sum_{0\leq j\leq N_v-1}\int_{\Delta_j}\left(\frac{1+|v|}{1+|v-\mathbb{E}(x_i,t^n)\Delta t|}\right)^q\frac{dv}{(1+|v|)^{q-p}}\cr
        &\leq C_{q,f_0,T^f}\Delta v,
        \end{split}
    \end{align}
    where we used
    \begin{align*}
        \begin{split}
            \left(\frac{1+|v|}{1+|v-\mathbb{E}(x_i,t^n)\Delta t|}\right)^q=\left(1+\frac{|v|-|v-\mathbb{E}(x_i,t^n)\Delta t|}{1+|v-\mathbb{E}(x_i,t^n)|\Delta t} \right)^q\leq (1+|\mathbb{E}(x_i,t^n)\Delta t|)^q\leq C_{q,f_0,T^f},
        \end{split}
    \end{align*}
    and
    \begin{align*}
        \begin{split}
            \sum_{0\leq j\leq N_v-1}\int_{\Delta_j}\frac{dv}{(1+|v|)^{q-p}}\leq \int_{\mathbb{R}}\frac{dv}{(1+|v|)^{q-p}}\leq C_{q}.
        \end{split}
    \end{align*}
    On the other hand, the estimate for $III$ is obtained by
    \begin{align}\label{moment lemma III est}
        \begin{split}
            III\leq \|\tilde{f}^n\|_{L^{\infty}_q}\Delta v\leq C_{q,f_0,T^f}\Delta v.
        \end{split}
    \end{align}
    Lastly, for $p=0,1,2$, $\delta$ is controlled as follows:
    \begin{align}\label{moment lemma delta est}
        \begin{split}
            \delta(x,t) &\leq \|f(t^n)\|_{L^{\infty}_{q}}\int_{|v|\geq V_M} \left(\frac{1+|v|}{1+|v-\mathbb{E}(x_i,t^n)\Delta t|} \right)^q\frac{|\varphi(v)|}{(1+|v|)^q}dv\cr
            &\leq C_{q,f_0,T^f}\int^{\infty}_{V_M}v^{p-q}dv\cr
            &\leq \frac{C_{q,f_0,T^f}}{V_M^{q-3}}.
        \end{split}
    \end{align}
    In conclusion, we obtain the desired result from \eqref{moment lemma I est}, \eqref{moment lemma II est}, \eqref{moment lemma III est}, and \eqref{moment lemma delta est}.
\end{proof}

Based on Lemma \ref{Moment diff}, we establish the error estimate for the macroscopic fields and the local Maxwellian. 
\begin{lemma}\label{tilde moment difference}\textnormal{(Error estimate for the macroscopic fields)}
      Let $f(x,v,t)$ and $f^{n}_{i,j}$ be the solution of (\ref{VPBGK model}) and (\ref{VPBGK_Scheme}).Then, under the assumption of Theorem \ref{Main Thm}, we have
    \begin{align*}
        \begin{split}
            |\tilde{\rho}(x_{i},t^n)-\tilde{\rho}^n_i |,\;|\tilde{U}(x_{i},t^n)-\tilde{U}^n_i |,\;|\tilde{T}(x_{i},t^n)-\tilde{T}^n_i|&\leq C_{q,f_0,T^f,\varepsilon}\|f^n-f(t^n)\|_{L^{\infty}_{q}}\cr         
            &+C_{q,f_0,T^f,\varepsilon}\left\{\Delta x\Delta t +\Delta v + \Delta t + \frac{1}{V_M^{q-3}} \right\}.
        \end{split}
    \end{align*}
\end{lemma}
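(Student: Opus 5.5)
We will deduce Lemma \ref{tilde moment difference} directly from Lemma \ref{Moment diff} by writing the macroscopic fields as smooth functions of the three moments on a region where these moments are bounded and the density is bounded away from zero. Set
\[
(m_0,m_1,m_2)=\int_{\mathbb{R}}\tilde{f}(x_i,v,t^n)(1,v,|v|^2)\,dv,\qquad (m^n_0,m^n_1,m^n_2)=\sum_j\tilde{f}^n_{i,j}(1,v_j,|v_j|^2)\Delta v .
\]
Then $\tilde\rho=m_0$, $\tilde U=m_1/m_0$ and $\tilde T=m_2/m_0-(m_1/m_0)^2$, and the same formulas hold for the discrete quantities. Lemma \ref{Moment diff} gives $|m_k-m^n_k|\le \mathcal{E}$ for $k=0,1,2$, where
\[
\mathcal{E}:=C_{q,f_0,T^f}\|f^n-f(t^n)\|_{L^\infty_q}+C_{q,f_0,T^f}\{\Delta x\Delta t+\Delta v+\Delta t+V_M^{-(q-3)}\}.
\]
The $\tilde\rho$ estimate is then immediate.

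For $\tilde U$ we write
\[
\tilde U-\tilde U^n_i=\frac{m_1-m_1^n}{m_0}+m_1^n\,\frac{m_0^n-m_0}{m_0m_0^n}.
\]
This needs three bounds. First, $|m^n_1|\le\tilde\rho^n_i|\tilde U^n_i|$ is bounded by $A^n_6$. Second, $m_0^n=\tilde\rho^n_i$ has a uniform positive lower bound by $A^n_5$, which holds for all $n$ by Theorem \ref{stability theorem}. Third, $m_0$ needs a positive lower bound for the continuous $\tilde f$. For this we use $m_0=\int f(x_i-v\Delta t,v-\mathbb{E}\Delta t,t^n)\,dv$. The change of variables $w=v-\mathbb{E}(x_i,t^n)\Delta t$ is a translation in $v$, but the $x$-argument still depends on $v$. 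So instead we bound $m_0\ge \rho(x_i,t^n)-|m_0-\rho(x_i,t^n)|$, where $\rho(x_i,t^n)\ge C_{q,T^f}$ by Theorem \ref{analysis}. The difference is $O(\Delta t)$ by the Taylor estimate $|f-\tilde f|\le C\Delta t(1+|v|)^{-q}$ from the proof of Lemma \ref{R_3 estimate}, integrated in $v$. Once $\Delta t$ is small, $m_0\ge C_{q,T^f}/2$. We also need $|m_1|$ bounded, which follows from $\|f(t)\|_{L^\infty_q}$ bounded and $q>3$ by the same ratio argument $(1+|v|)/(1+|v-\mathbb{E}\Delta t|)\le C$ used repeatedly above.

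For $\tilde T$ we use $\tilde T=m_2/m_0-\tilde U^2$. The difference $m_2/m_0-m_2^n/m_0^n$ is handled exactly like $\tilde U$, using that $m_2$ and $m_2^n$ are bounded (the latter by $A^n_6$ via $m_2^n=\tilde\rho^n_i(\tilde T^n_i+|\tilde U^n_i|^2)$, the former from $q>3$). For the quadratic term, $|\tilde U^2-(\tilde U^n_i)^2|\le(|\tilde U|+|\tilde U^n_i|)\,|\tilde U-\tilde U^n_i|$, and both velocities are uniformly bounded. The constants collected along the way come from $A^n_5$ and $A^n_6$, and these involve $\varepsilon$; this explains the dependence $C_{q,f_0,T^f,\varepsilon}$. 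It remains to absorb the constant coming from the lower bound for $m_0$.

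The step we expect to be the main obstacle is the uniform lower bound on the continuous density $m_0$ of $\tilde f$ (and, symmetrically, making sure that $A^n_5$ and $A^n_6$ are genuinely available at step $n$, which Theorem \ref{stability theorem} provides). The rest is routine algebra with the quotient rule. If the perturbation argument for $m_0$ were inconvenient, we would instead fall back on a direct lower bound: the hypothesis $f\ge C e^{-|v|^\alpha}$-type positivity inherited along characteristics gives $\tilde f\ge$ a positive Maxwellian-type bound, which can be integrated in $v$.
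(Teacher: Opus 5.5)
Your proof is correct and follows the paper's approach. Both reduce everything to Lemma \ref{Moment diff} through quotient-rule identities. The discrete density lower bound and the discrete moment bounds come from $A^n_5$ and $A^n_6$, and a continuous lower bound on $\tilde\rho$ handles the rest.

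The algebra differs only cosmetically. The paper writes $\tilde U-\tilde U^n_i=\frac{1}{\tilde\rho^n_i}(m_1-m_1^n)-\frac{\tilde U}{\tilde\rho^n_i}(m_0-m_0^n)$, dividing only by the discrete density and using a bound on the continuous $\tilde U$. You divide by $m_0$ and use the bound on $m_1^n$. Either way $1/m_0$ must be controlled; in the paper this enters through $|\tilde U|\le C$ from Lemma \ref{continuous tilde moment bound}.

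The real difference is how you get the continuous lower bound on $m_0$. The paper proves it in Lemma \ref{continuous tilde moment bound} by following characteristics back from $(x-v\Delta t,v-\mathbb{E}\Delta t)$ to $t=0$. Using $\mathcal{M}(f)\ge 0$ it gets $\tilde f\ge e^{-T^f/\varepsilon}f_0(X(0),V(0))$, then applies the initial lower bound $f_0\ge C_0e^{-|v|^\alpha}$; this is exactly your fallback.

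Your primary route, $m_0\ge\rho(x_i,t^n)-C\Delta t$, is valid only once $\Delta t\le\inf\rho/(2C)$. That threshold depends on $f_0$ and $T^f$ and is not among the smallness conditions $r_{\Delta x},r_{\Delta v}$ in Theorem \ref{Main Thm}. As written, it therefore quietly strengthens the hypotheses. The fix is harmless: shrink $r_{\Delta v}$, since $\Delta t<\Delta v$. The characteristic argument needs no such condition, and it does not rely on the density lower bound quoted from Theorem \ref{analysis}.
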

\begin{proof}
    By the definition of continuous and discrete density, we get
    \begin{align*}
         \tilde{\rho}(x_i,t^n)  - \tilde{\rho}^n_i =\int_{\mathbb{R}}\tilde{f}(x_{i},v,t^n)dv- \sum_{j}\tilde{f}^{n}_{i,j}\Delta v.
    \end{align*}
    Then, Lemma \ref{Moment diff} gives the first assertion directly. To derive $\tilde{U}(x_{i},t^n)-\tilde{U}^n_i$, we have
    \begin{align*}
        \begin{split}
            |\tilde{U}(x_i,t^n)-\tilde{U}^n_i| &= \frac{1}{\tilde{\rho}^n_i}\{\tilde{\rho}(x_i,t^n)\tilde{U}(x_i,t^n)-\tilde{\rho}^n_i \tilde{U}^n_i  \} - \frac{\tilde{U}(x_i,t^n)}{\tilde{\rho}^n_i}\{\tilde{\rho}(x_i,t^n)-\tilde{\rho}^n_i \}\cr
            &\leq \frac{1}{\tilde{\rho}^n_i}|\tilde{\rho}(x_i,t^n)\tilde{U}(x_i,t^n)-\tilde{\rho}^n_i \tilde{U}^n_i| + \frac{|\tilde{U}(x_i,t^n)|}{\tilde{\rho}^n_i}|\tilde{\rho}(x_i,t^n)-\tilde{\rho}^n_i|.
        \end{split}
    \end{align*}
   Combining Lemma \ref{A5} and Lemma \ref{continuous tilde moment bound}, we get
    \begin{align*}
        \begin{split}
            &|\tilde{U}(x_i,t^n)-\tilde{U}^n_i|\cr 
            &\quad\leq C_{q,f_0,T^f,\varepsilon}\{|\tilde{\rho}(x_i,t^n)\tilde{U}(x_i,t^n)-\tilde{\rho}^n_i \tilde{U}^n_i| + |\tilde{\rho}(x_i,t^n)-\tilde{\rho}^n_i|\}\cr
            &\quad\leq  C_{q,f_0,T^f,\varepsilon}\left|\int_{\mathbb{R}}\tilde{f}(x_{i},v,t^n)(1+|v|)dv-\sum_{j}\tilde{f}^{n}_{i,j}(1+|v|)\Delta v\right|.
        \end{split}
    \end{align*}
    Then, this leads to the desired result. Lastly, for $\tilde{T}(x_{i},t^n)-\tilde{T}^n_i$, we have
    \begin{align*}
        \begin{split}
            \tilde{T}(x_i,t^n)-\tilde{T}^n_i &=\frac{1}{\tilde{\rho}^n_i}\{(\tilde{\rho}(x_i,t^n)\tilde{T}(x_i,t^n)+\tilde{\rho}(x_i,t^n)|\tilde{U}(x_i,t^n)|^2)-(\tilde{\rho}^n_i\tilde{T}^n_i + \tilde{\rho}^n_i|\tilde{U}^n_i|^2) \}\cr
            &+ \frac{\tilde{\rho}(x_i,t^n)\tilde{T}(x_i,t^n)+\tilde{\rho}(x_i,t^n)|\tilde{U}(x_i,t^n)|^2}{\tilde{\rho}^n_i\tilde{\rho}(x_i,t^n)}(\tilde{\rho}(x_i,t^n)-\tilde{\rho}^n_i ) \cr
            &+\frac{\tilde{\rho}^n_i\tilde{U}^n_i + \tilde{\rho}(x_i,t^n)\tilde{U}(x_i,t^n)}{\tilde{\rho}^n_i}(\tilde{\rho}(x_i,t^n)\tilde{U}(x_i,t^n)-\tilde{\rho}^n_i\tilde{U}^n_i)\cr 
            &- \frac{\tilde{\rho}^n_i + \tilde{\rho}(x_i,t^n)}{\{\tilde{\rho}^n_i\tilde{\rho}(x_i,t^n)\}^2}\{\tilde{\rho}(x_i,t^n)\tilde{U}(x_i,t^n)\}^2(\tilde{\rho}(x_i,t^n)-\tilde{\rho}^n_i).
        \end{split}
    \end{align*}
    Therefore, Lemma \ref{A5}, Lemma \ref{A6}, Lemma \ref{continuous tilde moment bound}, and Lemma \ref{Moment diff} give
    \begin{align*}
        \begin{split}
            &|\tilde{T}(x_i,t^n)-\tilde{T}^n_i|\cr
            &\qquad\leq C_{q,f_0,T^f,\varepsilon}\left\{|\tilde{\rho}(x_i,t^n)-\tilde{\rho}^n_i|+|\tilde{\rho}(x_i,t^n)\tilde{U}(x_i,t^n)-\tilde{\rho}^n_i\tilde{U}^n_i|+|\tilde{\rho}(x_i,t^n)\tilde{T}(x_i,t^n)-\tilde{\rho^n_i}\tilde{T}^n_i| \right\}\cr
            &\qquad\leq C_{q,f_0,T^f,\varepsilon}\left|\int_{\mathbb{R}}\tilde{f}(x_{i},v,t^n)(1+|v|^2)dv-\sum_{j}\tilde{f}^{n}_{i,j}(1+|v|^2)\Delta v\right|\cr
            &\qquad\leq C_{q,f_0,T^f,\varepsilon}\|f^n-f(t^n)\|_{L^{\infty}_{q}}+C_{q,f_0,\varepsilon}\left\{\Delta x\Delta t +\Delta v + \Delta t + \frac{1}{V_M^{q-3}} \right\}.
        \end{split}
    \end{align*}
\end{proof}

\begin{lemma}\label{Maxewllian estimate}\textnormal{(Error estimate for the local Maxwellian)}
    Let $f(x,v,t)$ and $f^{n}_{i,j}$ be the solution of (\ref{VPBGK model}) and (\ref{VPBGK_Scheme}). Then, under the assumption of Theorem \ref{Main Thm}, we have
    \begin{align*}
        \begin{split}
            \|\mathcal{M}(\tilde{f}(t^n))-\mathcal{M}(\tilde{f}^{n})\|_{L^{\infty}_{q}}
            &\leq C_{q,f_0,T^f,\varepsilon}\|f^n-f(t^n)\|_{L^{\infty}_{q}}\cr         
            &+C_{q,f_0,T^f,\varepsilon}\left\{\Delta x\Delta t +\Delta v + \Delta t + \frac{1}{V_M^{q-3}} \right\}.
        \end{split}
    \end{align*}
\end{lemma}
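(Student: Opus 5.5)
The plan is to reduce the Maxwellian error to the macroscopic-field error of Lemma \ref{tilde moment difference}, using a mean value argument in the parameters $(\rho,U,T)$. For fixed $v$, write $\mathcal{M}[\rho,U,T](v)=\frac{\rho}{\sqrt{2\pi T}}e^{-|v-U|^2/(2T)}$ and interpolate linearly:
\begin{align*}
(\rho_\theta,U_\theta,T_\theta)=(1-\theta)(\tilde\rho(x_i,t^n),\tilde U(x_i,t^n),\tilde T(x_i,t^n))+\theta(\tilde\rho^n_i,\tilde U^n_i,\tilde T^n_i),\quad \theta\in[0,1].
\end{align*}
Then
\begin{align*}
\mathcal{M}(\tilde f(t^n))(x_i,v_j)-\mathcal{M}(\tilde f^n_{i,j})=\int_0^1\big(\partial_\rho\mathcal{M}\,\delta\rho+\partial_U\mathcal{M}\,\delta U+\partial_T\mathcal{M}\,\delta T\big)[\rho_\theta,U_\theta,T_\theta](v_j)\,d\theta,
\end{align*}
where $\delta\rho,\delta U,\delta T$ are the differences bounded in Lemma \ref{tilde moment difference}. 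It therefore suffices to show that $(1+|v_j|)^q$ times each partial derivative is bounded uniformly in $\theta$, $i$, $j$, $n$ by a constant $C_{q,f_0,T^f,\varepsilon}$.

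The first step is to confine the interpolated parameters to a compact set on which $T_\theta$ stays away from zero. For the discrete endpoint, Lemma \ref{A5} and Lemma \ref{A6} give lower bounds on $\tilde\rho^n_i,\tilde T^n_i$ and upper bounds on $\tilde\rho^n_i,|\tilde U^n_i|,\tilde T^n_i$, all depending only on $q,f_0,T^f,\varepsilon$ (and $\alpha,\eta_1$). For the continuous endpoint, the analogous bounds on $\tilde\rho,\tilde U,\tilde T$ would come from Lemma \ref{continuous tilde moment bound}, which is cited in the proof of Lemma \ref{tilde moment difference}, together with Theorem \ref{analysis}(2). Since the set of admissible parameters is a box and hence convex, the bounds $c\le\rho_\theta\le C$, $|U_\theta|\le C$, $c\le T_\theta\le C$ hold for all $\theta$.

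The second step computes the derivatives:
\begin{align*}
\partial_\rho\mathcal{M}=\mathcal{M}/\rho,\quad \partial_U\mathcal{M}=\frac{v-U}{T}\mathcal{M},\quad \partial_T\mathcal{M}=\Big(-\frac{1}{2T}+\frac{|v-U|^2}{2T^2}\Big)\mathcal{M}.
\end{align*}
Each is a polynomial of degree at most $2$ in $|v-U|$, with coefficients bounded on the compact set, times $\mathcal{M}$. We then bound
\begin{align*}
(1+|v|)^q(1+|v-U_\theta|)^2e^{-|v-U_\theta|^2/(2T_\theta)}\le C(1+|U_\theta|)^q\sup_{w}(1+|w|)^{q+2}e^{-|w|^2/(2C)}\le C,
\end{align*}
using $1+|v|\le(1+|U|)(1+|v-U|)$ and $T_\theta\le C$. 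This yields $(1+|v_j|)^q|\mathcal{M}(\tilde f(t^n))-\mathcal{M}(\tilde f^n_{i,j})|\le C(|\delta\rho|+|\delta U|+|\delta T|)$. Taking the supremum over $i,j$ and inserting Lemma \ref{tilde moment difference} gives the stated bound.

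The main obstacle is the uniform positive lower bound on $T_\theta$ at the continuous endpoint, since the $\tilde{}$-moments are moments of the shifted function $f(x-v\Delta t,v-\mathbb{E}\Delta t,t)$ rather than of $f$ itself. The first approach is to transport the lower bound $f\ge C e^{-C|v|^\alpha}$ from the existence theory through the shift: $\tilde f$ inherits a Gaussian-type lower bound because $|\mathbb{E}|\Delta t$ is bounded. One then argues as in Lemma \ref{A5}: $\tilde\rho\ge c$, and the analogue of Lemma \ref{Mac_bound_lemma}(1) for continuous functions gives $\tilde T\ge(\tilde\rho/(C\|\tilde f\|_{L^\infty_q}))^2>0$. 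If Lemma \ref{continuous tilde moment bound} already supplies these bounds, they can be invoked directly.
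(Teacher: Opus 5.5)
Your proposal is correct and follows the paper's proof. Both arguments linearly interpolate $(\rho,U,T)$ between the moments of $\tilde f(x_i,\cdot,t^n)$ and $(\tilde\rho^n_i,\tilde U^n_i,\tilde T^n_i)$, use uniform two-sided bounds from Lemmas \ref{A5}, \ref{A6} and \ref{continuous tilde moment bound}, control $\partial_\rho\mathcal{M},\partial_U\mathcal{M},\partial_T\mathcal{M}$ against the polynomial weight by Gaussian decay, and finish with Lemma \ref{tilde moment difference}. Your explicit argument for $\tilde T(x_i,t^n)\ge c>0$ (a lower bound on $\tilde f$ gives one on $\tilde\rho$, which combined with $\tilde\rho\le C\|\tilde f\|_{L^\infty_q}\tilde T^{1/2}$ bounds $\tilde T$ below) fills a step the paper glosses over, since Lemma \ref{continuous tilde moment bound} as stated only gives a lower bound for $\tilde\rho$.
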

\begin{proof}
    For $\kappa\in[0,1]$, we define $(\tilde{\rho}^n_{i,\kappa},\tilde{U}^n_{i,\kappa},\tilde{T}^n_{i,\kappa})$ as 
    \begin{align*}
        \begin{split}
            (\tilde{\rho}^n_{i,\kappa},\tilde{U}^n_{i,\kappa},\tilde{T}^n_{i,\kappa}) = (1-\kappa)(\tilde{\rho}(x_i,t^n),\tilde{U}(x_i,t^n),\tilde{T}(x_i,t^n)) + \kappa (\tilde{\rho}^n_{i},\tilde{U}^n_{i},\tilde{T}^n_{i}).
        \end{split}
    \end{align*}
    Applying Taylor's theorem, we have
    \begin{align*}
        \begin{split}
            \mathcal{M}(\tilde{f}(x_i,v_j,t^n)) - \mathcal{M}(\tilde{f}^n_{i,j}) &= \mathcal{M}(\tilde{\rho}(x_i,t^n),\tilde{U}(x_i,t^n),\tilde{T}(x_i,t^n))(v_j) - \mathcal{M}(\tilde{\rho}^n_i,\tilde{U}^n_i,\tilde{T}^n_i)(v_j)\cr
            &= \{\tilde{\rho}(x_i,t^n) - \tilde{\rho}^{n}_{i} \}\int^1_0 \frac{\partial \mathcal{M}(\kappa)}{\partial \rho}d\kappa \cr
            &+ \{\tilde{U}(x_i,t^n) - \tilde{U}^{n}_{i} \}\int^1_0 \frac{\partial \mathcal{M}(\kappa)}{\partial U}d\kappa\cr &+ \{\tilde{T}(x_i,t^n) - \tilde{T}^{n}_{i} \}\int^1_0 \frac{\partial \mathcal{M}(\kappa)}{\partial T}d\kappa.
        \end{split}
    \end{align*}
    By Lemma \ref{A5}, Lemma \ref{A6}, and Lemma \ref{continuous tilde moment bound}, $(\tilde{\rho}^n_{i,\kappa},\tilde{U}^n_{i,\kappa},\tilde{T}^n_{i,\kappa})$ has uniform lower and upper bounds. Thus, we obtain
    \begin{align*}
        \begin{split}
            \mathcal{M}(\kappa)(v_j) = \frac{\tilde{\rho}^n_{i,\kappa}}{(2\pi \tilde{T}^n_{i,\kappa})^{\frac{1}{2}}} e^{-\frac{|v_j-\tilde{U}^n_{i,\kappa}|^2}{2\tilde{T}^n_{i,\kappa}}}\leq C e^{-C|v_j - \tilde{U}^n_{i,\kappa}|^2}\leq Ce^{-C|v_j|^2}.
        \end{split}
    \end{align*}
    Therefore, by direct calculation, we get  
    \begin{align*}
        \begin{split}
        &\left|\int^1_0 \frac{\partial \mathcal{M}(\theta)}{\partial \rho^n_{i,\kappa}} \right| = \left|\int^1_0 \frac{1}{\tilde{\rho}^n_{i,\kappa}}\mathcal{M}(\theta) \right| \leq Ce^{-C|v_j|^2},\cr
        &\left|\int^1_0 \frac{\partial \mathcal{M}(\theta)}{\partial U^n_{i,\kappa}} \right| = \left|\int^1_0 \frac{|v_j - \tilde{U}^n_{i,\kappa}|}{2\tilde{T}^n_{i,\kappa}}\mathcal{M}(\theta) \right|\leq C(1+|v_j|)^2 e^{-C|v_j|^2},\cr
        &\left|\int^1_0 \frac{\partial \mathcal{M}(\theta)}{\partial T^n_{i,\kappa}} \right| = \left|\int^1_0 \frac{|\tilde{T}^n_{i,\kappa}|+|v_j - \tilde{U}^n_{i,\kappa}|^2}{2\tilde{T}^n_{i,\kappa}}\mathcal{M}(\theta) \right|\leq C(1+|v_j|)^2 e^{-C|v_j|^2}.
        \end{split}
    \end{align*}
    Using $x^2e^{-nx}\leq C$ for some $C>0$, we have
    \begin{align*}
        \begin{split}
            &|\mathcal{M}(\tilde{f}(x_i,v_j,t^n)) - \mathcal{M}(\tilde{f}^n_{i,j})|\cr 
            &\quad \leq C\{|\tilde{\rho}(x_i,t^n) - \tilde{\rho}^n_i| + |\tilde{U}(x_i,t^n) - \tilde{U}^n_i| + |\tilde{T}(x_i,t^n) - \tilde{T}^n_i| \}(1+|v_j|)^2 e^{-C|v_j|^2}\cr
            &\quad\leq C\{|\tilde{\rho}(x_i,t^n) - \tilde{\rho}^n_i| + |\tilde{U}(x_i,t^n) - \tilde{U}^n_i| + |\tilde{T}(x_i,t^n) - \tilde{T}^n_i| \},
        \end{split}
    \end{align*}
    where $C$ is the constant that depends on $q$, $T^f$, $f_0$, and $\varepsilon$. Then, Lemma \ref{tilde moment difference} completes the proof.
\end{proof}

\section{Proof of the Main Theorem}\label{sec6}
In this section, we present the proof of Theorem \ref{Main Thm}. We note that the constant $C$ appearing in the proof depends on $q$, $T^f$, $f_0$, $\varepsilon_0$, and is not necessarily the same in each line. By Theorem \ref{consistent theorem}, we have
\begin{align*}
    \begin{split}
        &\|f(t^{n+1})-f^{n+1}\|_{L^{\infty}_{q}}\cr
        &\quad\leq \frac{\varepsilon}{\varepsilon + \Delta t}\|\tilde{f}(t^n)-\tilde{f}^{n} \|_{L^{\infty}_{q}}+ \frac{\Delta t}{\varepsilon + \Delta t}\|\mathcal{M}(\tilde{f}(t^n)) - \mathcal{M}(\tilde{f}^{n})\|_{L^{\infty}_{q}}\cr 
        &\quad+ \frac{1}{\varepsilon + \Delta t}\{\|R_1\|_{L^{\infty}_{q}} + \|R_2\|_{L^{\infty}_{q}}+\|R_3\|_{L^{\infty}_{q}}\}. 
    \end{split}
\end{align*}
According to the error estimates in Section \ref{sec5}, we obtain 
\begin{align}\label{iteration}
    \begin{split}
        \|f(t^{n+1}) - f^{n+1}\|_{L^{\infty}_{q}} \leq \left(1+\frac{C\Delta t}{\varepsilon + \Delta t} \right)\|f(t^n) -f^{n}\|_{L^{\infty}_{q}} + CP(\Delta x,\Delta v,\Delta t),
    \end{split}
\end{align}
where $P(\Delta x,\Delta v,\Delta t)$ is given by
\begin{align*}
    \begin{split}
        P(\Delta x,\Delta v,\Delta t) = \frac{\Delta t}{\varepsilon+\Delta t}\left\{\Delta x+\Delta v+\Delta t + \frac{1}{{V}^{q-3}_M} \right\}.
    \end{split}
\end{align*}
Put $\Gamma = \frac{C\Delta t}{\varepsilon + \Delta t}$, and we iterate \eqref{iteration} to get 
\begin{align*}
    \begin{split}
        \|f(N_t \Delta t) - f^{N_t}\|_{L^{\infty}_{q}} \leq (1+\Gamma)^{N_{t}}\|f_{0} - f^{0}\|_{L^{\infty}_{q}} + C\sum^{N_t}_{i=1} (1+\Gamma)^{i-1}P.
    \end{split}
\end{align*}
By the definition of initial iteration; $f^{0}_{i,j} = f_{0}(x_i,v_j)$, we have
\begin{align*}
    \|f^{0} - f_{0} \|_{L^{\infty}_{q}} = 0.
\end{align*}
On the other hand, applying $(1+x)^{n} \leq e^{nx}$ and $N_t\Delta t=T^f$, we derive
\begin{align*}
    \begin{split}
        (1+\Gamma)^{N_t} \leq e^{N_{t} \Gamma} \leq e^{\frac{CN_{t}\Delta t}{\varepsilon+\Delta t}} = e^{\frac{CT^{f}}{\varepsilon + \Delta t}}.
    \end{split}
\end{align*}
Thus, we have
\begin{align*}
    \begin{split}
        \sum^{N_t}_{i=1}(1+\Gamma)^{i-1} = \frac{(1+\Gamma)^{N_{t}}-1}{(1+\Gamma)-1} \leq C(e^{\frac{CT^{f}}{\varepsilon + \Delta t}} -1 )\left(\frac{\varepsilon + \Delta t}{\Delta t}\right).
    \end{split}
\end{align*}
This gives
\begin{align*}
    \begin{split}
        \sum^{N_t}_{i=1}(1+\Gamma)^{i-1}P \leq C(e^{\frac{CT^{f}}{\varepsilon + \Delta t}} -1 )\left\{\Delta x + \Delta v + \Delta t +\frac{1}{V_M^{q-3}}\right\}.
    \end{split}
\end{align*}
Therefore, we conclude
\begin{align*}
    \begin{split}
        \|f(T^f)- f^{N_t}\|_{L^{\infty}_{q}} \leq C\left\{\Delta x + \Delta v + \Delta t + \frac{1}{V_M^{q-3}}\right\}.
    \end{split}
\end{align*}
Moreover, by Lemma \ref{difference field}, we deduce that
\begin{align*}
    \begin{split}
        \|\mathbb{E}(T^f)- \mathbb{E}^{N_t}\|_{L^{\infty}_{x}} \leq C\left\{\Delta x + \Delta v + \Delta t + \frac{1}{V_M^{q-3}}\right\}.
    \end{split}
\end{align*}
This completes the proof.


 \section{Numerical test}
 In this section, we verify the convergence estimates in Theorem \ref{Main Thm} by checking the order of convergence of the proposed method. For the test, we consider uniform grids in space and velocity, and take smooth non-equilibrium initial data that satisfies the lower bound condition in Theorem \ref{analysis}:
 \begin{align}
 f_0(x,v)=\frac{1}{\sqrt{2\pi}} (1 +0.01\cos(2\pi x))\exp\left(-v^2/2\right),
 \end{align}
 where the periodic boundary condition is assumed on the spatial domain $[0,1]$, and the velocity domain is truncated by $[-15,15]$. We compute numerical solutions up to $t=0.4$ for which the solutions remain smooth even for small Knudsen numbers. We use time steps that correspond to CFL$=0.9$.
 In Table \ref{tab1}, we report the numerical errors and the corresponding order of convergence for $\|f\|_{L_{q}^\infty}$ with $q=4,5$ and $\|\mathbb{E}\|_{L_x^\infty}$ in a wide range of Knudsen numbers. 
\begin{table}[htbp]
	\centering	
		\begin{tabular}{ |p{3cm}||p{3cm}|p{3cm}|p{3cm}|}
			\hline
			\multicolumn{4}{ |c| }{$\varepsilon=1$} \\ 
			\hline
			& 
			\hspace{10mm}$ \|f\|_{L_{4}^\infty}$ &
			\hspace{10mm}$ \|f\|_{L_{5}^\infty}$ & 
			\hspace{10mm}$ \|\mathbb{E}\|_{L_x^\infty}$\\
			\hline
			($N_x$,$2N_x$) & Error \, \, \, \, \, Order& Error \, \, \, \, \, Order &  Error \, \, \, \, \, Order\\
			\hline
			$(40,80)$    & 8.4656e-04
			 \, \, \, 0.862
			  & 1.3385e-03
			   \, \, \, 0.862  
			& 3.9372e-05
			 \, \, \, 1.404
			\\
			$(80,160)$    & 4.6567e-04
			 \, \, \, 0.933
			 &  7.3629e-04
			   \, \, \, 0.921		 
			& 1.4876e-05
			 \, \, \, 1.096
			 \\
			(160,320) & 2.4390e-04
			  \, \, \, 0.967
			&  3.8880e-04
			   \, \, \, 0.959	  
			& 6.9595e-06
			 \, \, \, 1.024
			 \\
			(320,640) &  1.2477e-04
			 \, \, \, 0.983 &  
			 2.0006e-04  \, \, \, 0.981 	& 3.4220e-06
			 \, \, \, 1.006 \\
			(640,1280) &  6.3107e-05 \, \, \,  &  1.0136e-04  \, \, \,
			& 1.7035e-06 \, \, \, \\
			\hline
			\hline
		\end{tabular}
	\begin{tabular}{ |p{3cm}||p{3cm}|p{3cm}|p{3cm}|}
		\hline
		\multicolumn{4}{ |c| }{$\varepsilon=0.01$} \\ 
		\hline
		& 
		\hspace{10mm}$ \|f\|_{L_{4}^\infty}$ &
		\hspace{10mm}$ \|f\|_{L_{5}^\infty}$ & 
		\hspace{10mm}$ \|\mathbb{E}\|_{L_x^\infty}$\\
		\hline
		($N_x$,$2N_x$) & Error \, \, \, \, \, Order & Error \, \, \, \, \, Order &  Error \, \, \, \, \, Order\\
		\hline
		$(40,80)$    & 8.4759e-04
		\, \, \, 0.849
		& 1.3402e-03
		\, \, \, 0.765	  
		& 3.5723e-05
		\, \, \, 0.752
		\\
		$(80,160)$    & 4.7062e-04
		\, \, \, 0.889   
		& 7.8887e-04
		\, \, \, 0.917		 
		& 2.1214e-05
		\, \, \, 0.928
		\\
		(160,320) & 2.5422e-04
		\, \, \, 0.959
		& 4.1768e-04
		\, \, \, 0.950		   
		& 1.1148e-05
		\, \, \, 0.976
		\\
		(320,640) &  1.3082e-04
		\, \, \, 0.979 &  2.1626e-04
		\, \, \, 0.979
		& 5.6660e-06
		\, \, \, 0.991 \\
		(640,1280) &  6.6371e-05 \, \, \, &  1.0972e-04  \, \, \,
		& 2.8507e-06 \, \, \, \\
		\hline
		\hline
	\end{tabular}
	\begin{tabular}{ |p{3cm}||p{3cm}|p{3cm}|p{3cm}|}
		\hline
		\multicolumn{4}{ |c| }{$\varepsilon=0.0001$} \\ 
		\hline
		& 
		\hspace{10mm}$ \|f\|_{L_{4}^\infty}$ &
		\hspace{10mm}$ \|f\|_{L_{5}^\infty}$ & 
		\hspace{10mm}$ \|\mathbb{E}\|_{L_x^\infty}$\\
		\hline
		($N_x$,$2N_x$) & Error \, \, \, \, \, Order & Error \, \, \, \, \, Order &  Error \, \, \, \, \, Order\\
		\hline
		$(40,80)$    & 1.0338e-03
		\, \, \,  0.844
		& 1.6346e-03
		\, \, \, 0.808 
		& 4.9554e-05
		\, \, \, 0.785
		\\
		$(80,160)$    & 5.7586e-04
		\, \, \, 0.913
		& 9.3386e-04
		\, \, \, 0.897
		& 2.8768e-05
		\, \, \, 0.940
		\\
		(160,320) & 3.0584e-04
		\, \, \, 0.953
		& 5.0138e-04
		\, \, \, 0.958  
		& 1.4998e-05
		\, \, \, 0.980
		\\
		(320,640) &  1.5797e-04
		\, \, \, 0.979 &  2.5803e-04
		\, \, \, 0.977
		& 7.6016e-06
		\, \, \, 0.993 \\
		(640,1280) &  8.0128e-05 \, \, \,  &  1.3113e-04  \, \, \,
		& 3.8198e-06 \, \, \, \\
		\hline
		\hline
	\end{tabular}
	\caption{Numerical errors and order of convergence for various Knudsen numbers $\varepsilon=1,\, 0.01,\, 0.0001$ with $q=4,\,5$.
}\label{tab1}
\end{table}
The numerical results show that the proposed method attains expected first-order accuracy for all ranges of Knudsen number regardless of the values of $q$.\\


%
%






\noindent{\bf Acknowledgment}\newline
The work of Cho was supported by the National Research Foundation of Korea(NRF) grant funded by the Korea government(MSIT) (RS-2026-25493252).\newline
\noindent The work of Yun was supported by the National Research Foundation of Korea(NRF) grant funded by
the Korean government(MSIT) (RS-2023-NR076676).\newline

\appendix
\section{A priori estimates for the continuous solution}\label{sec Ap A}
In this appendix, we present several estimates for the continuous solution required in the main text. First, we prove that the moments of $\tilde{f}$, which is defined $\tilde{f}(x,v,t)=f(x-v\Delta t,v-\mathbb{E}(x,t)\Delta t,t)$, are uniformly bounded.

\begin{lemma}\label{continuous tilde moment bound}
    Assume that $\tilde{f}$ is defined in Theorem \ref{consistent theorem}. Let $\tilde{\rho}(x,t)$, $\tilde{U}(x,t)$, and $\tilde{T}(x,t)$ be defined by
    \begin{align*}
        \begin{split}
            & \tilde{\rho}(x,t) = \int_{\mathbb{R}} \tilde{f}(x,v,t)dv, \cr
            &\tilde{\rho}(x,t)\tilde{U}(x,t) = \int_{\mathbb{R}} v\tilde{f}(x,v,t)dv, \cr
            &\tilde{\rho}(x,t)\tilde{T}(x,t) + \tilde{\rho}(x,t)|\tilde{U}(x,t)|^2 = \int_{\mathbb{R}} |v|^2\tilde{f}(x,v,t)dv.
        \end{split}
    \end{align*}
    Then, under the assumption of Theorem \ref{analysis}, there exist constants $C_{q,f_0,T^f,\varepsilon}$ which satisfies
     \begin{align*}
         \begin{split}
             \tilde{\rho}(x,t) \geq C_{q,f_0,T^f,\varepsilon} > 0,
         \end{split}
     \end{align*}
    and
    \begin{align*}
        \begin{split}
            \|\tilde{\rho}(t)\|_{L^{\infty}_{x}}, \;\|\tilde{U}(t)\|_{L^{\infty}_{x}},\;\|\tilde{T}(t)\|_{L^{\infty}_{x}} \leq C_{q,f_0,T^f,\varepsilon}.
        \end{split}
    \end{align*}
\end{lemma}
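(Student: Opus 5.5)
The plan is to reduce every claim to bounds on $f$ itself from Theorem \ref{analysis}, using that $\tilde f(x,\cdot,t)$ is $f$ evaluated along a slightly shifted characteristic. The key elementary fact, already used repeatedly in Section \ref{sec5}, is
\begin{align*}
\left(\frac{1+|v|}{1+|v-\mathbb{E}(x,t)\Delta t|}\right)^q\leq (1+\|\mathbb{E}(t)\|_{L^\infty_x}\Delta t)^q\leq C_{q,f_0,T^f},
\end{align*}
since $\Delta t<1$ and $\|\mathbb{E}(t)\|_{L^\infty_x}\le C_{q,f_0,T^f}$ by Theorem \ref{analysis}(4).

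\textbf{Upper bounds on the moments.} From the fact above,
\begin{align*}
\tilde f(x,v,t)\le \|f(t)\|_{L^\infty_q}(1+|v-\mathbb{E}\Delta t|)^{-q}\le C(1+|v|)^{-q}.
\end{align*}
Because $q>3$, integrating gives $\int\tilde f(1+|v|+|v|^2)\,dv\le C_{q,f_0,T^f}$. An alternative is to change variables $w=v-\mathbb{E}(x,t)\Delta t$ in the integral. The $v$-integral is not an exact change of variables in $x$, since $x-v\Delta t$ depends on $v$, but the pointwise bound sidesteps this. This yields $\tilde\rho\le C$. Once $\tilde\rho$ has a positive lower bound, we also get
\begin{align*}
|\tilde U|\le \frac{\int|v|\tilde f\,dv}{\tilde\rho}\le C,\qquad \tilde T\le \frac{\int|v|^2\tilde f\,dv}{\tilde\rho}\le C.
\end{align*}

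\textbf{Lower bound on $\tilde\rho$ (main obstacle).} Theorem \ref{analysis}(2) gives $\rho\ge C$ only at fixed $x$, whereas $\tilde\rho(x,t)$ integrates $f$ over the tilted line $\{(x-v\Delta t,\,v-\mathbb{E}\Delta t)\}$. I would handle this in one of two ways.

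\emph{(a) Perturbation.} Write
\begin{align*}
|\tilde\rho(x,t)-\rho(x,t)|\le \int|\tilde f-f|\,dv\le C\Delta t\int(1+|v|)^{-q}dv\le C\Delta t,
\end{align*}
using the Taylor bound $|f-\tilde f|\le C\Delta t(1+|v|)^{-q}$ already derived in the proof of Lemma \ref{R_3 estimate}, which relies on $\|f\|_{W^{1,\infty}_{q+1}}$. Then $\tilde\rho\ge \rho - C\Delta t\ge \tfrac12 C_{q,T^f}$ once $\Delta t$ is small, and the smallness is available from the mesh restrictions of Theorem \ref{Main Thm}.

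\emph{(b) Maxwellian lower bound.} Alternatively, I would establish a pointwise bound $f(x,v,t)\ge c\,e^{-C|v|^\alpha - C|v|}$, obtained by Duhamel along characteristics from $f_0\ge C_0e^{-|v|^\alpha}$ with $\mathbb{E}$ bounded, as in the existence theory. Evaluated at the shifted point, this stays comparable and integrates to a positive constant. It also explains the $\varepsilon$-dependence in the constant, since the Duhamel factor is $e^{-t/\varepsilon}$.

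I would present (a) as the main route and note (b) as a fallback that holds for all $\Delta t<1$.
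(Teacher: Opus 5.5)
Your upper bounds match the paper's proof exactly: the pointwise estimate $(1+|v|)^q\tilde f\le\|f(t)\|_{L^\infty_q}(1+\|\mathbb{E}(t)\|_{L^\infty_x}\Delta t)^q$, integration using $q>3$, and division by the lower bound on $\tilde\rho$. Your fallback (b) is precisely the paper's lower-bound argument. The paper runs the backward characteristic $\frac{d}{ds}(X,V)=(V,\mathbb{E}(X,s))$ from $(x-v\Delta t,\,v-\mathbb{E}(x,t)\Delta t)$, uses $\frac{d}{ds}f\ge-\frac{1}{\varepsilon}f$ to get $\tilde f\ge e^{-T^f/\varepsilon}f_0(X(0),V(0))$, and then uses $|V(0)|\le|v|+C$. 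Your conclusion $c\,e^{-C|v|^\alpha-C|v|}$ is more accurate than the paper's $Ce^{-|v|^\alpha}$. For $\alpha>1$ the difference $(|v|+C)^\alpha-|v|^\alpha$ grows like $|v|^{\alpha-1}$, so the extra factor is genuinely needed, though it is harmless after integration.

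Your main route (a) is genuinely different. It perturbs off $\rho$ instead of following characteristics, using the lower bound in Theorem~\ref{analysis}(2) and one derivative of $f$, and it yields the clean constant $\tfrac12\inf\rho$.

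\begin{itemize}
\item \textbf{Extra smallness condition.} Route (a) needs $\Delta t\le\inf\rho/(2C)$, where $C$ depends on $\|f\|_{W^{1,\infty}_q}$ and $\|\mathbb{E}\|_{L^\infty_x}$ of the continuous solution. Contrary to your claim, this is \emph{not} implied by the mesh conditions of Theorem~\ref{Main Thm}. Those conditions ($r_{\Delta x}$, $r_{\Delta v}$, and the window $\eta_1\Delta v/C_{\mathbb{E},1}<\Delta t<\eta_2\Delta v/C_{\mathbb{E},1}$) are built from scheme constants ($C_{\mathbb{E},1}$, $a_1,\dots,a_5$, $l$) unrelated to $\inf\rho$ or to derivatives of $f$.
\item \textbf{Consequence.} So (a) proves the lemma only after adding one more solution-dependent smallness condition. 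That is harmless for the convergence theorem once $r_{\Delta v}$ is shrunk. It is not the statement as given, which assumes only Theorem~\ref{analysis} and holds uniformly for $\Delta t<1$.
\item \textbf{Regularity used.} The bound you quote from Lemma~\ref{R_3 estimate} relies on $\|f\|_{W^{1,\infty}_{q+1}}$, which Theorem~\ref{analysis} does not provide. For the integrated estimate $\int|f-\tilde f|\,dv\le C\Delta t$, however, $\|f\|_{W^{1,\infty}_q}$ with $q>2$ suffices.
\end{itemize}

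Route (b) needs only three things: $\|\mathbb{E}\|_{L^\infty_x}$, a well-defined flow (Lipschitz in $x$ since $\partial_x\mathbb{E}=\rho-1$), and $f_0\ge C_0e^{-|v|^\alpha}$. It also accounts for the $\varepsilon$-dependence of the constant. It should therefore be your main argument.
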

\begin{proof}
    First, we prove the lower bound result. To do this, we consider the ODE system, which is given by
    \begin{align*}
        \begin{split}
                 &\frac{d}{ds}(X(s),V(s)) = (V(s),\mathbb{E}(X(s),s)),\quad(X(t),V(t))=(x-v\Delta t,v-\mathbb{E}(x,t)\Delta t).
        \end{split}
    \end{align*}
    By Theorem \ref{analysis}, $V(0)$ is derived by
    \begin{align}\label{V_0}
        \begin{split}
            |V(0)| \leq |v-\mathbb{E}\Delta t| + \int^t_0 \|\mathbb{E}(s)\|_{L^{\infty}_{x}} ds \leq |v| + C_{q,f_0,T^f}\Delta t + C_{q,f_0,T^f}\leq |v|+C_{q,f_0,T^f}.
        \end{split}
    \end{align}
    On the other hand,  \eqref{VPBGK model} gives
    \begin{align*}
        \begin{split}
            \frac{d}{ds} f(X(s),V(s),s) = \frac{1}{\varepsilon}(\mathcal{M}(f) - f)(X(s),V(s),s)\geq-\frac{1}{\varepsilon}f(X(s),V(s),s).
        \end{split}
    \end{align*}
    This implies
    \begin{align}\label{lower bdd}
        \begin{split}
            \tilde{f}(x,v,t) \geq e^{-\frac{t}{\varepsilon}}f_0(X(0),V(0)) \geq C_{0}e^{-\frac{T^f}{\varepsilon}}e^{-|V(0)|^\alpha}.
        \end{split}
    \end{align}
    Inserting (\ref{V_0}) into (\ref{lower bdd}), we get
    \begin{align*}
        \begin{split}
            \tilde{f}(x,v,t)  \geq C_{q,f_0,T^f,\varepsilon} e^{-|v|^\alpha}.
        \end{split}
    \end{align*}
    Therefore, we derive
    \begin{align}\label{continuous tilde density lower bound}
        \begin{split}
            \tilde{\rho}(x,t) \geq \int_{\mathbb{R}}C_{q,f_0,T^f,\varepsilon} e^{-|v|^\alpha} dv = C_{q,f_0,T^f,\varepsilon}>0.
        \end{split}
    \end{align}
    To obtain the upper bound, we have
    \begin{align*}
        \begin{split}
            |\tilde{\rho}(x,t)| &\leq \int_{\mathbb{R}} |\tilde{f}(x,v,t)|dv \leq \|\tilde{f}(t)\|_{L^{\infty}_{q}} \int_{\mathbb{R}} \frac{1}{(1+|v|)^q}dv\leq C_q \|\tilde{f}(t)\|_{L^{\infty}_{q}}.
        \end{split}
    \end{align*}
    By Theorem \ref{analysis}, we get
    \begin{align}\label{continuous tilde f upper bound}
        \begin{split}
            (1+|v|)^q |\tilde{f}(x,v,t)| &= (1+|v|)^q |f(x-v\Delta t,v-\mathbb{E}(x,t)\Delta t,t)|\cr
            &= \|f(t)\|_{L^{\infty}_{q}}\left(\frac{1+|v|}{1+|v-\mathbb{E}(x,t)\Delta t|}\right)^q\cr
            &\leq \|f(t)\|_{L^{\infty}_{q}} (1+ C_{q}\|\mathbb{E}(t)\|_{L^{\infty}_x}\Delta t)^q\cr
            &\leq C_{q,f_0,T^f}.
        \end{split}
    \end{align}
    Thus, we yield
    \begin{align*}
        \begin{split}
        \|\tilde{\rho}(t)\|_{L^{\infty}_{q}} \leq C_{q,f_0,T^f}.
        \end{split}
    \end{align*}
    By \eqref{continuous tilde density lower bound} and \eqref{continuous tilde f upper bound}, $\tilde{U}$ is estimated by
    \begin{align*}
        \begin{split}
            |\tilde{U}(x,t)| &\leq \frac{1}{\tilde{\rho}(x,t)}\int_{\mathbb{R}} |\tilde{f}(x,v,t)||v|dv\leq C_{q,f_0,\varepsilon}\|\tilde{f}(t)\|_{L^{\infty}_{q}} \int_{\mathbb{R}} \frac{|v|}{(1+|v|)^q}dv\leq C_{q,f_0,T^f,\varepsilon}.
        \end{split}
    \end{align*}
    Lastly, we use \eqref{continuous tilde density lower bound} and \eqref{continuous tilde f upper bound} again to get
    \begin{align*}
        \begin{split}
            \tilde{T}(x,t) &= \frac{1}{\tilde{\rho}(x,t)}\left\{\int_{\mathbb{R}}|v|^2 \tilde{f}(x,v,t)dv - |\tilde{U}|^2 \right\}\cr
            &\leq \frac{1}{\tilde{\rho}(x,t)}\int_{\mathbb{R}}|v|^2 \tilde{f}(x,v,t)dv\cr
            &\leq C_{q,f_0,T^f,\varepsilon}\|\tilde{f}(t)\|_{L^{\infty}_{q}} \int_{\mathbb{R}}\frac{|v|^2}{(1+|v|)^q}dv\cr
            &\leq C_{q,f_0,T^f,\varepsilon}.
        \end{split}
    \end{align*}
\end{proof}

Next, we will control the time derivatives for the solution such as $\|\partial_t f\|_{L^{\infty}_{q}}$, $\|\partial_t \partial_x f\|_{L^{\infty}_{q}}$, $\|\partial_t \partial_v f\|_{L^{\infty}_{q}}$, and $\|\partial_t \mathcal{M}(f)\|_{L^{\infty}_{q}}$. 
 \begin{lemma}\label{TimeDeri_f}
     Let $f$ and $\mathbb{E}$ is the solution of (\ref{VPBGK model}). Then, under the assumption of Theorem \ref{Main Thm}, we have
     \begin{align*}
         \begin{split}
             \|\partial_t f \|_{L^{\infty}_{q}} \leq C_{q,f_0,T^f,\varepsilon}.
         \end{split}
     \end{align*}
 \end{lemma}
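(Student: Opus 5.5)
The plan is to read off $\partial_t f$ from the equation \eqref{VPBGK model} itself and bound each term in the weighted norm $L^\infty_q$:
\begin{align*}
\partial_t f = -v\partial_x f - \mathbb{E}\,\partial_v f + \frac{1}{\varepsilon}\bigl(\mathcal{M}(f)-f\bigr).
\end{align*}
We multiply by $(1+|v|)^q$ and take the supremum over $(x,v)$. This reduces the claim to bounding four pieces uniformly for $t\in[0,T^f]$. This is essentially the computation already carried out in the estimate of $I_1$ in the proof of Lemma \ref{R_3 estimate}; here we make it self-contained.

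\textbf{Transport term.} Since $|v|(1+|v|)^q\le(1+|v|)^{q+1}$, we get $\|v\partial_x f(t)\|_{L^\infty_q}\le\|f(t)\|_{W^{1,\infty}_{q+1}}$. This is bounded by $\|f(t)\|_{W^{2,\infty}_{q+2}}$. It is finite uniformly on $[0,T^f]$ by Theorem \ref{analysis}(1) applied with weight $q+2$, which is legitimate because Theorem \ref{Main Thm} assumes $\|f_0\|_{W^{2,\infty}_{q+2}}<\infty$.

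\textbf{Force term.} We have $\|\mathbb{E}\partial_v f(t)\|_{L^\infty_q}\le\|\mathbb{E}(t)\|_{L^\infty_x}\|f(t)\|_{W^{1,\infty}_q}$. Theorem \ref{analysis}(4) controls the first factor by $C_{q,f_0,T^f}$, and Theorem \ref{analysis}(1) controls the second.

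\textbf{Relaxation term.} By Lemma \ref{LipMaxw}, $\|\mathcal{M}(f)(t)\|_{L^\infty_q}\le\|\mathcal{M}(f)(t)\|_{W^{1,\infty}_q}\le C_{q,f_0,T^f}(\|f(t)\|_{W^{1,\infty}_q}+1)$. Together with $\|f(t)\|_{L^\infty_q}\le C$, this gives
\begin{align*}
\frac1\varepsilon\|\mathcal{M}(f)-f\|_{L^\infty_q}\le \frac{C_{q,f_0,T^f}}{\varepsilon}.
\end{align*}
This is the source of the $\varepsilon$-dependence in the constant.

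Summing the four bounds gives $\|\partial_t f(t)\|_{L^\infty_q}\le C_{q,f_0,T^f,\varepsilon}$ for all $t\in[0,T^f]$. There is no real obstacle. The only point requiring care is that the transport term costs one extra power of $v$. This is why the higher-weight hypothesis $W^{2,\infty}_{q+2}$ (or at least $W^{1,\infty}_{q+1}$) from Theorem \ref{Main Thm} must be invoked, via Theorem \ref{analysis} with $q$ replaced by $q+2$, rather than only the $W^{2,\infty}_q$ bound.
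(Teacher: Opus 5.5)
Your proposal is correct and matches the paper's proof. Like the paper, you substitute the equation for $\partial_t f$, multiply by the weight, and bound the three terms: the transport term by $\|f\|_{W^{1,\infty}_{q+1}}$, the force term by $\|\mathbb{E}\|_{L^\infty_x}\|f\|_{W^{1,\infty}_q}$, and the relaxation term via Lemma~\ref{LipMaxw}. Your remark that the transport term needs the $W^{2,\infty}_{q+2}$ hypothesis of Theorem~\ref{Main Thm} makes explicit a step the paper leaves implicit.
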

 \begin{proof}
 Multiplying $(1+|v|)^q$ to \eqref{VPBGK model}, Lemma \ref{LipMaxw} gives
 \begin{align*}
     \begin{split}
         (1+|v|^{q})|\partial_t f| &\leq (1+|v|^{q})\left\{|v||\partial_x f| + |\mathbb{E}||\partial_v f|+ \frac{1}{\varepsilon}|\mathcal{M}(f)| + \frac{1}{\varepsilon}|f|\right\}\cr
         &\leq C_{q,f_0}\left\{\|f\|_{W^{1,\infty}_{q+1}} + \|f\|_{W^{1,\infty}_{q}} + \frac{1}{\varepsilon}\|f\|_{L^{\infty}_{q}} + \frac{1}{\varepsilon}\|\mathcal{M}(f)\|_{L^{\infty}_{q}}\right\}\cr
         &\leq C_{q,f_0,T^f,\varepsilon}.
     \end{split}
 \end{align*}
 This completes the proof.
 \end{proof}
 
\begin{lemma}\label{TimeDeri_f2}
    Let $f$ and $\mathbb{E}$ is the solution of (\ref{VPBGK model}). Then, under the assumption of Theorem \ref{Main Thm}, we have
    \begin{align*}
        \|\partial_t \partial_x f\|_{L^{\infty}_{q}} + \|\partial_t \partial_v f\|_{L^{\infty}_{q}}\leq C_{q,f_0,T^f,\varepsilon}.
    \end{align*}
\end{lemma}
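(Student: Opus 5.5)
The plan is to differentiate the VPBGK equation in $x$ and in $v$, and then bound each resulting term in $L^{\infty}_q$ by the $W^{2,\infty}_{q+2}$ regularity of $f$ (assumed in Theorem \ref{Main Thm} and propagated by Theorem \ref{analysis}), together with bounds on $\mathbb{E}$, $\partial_x\mathbb{E}$ and $\mathcal{M}(f)$. Since $f$ is $C^2$ in $(x,v)$ and $\partial_t f$ is expressed through first derivatives via the equation, we may write
\begin{align*}
\partial_t\partial_x f &= -v\partial^2_x f - \partial_x\mathbb{E}\,\partial_v f - \mathbb{E}\,\partial_x\partial_v f + \frac{1}{\varepsilon}\left(\partial_x\mathcal{M}(f) - \partial_x f\right),\\
\partial_t\partial_v f &= -\partial_x f - v\partial_x\partial_v f - \mathbb{E}\,\partial^2_v f + \frac{1}{\varepsilon}\left(\partial_v\mathcal{M}(f) - \partial_v f\right).
\end{align*}
We multiply each identity by $(1+|v|)^q$ and estimate it term by term.

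First, the transport terms $v\partial^2_x f$ and $v\partial_x\partial_v f$ lose one power of $v$. They are therefore controlled by $\|f(t)\|_{W^{2,\infty}_{q+1}}\le \|f(t)\|_{W^{2,\infty}_{q+2}}$, and this is finite by Theorem \ref{analysis}(1) applied with weight $q+2$, using $\|f_0\|_{W^{2,\infty}_{q+2}}<\infty$. The terms $\partial_x f$, $\partial_v f$, $\mathbb{E}\partial_x\partial_v f$ and $\mathbb{E}\partial^2_v f$ are bounded by $\big(1+\|\mathbb{E}(t)\|_{L^{\infty}_x}\big)\|f(t)\|_{W^{2,\infty}_q}$, using Theorem \ref{analysis}(4) for the field.

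Second, the field derivative. From the Poisson equation (equivalently, differentiating the Green kernel representation \eqref{continuous electric field}) we get $\partial_x\mathbb{E}=\rho-1$. Hence
\[
|\partial_x\mathbb{E}|\le \|f(t)\|_{L^{\infty}_q}\int_{\mathbb{R}}(1+|v|)^{-q}\,dv+1\le C_{q,f_0,T^f},
\]
which handles $\partial_x\mathbb{E}\,\partial_v f$ via $\|f(t)\|_{W^{1,\infty}_q}$.

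Third, the relaxation terms. Lemma \ref{LipMaxw} gives $\|\mathcal{M}(f)\|_{W^{1,\infty}_q}\le C_{q,f_0,T^f}\big(\|f\|_{W^{1,\infty}_q}+1\big)$, which controls both $\partial_x\mathcal{M}(f)$ and $\partial_v\mathcal{M}(f)$. The remaining pieces $\varepsilon^{-1}\partial_x f$ and $\varepsilon^{-1}\partial_v f$ are immediate. Collecting all terms yields a bound $C_{q,f_0,T^f,\varepsilon}$, with the $\varepsilon$-dependence entering only through the factor $1/\varepsilon$.

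The main obstacle is justifying that the differentiation commutes, i.e.\ that $\partial_t\partial_x f$ exists and equals $\partial_x\partial_t f$. This requires the classical regularity stated in Theorem \ref{analysis}(3). One also has to be sure that $\partial_x\mathcal{M}(f)$ is legitimately bounded by Lemma \ref{LipMaxw}. That in turn relies on the lower bounds on $\rho$ and $T$ from Theorem \ref{analysis}(2), which Lemma \ref{LipMaxw} already encodes, so I would simply invoke it. Otherwise the argument is a direct bookkeeping of weights: the only delicate point is that the $v$-factor in the transport term forces the use of the weight $q+1$, which is exactly why the extra hypothesis $\|f_0\|_{W^{2,\infty}_{q+2}}<\infty$ is imposed.
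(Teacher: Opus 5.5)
Your proposal is correct and follows the paper's proof. You differentiate the equation in $x$ and in $v$, weight by $(1+|v|)^q$, use $\partial_x\mathbb{E}=\rho-1$ for the field derivative, Lemma \ref{LipMaxw} for $\partial_x\mathcal{M}(f)$ and $\partial_v\mathcal{M}(f)$, the higher-weight $W^{2,\infty}$ bound for the transport terms, and the regularity from Theorem \ref{analysis} to commute $\partial_t$ with $\partial_x,\partial_v$, which is exactly the paper's term-by-term argument.
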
 
 \begin{proof}
     We differentiate $\eqref{VPBGK model}$ in $x$, and multiply $(1+|v|)^q$ to get
     \begin{align*}
         \begin{split}
             &(1+|v|^{q})|\partial_x\partial_t f|\cr
             &\qquad\leq (1+|v|^{q})\left\{|v||\partial^2_x f(x,v,t)| + |\partial_x \mathbb{E}||\partial_v f| +|\mathbb{E}||\partial_x\partial_v f|+\frac{1}{\varepsilon}|\partial_x \mathcal{M}(f)| + \frac{1}{\varepsilon}|\partial_x f(x,v,t)|\right\}\cr
             &\qquad\leq (1+|v|^{q})\left\{|v||\partial^2_x f(x,v,t)| + (|\rho|+1)|\partial_v f| +|\mathbb{E}||\partial_x\partial_v f|+\frac{1}{\varepsilon}|\partial_x \mathcal{M}(f)| + \frac{1}{\varepsilon}|\partial_x f(x,v,t)|\right\}.
         \end{split}
     \end{align*}
     Since $\partial_t\partial_x f=\partial_x\partial_t f$ by condition $(A3)$ in Theorem \ref{analysis}, we utilize Lemma \ref{LipMaxw} to obtain
     \begin{align*}
         \begin{split}
         \|\partial_t \partial_x f(t)\|_{L^{\infty}_{q}} \leq C_{q,f_0,T^f,\varepsilon}.
         \end{split}
     \end{align*}
     On the other hand, differentiating $\eqref{VPBGK model}$ in $v$, and multiplying $(1+|v|)^q$, we have
    \begin{align*}
         \begin{split}
             &(1+|v|^{q})|\partial_v\partial_t f(x,v,t)|\cr 
             &\qquad\leq (1+|v|^{q})\left\{|\partial_x f| + |v||\partial_v \partial_x f| +|\mathbb{E}||\partial^2_v f|+\frac{1}{\varepsilon}|\partial_v \mathcal{M}(f)| + \frac{1}{\varepsilon}|\partial_v f|\right\}.
         \end{split}
     \end{align*}
     Similar to the above case, this leads to 
     \begin{align*}
         \begin{split}
             \|\partial_t \partial_v f(t)\|_{L^{\infty}_{q}} \leq C_{q,f_0,T^f,\varepsilon}.
         \end{split}
     \end{align*}
 \end{proof}

 \begin{lemma}\label{TimeDeri_Maxwellian}
     Let $f$ and $\mathbb{E}$ is the solution of (\ref{VPBGK model}) and $\mathcal{M}(f)$ is defined by (\ref{local Maxwellian}). \ Then, under the assumption of Theorem \ref{analysis}, we have
    \begin{align*}
        \|\partial_t \mathcal{M}(f)(t)\|_{L^{\infty}_{q}}\leq C_{q,f_0,T^f}.
    \end{align*}
 \end{lemma}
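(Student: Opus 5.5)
We must bound $\|\partial_t\mathcal{M}(f)(t)\|_{L^\infty_q}$. The plan is to differentiate the Maxwellian through its macroscopic fields, writing
\begin{align*}
\partial_t\mathcal{M}(f)=\frac{\partial\mathcal{M}}{\partial\rho}\partial_t\rho+\frac{\partial\mathcal{M}}{\partial U}\partial_t U+\frac{\partial\mathcal{M}}{\partial T}\partial_t T .
\end{align*}
The partial derivatives are explicit:
\begin{align*}
\frac{\partial\mathcal{M}}{\partial\rho}=\frac{\mathcal{M}}{\rho},\qquad \frac{\partial\mathcal{M}}{\partial U}=\frac{v-U}{T}\mathcal{M},\qquad \frac{\partial\mathcal{M}}{\partial T}=\Big(\frac{|v-U|^2}{2T^2}-\frac{1}{2T}\Big)\mathcal{M}.
\end{align*}
By Theorem \ref{analysis}(2) we have $\rho\ge C$, $T\ge C$ and $T+|U|^2\le C$. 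The upper bound on $\rho$ comes from $\rho\le C_q\|f\|_{L^\infty_q}$. Together these give $\mathcal{M}(f)\le Ce^{-c|v|^2}$. Hence each coefficient, multiplied by $(1+|v|)^q$, is bounded uniformly, since $(1+|v|)^{q+2}e^{-c|v|^2}\le C$. The problem therefore reduces to showing that $|\partial_t\rho|$, $|\partial_tU|$ and $|\partial_tT|$ are bounded uniformly in $(x,t)\in\mathbb{T}\times[0,T^f]$.

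For the time derivatives of the moments, I would use the conservation laws obtained by integrating \eqref{VPBGK model} against $(1,v,|v|^2)$. The relaxation term cancels, so the bounds carry no $\varepsilon$-dependence. The laws are
\begin{align*}
\partial_t\rho=-\partial_x\!\int vf\,dv,\qquad \partial_t(\rho U)=-\partial_x\!\int v^2f\,dv+\mathbb{E}\rho,\qquad \partial_t\!\int v^2 f\,dv=-\partial_x\!\int v^3f\,dv+2\mathbb{E}\rho U,
\end{align*}
where the force terms come from integrating $\mathbb{E}\partial_v f$ by parts. Boundary terms vanish because $(1+|v|)^q f\in L^\infty$. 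The right-hand sides are bounded by $\|f\|_{W^{1,\infty}_{q}}\int(1+|v|)^{3-q}dv$ together with $\|\mathbb{E}\|_{L^\infty_x}$ from Theorem \ref{analysis}(4). The weight integral requires $q>4$. I would use that $\|f\|_{W^{2,\infty}_{q+2}}<\infty$ (the assumption of Theorem \ref{Main Thm}), so that $\partial_x f$ decays like $(1+|v|)^{-q-2}$ and $\int(1+|v|)^{1-q}dv<\infty$ for $q>3$. Alternatively, I would directly use $\|\partial_t f\|_{L^\infty_{q+2}}$ from Lemma \ref{TimeDeri_f} (with $q$ replaced by $q+2$) to write $|\partial_t\int v^k f\,dv|\le C\|\partial_t f\|_{L^\infty_{q+2}}$. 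That route carries $\varepsilon$ into the constant, so I prefer the conservation-law route.

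Finally, I would recover $\partial_tU$ and $\partial_tT$ from the moment derivatives by the quotient rule:
\begin{align*}
\partial_t U=\frac{\partial_t(\rho U)-U\partial_t\rho}{\rho},\qquad \partial_tT=\frac{\partial_t\int v^2f\,dv-(T+U^2)\partial_t\rho-2\rho U\partial_tU}{\rho}.
\end{align*}
Each expression is bounded using the lower bound on $\rho$ and the upper bounds on $U$ and $T$. Combining these bounds with the coefficient estimates gives $\|\partial_t\mathcal{M}(f)(t)\|_{L^\infty_q}\le C_{q,f_0,T^f}$.

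The main obstacle is the weight bookkeeping in the third-moment flux $\partial_x\int v^3 f\,dv$, which is where the extra weight $q+2$ (or $q>4$) is needed. I would handle it with the $W^{2,\infty}_{q+2}$ bound on $f$ propagated from Theorem \ref{analysis} applied with weight $q+2$.
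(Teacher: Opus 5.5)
Your proposal is correct and follows essentially the same route as the paper. Both proofs use the chain rule through $(\rho,U,T)$, Gaussian bounds on $\partial_\rho\mathcal{M}$, $\partial_U\mathcal{M}$, $\partial_T\mathcal{M}$ from the macroscopic bounds of Theorem \ref{analysis}, and the moment equations (where the relaxation term cancels) with an extra velocity weight on the derivatives of $f$ to bound $\partial_t\rho$, $\partial_tU$, $\partial_tT$. The differences are cosmetic: the paper bounds $\int\mathbb{E}\varphi\,\partial_vf\,dv$ directly instead of integrating by parts, uses $\|f\|_{W^{1,\infty}_{q+1}}$ (giving $\int(1+|v|)^{2-q}dv$) instead of your $q+2$ weight, and leaves your quotient-rule step implicit.
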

 \begin{proof}
 By the chain rule,
    \begin{align}\label{t deri Mx}
        \begin{split}
            \partial_t \mathcal{M}(f) = \partial_t \rho \frac{\partial\mathcal{M}(f)}{\partial U}+ \partial_t U \frac{\partial\mathcal{M}(f)}{\partial \rho}+\partial_t T \frac{\partial\mathcal{M}(f)}{\partial T}.
        \end{split}
    \end{align}
    By direct calculation, we get
\begin{align*}
    \begin{split}
        &\left|\frac{\partial\mathcal{M}(f)}{\partial \rho}\right| \leq \frac{1}{(2\pi T)^{\frac{1}{2}}}\exp\left(-\frac{|v-U|^{2}}{2T} \right)\leq C_{q,T^{f},f_{0}}e^{-C_{q,f_0,T^f}|v|^{2}},\cr
        &\left|\frac{\partial\mathcal{M}(f)}{\partial U}\right| \leq \frac{|v-U|}{T}\mathcal{M}(f)\leq C_{q,T^{f},f_{0}}(1+|v|)e^{-C_{q,f_0,T^f}|v|^{2}},\cr
        &\left|\frac{\partial\mathcal{M}(f)}{\partial T}\right| \leq C\left( \frac{|v-U|^2}{T^2}+\frac{1}{T}\right)\mathcal{M}(f) \leq C_{q,T^{f},f_{0}}(1+|v|^{2})e^{-C_{q,T^{f},f_{0}}|v|^{2}}.
    \end{split}
\end{align*}
Since $x^{n}e^{-Cx^{2}} \leq C$ for some constant $C>0$, we obtain
\begin{align}\label{A.3 result 1}
    \begin{split}
        \|\partial_\rho \mathcal{M}(f)\|_{L^{\infty}_{q}}+\|\partial_U \mathcal{M}(f)\|_{L^{\infty}_{q}}+\|\partial_T \mathcal{M}(f)\|_{L^{\infty}_{q}} \leq C_{q,f_0,T^f}.
    \end{split}
\end{align}
On the other hand, we multiply $\varphi(v)=(1,v,|v|^2)$ to \eqref{VPBGK model}, and integrate with respect to $v$ to get
\begin{align*}
    \begin{split}
        \left|\partial_t \int_{\mathbb{R}}\varphi(v) f
        dv\right| &\leq \left|\int_{\mathbb{R}} v\varphi(v) \partial_x f dv\right| + \left|\int_{\mathbb{R}} \mathbb{E}\varphi(v) \partial_v f  dv\right|\cr
        &\leq C(1+\|\mathbb{E}\|_{L^{\infty}_x})\int_{\mathbb{R}} (1+|v|)^3|\partial_{x,v}f|dv \cr
        &\leq C(1+\|\mathbb{E}\|_{L^{\infty}_x})\|f(t)\|_{W^{1,\infty}_{q+1}} \int_{\mathbb{R}} \frac{dv}{(1+|v|)^{q-2}}\cr
        &\leq C_{q,f_0,T^f}.
    \end{split}
\end{align*}
Therefore, we have
\begin{align}\label{A.3 result 2}
    \begin{split}
        |\partial_t \rho|+ |\partial_t U|+|\partial_t T| \leq C_{q,f_0,T^f}. 
    \end{split}
\end{align}
According to \eqref{t deri Mx}, \eqref{A.3 result 1}, and \eqref{A.3 result 2}, we get
\begin{align*}
    \begin{split}
        (1+|v|)^q |\partial_t \mathcal{M}(f)| &\leq |\partial_t \rho|\|\partial_\rho \mathcal{M}(f)\|_{L^{\infty}_q} + |\partial_t U|\|\partial_U \mathcal{M}(f)\|_{L^{\infty}_q} + |\partial_t T|\|\partial_T \mathcal{M}(f)\|_{L^{\infty}_q}\leq C_{q,f_0,T^f}.
    \end{split}
\end{align*}
This completes the proof.
\end{proof}

\begin{lemma}\label{TimeDeri_field}
         Let $f$ and $\mathbb{E}$ is the solution of (\ref{VPBGK model}) and $\mathcal{M}(f)$ is defined by (\ref{local Maxwellian}). \ Then, under the assumption of Theorem \ref{analysis}, we have
    \begin{align*}
        \|\partial_t \mathbb{E}(t)\|_{L^{\infty}_{x}}\leq C_{q,f_0,T^f}.
    \end{align*}  
\end{lemma}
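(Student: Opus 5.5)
The plan is to differentiate the Green-kernel representation \eqref{continuous electric field} of the electric field in time and then use the local conservation law to trade the time derivative of $\rho$ for a spatial derivative of the momentum. For fixed $x$ and $t$ we have
\begin{align*}
\mathbb{E}(x,t)=\int_0^1 K(x,y)(\rho(y,t)-1)\,dy ,
\end{align*}
and $K$ does not depend on $t$. Theorem \ref{analysis} gives $f\in C^1$ with $\|f(t)\|_{W^{2,\infty}_q}$ bounded and $q>3$. Differentiation under the integral sign is therefore justified: $\partial_t f$ is dominated by $C(1+|v|)^{-q}$ by Lemma \ref{TimeDeri_f}, which is integrable in $v$. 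This yields
\begin{align*}
\partial_t\mathbb{E}(x,t)=\int_0^1 K(x,y)\,\partial_t\rho(y,t)\,dy .
\end{align*}

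Next we integrate \eqref{VPBGK model} in $v$. The cancellation property of $\mathcal{M}(f)-f$ removes the relaxation term. The force term $\int_{\mathbb{R}}\mathbb{E}\,\partial_v f\,dv$ vanishes because $f$ decays like $(1+|v|)^{-q}$. What remains is the continuity equation
\begin{align*}
\partial_t\rho+\partial_x(\rho U)=0 .
\end{align*}
We could stop here, since $|\partial_t\rho|\le C_{q,f_0,T^f}$ was already shown in \eqref{A.3 result 2} in the proof of Lemma \ref{TimeDeri_Maxwellian}. Together with $|K|\le 1$ this gives $|\partial_t\mathbb{E}(x,t)|\le \sup_y|\partial_t\rho(y,t)|\le C_{q,f_0,T^f}$. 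To keep the argument self-contained, we can instead bound the momentum flux directly:
\begin{align*}
|\partial_x(\rho U)(y,t)|\le \int_{\mathbb{R}}|v||\partial_x f|\,dv\le \|f(t)\|_{W^{1,\infty}_{q}}\int_{\mathbb{R}}\frac{|v|}{(1+|v|)^{q}}\,dv\le C_{q,f_0,T^f},
\end{align*}
using $q>2$ and item (1) of Theorem \ref{analysis}.

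An alternative route avoids any pointwise estimate on $\partial_t\rho$. Integrating by parts in $y$ on $[0,x]$ and $[x,1]$, where $\partial_y K=1$ on each piece, gives the classical formula
\begin{align*}
\partial_t\mathbb{E}(x,t)=-\rho U(x,t)+\int_0^1\rho U\,dy ,
\end{align*}
up to the jump terms of $K$. These jump terms combine into $\rho U(x)$ because $K(x,x^+)-K(x,x^-)=-1$, and the boundary terms cancel by periodicity, $K(x,0)=0$ and $K(x,1)=0$. We then bound $|\rho U|\le \|f(t)\|_{L^\infty_q}\int_{\mathbb{R}}|v|(1+|v|)^{-q}\,dv$. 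This route only needs $L^\infty_q$ control of $f$.

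The main obstacle is rigor rather than estimation. We must justify interchanging $\partial_t$ with the $y$- and $v$-integrals and confirm that the boundary terms in $v$ vanish. Both follow from the continuity of the derivatives in item (3) of Theorem \ref{analysis} and the uniform weighted bounds with $q>3$, through dominated convergence. We would present the first route as the main argument and record the integration-by-parts formula as a remark.
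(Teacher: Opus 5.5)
Your main route is correct and is essentially the paper's proof: differentiate the Green-kernel representation in $t$, replace $\partial_t\rho$ by $-\int_{\mathbb{R}} v\,\partial_x f\,dv$ via the continuity equation, and conclude with $|K|\le 1$ and a weighted bound on $\partial_x f$. Your bound uses only $\|f(t)\|_{W^{1,\infty}_q}$ with $q>2$, so it stays within the hypotheses of Theorem \ref{analysis}, whereas the paper invokes $\|f\|_{W^{1,\infty}_{q+1}}$; the integration-by-parts identity $\partial_t\mathbb{E}=-\rho U+\int_0^1\rho U\,dy$ is a correct optional remark.
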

\begin{proof}
    From charge conservation, we have
    \begin{align*}
        \begin{split}
            \partial_t \rho +\int_{\mathbb{R}}v\partial_x fdv=0.
        \end{split}
    \end{align*}
    Differentiating the Poisson equation in $t$, we get
    \begin{align*}
        \begin{split}
            \partial_t(\partial_x \mathbb{E})=\partial_t(\rho -1)=-\int_{\mathbb{R}}v\partial_x fdv.
        \end{split}
    \end{align*}
    Then, \eqref{continuous electric field} gives
    \begin{align*}
        \begin{split}
            \partial_t \mathbb{E}(x,t)=\int^1_0K(x,y)\left( -\int_{\mathbb{R}}v\partial_x f(y,v,t)dv\right)dy,
        \end{split}
    \end{align*}
    for $K$ is defined in \eqref{Green kernel}. Therefore, it is enough to show that $\|\partial_t \mathbb{E}(t)\|_{L^{\infty}_x}\leq C_{q,f_0,T^f}$ since we see that
    \begin{align*}
        \begin{split}
            \left|\int_{\mathbb{R}}v\partial_x fdv\right|\leq \|f\|_{W^{1,\infty}_{q+1}}\int_{\mathbb{R}}\frac{dv}{(1+|v|)^{q-1}}\leq C_{q,f_0,T^f}.
        \end{split}
    \end{align*}
\end{proof}

\bibliographystyle{abbrv}
\bibliography{ref}

@article{PhysRev.94.511,
  title = {A Model for Collision Processes in Gases. {I}. Small Amplitude Processes in Charged and Neutral One-Component Systems},
  author = {Bhatnagar, P. L. and Gross, E. P. and Krook, M.},
  journal = {Phys. Rev.},
  volume = {94},
  issue = {3},
  pages = {511--525},
  numpages = {0},
  year = {1954},
  month = {May},
  publisher = {American Physical Society},
  doi = {10.1103/PhysRev.94.511},
  url = {https://link.aps.org/doi/10.1103/PhysRev.94.511}
}

@article{pieraccini2007implicit,
  title={Implicit--explicit schemes for {BGK} kinetic equations},
  author={Pieraccini, Sandra and Puppo, Gabriella},
  journal={Journal of Scientific Computing},
  volume={32},
  pages={1--28},
  year={2007},
  publisher={Springer}
}

@article{filbet2001convergence,
  title={Convergence of a finite volume scheme for the {V}lasov--{P}oisson system},
  author={Filbet, Francis},
  journal={SIAM Journal on Numerical Analysis},
  volume={39},
  number={4},
  pages={1146--1169},
  year={2001},
  publisher={SIAM}
}

@article{dimarco2014asymptotic,
  title={An asymptotic preserving automatic domain decomposition method for the {V}lasov--{P}oisson--{BGK} system with applications to plasmas},
  author={Dimarco, Giacomo and Mieussens, Luc and Rispoli, Vittorio},
  journal={Journal of Computational Physics},
  volume={274},
  pages={122--139},
  year={2014},
  publisher={Elsevier}
}

@article{crestetto2012kinetic,
  title={Kinetic/fluid micro-macro numerical schemes for {V}lasov-{P}oisson-{BGK} equation using particles},
  author={Crestetto, Ana{\"\i}s and Crouseilles, Nicolas and Lemou, Mohammed},
  journal={Kinetic and Related Models},
  volume={5},
  number={4},
  pages={787--816},
  year={2012}
}

@article{crouseilles2016multiscale,
  title={Multiscale Schemes for the {BGK}--{V}lasov--{P}oisson System in the Quasi-Neutral and Fluid Limits. {S}tability Analysis and First Order Schemes},
  author={Crouseilles, Nicolas and Dimarco, Giacomo and Vignal, Marie-H{\'e}l{\'e}ne},
  journal={Multiscale Modeling \& Simulation},
  volume={14},
  number={1},
  pages={65--95},
  year={2016},
  publisher={SIAM}
}

@article{laidin2022hybrid,
  title={Hybrid Kinetic/Fluid numerical method for the {V}lasov-{BGK} equation in the diffusive scaling},
  author={Laidin, Tino},
  journal={Kinetic and Related Models},
  volume={16},
  number={6},
  pages={913--947},
  year={2023},
}

@article{russo2018convergence,
  title={Convergence of a semi-{L}agrangian scheme for the ellipsoidal {BGK} model of the {B}oltzmann equation},
  author={Russo, Giovanni and Yun, Seok-Bae},
  journal={SIAM Journal on Numerical Analysis},
  volume={56},
  number={6},
  pages={3580--3610},
  year={2018},
  publisher={Kinetic and Related Models}
}

@article{crouseilles2011asymptotic,
  title={An asymptotic preserving scheme based on a micro-macro decomposition for collisional {V}lasov equations: diffusion and high-field scaling limits.},
  author={Crouseilles, Nicolas and Lemou, Mohammed},
  journal={Kinetic and Related Models},
  volume={4},
  number={2},
  pages={441--477},
  year={2011}
}

@article{hu2018asymptotic,
  title={Asymptotic-preserving and positivity-preserving implicit-explicit schemes for the stiff {BGK} equation},
  author={Hu, Jingwei and Shu, Ruiwen and Zhang, Xiangxiong},
  journal={SIAM Journal on Numerical Analysis},
  volume={56},
  number={2},
  pages={942--973},
  year={2018},
  publisher={SIAM}
}

@article{boscarino2022convergence,
  title={Convergence estimates of a semi-{L}agrangian scheme for the ellipsoidal {BGK} model for polyatomic molecules},
  author={Boscarino, Sebastiano and Cho, Seung Yeon and Russo, Giovanni and Yun, Seok-Bae},
  journal={ESAIM: Mathematical Modelling and Numerical Analysis},
  volume={56},
  number={3},
  pages={893--942},
  year={2022},
  publisher={EDP Sciences}
}

@article{hu2017class,
  title={On a class of implicit--explicit runge--kutta schemes for stiff kinetic equations preserving the navier--stokes limit},
  author={Hu, Jingwei and Zhang, Xiangxiong},
  journal={Journal of Scientific Computing},
  volume={73},
  pages={797--818},
  year={2017},
  publisher={Springer}
}

@article{issautier1996convergence,
  title={Convergence of a weighted particle method for solving the {B}oltzmann {BGK} equation},
  author={Issautier, D},
  journal={SIAM Journal on Numerical Analysis},
  volume={33},
  number={6},
  pages={2099--2119},
  year={1996},
  publisher={SIAM}
}

@article{russo2012convergence,
  title={Convergence of a semi-{L}agrangian scheme for the {BGK} model of the Boltzmann equation},
  author={Russo, Giovanni and Santagati, Pietro and Yun, Seok-Bae},
  journal={SIAM Journal on Numerical Analysis},
  volume={50},
  number={3},
  pages={1111--1135},
  year={2012},
  publisher={SIAM}
}

@article{einkemmer2014convergence,
  title={Convergence analysis of a discontinuous {G}alerkin/{S}trang splitting approximation for the {V}lasov--{P}oisson equations},
  author={Einkemmer, Lukas and Ostermann, Alexander},
  journal={SIAM Journal on Numerical Analysis},
  volume={52},
  number={2},
  pages={757--778},
  year={2014},
  publisher={SIAM}
}

@article{besse2008convergence2,
  title={Convergence of classes of high-order semi-{L}agrangian schemes for the {V}lasov--{P}oisson system},
  author={Besse, Nicolas and Mehrenberger, Michel},
  journal={Mathematics of Computation},
  volume={77},
  number={261},
  pages={93--123},
  year={2008}
}

@article{besse2008convergence1,
  title={Convergence of a high-order semi-{L}agrangian scheme with propagation of gradients for the one-dimensional {V}lasov--{P}oisson system},
  author={Besse, Nicolas},
  journal={SIAM Journal on Numerical Analysis},
  volume={46},
  number={2},
  pages={639--670},
  year={2008},
  publisher={SIAM}
}

@article{besse2004convergence,
  title={Convergence of a semi-{L}agrangian scheme for the one-dimensional {V}lasov--{P}oisson system},
  author={Besse, Nicolas},
  journal={SIAM Journal on Numerical Analysis},
  volume={42},
  number={1},
  pages={350--382},
  year={2004},
  publisher={SIAM}
}

@article{cottet1984particle,
  title={Particle methods for the one-dimensional {V}lasov--{P}oisson equations},
  author={Cottet, G-H and Raviart, P-A},
  journal={SIAM Journal on Numerical Analysis},
  volume={21},
  number={1},
  pages={52--76},
  year={1984},
  publisher={SIAM}
}

@article{ganguly1989convergence,
  title={On the convergence of particle methods for multidimensional {V}lasov--{P}oisson systems},
  author={Ganguly, Keshab and Victory, Jr, HD},
  journal={SIAM Journal on Numerical Analysis},
  volume={26},
  number={2},
  pages={249--288},
  year={1989},
  publisher={SIAM}
}

@article{victory1991convergence2,
  title={The convergence analysis of fully discretized particle methods for solving {V}lasov--{P}oisson systems},
  author={Victory, Jr, HD and Tucker, Garry and Ganguly, Keshab},
  journal={SIAM Journal on Numerical Analysis},
  volume={28},
  number={4},
  pages={955--989},
  year={1991},
  publisher={SIAM}
}

@article{victory1991convergence1,
  title={The convergence theory of particle-in-cell methods for multidimensional {V}lasov--{P}oisson systems},
  author={Victory, Jr, HD and Allen, Edward J},
  journal={SIAM Journal on Numerical Analysis},
  volume={28},
  number={5},
  pages={1207--1241},
  year={1991},
  publisher={SIAM}
}

@incollection{Cho2024,
  author    = {Cho, S. Y. and Groppi, M. and Qiu, J. M. and Russo, G. and Yun, S.-B.},
  title     = {Conservative Semi-{L}agrangian Methods for Kinetic Equations},
  booktitle = {Active Particles, Volume 4},
  publisher = {Birkh{\"a}user, Cham},
  year      = {2024},
  pages     = {283-420},
  doi       = {10.1007/978-3-031-73423-6_7},
  isbn      = {978-3-031-73423-6}
}

@article{schaeffer1998convergence,
  title={Convergence of a Difference Scheme for the Vlasov--Poisson--Fokker--Planck System in One Dimension},
  author={Schaeffer, Jack},
  journal={SIAM Journal on Numerical Analysis},
  volume={35},
  number={3},
  pages={1149--1175},
  year={1998},
  publisher={SIAM}
}

@unpublished{YunPark2026,
  author = {Park, Sungsu and Yun, Seok-Bae},
  title  = {Classical solution for the {V}lasov-{P}oisson-{BGK} model in the periodic box},
  note   = {In preparation},
  year   = {2026}
}

\end{document}